\DeclareFontFamily{OT1}{pzc}{}
\DeclareFontShape{OT1}{pzc}{m}{it}{<-> s * [1.10] pzcmi7t}{}
\DeclareMathAlphabet{\mathpzc}{OT1}{pzc}{m}{it}
\definecolor{Red}{cmyk}{0,1,1,0.2}
\newcommand{\N}{\mathbb N}
\newcommand{\R}{\mathbb R}
\def\R{\mathbb R}
\def\N{\mathbb N}
\def\ep{\epsilon}
\newcommand{\be}{\begin{equation}}
\newcommand{\ee}{\end{equation}}
\def\1{{\bf 1}}
\newtheorem{Theorem}{Theorem}[section]
\newtheorem{Definition}[Theorem]{Definition}
\newtheorem{Proposition}[Theorem]{Proposition}
\newtheorem{Lemma}[Theorem]{Lemma}
\newtheorem{Corollary}[Theorem]{Corollary}
\newtheorem{Remark}[Theorem]{Remark}
\begin{document}

\title{A class of germs arising from homogenization\\ in traffic flow on junctions}
\author{\renewcommand{\thefootnote}{\arabic{footnote}}
  P. Cardaliaguet\footnotemark[1], N. Forcadel\footnotemark[2], R. Monneau\footnotemark[1] ~\footnotemark[3]}
\footnotetext[1]{CEREMADE, UMR CNRS 7534, Universit\'e Paris Dauphine-PSL,
Place de Lattre de Tassigny, 75775 Paris Cedex 16, France. }
\footnotetext[2]{INSA Rouen Normandie, Normandie Univ, LMI UR 3226, F-76000 Rouen, France.}
\footnotetext[3]{CERMICS, Universit\'e Paris-Est, Ecole des Ponts ParisTech, 6-8 avenue Blaise Pascal, 77455 Marne-la-Vall\'ee Cedex 2, France.}

\maketitle

\begin{abstract}
We {consider traffic flows described by conservation laws. We study}  a 2:1 junction  (with two incoming roads and one outgoing road) or a 1:2 junction  (with one incoming road and two outgoing roads).
At the mesoscopic level, the priority law at the junction is given by traffic lights, which are periodic in time and the traffic can also be slowed down by periodic in time flux-limiters.

After a  long time, and at large scale in space, we intuitively expect an effective junction condition to emerge.
Precisely, we perform a rescaling in space and time, to pass from the mesoscopic scale to the macroscopic {one}.
At the limit of the rescaling, we show rigorous homogenization of the problem and identify the effective junction condition, which belongs to a general class of germs (in the terminology of \cite{AKR11,FMR22b,MFR22}). The identification of this germ and of a characteristic subgerm which determines the whole germ, is the first key result of the paper.

The second key result of the paper is the construction of a family of correctors whose values at infinity are related to {each} element of the characteristic subgerm.
This construction is indeed explicit  at the level of some mixed Hamilton-Jacobi equations for concave Hamiltonians (i.e. fluxes). The explicit solutions are found in the spirit of representation formulas for optimal control problems.
%
\end{abstract}

\paragraph{AMS Classification:} 35L65, 76A30, 35B27, 35R02, 35D40, 35F20.
\paragraph{Keywords:} traffic flow models, scalar conservation laws, homogenization on networks.

\tableofcontents

\section{Introduction} 

In this section  we introduce  the problem and the main notations,  assumptions and results of the paper.  We start with a  foreword in which we explain the goal of the paper. Then  we introduce the notions of germs and  our two main models (mesoscopic and macroscopic). We give our main results and compare them with the literature. We finally describe the organization of the paper.

\subsection{Foreword}\label{s1.1}

The goal of the paper  is to  understand and to  justify effective junction conditions for macroscopic models of traffic flows arising by homogenization of mescoscopic models.
We concentrate here  on junctions involving two incoming roads and a single outgoing one (referred later on as 2:1 junctions), or the opposite: one incoming road and two outgoing ones (referred as 1:2 junctions). On each road, the equation satisfied by the density is a scalar conservation law of the form 
$$
\partial_t \rho+ \partial_x(f(\rho))=0,
$$
where the concave flux function $f$  can depend on the road. At the junction point we require of course a Rankine-Hugoniot condition, as well as relations between the incoming and outgoing fluxes, which define what is called a germ. For the mesoscopic model, the germ is an oscillating function of time, which can be interpreted as periodic in time traffic lights (or more generally flux limiters).  For instance, for the 1:2 junction, traffic lights regulate the traffic, dispatching the vehicles in one of the two exit branches. For 2:1 junction, the traffic lights give the priority rules.

Looking at long time behavior and on large space scale, we show that the oscillating germ for the mesoscopic model homogenizes in an effective (and homogeneous)  germ for the macroscopic model. On the branches, the PDEs satisfied by the densities are the same for the macroscopic model  and the mesoscopic model; only the junction condition (the germ) changes.   Our homogenization procedure naturally  introduces a general class of germs for conservation laws on 1:2 and 2:1 junctions. The guess and the study of those germs (Theorem \ref{th:1})  is the first key contribution of this paper. The second key contribution is the  rigorous justification of the homogenization by the construction of suitable correctors (Theorem \ref{thm.main2:1} for 2:1 junctions and Theorem \ref{thm.main} for 1:2 junctions).


For the mesoscopic model, we manage to reduce the junction condition to a 1:1 junction, involving at each time  one incoming road and one outgoing road only.  1:1 junctions  are well understood and justified \cite{AGDV11, AKR11, AuPe05, BaJe97, GNPT07, To00, To01}; they are  known to arise by homogenization of microscopic models of follow-the-leader type \cite{CFarma, CFsiam21, rigorousLWR, FSZ18, FoSa20, GaImMo15} and there is an equivalence between the approach through the germ theory for 1:1 junctions and the one using Hamilton-Jacobi (HJ) equations on such  junctions  \cite{CFGM}. We will make an extensive use of this equivalence (still in the case 1:1) in the construction of correctors. Our new junction conditions (for 2:1 and 1:2 junctions) arise rigorously by mixing these very natural 1:1 junctions. 
Let us  underline that the mesoscopic models we consider possess an $L^1-$contraction property, and, as expected, this is also the case for our limit models after homogenization. Note however that, in the literature, there exists some junction models which do not possess this $L^1-$contraction property\footnote{For instance traffic flows on 1:2 junctions in which the positive  proportion of the traffic entering each outgoing road is fixed,  are never $L^1-$contractions.}.

 For our mesoscopic models, we use the approach through germs developed in  \cite{AKR11}. This approach, which relies on the notion of trace developed by Panov \cite{Ps07} (see also \cite{V01}), consists in requiring  that the trace of the solution at the junction belongs to a set, the germ. As recalled in Subsection \ref{subsec.germ},  the fact that the germ is ``maximal'' ensures the uniqueness of the solution to the conservation law and its stability. Existence, on the other hand, comes from the ``completeness''  of this germ. 
\medskip

As explained above, the paper partially relies (for the construction of correctors) on the formulation of traffic flows in terms of Hamilton-Jacobi on a 1:1 junction. HJ equation on junctions have been discussed in many works \cite{ACCT13, AOT15, CaMa13, IM17, LiSo16,LS2,ShCa13}; see also the recent monograph \cite{BaCh}. The central notion of  flux limiters, used throughout this paper, has been  developed in \cite{IM17}. Questions of homogenization in this framework are discussed in \cite{AcTc15, CFarma, IM17, FSZ18, FoSa20, GaImMo15}. In contrast with the approach developed here, these papers rely on a comparison principle. Homogenization of scalar conservation laws has been less considered in the literature: see \cite{Da09, Ew92, Se92}, and, as far as we know, never for problems on a junction.

\bigskip

Now a few words about the techniques of proof are in order. Let us first underline that, for technical reasons, we mainly work throughout the paper in the case of 1:2 junctions; the maybe more interesting problem of junctions of type 2:1 is handled by a simple change of variables in Subsection \ref{proof2:1}. Second, and in contrast with most homogenization results we are aware of on the topic and quoted above, the homogenization does not rely directly on a comparison principle for some Hamilton-Jacobi formulation on the junction: indeed the limit problem {\it cannot } naturally be formulated in terms of pure HJ equations with some general comparison principle at the HJ level.

The homogenization must therefore  be proved directly at the level of the scalar conservation laws. The construction of correctors for each element of the homogenized germ seems to be a difficult task in general. For this reason we first show the existence of a subset of the  germ, called a characteristic subgerm, which  determines the whole germ (Lemma \ref{lem.reduction}). This characteristic subgerm will be then used to guide  the construction of correctors. Indeed, to each element of the characteristic subgerm, we associate a corrector whose values at infinity are given by the values of this element  (Theorem \ref{thm.corrector}). This construction  uses explicit solutions for suitable HJ equations with concave Hamiltonians in the flavor of the Lax-Oleinik formula. The explicit solutions are guessed in the spirit of representation formulas in optimal control theory on junctions \cite{IM17}. The proof of homogenization is then achieved thanks to Kato's inequality and germ's theory developed in \cite{AKR11,MFR22}.
\bigskip

Note that the mesoscopic model can itself be thought as the limit of a microscropic model taking the form of a follow-the-leader model on a junction, as discussed in \cite{CHM20} for instance. However the rigorous derivation of the macroscopic model from a microscopic one seems a very challenging question. Another open problem is the analysis of junctions involving {four branches or more}, which seems to require new ideas.

\subsection{Standing notation and assumptions}

The following assumptions are in force throughout the paper. 

Let $\mathcal R^0=(-\infty,0)\times \{0\}$ be the incoming branch, $\mathcal R^{j}= (0,\infty)\times \{j\}$ for $j=1,2$ being the outgoing ones. We consider the set $\mathcal R=\bigcup_{j=0}^2 \mathcal R^{j}\cup\{0\}$ with the topology of three half lines glued together at the origin $0$.

Let $a^j<b^j<c^j$ for $j\in\{0,1,2\}$. We make the following assumptions on the fluxes for some $\delta>0$: 

\be\label{hypflux}
\begin{array}{c}
\text{For $j\in\{0,1,2\}$, the flux $f^j:[a^j, c^j]\to \R$ is of class $C^2$, with $(f^j)''\le -\delta<0$ on $[a^j,c^j]$,} \\
\text{increasing on $[a^j,b^j]$ and decreasing on $[b^j,c^j]$, with $f^j(a^j)=f^j(c^j)=0$.}
\end{array}
\ee
We set
\begin{equation}\label{eq::e*1}
f^j_{\max}:=\max_{[a^j,c^j]} f^j=f^j(b^j)>0
\end{equation}
and define the nondecreasing envelope of $f^j$
\begin{equation}\label{eq::e*2}
f^{j,+}(p):=\left\{\begin{array}{ll}
f^j(p)&\quad \mbox{for}\quad p\in [a^j,b^j]\\
f^j(b^j) &\quad \mbox{for}\quad p\in [b^j,c^j]\\
\end{array}\right.
\end{equation}
and its nonincreasing envelope
\begin{equation}\label{eq::e*3}
f^{j,-}(p):=\left\{\begin{array}{ll}
f^j(b^j)&\quad \mbox{for}\quad p\in [a^j,b^j]\\
f^j(p) &\quad \mbox{for}\quad p\in [b^j,c^j].\\
\end{array}\right.
\end{equation}
Throughout the paper, the set $I^1$ (respectively $I^2$) denotes the time {sets} on which the branch 1 (resp. the branch 2) is active in the mesoscopic model. The sets $I^1$ and $I^2$ form a partition of $\R$, each $I^k$, $k=1,2$, being periodic and of period $1$ and locally the union of a finite number of intervals: 
\be\label{hypIj}
\begin{array}{c}
I^1\cup I^2 = \R, \; I^1\cap I^2 =\emptyset, \\
 \text{$I^j$ is periodic of period $1$ and consists locally in a finite number of intervals, $j=1,2$.}
\end{array}
\ee
The flux limiter in the mesoscopic model is a time dependent map $A:\R\to \R$, such that
\be\label{hypfluxlimiterA}
\begin{array}{c}
\text{$A:\R\to \R$ is piecewise constant, periodic of period $1$ and such that}\\
0\leq A(t)\leq \left\{\begin{array}{ll}
\min\{f^0_{\max}, f^1_{\max}\} & {\rm on}\; I^1,\\
\min\{f^0_{\max}, f^2_{\max}\}  & {\rm on}\; I^2.
\end{array}\right.
\end{array}
\ee

\subsection{Entropy pairs and germs}\label{subsec.germ}

We now introduce the notion of germs, following \cite{AKR11,MFR22}. Germs define the junction conditions and play a central role in this paper. Let us recall that the pair (entropy, entropy flux) is given, for $p,\bar p \in \R$, by
$$
\eta(\bar p,p)=|p-\bar p| , \qquad q^j(\bar p,p) = \mbox{sign}(p-\bar p)(f^j(p)-f^j(\bar p)).
$$

We define the box
\begin{equation}\label{eq::e**1}
Q:=[a^0,c^0]\times [a^1,c^1]\times [a^2,c^2]
\end{equation}
and the subset of $Q$ satisfying Rankine-Hugoniot condition
\begin{equation}\label{eq::e**2}
Q^{RH}:=\left\{P=(p^0,p^1,p^2)\in Q,\quad f^0(p^0)=f^1(p^1)+f^2(p^2)\right\}
\end{equation}
\begin{Definition}\label{defi:1}{\bf (dissipation, germ, maximality)}\\
\noindent {\bf i) (Dissipation)}\\
For $P=(p^0,p^1,p^2)$, $\bar P=(\bar p^0,\bar p^1,\bar p^2) \in Q$, we define the dissipation by
$$D(\bar P,P):= q^0(\bar p^0,p^0)-\left\{q^1(\bar p^1,p^1)+q^2(\bar p^2,p^2)\right\}=\mbox{IN}-\mbox{OUT}$$
\noindent {\bf ii) (Germ)}\\
Consider a set $G\subset Q$. We say that $G$ is a germ (for dissipation $D$) if
$$\left\{\begin{array}{ll}
G\subset Q^{RH}&\quad \mbox{\bf (Rankine-Hugoniot)}\\
D(\bar P,P)\ge 0\quad \mbox{for all}\quad \bar P,P\in G &\quad \mbox{\bf (dissipation)}\\
\end{array}\right.$$
\noindent {\bf iii) (Maximal set)}\\
Let $G\subset Q$ be a set. We say that $G$ is maximal (for the dissipation $D$ relatively to the box $Q$) if for every $P\in Q$, we have
$$\left(D(\bar P,P)\ge 0\quad \mbox{for all}\quad \bar P\in G\right)\quad \Longrightarrow\quad P\in G.$$
\end{Definition}

\subsection{The mesoscopic problem}\label{sec-meso}

We are interested in a problem with one incoming branch and two outgoing ones; a periodic traffic light regulates the traffic, dispatching the vehicles in one of the two exit branches, slowing down the traffic or stopping it at the junction. On the time-intervals $I^1$, cars coming from road $0$ can enter road $1$ only, while on the time-intervals $I^2$ cars coming from road $0$ can enter road $2$ only. The traffic can also be limited on the junction by the flux limiter $A$, which is time dependent, {but piecewise constant.} For instance,   time intervals on which $A(t)=0$ correspond to periods where the traffic light stops completely the traffic at the junction.  
$$\left\{\begin{array}{l}
\left.\begin{array}{l}
\mbox{traffic from branch $0$ to branch $1$ with limiter $A(t)$,}\\
\mbox{no traffic entering in branch $2$}\\
\end{array}\right\} \quad \mbox{on the time-interval $I^1$,}\\
\\
\left.\begin{array}{l}
\mbox{traffic from branch $0$ to branch $2$  with limiter $A(t)$,}\\
\mbox{no traffic entering in branch $1$}\\
\end{array}\right\} \quad \mbox{on the time-intervals $I^2$.}
\end{array}\right.$$

\begin{figure}[!ht]
\begin{center}
\includegraphics[width=0.7\textwidth]{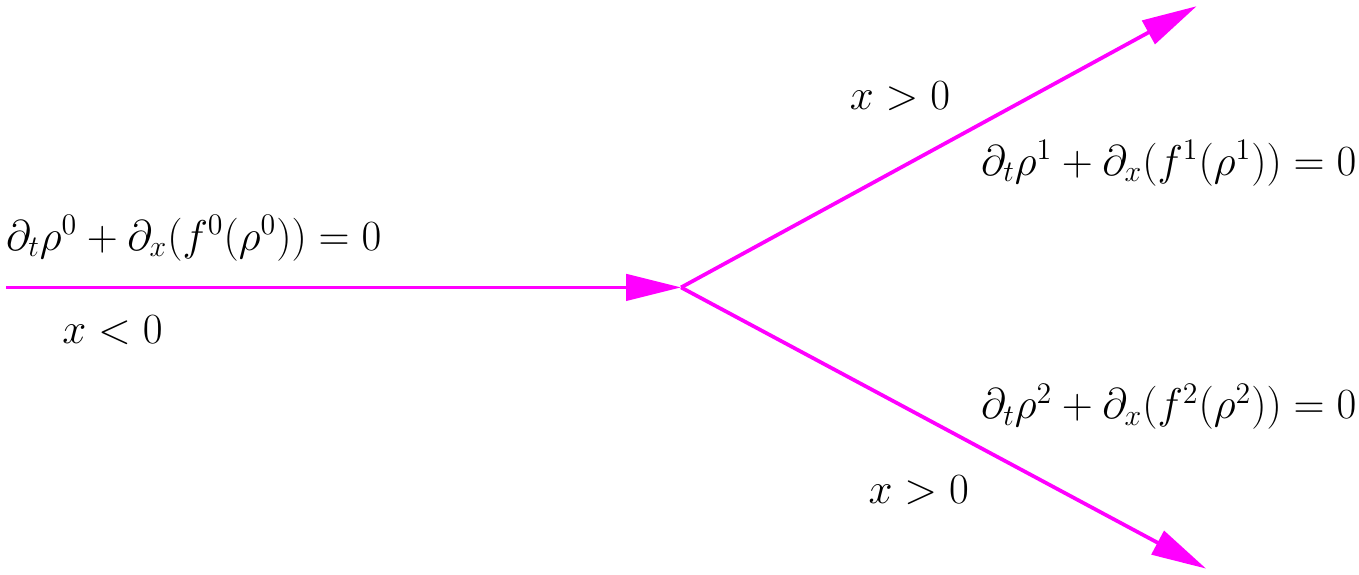}
\caption{Divergent 1:2 junction}
\label{Fdiv}
\end{center}
\end{figure}

Let $\rho^{j}$ ($j=0,1,2$) be the density of vehicles. Then $\rho=(\rho^0,\rho^1,\rho^2)$ solves 
\be\label{eq.meso}
\begin{array}{lllll}
(i)&\rho^j\in [a^j,c^j] &\qquad \text{ a.e. on}\; &(0,\infty)\times \mathcal R^j, & j=0,1,2\\
(ii)& \partial_t \rho^j +\partial_x (f^j(\rho^j))= 0 &\qquad \text{ on}\; &(0,\infty)\times \mathcal R^j, & j=0,1,2 \\
(iii) & (\rho^0(t,0^-), \rho^1(t,0^+), \rho^2(t,0^+))\in \mathcal G(t) &\qquad \text{ for a.e.}\;  &t\in (0,\infty),&
\end{array}
\ee
where the time dependent germ  $t {\mapsto}\mathcal G(t)$ is the piecewise constant  set-valued map given by
\be\label{defGt}
\mathcal G(t) = \mathcal G_{\Lambda_k} (t) \qquad \text{on}\; I^k, \; k=1,2,
\ee
and
\be\label{germGL1bis}
{\mathcal G}_{\Lambda_1}(t)=\left\{P=(p^0,p^1,p^2)\in Q,\quad \left|\begin{array}{l}
f^2(p^2)=0\\
\min(A(t), f^{0,+}( p^0),f^{1,-}(p^1))=f^0(p^0)=f^1(p^1)
\end{array}\right.\right\}, 
\ee
\be\label{germGL2bis}
{\mathcal G}_{\Lambda_2}(t)=\left\{P=(p^0,p^1,p^2)\in Q,\quad \left|\begin{array}{l}
f^1(p^1)=0\\
\min(A(t), f^{0,+}(p^0),f^{2,-}(p^2))=f^0(p^0)=f^2(p^2)
\end{array}\right.\right\}.
\ee
Recall that the assumption on the time intervals $I^k$, $k=1,2$, and the flux limiter $A$ are given in \eqref{hypIj} and \eqref{hypfluxlimiterA} respectively. 
The  notation $\mathcal G_{\Lambda_k}$ is justified in Section \ref{sec:germ} below, where we also explain that the ${\mathcal G}_{\Lambda_k}(t)$ are maximal germs  for each $t\in \R$ (Lemma \ref{lem::c15}).  The germ{s} ${\mathcal G}_{\Lambda_1}(t)$ and ${\mathcal G}_{\Lambda_2}(t)$ are very natural from a traffic flow point of view. Indeed,  during the {time-interval $I^1$ (for instance)}, the flux on the road $2$ is null and we consider only a 1:1 junction between the incoming road $0$ and the outgoing road $1$. In this situation, the description of the germ is well understood and take the above form (see \cite{CFarma} for the derivation of the junction condition in terms of Hamilton-Jacobi equations and \cite{CFGM} for a reformulation in term of scalar conservation laws). Let us recall that, for any $j=0,1,2$, the $L^\infty$ map $\rho^j$, being a solution to the scalar conservation law $\partial_t \rho^{j} +\partial_x (f^{j}(\rho^{j}))= 0$, has a strong trace (see Theorem \ref{th::r7homo}) at $x=0$ in the sense of Panov \cite{Ps07},  because the fluxes are strongly concave in the sense of (\ref{hypflux}).
\medskip

We say that a function $v$ is a standard Krushkov entropy solution of $\partial_t v+\partial_x(f(v))=0$ on $(0,+\infty)_t\times (0,+\infty)_x$ with initial condition $\bar v$, if for every $C^1_c([0,+\infty)_t\times (0,+\infty)_x)$ function $\varphi\ge 0$, we have
$$\int_{(0,+\infty)_t} \ \int_{(0,+\infty)_x} \ |v-c| \varphi_t + \left\{\mbox{sign}(v-c)\right\}\cdot (f(v)-f(c)) \varphi_x + \int_{\left\{0\right\}\times (0,+\infty)_x} |\bar v-c|\varphi \ge 0\quad \mbox{for all}\quad c\in \R$$

The next lemma states that equation \eqref{eq.meso} is well-posed and defines  a semigroup of contraction in $L^1$. 

\begin{Lemma}{\bf (Existence, uniqueness, $L^1$-contraction on the junction)}\label{lem.existeqmeso}\\ 
Assume \eqref{hypflux}, \eqref{hypIj} and \eqref{hypfluxlimiterA}. Given an initial condition $\bar \rho= (\bar \rho^{j})_{j=0, 1,2}$ in $L^\infty(\mathcal R)$ with $\bar \rho^{j} \in [a^{j}, c^{j}]$ a.e., there exists a unique entropy solution to \eqref{eq.meso}, in the sense that $\rho^{j}$ is a standard Krushkov entropy solution of $\partial_t \rho^{j} +\partial_x (f^{j}(\rho^{j}))= 0 $ on $(0,\infty)\times \mathcal R^{j}$ with $\rho^{j}(0,\cdot)=\bar \rho^{j}$ a.e.,
and such that the traces $(\rho^0(t,0^-), \rho^1(t,0^+), \rho^2(t,0^+))$ belong to the set $\mathcal G(t)$ for a.e. $t\in (0,\infty)$. 

In addition, if $\rho$ is a solution  to \eqref{eq.meso} associated with the initial condition $\bar \rho$ and $\rho_1$ is a solution to \eqref{eq.meso} associated with the initial condition $\bar \rho_1$, then Kato's inequality holds: 
\be\label{Kato}
\sum_{j=0}^2 \int_0^\infty \int_{\mathcal R^{j}} |\rho^{j}- \rho^{j}_1|\phi^{j}_t+ \left\{\mbox{sign}(\rho^{j}- \rho^{j}_1)\right\}\cdot (f^{j}(\rho^{j})- f^{j}(\rho_1^{j})) \partial_x\phi^{j} + \sum_{j=0}^2\int_{\mathcal R^{j}} |\bar \rho^{j}- \bar \rho^{j}_1| \phi^{j}(0,x) \geq 0
\ee
for any continuous nonnegative test function $\phi:[0,\infty)\times \mathcal R\to [0,\infty)$ with a compact support and such that $\phi^j:=\phi_{|[0,+\infty)\times (\mathcal R^j\cup \left\{0\right\})}$ is $C^1$ for any $j=0,1,2$.
\end{Lemma}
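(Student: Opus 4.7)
The plan is to exploit two structural facts: the time-dependent germ $\mathcal{G}(t)$ is piecewise constant in time and, by the maximality statement announced just after its definition (Lemma \ref{lem::c15}), maximal for every $t$; and on each time-interval $I^k$ the junction condition decouples into a $1{:}1$ junction with flux limiter together with a scalar conservation law on a half-line carrying a zero-flux boundary condition. Indeed, on $I^1$ the germ $\mathcal{G}_{\Lambda_1}(t)$ forces $f^2(p^2)=0$, so $\rho^2$ evolves independently as a Kruzhkov solution on $(0,\infty)$ with zero flux at $x=0$, while $(\rho^0,\rho^1)$ form a classical $1{:}1$ junction with piecewise constant flux limiter $A(t)$; symmetrically on $I^2$.

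\emph{Construction.} I would partition $[0,\infty)$ into the locally finitely many maximal intervals $[t_n,t_{n+1})$ on which both the flux limiter $A$ and the index $k$ with $t\in I^k$ are constant. On each such slab, one invokes the well-posedness of $1{:}1$ junctions with constant flux limiter, recalled in \cite{AKR11} and reformulated in \cite{CFGM}, to produce $(\rho^0,\rho^k)$; the inactive branch $\rho^{k'}$, $k'\neq k$, is obtained by solving the Kruzhkov problem on the half-line with zero-flux boundary. One then concatenates in time, using the terminal values of each slab as initial data for the next. Thanks to the strong concavity assumption \eqref{hypflux}, Panov's strong trace theorem (Theorem \ref{th::r7homo}) provides traces at $x=0$ on each branch, and by construction these traces lie in $\mathcal{G}(t)$ for a.e.\ $t$.

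\emph{Uniqueness and $L^1$-contraction.} Given two solutions $\rho,\rho_1$ with initial data $\bar\rho,\bar\rho_1$, one applies Kruzhkov's doubling of variables on each branch $\mathcal R^j$. The interior integrals assemble the left-hand side of \eqref{Kato}, while the trace computation at $x=0$, legitimate thanks to Panov's strong traces, produces a boundary contribution equal to $\int_0^\infty D(P_1(t),P(t))\,\phi(t,0)\,dt$ with $P(t):=(\rho^0(t,0^-),\rho^1(t,0^+),\rho^2(t,0^+))$ and $P_1(t)$ defined analogously. Since $P(t),P_1(t)\in\mathcal{G}(t)$ and the latter is a germ, Definition \ref{defi:1}(ii) yields $D(P_1(t),P(t))\ge 0$, so the boundary term carries a favorable sign and \eqref{Kato} follows. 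Uniqueness is the specialization $\bar\rho=\bar\rho_1$.

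\emph{Main difficulty.} The principal technical point is rigorously identifying the boundary integral at $x=0$ with the dissipation $D$; this is now a standard manipulation in the germ framework of \cite{AKR11,MFR22} once Panov's strong traces are in hand, but it must be combined with the piecewise constant time-dependence of $\mathcal{G}$. The latter is not a serious obstacle, because the switching times form a locally finite set and the dissipation inequality is invoked pointwise in $t$. A secondary, more conceptual point is checking that the zero-flux boundary problem used for the inactive branch is the correct pathwise translation of the constraint $f^{k'}(p^{k'})=0$ encoded in \eqref{germGL1bis}--\eqref{germGL2bis}; this is the content of the $1{:}1$ junction reformulation recalled at the end of Subsection \ref{sec-meso}.
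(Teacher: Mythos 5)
Your proposal is correct and follows essentially the same strategy as the paper: time-slicing along the switching times of $A$ and of the partition $I^1,I^2$, reduction on each slab to a 1:1 junction for the active pair plus a decoupled zero-flux half-line problem for the inactive branch (which the paper realizes as a degenerate 1:1 junction with a dummy road of zero flux), concatenation in time, and Kato's inequality obtained from the dissipation property of the maximal germs $\mathcal G_{\Lambda_k}(t)$ applied pointwise in $t$ via \cite{AKR11}. The only notable difference is that the paper sources the per-slab existence from the Hamilton--Jacobi formulation of \cite{IM17} and transfers it to the conservation law through the equivalence of Lemma \ref{lem::r3}, rather than invoking the 1:1 conservation-law well-posedness directly.
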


The proof of Lemma \ref{lem.existeqmeso} is postponed to Subsection  \ref{proofs1:2}. Let us underline that equation \eqref{eq.meso} almost fits the usual existence and uniqueness framework of conservation laws on a junction, as discussed in \cite{AKR11}, as only one outgoing branch is active at any time. 

\subsection{The macroscopic problem} 

We expect the limit problem to be of the same form as the mesoscopic problem, but with an autonomous germ $\mathcal G$. The limit scalar conservation law should take the form:  
\be\label{eq.macro}
\begin{array}{lllll}
(i) & \rho^j\in [a^j,c^j]&\qquad \text{ a.e. on}\; &(0,\infty)\times \mathcal R^j, &\; j=0,1,2,\\
(ii) & \partial_t \rho^j +(f^j(\rho^j))_x= 0 &\qquad \text{ on}\; &(0,\infty)\times \mathcal R^j, &\; j=0,1,2,\\
(iii) & (\rho^0(t,0), \rho^1(t,0), \rho^2(t,0))\in \mathcal G& \qquad \text{ for a.e.}\;  &t\in (0,\infty),&
\end{array}
\ee

Here the set $\mathcal G$ is the limit germ and is the main unknown of our problem. We now define the notion of solution for equation \eqref{eq.macro}, following \cite{FMR22b,MFR22}.  

\begin{Definition}{\bf (Entropy solution of (\ref{eq.macro}))}\label{def.solwithgerm}\\ 
Given a maximal germ $\mathcal G\subset Q$ and an initial condition $\bar \rho\in L^\infty(\mathcal R)$ such that $\bar \rho^j\in [a^j, c^j]$ a.e. for $j=0,1,2$, we say that a map  $\rho\in L^\infty((0,\infty)\times \mathcal R)$ is an entropy solution of \eqref{eq.macro} if, for any $j=0,1,2$, $\rho^j$ is a Kruzkhov entropy solution of \eqref{eq.macro}-(ii) on $\mathcal R^j$, if {its trace at $t=0$ is $\bar \rho$} and if, its trace $\rho(\cdot,0)=(\rho^0(\cdot,0^-),\rho^1(\cdot,0^+),\rho^2(\cdot,0^+))$ at $x=0$ belongs to $\mathcal G$: 
$$
\rho(t,0)\in \mathcal G\qquad a.e. \; t\geq0.
$$
\end{Definition} 

Following \cite{FMR22b,MFR22}, and because the germ $\mathcal G$ is maximal, the last condition in Definition \ref{def.solwithgerm} is equivalent to the following entropy inequality: 
$$
\sum_{j=0}^2\left\{ \int_0^\infty \int_{\mathcal R^j} \eta(u^j-\rho^j)\partial_t \phi^j+ q^j(u^j,\rho^j) \partial_x\phi^j +\int_{\mathcal R^j} \eta(u^j,\bar \rho^j)\phi^j(0,x)\right\} \geq 0
$$
for any $u=(u^j) \in \mathcal G$ and any continuous nonnegative test function $\phi:[0,\infty)\times \mathcal R\to [0,\infty)$ with a compact support and such that $\phi^j:=\phi_{|[0,+\infty)\times (\mathcal R^j\cup \left\{0\right\})}$ is $C^1$ for any $j=0,1,2$. 

Let us also point out that the  entropy solution  $\rho$ of \eqref{eq.macro} is in $C^0([0,+\infty), L^1_{loc}(\mathcal R))$: this is an easy consequence of the classical continuity in $L^1_{loc}$ of bounded entropy solution of scalar conservation laws on the line (see \cite[Theorem 6.2.2, {Lemma 6.3.3}]{Da05}) and of finite speed of propagation arguments. 

\subsection{Main result: the homogenization}

We are interested in the homogenization of \eqref{eq.meso}. Namely, given an initial condition $\bar \rho_0$, we want to understand the behavior as $\ep\to0$ of  the solution $\rho^\ep= (\rho^{\ep,0},\rho^{\ep,1},\rho^{\ep,2})$ to
\be\label{eq.mesoep}
\begin{array}{lllll}
(i) & \rho^{\varepsilon,j}\in [a^j,c^j] &\qquad \text{ a.e. on }\; &(0,\infty)\times \mathcal R^j, &\; j=0,1,2\\
(ii) & \partial_t \rho^{\ep,j} +\partial_x(f^j(\rho^{\ep,j}))= 0 &\qquad \text{ on}\; &(0,\infty)\times \mathcal R^j, &\; j=0,1,2\\
(iii)& (\rho^{\ep,0}(t,0), \rho^{\ep,1}(t,0), \rho^{\ep,2}(t,0))\in \mathcal G(t/\ep)&\qquad \text{ for a.e.}\;  &t\in (0,\infty),&\\
(iv)& \rho^\ep(0,\cdot)= \bar \rho_0 &\qquad \text{ on}\;  &\left\{0\right\}\times \mathcal R,&\\
\end{array}
\ee

{ Our main homogenization result is the following:
\begin{Theorem}{\bf (Homogenization of the 1:2 junction)}\label{thm.main}\\ 
Assume that \eqref{hypflux}, \eqref{hypIj} and \eqref{hypfluxlimiterA} hold.
Then there exists a maximal germ $\mathcal G_{\bar{\Lambda}}\subset Q$, such that the following holds true.
Let the initial data $\bar \rho_0=(\bar \rho_0^i)\in L^\infty(\mathcal R)$ be such that $\bar \rho^i_0\in [a^i,c^i]$ a.e. for $i=0,1,2$. 
Then the solution $\rho^\ep$ of (\ref{eq.mesoep}) converges  in $L^1_{loc}([0,\infty)\times \mathcal R)$ to the unique entropy solution $\rho$  to 
\be\label{eq.homo}
\begin{array}{lllll}
(i) & \rho^j \in [a^j,c^j]  &\qquad \text{ a.e. on}\; &(0,\infty)\times \mathcal R^j,& \; j=0,1,2 \\
(ii)& \partial_t \rho^j +\partial_x(f^j(\rho^j))= 0 &\qquad \text{ on}\; &(0,\infty)\times \mathcal R^j, & \; j=0,1,2\\
(iii)& (\rho^0(t,0), \rho^1(t,0), \rho^2(t,0))\in \mathcal G_{\bar{\Lambda}} &\qquad \text{ for a.e.}\;  &t\in (0,\infty),&\\
(iv)& \rho(0,\cdot)= \bar \rho_0&\qquad \text{ on}\;  &\left\{0\right\}\times \mathcal R,&\\
\end{array}
\ee
\end{Theorem}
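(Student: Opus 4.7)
The plan is to combine three ingredients developed earlier in the paper: (i) the explicit identification of the homogenized maximal germ $\mathcal{G}_{\bar\Lambda}$ together with a characteristic subgerm that determines it (Theorem \ref{th:1} and Lemma \ref{lem.reduction}); (ii) the family of correctors associated with each element of this characteristic subgerm (Theorem \ref{thm.corrector}), constructed via Lax--Oleinik-type formulas at the level of the underlying Hamilton--Jacobi equation; and (iii) Kato's inequality on the junction (Lemma \ref{lem.existeqmeso}). The scheme is the analogue, for scalar conservation laws, of the perturbed test function method for HJ homogenization, but the test ``function'' is now a full rescaled corrector used inside the entropy--entropy-flux formulation.

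\textbf{Step 1 (Compactness).} The uniform $L^\infty$ bound on $\rho^\ep$ follows from \eqref{eq.mesoep}(i). Since each $f^j$ is strongly concave by \eqref{hypflux}, a standard compensated compactness argument applied on each branch $\mathcal R^j$ gives strong $L^1_{\mathrm{loc}}$-compactness of $\rho^{\ep,j}$ in the interior of $(0,\infty)\times \mathcal R^j$. The $L^1$-contraction of Lemma \ref{lem.existeqmeso} delivers uniform continuity in $t$ with values in $L^1_{\mathrm{loc}}(\mathcal R)$. Extract a (sub)sequence $\rho^\ep \to \rho$ in $L^1_{\mathrm{loc}}([0,\infty)\times \mathcal R)$. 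On each branch, the limit $\rho^j$ is a Kruzkhov entropy solution of \eqref{eq.homo}(ii) with initial datum $\bar\rho_0^j$, and admits a strong trace at $x=0$ by Panov's theorem (Theorem \ref{th::r7homo}).

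\textbf{Step 2 (Rescaled correctors and Kato's inequality).} Fix $\bar P=(\bar p^0,\bar p^1,\bar p^2)$ in the characteristic subgerm of $\mathcal{G}_{\bar\Lambda}$, and let $w_{\bar P}$ be the associated corrector from Theorem \ref{thm.corrector}. Define
\begin{equation*}
V^\ep_{\bar P}(t,x) := \partial_x\bigl(\ep\, w_{\bar P}(t/\ep, x/\ep)\bigr).
\end{equation*}
By construction, $V^\ep_{\bar P}$ is an exact entropy solution of the mesoscopic equation on each branch, its trace at $x=0$ belongs to $\mathcal G(t/\ep)$, and on branch $j$ one has $V^\ep_{\bar P} \to \bar p^j$ in a suitable averaged sense as $|x|/\ep \to \infty$. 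Applying Kato's inequality \eqref{Kato} to the pair $(\rho^\ep, V^\ep_{\bar P})$ is legitimate because both solutions' traces lie in the same time-dependent germ $\mathcal G(t/\ep)$, so by the dissipation property the boundary dissipation term in \eqref{Kato} is nonnegative and can be discarded, leaving only interior and initial terms.

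\textbf{Step 3 (Passage to the limit and identification of the trace).} Choosing test functions $\phi$ supported near a point $(t_0,0)$ with $t_0>0$ and letting $\ep\to 0$, the sublinearity of $w_{\bar P}$ and the prescribed slopes $\bar p^j$ at infinity allow one to replace $V^\ep_{\bar P}$ by the constant $\bar p^j$ on each branch in the interior entropy terms. One obtains in the limit the entropy-entropy-flux inequality
\begin{equation*}
\sum_{j=0}^2\!\int_0^\infty\!\!\int_{\mathcal R^j}\!\!\bigl(\eta(\bar p^j,\rho^j)\partial_t\phi^j + q^j(\bar p^j,\rho^j)\partial_x\phi^j\bigr) + \sum_{j=0}^2\!\int_{\mathcal R^j}\!\!\eta(\bar p^j,\bar\rho_0^j)\phi^j(0,\cdot) \ \geq\ 0.
\end{equation*}
Localizing near the junction via the strong trace of $\rho$ at $x=0$ yields the dissipation inequality $D(\bar P, \rho(t,0))\geq 0$ for a.e.\ $t>0$. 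Letting $\bar P$ range over the characteristic subgerm and invoking Lemma \ref{lem.reduction} together with the maximality of $\mathcal G_{\bar\Lambda}$ forces $(\rho^0(t,0^-),\rho^1(t,0^+),\rho^2(t,0^+)) \in \mathcal G_{\bar\Lambda}$ a.e. Thus $\rho$ is an entropy solution of \eqref{eq.homo} in the sense of Definition \ref{def.solwithgerm}. Uniqueness of such an entropy solution (\cite{AKR11,MFR22}) then promotes the subsequential convergence to convergence of the whole family $\rho^\ep$.

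\textbf{Main obstacle.} The delicate point is the passage to the limit in Kato's inequality near the junction: both $\rho^\ep$ and $V^\ep_{\bar P}$ have traces at $x=0$ that oscillate on scale $\ep$ inside the moving germ $\mathcal G(t/\ep)$, and the corrector $w_{\bar P}$ is only sublinear at infinity on each branch. The fact that correctors are not available for \emph{every} element of the maximal germ is precisely why the characteristic subgerm reduction is indispensable: one builds correctors only for the characteristic states, and the germ machinery (dissipation + maximality) propagates the entropy inequalities to the whole $\mathcal G_{\bar\Lambda}$.
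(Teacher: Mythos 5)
Your proposal is correct and follows essentially the same route as the paper: interior compactness on each branch, Kato's inequality applied to the pair $(\rho^\ep, u^\ep_p)$ where $u^\ep_p$ is the rescaled corrector from Theorem \ref{thm.corrector}, passage to the limit using the decay of the corrector to the constant state $p$, identification of the trace through the dissipation inequality combined with Lemma \ref{lem.reduction} and the maximality of $\mathcal G_{\bar\Lambda}$, and finally uniqueness of the limit problem to upgrade subsequential convergence. The only (immaterial) difference is that you invoke compensated compactness for the strong $L^1_{loc}$ compactness, whereas the paper uses a local BV bound coming from the Oleinik one-sided Lipschitz estimate (Proposition \ref{prop.intregu}).
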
}

Let us point out that Theorem \ref{thm.main} itself implies the existence of a solution to \eqref{eq.homo}, which is not obvious otherwise. This shows in particular that the germ $\mathcal G_{\bar \Lambda}$ is complete in the terminology of \cite{AKR11,MFR22}. The germ $\mathcal G_{\bar \Lambda}$ is described in Subsection \ref{subsec.germmacro}.

In order to prove the theorem, we need to build suitable correctors of the equation, associated to elements of the germ. For this, the point is that we will not have to do it for all elements of the germ $\mathcal G_{\bar \Lambda}$, but only for a subset of it (which will indeed determine the whole germ $\mathcal G_{\bar \Lambda}$, as we will see later on). This subset, denoted by $E_{\bar \Lambda}$, is called a characteristic subgerm and is given in the following expression (where the continuous, nondecreasing maps $p^0\to \hat p^j_{p^0}$ for $j=1,2$ are introduced in \eqref{defpkp0BIS}):
\be\label{defG0}
\begin{array}{ll}
E_{\bar \Lambda}:=  & \Bigl\{(p^0,p^1,p^2)\in Q^{RH} \; \text{such that one of the following conditions holds:}\\
& \quad  \text{(i)} \;  p^j=\hat p^j_{p^0} , \; j=1,2,\; f^0(p^0)=f^{0,+}(p^0)\leq \int_0^1 A(t)dt,\\
& \quad \text{(ii)}\; p^2=c^2, \; f^0(p^0)= f^{0,-}(p^0)= \int_0^1{\bf 1}_{I^1}(t)A(t)dt = f^1(p^1)=f^{1,+}(p^1),  \\
& \quad \text{(iii)}\; p^1=c^1, \; f^0(p^0)= f^{0,-}(p^0)= \int_0^1{\bf 1}_{I^2}(t)A(t)dt = f^2(p^2)=f^{2,+}(p^2),\\
&\quad  \text{(iv)}\; p^j=c^j, \;j=0,1,2\;  \Bigr\}.
\end{array}
\ee

Case $(i)$ corresponds to a situation in which the traffic is fluid on all branches at the macroscopic level, and fluid on the exit branches at the mesoscopic level. In case $(ii)$, the outgoing branch 2 is completely congested and the traffic is stopped on this branch. The traffic reduces to a classical {1:1} junction, the only difficulty being that the traffic is congested at the macroscopic level on the incoming branch and fluid (but saturated by the flux limiter $A$) on the {outgoing} {branch 1}. Case $(iii)$ is symmetric, exchanging the role of the outgoing roads. The last case, Case $(iv)$, is particularly simple since it corresponds to a situation in which the traffic is completely congested (and the velocity of the traffic is null everywhere).  

The following lemma states that the germ $\mathcal G_{\bar{\Lambda}}$ is a sort of closure of $ E_{\bar \Lambda}$: 

\begin{Lemma}{\bf ($E_{\bar \Lambda}$ generates $\mathcal G_{\bar \Lambda}$)}\label{lem.reduction}\\ 
Assume that \eqref{hypflux}, \eqref{hypIj} and \eqref{hypfluxlimiterA}  hold. We have $E_{\bar \Lambda}\subset \mathcal G_{\bar{\Lambda}}$ and $E_{\bar \Lambda}$ generates $\mathcal G_{\bar{\Lambda}}$: namely, for any $U\in Q$, 
$$
\Bigl( \; D(U,\bar U) \geq 0 \qquad \forall \bar U\in E_{\bar \Lambda}\;\Bigr) \qquad \Longrightarrow \qquad U\in \mathcal G_{\bar{\Lambda}} . 
$$
\end{Lemma}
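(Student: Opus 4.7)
My approach is to handle the two assertions separately. Setting $\bar A := \int_0^1 A(t)\,dt$ and $A^k := \int_0^1 {\bf 1}_{I^k}(t) A(t)\,dt$, so that $\bar A = A^1 + A^2$, I would first verify the inclusion $E_{\bar \Lambda} \subset \mathcal G_{\bar{\Lambda}}$ by invoking the explicit description of $\mathcal G_{\bar{\Lambda}}$ given in Subsection \ref{subsec.germmacro} and matching each of the four cases of (\ref{defG0}) with a regime of $\mathcal G_{\bar{\Lambda}}$: case (i) corresponds to the fully fluid regime with entering flux $\leq \bar A$ dispatched by $p^0 \mapsto (\hat p^1_{p^0}, \hat p^2_{p^0})$; cases (ii) and (iii) correspond to semi-congested regimes in which one exit branch is saturated at $c^j$ (carrying zero flux) while the incoming traffic is limited by $A^k$; and case (iv) corresponds to the fully congested regime. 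Rankine–Hugoniot is built into the definition of $E_{\bar \Lambda}$ in all four cases, and the remaining algebraic conditions on $f^{j,\pm}$ are exactly those defining $\mathcal G_{\bar{\Lambda}}$.

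For the generation step, fix $U = (u^0, u^1, u^2) \in Q$ with $D(\bar U, U) \geq 0$ for every $\bar U \in E_{\bar \Lambda}$, and aim to show $U \in \mathcal G_{\bar{\Lambda}}$. I would first extract Rankine–Hugoniot for $U$ by testing against the two extreme elements $(a^0, a^1, a^2)$ (case (i) at $p^0 = a^0$, where $\hat p^j_{a^0} = a^j$ and all fluxes vanish) and $(c^0, c^1, c^2)$ (case (iv)): using the identities $q^j(a^j, u^j) = f^j(u^j)$ and $q^j(c^j, u^j) = -f^j(u^j)$, valid for $u^j\in[a^j,c^j]$, the two resulting dissipation inequalities combine to give $f^0(u^0) = f^1(u^1) + f^2(u^2)$. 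I would then distinguish subcases according to whether each $u^j$ lies in the ascending half $[a^j, b^j]$ or the descending half $[b^j, c^j]$ of $f^j$. Testing $D(\bar U, U) \geq 0$ along the continuous family of case (i) elements indexed by $p^0 \in [a^0, b^0]$ with $f^0(p^0) \leq \bar A$, and against the isolated elements of cases (ii), (iii), (iv), yields constraints that, thanks to the monotonicity of $f^{j,\pm}$, the strong concavity of $f^j$, and the continuous monotone dependence of $\hat p^j_{p^0}$ on $p^0$ from (\ref{defpkp0BIS}), pin down $U$ to exactly one of the regimes defining $\mathcal G_{\bar{\Lambda}}$.

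The main obstacle is this last regime-matching: the finite-plus-one-parameter family of dissipation inequalities against $E_{\bar \Lambda}$ must capture the full continuum of constraints encoded by $\mathcal G_{\bar{\Lambda}}$. The key algebraic fact underlying this reduction is that along the case (i) curve, $p^0 \mapsto D((p^0, \hat p^1_{p^0}, \hat p^2_{p^0}), U)$ is, on each monotone half of $[a^0, b^0]$, piecewise-affine in the throughput $f^0(p^0)$, so that its sign is entirely determined by its values at $p^0 = a^0$, $p^0 = b^0$, and at the saturation threshold where $f^0(p^0) = \bar A$---and these are precisely the distinguished test points highlighted by $E_{\bar \Lambda}$. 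The semi-congested cases (ii) and (iii), together with (iv), refine the analysis on the descending halves $[b^j, c^j]$, on which $q^j(c^j, u^j) = -f^j(u^j)$ alone cannot distinguish the position of $u^j$; these extra test elements supply the missing inequalities that locate $u^j$ inside the correct regime of $\mathcal G_{\bar{\Lambda}}$.
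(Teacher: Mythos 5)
Your verification of $E_{\bar \Lambda}\subset \mathcal G_{\bar{\Lambda}}$ is fine and matches the paper. The generation step, however, has a genuine gap, and it also misses the route the paper actually takes. The paper's proof of Lemma \ref{lem.reduction} is two lines: one checks that $E_{\bar\Lambda}$ coincides exactly with the set $E^+_{\bar\Lambda}=\Gamma\cup\{P_1,P_2,P_3\}$ of \eqref{eq::e100} (case (i) of \eqref{defG0} is the curve $\Gamma$ of \eqref{eq::c5}, cases (ii)--(iv) are the points $P_1,P_2,P_3$ of \eqref{eq::c6}), and then invokes Theorem \ref{th:1}-(ii), whose proof (Lemma \ref{lem::r36}) is where the real work lives. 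If you do not use that theorem you must reproduce that work, and your sketch does not.

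Concretely, the ``key algebraic fact'' you rely on is false. Along the case-(i) curve $\bar P_\lambda=(u^0_+(\lambda),u^1_+(\hat\lambda^1(\lambda)),u^2_+(\hat\lambda^2(\lambda)))$ the dissipation reads
$$
D(\bar P_\lambda,U)=\mbox{sign}(u^0-u^0_+(\lambda))\,(f^0(u^0)-\lambda)-\sum_{k=1,2}\mbox{sign}\bigl(u^k-u^k_+(\hat\lambda^k(\lambda))\bigr)\,\bigl(f^k(u^k)-\hat\lambda^k(\lambda)\bigr),
$$
and $\hat\lambda^k$ is only a continuous nondecreasing function of $\lambda$ --- it can be strictly concave (Proposition \ref{lem::f4}) --- so $\lambda\mapsto D(\bar P_\lambda,U)$ is not piecewise affine in $\lambda=f^0(p^0)$. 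Even when $\hat\lambda^k$ happens to be piecewise affine, the kinks sit at the $U$-dependent values $\lambda=f^0(u^0)$ and $\hat\lambda^k(\lambda)=f^k(u^k)$, so the sign on $[0,\bar\lambda^0]$ is not determined by the three samples at $a^0$, $b^0$ and the saturation threshold. (That three-point reduction is also unnecessary: the hypothesis already gives $D\ge 0$ at every point of the curve.) What actually closes the argument in the paper is: (a) testing against $P_1$ and $P_2$ and running the sign/flux analysis of Lemma \ref{lem::22} to obtain $0\le f^k(u^k)\le\bar\lambda^k$; (b) testing against the single, adaptively chosen curve point with $\bar\lambda=\min(\bar\lambda^0,f^{0,+}(u^0))$ and invoking Lemma \ref{lem::22} again, in two subcases, to obtain $f^{k,+}(u^k)\ge\hat\lambda^k(f^{0,+}(u^0))$. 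None of this follows from monotonicity and concavity alone as your sketch suggests; you should either carry out that case analysis or simply cite Theorem \ref{th:1}-(ii) after the identification $E_{\bar\Lambda}=E^+_{\bar\Lambda}$.
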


The two main ingredients of the proof of Theorem \ref{thm.main} are the correct guess of the effective germ $\mathcal G_{\bar \Lambda}$ (with its generation property given in Lemma \ref{lem.reduction}) and  the construction of a corrector for each element {of} $ E_{\bar \Lambda}$:

\begin{Theorem}{\bf (Existence of correctors with prescribed values at infinity)}\label{thm.corrector}\\ 
Assume that \eqref{hypflux}, \eqref{hypIj} and \eqref{hypfluxlimiterA}  hold. For any $p=(p^0,p^1,p^2)\in E_{\bar \Lambda}$, there exists an entropy solution $u_p=(u_p^i)\in L^\infty(\R\times \mathcal R)$ of \eqref{eq.meso} which is $1$-periodic in time and a constant $C>0$ such that for all $M\geq C$
\be\label{keypptcorr}
 \|u^0_p-p^0\|_{L^\infty(\R\times (-\infty,-M))}+ \|u^i_p-p^i\|_{L^\infty(\R\times (M,\infty))}\leq CM^{-1},\qquad  i=1,2. 
\ee
If, in addition,  $p$ is as in (i) in the definition \eqref{defG0} of  $E_{\bar \Lambda}$, then 
$$
u^0_p= p^0 \; \text{on} \; \R\times (-\infty,-C).
$$
\end{Theorem}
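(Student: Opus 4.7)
The plan is to construct the corrector $u_p$ case by case in the four cases defining $E_{\bar\Lambda}$, working primarily at the Hamilton--Jacobi level. Thanks to the equivalence between germ formulations of 1:1 junctions and HJ equations with flux limiters \cite{CFGM,IM17}, entropy correctors can be obtained by differentiating explicit time-periodic HJ primitives $U_p^j$ built via Lax--Oleinik type representation formulas; strong concavity of the fluxes in \eqref{hypflux} ensures that such representations are available and yield solutions with the required regularity. Case (iv) is immediate, with $u_p \equiv (c^0,c^1,c^2)$ a stationary solution since all fluxes vanish.

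Cases (ii) and (iii) are symmetric; focus on (ii). Setting $u_p^2 \equiv c^2$ freezes branch $2$, so the junction condition reduces to a true 1:1 problem between branches $0$ and $1$ with effective time-periodic flux limiter $\tilde A(t) := \mathbf{1}_{I^1}(t) A(t)$ (during $I^2$ the flow toward branch $2$ is blocked and the effective limiter vanishes). The prescribed values $(p^0,p^1)$ in case (ii) correspond to a congested incoming regime and a fluid outgoing one, both carrying the common mean flux $\int_0^1 \tilde A(t)\,dt$. On this reduced 1:1 junction, the Lax--Oleinik formula of \cite{CFarma,IM17} yields a $1$-periodic HJ solution whose slopes at infinity are $(p^0,p^1)$; the boundary layer near $x=0$ has bounded width because average inflow and outflow are balanced, and a standard argument gives the $O(1/|x|)$ decay required in \eqref{keypptcorr}.

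The main obstacle is Case (i), the fluid regime, where we must enforce the strict equality $u_p^0 = p^0$ on $(-\infty,-C)$. The strategy is to fix $u_p^0 \equiv p^0$ on all of $\mathcal R^0$ beyond some $-C$, exploiting that characteristics of speed $(f^0)'(p^0) > 0$ carry the constant profile from $-\infty$ toward the junction without creating shocks from the left. Near the junction a periodic boundary layer forms: during $I^j$-times, cars are dispatched into branch $j$ with flux no larger than $A(t)$, and the condition $f^0(p^0) \leq \int_0^1 A(t)\,dt$ ensures on-average balance so that the layer stays confined to $(-C,0)$ for $C$ sufficiently large. On each outgoing branch one solves the initial-boundary value problem with the resulting time-periodic boundary flux; the strong concavity of $f^j$ implies that this solution is $1$-periodic in time and that its density converges to the fluid value $\hat p^j_{p^0}$ at rate $O(1/x)$, through the rarefaction smoothing of boundary oscillations typical of concave scalar conservation laws. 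Finally, one verifies the Krushkov entropy inequality on each branch, the trace condition $u_p(t,0) \in \mathcal G(t)$ for a.e.~$t$ using the explicit descriptions \eqref{germGL1bis}--\eqref{germGL2bis}, and the decay estimate \eqref{keypptcorr}; the consistency of the junction condition in Case (i) ties together the dispatching rule, the flux limiter, and the three asymptotic slopes, and is the subtlest step.
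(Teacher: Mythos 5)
Your proposal is correct and follows essentially the same route as the paper: a case-by-case construction at the Hamilton--Jacobi level via Lax--Oleinik-type representation formulas, with case (iv) trivial, cases (ii)--(iii) reduced to a 1:1 junction by freezing $u^2\equiv c^2$ and replacing $A$ by $A\,{\bf 1}_{I^1}$ (Proposition \ref{prop.correctorcongested}), and case (i) handled by first solving the incoming road independently so that the boundary layer is confined to a bounded neighborhood of the junction (whence $u^0_p=p^0$ exactly for $x\le -C$), then feeding the resulting periodic junction flux into the exit branches (Lemmas \ref{lem.w0}, \ref{lem.defwBIS}, \ref{lem.viscjunc} and Proposition \ref{prop.correctorfluid}). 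The only imprecision is the phrase ``bounded width'' for the boundary layer in case (ii): on the congested incoming branch the corrector need not have compact support (cf.\ Remark \ref{rem::b205}), but your subsequent claim of $O(1/|x|)$ decay is the correct one and is what \eqref{keypptcorr} requires.
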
 

The definition of the germ $\mathcal G_{\bar \lambda}$, the proof of its maximality as well as the proof of Lemma \ref{lem.reduction} are given in Subsection \ref{subsec.germmacro}. 
The proofs of Theorem \ref{thm.main} (convergence part) and Theorem \ref{thm.corrector} are postponed to the last section (Subsection \ref{proofs1:2}).

\subsection{Homogenization for 2:1 junctions} \label{subsec.2:1}

We complete the section by the analysis of homogenization on 2:1 junctions: as already pointed out, this case is more realistic in terms of applications. The junction is now described by the two incoming branches $\check{\mathcal R}^j=(-\infty,0)\times \{j\}$, $j=1,2$, and the outgoing branch $\check{\mathcal R}^0=(0,\infty)\times \{0\}$. We set $\check{\mathcal R}= \bigcup_{j=0}^2 \check{\mathcal R}^j\cup\{0\}$. 

The mesoscopic model we are interested in concerns a junction with a periodic traffic light which regulates the traffic. As before the time-interval $\R$ is split into the $1-$periodic sets $I^1$ and $I^2$, each $I^k$ consisting locally in a finite number of intervals. On the time-intervals $I^1$, only cars coming from road $1$ are allowed to enter the junction and the road $0$, while on the time-intervals $I^2$ only cars coming from road $2$ can enter road $0$. The traffic is also  limited on the junction by a flux limiter $A=A(t)$. To summarize:  
$$\left\{\begin{array}{l}
\left.\begin{array}{l}
\mbox{traffic from branch $1$ to branch $0$ with flux limiter $A(t)$,}\\
\mbox{no traffic exiting  branch $2$}\\
\end{array}\right\} \quad \mbox{on the time intervals $I^1$,}\\
\\
\left.\begin{array}{l}
\mbox{traffic from branch $2$ to branch $0$ with flux limiter $A(t)$,}\\
\mbox{no traffic exiting  branch $1$}\\
\end{array}\right\} \quad \mbox{on the time intervals $I^2$}
\end{array}\right.$$
(see figure \ref{Fconv}).  
\begin{figure}[!ht]\label{fig2}
\begin{center}
\includegraphics[width=0.7\textwidth]{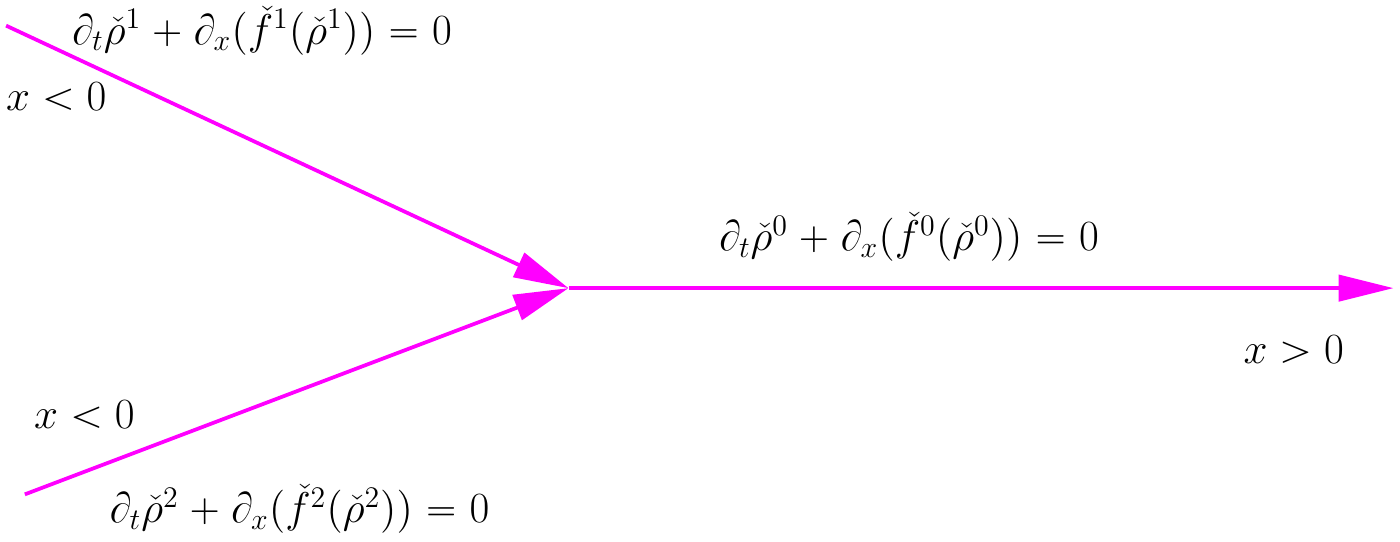}
\caption{Convergent 2:1 junction}
\label{Fconv}
\end{center}
\end{figure}
We fix $\ep>0$ a scaling parameter. 
In this model the scaled densities $\check \rho^\ep= (\check \rho^{\ep,0}, \check \rho^{\ep,1}, \check \rho^{\ep,2})$ solve the conservation law:  
\begin{equation}\label{eq::c20bis}
\left\{\begin{array}{llll}
\check \rho^{\ep,j}\in [\check a^j, \check c^j]&\quad \mbox{a.e on.}&\quad  (0,+\infty)\times \check {\mathcal R}^j,&\quad j=0,1,2\\
\partial_t \check \rho^{\ep,j}+\partial_x (\check f^j(\check \rho^{\ep,j}))=0&\quad \mbox{on}&\quad (0,+\infty)\times \check {\mathcal R}^j,&\quad j=0,1,2\\
(\check \rho^{\ep,0}(t,0^+),\check \rho^{\ep,1}(t,0^-),\check \rho^{\ep,2}(t,0^-))\in \check {\mathcal G}(t/\ep)&\quad \mbox{for a.e.}&\quad t\in (0,+\infty),&\\
\check \rho^\ep(0,\cdot)= \check{\bar \rho} &\quad \mbox{on}&\quad \left\{0\right\}\times \check {\mathcal R}.&
\end{array}\right.
\end{equation}
The fluxes $\check f^j$ satisfy condition (\ref{hypflux}) with $\check a^j,\check b^j,\check c^j$ in place of $a^j,b^j,c^j$, and $\check f^{j,\pm}$ are defined similarly as in (\ref{eq::e*2}), (\ref{eq::e*3}). The time periodic maximal germ $\check{\mathcal G}$ of period equal to $1$ is given by
\begin{equation}\label{eq::c21bis}
\check{\mathcal G}(t):=\left\{\begin{array}{llll}
\check {\mathcal G}^1(t) &\quad \mbox{on}\quad I^1\\
\check {\mathcal G}^2(t) &\quad \mbox{on}\quad I^2\\
\end{array}\right.
\end{equation}
and 
$$
\check{\mathcal G}^1(t)= \{ (p^0,p^1,p^2)\in Q, \;  \check f^2(p^2)=0,\quad \min\left\{A(t),\check f^{1,+}(p^1),\check f^{0,-}(p^0)\right\}=\check f^1(p^1)=\check f^0(p^0)\},$$
$$\check{\mathcal G}^2(t)= \{ (p^0,p^1,p^2)\in Q, \;  \check f^1(p^1)=0,\quad \min\left\{A(t), \check f^{2,+}(p^2),\check f^{0,-}(p^0)\right\}=\check f^2(p^2)=\check f^0(p^0)\}.
$$
As in the previous parts, $I^1$ and $I^2$ form a partition of $\R$ satisfying \eqref{hypIj}, and the flux limiter $A:\R\to \R$ is a periodic, piecewise constant map such that \eqref{hypfluxlimiterA} holds. Finally the initial condition $\check{\bar \rho}= (\check{\bar \rho}^j)\in L^\infty(\check{\mathcal R})$ satisfies $\check{\bar \rho}^j\in [\check a^j, \check c^j]$ a.e.{.}

\begin{Theorem}{\bf (Homogenization of the 2:1 junction)}\label{thm.main2:1}\\ 
Under the previous assumptions,
for any $\ep>0$ there exists a unique entropy solution to \eqref{eq::c20bis} and, as $\ep\to 0^+$ the  solution $(\check \rho^\ep)$  to \eqref{eq::c20bis} converges in $L^1_{loc}({[0,\infty)}\times \mathcal R)$ to the unique entropy solution $\check \rho$ of the homogenized problem 
\be\label{eq.macro2:1}
\left\{\begin{array}{llll}
\check \rho^j\in [\check a^j,\check c^j]&\quad \mbox{a.e. on}&\quad (0,+\infty)\times \check {\mathcal R}^j,&\quad j=0,1,2\\
\partial_t \check \rho^j+\partial_x (\check f^j(\check \rho^j))=0&\quad \mbox{on}&\quad (0,+\infty)\times \check {\mathcal R}^j,&\quad j=0,1,2\\

(\check \rho^0(t,0^+),\check \rho^1(t,0^-),\check \rho^2(t,0^-))\in {\mathcal G}_{\check f,\bar \Lambda}^-& \quad \mbox{for a.e.}&\quad t\in (0,+\infty),&\\
\check \rho(0,\cdot)= \check{\bar \rho} &\quad \mbox{on}&\quad \left\{0\right\}\times \check {\mathcal R}.&
\end{array}\right.
\ee
where the maximal germ $ {\mathcal G}_{\check f,\bar \Lambda}^-$ is defined explicitly in \eqref{eq::c1bis} below with $\bar \Lambda$ given in Subsection  \ref{subsec.germmacro}. 
\end{Theorem}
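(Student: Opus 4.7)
The plan is to reduce Theorem \ref{thm.main2:1} to the already established Theorem \ref{thm.main} via a simultaneous reflection in space and density. For $j\in\{0,1,2\}$ I would introduce new densities on a reflected network
$$
\rho^{\ep,j}(t,x) := \check c^j + \check a^j - \check \rho^{\ep,j}(t,-x), \qquad f^j(p) := \check f^j(\check c^j + \check a^j - p).
$$
A direct computation (chain rule in the weak formulation of the scalar conservation law) shows that $\rho^\ep=(\rho^{\ep,0},\rho^{\ep,1},\rho^{\ep,2})$ satisfies $\partial_t\rho^{\ep,j}+\partial_x(f^j(\rho^{\ep,j}))=0$ on each branch, with the new fluxes $f^j$ satisfying \eqref{hypflux} with $a^j=\check a^j$, $c^j=\check c^j$, $b^j=\check a^j+\check c^j-\check b^j$. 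Under $x\mapsto -x$ the incoming branches $\check{\mathcal R}^1, \check{\mathcal R}^2$ become outgoing, and $\check{\mathcal R}^0$ becomes incoming, so the geometry matches the 1:2 model of Subsection \ref{sec-meso}.

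The key algebraic identity I would verify is that the monotone envelopes swap under the density reflection: $f^{j,+}(p) = \check f^{j,-}(\check c^j+\check a^j-p)$ and $f^{j,-}(p)=\check f^{j,+}(\check c^j+\check a^j-p)$. Using this, one checks directly against the definitions \eqref{germGL1bis}--\eqref{germGL2bis} and the formulas for $\check{\mathcal G}^1(t), \check{\mathcal G}^2(t)$ after \eqref{eq::c21bis} that
$$
(p^0,p^1,p^2)\in \check{\mathcal G}(t) \quad\Longleftrightarrow\quad (\check c^0+\check a^0-p^0, \check c^1+\check a^1-p^1, \check c^2+\check a^2-p^2) \in \mathcal G(t),
$$
where $\mathcal G(t)$ is the 1:2 time-dependent germ of \eqref{defGt} associated with the new fluxes $f^j$ and the \emph{same} partition $I^1\cup I^2=\R$ and flux limiter $A$ (the role of $I^k$ is unchanged: during $I^1$ it is branch $1$ that is active in both models).

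Once this equivalence of time-dependent germs is established, the transformed problem is precisely an instance of \eqref{eq.mesoep}. Existence and uniqueness for \eqref{eq::c20bis} therefore follow from Lemma \ref{lem.existeqmeso} applied to the reflected data $\bar\rho_0^j(x) := \check c^j+\check a^j-\check{\bar\rho}^j(-x)$. Theorem \ref{thm.main} then produces a maximal germ $\mathcal G_{\bar\Lambda}$ (for the fluxes $f^j$) and $L^1_{loc}$ convergence of the transformed $\rho^\ep$ to the unique entropy solution $\rho$ of \eqref{eq.homo}. Reflecting back gives $\check\rho^{\ep,j}(t,x) = \check c^j+\check a^j - \rho^{\ep,j}(t,-x) \to \check c^j+\check a^j-\rho^j(t,-x)=:\check\rho^j(t,x)$ in $L^1_{loc}$, and defines the homogenized 2:1 germ of \eqref{eq::c1bis} by
$$
\mathcal G^-_{\check f,\bar\Lambda} = \bigl\{ (p^0,p^1,p^2)\in \check Q \;:\; (\check c^0+\check a^0-p^0,\check c^1+\check a^1-p^1,\check c^2+\check a^2-p^2) \in \mathcal G_{\bar\Lambda}\bigr\}.
$$

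The main technical point will be careful bookkeeping of the entropy formulation under the combined reflection: one must check that Kruzhkov entropy inequalities on each branch are preserved (which follows from the symmetry $\eta(\bar p,p)=|p-\bar p|=|(\check c+\check a-\bar p)-(\check c+\check a-p)|$ and from $q^j(\bar p,p)$ picking up a sign from $f^j=\check f^j(\check c^j+\check a^j-\cdot)$ that is compensated by the spatial reflection $\partial_x\mapsto -\partial_x$), and that the trace condition at the junction transfers correctly, with the distinction $0^-\leftrightarrow 0^+$ swapped consistently with the swap of incoming and outgoing branches. These are routine verifications given the strong traces available from Theorem \ref{th::r7homo} under \eqref{hypflux}, so no serious obstacle beyond the change-of-variables calculation is expected.
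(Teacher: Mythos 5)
Your proposal is correct and follows essentially the same route as the paper (Subsection \ref{proof2:1}): a reversion transform $x\mapsto -x$ combined with a reflection of the density that turns the 2:1 problem into an instance of the 1:2 problem, swaps the monotone envelopes $f^{j,\pm}$, identifies the mesoscopic germs, and lets Lemma \ref{lem.existeqmeso} and Theorem \ref{thm.main} do the work. The only (cosmetic) difference is that you reflect the density as $p\mapsto \check a^j+\check c^j-p$ while the paper uses $p\mapsto -p$ as in \eqref{eq::c19}--\eqref{eq.defrhorhoep}, and you should additionally record (as the paper does in Lemma \ref{cor::c24}) that your pull-back definition of the limit germ coincides with the explicit set \eqref{eq::c1bis} via the same envelope-swap identity.
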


The proof of {this} theorem is given in Subsection \ref{proof2:1}.

\subsection{Review of the literature}

Conservation laws (CL) on junctions (and their application to traffic flows) have attracted a lot of attention: see for instance the monograph \cite{GP1}  and the survey paper \cite{BCGHP}. A large part of the literature is concerned with conservation laws on 1:1 junctions, involving one flux function for the incoming road and a possibly  different one on the outgoing road, see  \cite{AGDV11, AKR11, AuPe05, BaJe97, GNPT07, To00, To01}.
It turns out that the approach through the germ theory for 1:1 junctions is strongly linked with Hamilton-Jacobi (HJ) equations on such  junctions (still in the 1:1 case, see \cite{CFGM}). Combining both approaches gives a rough picture of this 1:1 setting: in a nutshell, the junction condition reduces to a flux limiter (a scalar),  the conservation law is an $L^1-$contraction and is equivalent to the HJ approach {at the level of the antiderivative. Let us also underline that the Hamilton-Jacobi equation possesses itself an $L^\infty-$contraction property.} In conclusion, this 1:1 framework is now relatively well understood.

The situation is completely different for junctions involving at least 3 branches. Indeed, although many works have been devoted to such junctions (see for instance \cite{ACD, FMR22b, GP2, GR20, H22, MFR22, To20}), the problem is still poorly understood and the general picture is far from clear. For instance, if the germ approach of \cite{AKR11}  has been recently extended to general junctions in \cite{FMR22b, MFR22} (and we strongly use this extension in the paper), there are still few examples of germs which are  maximal and complete; one of the outcome of our paper is to describe a new class of such germs (note however that a particular case was previously discussed in \cite{To20}).  On the other hand,  models involving more than 3 branches seem far richer than the 1:1 set-up: for instance our junction condition (in terms of germs) can be parametrized by a whole family of increasing functions (in contrast with the 1:1 set-up where there is just a single parameter). Another difference with the 1:1 setting is that 2:1 and 1:2 junctions are not always  $L^1-$contractions. And last, the equivalence between CL and HJ is lost in general: the limit models for 1:2 and 2:1 junctions discussed in this paper do not seem to fit a HJ framework. \medskip

It is interesting to compare our class of germs (that we call here the class of traffic light germs, TL-germs  in brief) with some {of} the known germs in the literature on junctions (see in particular \cite{GR20}).
We only consider 1:2 junctions because a reversed germ is automatically constructed for 2:1 junctions, by reversion transform.
In \cite{To20},  the author  defines a germ which is a special case of TL-germs for very special functions satisfying moreover $f^0_{max}=f^1_{max}+f^2_{max}$ with $\hat \lambda^j(\lambda)=\theta^j \lambda$ for $j=1,2$. In the pioneering work \cite{HR}, the authors introduced a class of germs, by the maximization of some entropy at the junction. It has been only very recently proved in \cite{H22} that those germs are $L^1$-contractant. We do not know what is the relationship between this class of germs and the class of TL-germs, even if the intersection of the two classes is empty or not.

The vanishing viscosity germ studied in \cite{ACD} can be either or not a TL-germ, depending on the flux functions.
For instance, for $f^0=f$, $f^1=\alpha_1 f$, $f^2=\alpha_2 f$, it is possible to show that the vanishing viscosity germ is a TL-germ if and only if $\alpha_1+\alpha_2\le 1$. 
\medskip

Hamilton-Jacobi germs (HJ-germs in brief) were  defined in \cite{IMZ} and studied in \cite{IM17}. These HJ-germs are the same (going from the HJ level to the level of conservation laws) as the ones defined previously in the monograph \cite{GP1} for divergent junctions, and a single ingoing road. These germs are a particular case of $\mathcal {R S}_2$ germs in \cite{GP2}, where the authors also show that the total variation of the fluxes is bounded by a constant if it is the case for the initial data. This allows them to show the existence of a solution. The uniqueness seems an open question in general (at least at the direct level of conservation laws). Notice that for $N\ge 3$ branches (like 1:2 junctions), it is easy to check that HJ-germs are never $L^1$-contractant germs {(see \cite{CFGM}).}

In the monograph \cite{GP1}, the authors  introduce in particular a germ for 2:1 junctions which is the same (by reversion) as the one called $\mathcal {R S}_1$ in the article \cite{GP2} for junctions 1:2. It is defined for $f^i =f$ for $i=0,1,2$, and it is possible to show that it is not in the class of what we call here TL-germs.
The existence of a solution is shown in \cite{GP2}, but the uniqueness seems open. We do not know if these germs have the $L^1$-contraction property or not.

\subsection{Organization of the paper}
In Section \ref{sec:germ},  we provide some key results concerning the germs discovered in this paper. Section \ref{sec:correctors} is devoted to the  construction of correctors. The proof of the main homogenization results, Theorem \ref{thm.main} and Theorem \ref{thm.main2:1}, are given in Section \ref{sec:homog}.

\section{Germs for divergent 1:2 junctions} \label{sec:germ}

In this section, we introduce a new general class of sets, prove that these sets are maximal germs, and show how the different germs encountered in the main results enter into this general framework. 

In contrast with the rest of the paper, in this section we only use a weaker assumption than (\ref{hypflux}), namely
\be\label{hypflux-bis}
\begin{array}{c}
\text{For $j\in\{0,1,2\}$, for $a^j<b^j<c^j$, the function $f^j:[a^j, c^j]\to \R$ is continuous,}\\
\text{increasing on $[a^j,b^j]$ and decreasing on $[b^j,c^j]$, with $f^j(a^j)=f^j(c^j)=0$,}
\end{array}
\ee
and we use the same notation $f^{j,\pm}$ as defined in  (\ref{eq::e*2}), (\ref{eq::e*3}). We start the section with a description of the general class of germs used throughout the paper and explain their main properties. We illustrate this notion by showing that the germs introduced for the mesoscopic model do fit  this general framework. Then we present the germs found through the homogenization procedure and give several examples. We complete the section by the proof of the main properties of our class of germs.

\subsection{A general family of germs}

\subsubsection{The main result on germs}

In this section  we investigate a general class of germs on 1:2 junctions. This family is described through  a set of parameters
$$
\Lambda=\Bigl\{ \bar \lambda^0, \bar \lambda^1, \bar \lambda^2, \hat \lambda^1, \hat \lambda^2\Bigr\}
$$ 
satisfying the following conditions
\begin{equation}\label{eq::c12}
\left\{\begin{array}{ll}
\bar \lambda^j\in [0,f^j_{\max}] &\quad \mbox{for}\quad j=0,1,2\\
\bar \lambda^0=\bar \lambda^1+\bar \lambda^2&\\
\mbox{the maps}\quad \hat \lambda^k: [0,f^0_{\max}]\to [0,\bar \lambda^k]\quad \mbox{are continuous nondecreasing}&\quad  \mbox{for}\quad  k=1,2\\
\hat \lambda^k(0)=0,\quad \hat \lambda^k(\bar \lambda^0)=\bar \lambda^k&\quad \mbox{for}\quad k=1,2\\
\hat \lambda^1(\lambda)+\hat \lambda^2(\lambda)=\min(\lambda,\bar \lambda^0)&\quad \mbox{for}\quad \lambda\in [0,f^0_{\max}].
\end{array}\right.
\end{equation}

The germ ${\mathcal G}_\Lambda$ is defined from $\Lambda$ as follows:
\begin{equation}\label{eq::c1}
{\mathcal G}_\Lambda:={\mathcal G}_{f,\Lambda}=\left\{P=(p^0,p^1,p^2)\in \R^3,\quad \left|\begin{array}{ll}
a^j\le p^j\le c^j ,&\quad j=0,1,2\\
\\
0\le f^j(p^j)\le\bar \lambda^j ,&\quad j=0,1,2\\
\\
f^0(p^0)=f^1(p^1)+f^2(p^2)&\\
\\
f^{k,+}(p^k)\ge \hat \lambda^k(f^{0,+}(p^0)),&\quad k=1,2\\
\end{array}\right.\right\}.
\end{equation}

\begin{Theorem}\label{th:1}{\bf (Germ for divergent 1:2 junction)}
Under assumptions (\ref{hypflux-bis}) and  (\ref{eq::c12}),  let us consider the set ${\mathcal G}_\Lambda$ defined in (\ref{eq::c1}).
Then 
\begin{itemize}
\item[(i)]  ${\mathcal G}_\Lambda$ is a maximal germ, 
\item[(ii)] ${\mathcal G}_\Lambda$ is determined by its subset 
\begin{equation}\label{eq::e100}
E^+_{\Lambda}:=\Gamma\cup \left\{P_1,P_2,P_{3}\right\},
\end{equation}
where the curve $\Gamma$ and the points $P_1,P_2,P_3$ are defined below in (\ref{eq::c5}) and (\ref{eq::c6}) respectively. This means that, for any $P\in Q$, 
$$
\Bigl[ D(\bar P, P) \geq 0\qquad \forall \bar P\in E^+_{\Lambda} \Bigr] \qquad \Longrightarrow \qquad P\in {\mathcal G}_\Lambda.
$$
\end{itemize}
\end{Theorem}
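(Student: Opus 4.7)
The plan is to split the proof into three steps: (a) check that the explicit set $E^+_\Lambda = \Gamma \cup \{P_1, P_2, P_3\}$ lies inside $\mathcal{G}_\Lambda$; (b) prove statement (ii), namely that dissipation against elements of $E^+_\Lambda$ alone forces membership in $\mathcal{G}_\Lambda$, which simultaneously yields the \emph{maximality} half of (i) as a byproduct; and (c) verify the pairwise dissipation $D(\bar P, P) \geq 0$ for every $\bar P, P \in \mathcal{G}_\Lambda$, completing the \emph{germ} property required in (i).

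For step (a), each point of $E^+_\Lambda$ must satisfy the Rankine-Hugoniot identity, the branchwise flux bounds $0 \leq f^j(p^j) \leq \bar\lambda^j$, and the envelope inequalities $f^{k,+}(p^k) \geq \hat\lambda^k(f^{0,+}(p^0))$ appearing in \eqref{eq::c1}. The curve $\Gamma$ should parametrize the ``fluid'' regime in which both outgoing fluxes realize $\hat\lambda^k(f^0(p^0))$ exactly, so the envelope inequalities are saturated, and Rankine-Hugoniot follows from the structural identity $\hat\lambda^1(\lambda) + \hat\lambda^2(\lambda) = \min(\lambda, \bar\lambda^0)$ in \eqref{eq::c12}. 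The three discrete points $P_1, P_2, P_3$ correspond to degenerate configurations (total congestion, one outgoing branch fully congested while the other saturates a bound, and so on), where each constraint reduces to a trivial bound or to a direct consequence of \eqref{eq::c12}.

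For step (b), given $P \in Q$ with $D(\bar P, P) \geq 0$ for all $\bar P \in E^+_\Lambda$, the plan is to recover each defining constraint of $\mathcal{G}_\Lambda$ through tailored choices of $\bar P$. Testing against the all-congested extreme (where $q^j(\bar p^j, p^j) = -f^j(p^j)$) yields $f^1(p^1) + f^2(p^2) \geq f^0(p^0)$, while a complementary zero-density extreme gives the reverse inequality, hence the Rankine-Hugoniot condition at $P$. The bounds $f^j(p^j) \leq \bar\lambda^j$ are obtained by testing against the remaining discrete points, which are designed so that $f^j(\bar p^j)$ saturates $\bar\lambda^j$. For the envelope inequalities, one tests along $\Gamma$ at the parameter $\lambda = f^{0,+}(p^0)$: after using the elementary identity $q^j(\bar p, p) = f^j(p \lor \bar p) - f^j(p \land \bar p)$ and a careful case analysis on the signs $p^k - b^k$ and $\bar p^k - b^k$, the contribution of branch $0$ collapses and what remains forces $f^{k,+}(p^k) \geq \hat\lambda^k(f^{0,+}(p^0))$.

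Step (c) is where I expect the main obstacle. The direct route is a finite case analysis on the fluid/congested status of each pair $(p^j, \bar p^j)$ relative to the critical densities $b^j$, reassembling the dissipation from the envelope inequalities and the pivotal identity $\hat\lambda^1 + \hat\lambda^2 = \min(\cdot, \bar\lambda^0)$. A cleaner alternative I would favor is to first verify internally that $E^+_\Lambda$ itself is a germ (a small check since $\Gamma$ is monotone in its flux parameter and the extremes are finitely many) and then invoke the general AKR/MFR principle that the maximal extension of a sufficiently rich $L^1$-dissipative germ is a maximal germ; this simultaneously deduces (c) from (b). The delicate issue is compatibility between the interior of $\Gamma$ and the singular points $P_1, P_2, P_3$, which is precisely where \eqref{eq::c12} ensures the algebraic consistency of the generators and closes the argument.
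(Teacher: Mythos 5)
Your steps (a) and (b) track the paper's actual argument for maximality and generation quite closely: the paper's Lemma \ref{lem::r36} tests $P$ against $P_3=(c^0,c^1,c^2)$ and $P_0=(a^0,a^1,a^2)\in\Gamma$ to get the two halves of Rankine--Hugoniot, against $P_1,P_2$ to get the bounds $f^k(p^k)\le\bar\lambda^k$, and against the point of $\Gamma$ with parameter $\bar\lambda=\min(\bar\lambda^0,f^{0,+}(p^0))$ to get the envelope inequalities, exactly as you outline (modulo the fact that the envelope step needs a genuine case split between $f^0(p^0)<\bar\lambda$ and $f^0(p^0)=\bar\lambda$, which the paper closes with sign tables and its classification lemma).

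The genuine gap is in step (c). Your preferred route --- check that $E^+_\Lambda$ is itself a germ and then ``invoke the general AKR/MFR principle that the maximal extension of a sufficiently rich $L^1$-dissipative germ is a maximal germ'' --- appeals to a principle that does not exist in this generality. If $G_0$ is a germ, its dual set $G_0^*=\{P\in Q:\ D(\bar P,P)\ge 0\ \forall \bar P\in G_0\}$ contains $G_0$ but is in general \emph{not} pairwise dissipative; whether a germ is definite and whether its unique maximal extension is again a germ is precisely what must be proved, and in \cite{AKR11} this is done case by case rather than deduced abstractly. Moreover, identifying $\mathcal G_\Lambda$ with $(E^+_\Lambda)^*$ via step (b) is circular at this stage: the inclusion $\mathcal G_\Lambda\subseteq (E^+_\Lambda)^*$ is itself an instance of the dissipation inequality you are trying to establish. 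Your fallback, the exhaustive case analysis on the fluid/congested status of each component, is exactly what the authors describe in a footnote as their first, $(2^3)^2=64$-case attempt; what makes the paper's proof work is Lemma \ref{lem::22}, which characterizes exactly when $D<0$ can occur (three mutually exclusive ``weak sign'' patterns), reducing the verification of the germ property to two cases that are closed using the monotonicity of the $\hat\lambda^k$ and the identity $\hat\lambda^1+\hat\lambda^2=\min(\cdot,\bar\lambda^0)$. Without that classification lemma or an equivalent device, step (c) of your proposal is not complete.
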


In order to describe the curves $\Gamma$ and the points $P_i$ (for $i=1,2,3$), let us first introduce the roots of $f^{j, \pm}(\cdot)=\lambda$ for $j=0,1,2$:
$$\left\{\begin{array}{l}
\left[a^j,b^j\right] \ni u^j_+(\lambda):=r \quad \mbox{such that}\quad f^{j,+}(r)=\lambda\in \left[0,f^j_{\max}\right]\\
\left[b^j,c^j\right]\ni u^j_-(\lambda):=r  \quad \mbox{such that}\quad f^{j,-}(r)=\lambda\in \left[0,f^j_{\max}\right].
\end{array}\right.$$
We will also use later the notation $u^j_\pm=(f^{j,\pm})^{-1}$.
Then 
\begin{equation}\label{eq::c5}
\Gamma:=\left\{P=(u^0_+(\lambda),u^1_+(\lambda^1),u^2_+(\lambda^2))\quad \mbox{with}\quad \lambda^k:=\hat \lambda^k(\lambda)
\quad \mbox{for}\quad k=1,2\quad \mbox{and}\quad \lambda\in [0,\bar \lambda^0] \right\}
\end{equation}
and 
\begin{equation}\label{eq::c6}
\left\{\begin{array}{l}
P_0:=(u^0_+(0),u^1_+(0),u^2_+(0))=(a^0,a^1,a^2)\in \Gamma\\
P_3:=(u^0_-(0),u^1_-(0),u^2_-(0))=(c^0,c^1,c^2)\\
\\
P_1:=(u^0_-(\bar \lambda^1),u^1_+(\bar \lambda^1),u^2_-(0))\\
P_2:=(u^0_-(\bar \lambda^2),u^1_-(0),u^2_+(\bar \lambda^2)).\\
\end{array}\right.
\end{equation}
Heuristically, the curve $\Gamma$ corresponds to a situation in which all the branches are fluids, while 
$$\left\{\begin{array}{llllll}
P_0=&(&\mbox{empty road},&\ \mbox{empty road},&\ \mbox{empty road}&)\quad \in \Gamma\\
P_3=&(& \mbox{fully congested},&\ \mbox{fully congested},&\ \mbox{fully congested}&)\\
\\
P_1=&(&\mbox{congested},&\ \mbox{fluid and saturated},&\ \mbox{fully congested}&)\\
P_2=&(&\mbox{congested},&\ \mbox{fully congested},&\ \mbox{fluid and saturated}&)
\end{array}\right.$$
where ``fully congested'' means that the road is with a maximal density of vehicles (hence with zero velocity). On the other hand, ``fluid and saturated'' means that the outgoing road is still fluid, but that we can not increase the flux passing through the junction point.\\

The proof of Theorem \ref{th:1} is postponed to Subsection \ref{sec:2.2}.\medskip

Let us now explain how the germs introduced for the mesoscopic model and the homogenized germ introduced for the macroscopic model fit into the framework just described. 

\subsubsection{Germs in the mesoscopic model}

We check here that the sets $\mathcal G_{\Lambda_k}(t)$ (for $t\in I^k$ and $k=1,2$) introduced in \eqref{germGL1bis} and \eqref{germGL2bis} respectively, are  of the form \eqref{eq::c1} for suitable sets $\Lambda_k(t)$. For $t\in I_1$, the set $\Lambda_1(t)=(\bar \lambda^0_1(t),\bar \lambda^1_1(t),\bar \lambda^2_1(t),\hat \lambda^1_1(t, \cdot),\hat \lambda^2_1(t, \cdot))$ is given by 
\begin{equation}\label{eq::c13-1}
\left\{\begin{array}{ll}
\bar \lambda^0_1(t)=\bar \lambda^1_1(t)=A(t), 
\quad \bar \lambda^2_1=0&\\
\hat \lambda^1_1(t,\lambda)=\min(\lambda,A(t))& \quad \mbox{for}\quad \lambda\in [0,f^0_{\max}]\\
\hat \lambda^2_1(t,\lambda)=0& \quad \mbox{for}\quad \lambda\in [0,f^0_{\max}].
\end{array}\right.
\end{equation}
For $t\in I^2$, the set $\Lambda_2(t)=(\bar \lambda^0_2(t),\bar \lambda^1_2(t),\bar \lambda^2_2(t),\hat \lambda^1_2(t,\cdot),\hat \lambda^2_2(t,\cdot))$ is defined symmetrically, exchanging the indices $1$ and $2$:
\begin{equation}\label{eq::c13-2}
\left\{\begin{array}{ll}
\bar \lambda^0_2(t)=\bar \lambda^2_2(t)=A(t),\quad \bar \lambda^1_2=0&\\
\hat \lambda^1_2(t,\lambda)=0& \quad \mbox{for}\quad \lambda\in [0,f^0_{\max}]\\
\hat \lambda^2_2(t,\lambda)=\min(\lambda,A(t))& \quad \mbox{for}\quad \lambda\in [0,f^0_{\max}].
\end{array}\right.
\end{equation}

The next lemma claims that the germs $\mathcal G_{\Lambda_k(t)}$ (for $k=1,2$) associated with the $\Lambda_k(t)$ through definition (\ref{eq::c1}), coincide precisely with the germs $\mathcal G_{\Lambda_k}(t)$ introduced in \eqref{germGL1bis} and \eqref{germGL2bis} respectively for the mesoscopic model: 
\begin{Lemma}\label{lem::c15}{\bf (Characterization of the maximal germs ${\mathcal G}_{\Lambda_k}$)}\\ For any $k=1,2$ and  any $t\in I^k$, the set ${\mathcal G}_{\Lambda_k(t)}$, defined through (\ref{eq::c1}) from the sets $\Lambda_k(t)$ is a maximal germs and coincides with the set ${\mathcal G}_{\Lambda_k}(t)$ introduced in \eqref{germGL1bis} (for $k=1$) and \eqref{germGL2bis} (for $k=2$).  
\end{Lemma}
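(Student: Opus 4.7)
The plan is to deduce the result directly from Theorem \ref{th:1}: it suffices to verify that the parameter sets $\Lambda_k(t)$ defined in \eqref{eq::c13-1}--\eqref{eq::c13-2} satisfy conditions \eqref{eq::c12}, and then to match the abstract description \eqref{eq::c1} of ${\mathcal G}_{\Lambda_k(t)}$ with the concrete description \eqref{germGL1bis}--\eqref{germGL2bis} of ${\mathcal G}_{\Lambda_k}(t)$. By the symmetry between branches $1$ and $2$, it is enough to handle $k=1$.

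First I would check \eqref{eq::c12} for $\Lambda_1(t)$: assumption \eqref{hypfluxlimiterA} gives $A(t)\in[0,\min\{f^0_{\max},f^1_{\max}\}]$ on $I^1$, so $\bar\lambda^0_1(t)=\bar\lambda^1_1(t)=A(t)$ and $\bar\lambda^2_1(t)=0$ lie in the right intervals and satisfy $\bar\lambda^0=\bar\lambda^1+\bar\lambda^2$. The maps $\hat\lambda^1_1(t,\lambda)=\min(\lambda,A(t))$ and $\hat\lambda^2_1(t,\lambda)=0$ are continuous nondecreasing in $\lambda$, have the correct boundary values, and satisfy $\hat\lambda^1_1+\hat\lambda^2_1=\min(\lambda,\bar\lambda^0_1)$. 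This allows me to invoke Theorem \ref{th:1}(i) to conclude that ${\mathcal G}_{\Lambda_1(t)}$ is a maximal germ.

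Next I would establish the set equality ${\mathcal G}_{\Lambda_1(t)}={\mathcal G}_{\Lambda_1}(t)$. Unfolding \eqref{eq::c1} for $\Lambda_1(t)$ gives the conditions $f^2(p^2)=0$ (from $\bar\lambda^2=0$), $f^0(p^0)=f^1(p^1)\in[0,A(t)]$, and $f^{1,+}(p^1)\ge \min(A(t),f^{0,+}(p^0))$; the constraint $f^{2,+}(p^2)\ge 0$ is trivial. Unfolding \eqref{germGL1bis} gives $f^2(p^2)=0$ together with $\min(A(t),f^{0,+}(p^0),f^{1,-}(p^1))=f^0(p^0)=f^1(p^1)$. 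Since $f^0(p^0)\le f^{0,+}(p^0)$ and $f^1(p^1)\le f^{1,-}(p^1)$ always hold, the latter equality splits into: (a) $f^0(p^0)=f^1(p^1)\le A(t)$, and (b) at least one of $A(t)$, $f^{0,+}(p^0)$, $f^{1,-}(p^1)$ equals $f^0(p^0)$. I would then argue that condition (b) is equivalent to the inequality $f^{1,+}(p^1)\ge \min(A(t),f^{0,+}(p^0))$ by distinguishing the four regimes $p^1\lessgtr b^1$ and $p^0\lessgtr b^0$: in the regime $p^1\ge b^1$ one has $f^{1,-}(p^1)=f^1(p^1)$ so (b) is automatic, while $f^{1,+}(p^1)=f^1_{\max}\ge A(t)$ by \eqref{hypfluxlimiterA}, so the inequality is also automatic; in the regime $p^1\le b^1$ both conditions reduce to $f^0(p^0)\ge \min(A(t),f^{0,+}(p^0))$, which further splits according to $p^0\lessgtr b^0$ and is handled similarly.

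The only real obstacle is this last chain of case checks matching the minimum formulation in \eqref{germGL1bis} with the pointwise inequalities defining \eqref{eq::c1}; the key structural input used throughout is the upper bound on $A(t)$ provided by \eqref{hypfluxlimiterA}, which ensures that the congested branches never saturate the flux limiter inconsistently. Once the equality for $k=1$ is established, the case $k=2$ follows verbatim by exchanging the roles of indices $1$ and $2$, and the maximality of ${\mathcal G}_{\Lambda_k}(t)$ is inherited from the maximality of ${\mathcal G}_{\Lambda_k(t)}$ already obtained from Theorem \ref{th:1}.
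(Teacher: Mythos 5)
Your proposal is correct and follows essentially the same route as the paper: verify \eqref{eq::c12} for $\Lambda_1(t)$ so that Theorem \ref{th:1} yields maximality, then reduce both descriptions to the common core $f^2(p^2)=0$, $\lambda:=f^0(p^0)=f^1(p^1)\in[0,A(t)]$, and finish by an elementary case check on the fluid/congested regimes of $p^0$ and $p^1$ (the paper phrases this as checking that, among the four cases $p^0\in\{u^0_\pm(\lambda)\}$, $p^1\in\{u^1_\pm(\lambda)\}$, both expressions exclude exactly $p^0=u^0_-(\lambda)$, $p^1=u^1_+(\lambda)$ with $\lambda<A(t)$, which is precisely what your equivalence of condition (b) with the inequality $f^{1,+}(p^1)\ge\min(A(t),f^{0,+}(p^0))$ amounts to). No gap.
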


\begin{proof} 
The proof is elementary. By symmetry, we can only do it for ${\mathcal G}_{\Lambda_1(t)}$ for $t\in I^1$. Notice that $\Lambda_1(t)$ satisfies (\ref{eq::c12}).
Hence $\mathcal G_{\Lambda_1(t)}$ is a maximal germ, from Theorem \ref{th:1}.\\
If $P=(p^k)_{k=0,1,2}$ belongs to  ${\mathcal G}_{\Lambda_1(t)}$ or to ${\mathcal G}_{\Lambda_1}(t)$, we have
$$\lambda:=f^0(p^0)=f^1(p^1)\in [0,\bar \lambda^0_1(t)]$$
and then
$$p^0\in \left\{u^0_\pm(\lambda)\right\},\quad p^1\in \left\{u^1_\pm(\lambda)\right\}.$$
This gives $2\times 2$ cases. Examining all cases in details (it is slightly tedious to do it for both expressions), we can check in both expressions that all cases are possible except the following case which is excluded by both expressions
$$p^0=u^0_-(\lambda),\quad p^1=u^1_+(\lambda) \quad \mbox{for}\quad \lambda\in [0,\bar \lambda^0_1(t)).$$
Hence the two expressions coincide and the {lemma} holds true. 
\end{proof}

\subsubsection{The homogenized germ in the macroscopic model}\label{subsec.germmacro}

We now turn to the homogenized germ. This germ is naturally associated with the correctors introduced in the next section. It happens however that it can be built independently: we present this construction here. We also give several examples in which the germ can be explicitly computed (Propositions \ref{prop.exmacrogerm}, \ref{lem::f21} and \ref{lem::f4}).  

The homogenized germ $\mathcal G_{\bar{\Lambda}}$ introduced in Theorem \ref{thm.main} is defined through the set of parameters 
$$
\bar{\Lambda}= \Bigl\{ \bar \lambda^0, \bar \lambda^1, \bar \lambda^2, \hat \lambda^1, \hat \lambda^2\Bigr\}
$$
by  relation \eqref{eq::c1} that we recall: 
\begin{equation}\label{eq::c1BIS}
{\mathcal G}_{\bar \Lambda}:=\left\{P=(p^0,p^1,p^2)\in Q^{RH},\quad \left|\begin{array}{ll}
0\le f^j(p^j)\le\bar \lambda^j ,&\quad j=0,1,2\\
\\
f^{k,+}(p^k)\ge \hat \lambda^k(f^{0,+}(p^0)),&\quad k=1,2\\
\end{array}\right.\right\}.
\end{equation}
In $\bar{\Lambda}$, the effective limiters $\bar \lambda^0, \bar \lambda^1, \bar \lambda^2$ are given by
\be\label{deflambdakkk}\left\{\begin{array}{l}
\displaystyle \bar \lambda^k:=\int_0^1 {\bf 1}_{I^k}(t) A(t)dt \quad \mbox{for}\quad k=1,2,\\
\;\\
\displaystyle \bar \lambda^0:=\int_0^1 A(t)dt =\bar \lambda^1+\bar \lambda^2 \le f^0_{\max}.
\end{array}\right.
\ee
For $\lambda\in [0, \bar \lambda^0]$, let $p^0= (f^{0,+})^{-1}(\lambda)$. Note that $p^0$ satisfies the inequality 
\be\label{condcondf0p0}
f^{0,+}(p^0)=f^0(p^0)\leq \int_0^1 A(t) dt =\bar \lambda^0.
\ee
We introduce the 1-periodic map\footnote{For simplicity we use the same expression  $F_\lambda$ and $F_{p^0}$ although the relationship between $\lambda$ and $p^0$ is the equality $p^0= (f^{0,+})^{-1}(\lambda)$: the first notation makes more sense in the present section, while the second one will be used throughout Section \ref{sec:correctors} on the construction of correctors.} $F_\lambda=F_{p^0} :\R\to [0,f^0_{\max}]$ as 
\be\label{defFp0}
\begin{array}{ll}
\forall t\in \R\qquad 
F_\lambda(t)= F_{p^0}(t) & = \displaystyle \left\{\begin{array}{ll}
\displaystyle \lambda= f^0(p^0) &\displaystyle \text{if } \frac{1}{t-t_1}\int_{t_1}^t A(s)ds \geq f^0(p^0),\; \forall t_1<t,\\
A(t) & \text{otherwise,}
\end{array}\right.
\end{array}
\ee
and set, for $k=1,2$,  
\be\label{defhatlambda}
\hat \lambda^k(\lambda)= \int_0^1 F_{\lambda}(t) {\bf 1}_{I^k}(t)dt .
\ee
We  extend the functions $\hat \lambda^k$ up to $f^0_{\max}$ by
$$
\hat \lambda^k(\lambda):=\bar \lambda^k\quad \mbox{for}\quad \lambda\in[ \bar \lambda^0, f^0_{\max}],\quad k=1,2.
$$
Finally, we set, for $k=1,2$,  
\be\label{defpkp0BIS}
\hat p^k_{p^0}= u^k_+(\hat \lambda^k(f^0(p^0))), \qquad \forall p^0\in [a^0,b^0]\; {\rm with} \; f^0(p^0)\leq \bar \lambda^0,
\ee
where we recall the notation $u^k_{\pm}= (f^{k, \pm})^{-1}$. 

The interpretation of these quantities  is the following: we  show in Lemma \ref{lem.w0} below that  $F_{p^0}$ is the flux at the junction $x=0$ of the $1-$periodic corrector taking value $p^0$ at $-\infty$ (or, equivalently, having a flux $\lambda=f^0(p^0)$ at $-\infty$). Proposition \ref{prop.correctorfluid}  shows that the $\hat p^k_{p^0}$ are  the densities at $+\infty$ and on the branch $k$ of this corrector. Hence the quantities $\hat \lambda^k(\lambda)$ are the fluxes at $+\infty$ of the  time periodic corrector with a flux $\lambda$ at $-\infty$. 

\begin{Remark}{{\bf (An obstacle problem)}}  The  flux  $F_\lambda$ at the junction can be recovered by an obstacle problem. More precisely, one can show that
$$F_\lambda=\lambda+A-(\Phi_\lambda)'\quad \mbox{a.e. on}\quad \R$$
where $\Phi_\lambda$ is a viscosity solution to the following obstacle problem
$$\min(\Phi_\lambda-B, (\Phi_\lambda)'-\lambda)=0\quad \mbox{on}\quad \R,\quad B'=A$$
such that $\Phi_\lambda-B$ is $1$-periodic. Moreover $\Phi_\lambda$ is unique for $\lambda\in [0,\bar \lambda^0)$ and we have
$$\{\Phi_\lambda=B\}\subset\{F_\lambda=\lambda\}\qquad{\rm and}\qquad \{\Phi_\lambda>B\}\subset\{F_\lambda=A\}.$$
Finally, we have the following representation for $\Phi_\lambda$:
\begin{equation}\label{eq::f16}
\Phi_\lambda(t):=\sup_{\tau\ge 0} \left\{B(t-\tau)+\lambda \tau\right\}.
\end{equation}
{See Lemma \ref{lem.analysew00} for related results on $\psi_{p^0}=\Phi_\lambda-B$.}
\end{Remark}

We then have the following properties
\begin{Lemma}\label{lem.hatlambdak}{\bf {(Properties of the fluxes $\hat \lambda^k$)}}\\
For $k=1,2$, $\bar \lambda^k\leq f^k_{\max}$ and the map $\lambda {\mapsto} \hat \lambda^k(\lambda)$ is continuous and nondecreasing on $ [0, f^0_{\max}]$ with 
\be\label{aezlqkjrsd}
 \hat \lambda^1(\lambda)+  \hat \lambda^2(\lambda)=\lambda \qquad \forall \lambda\in [0,\bar \lambda^0]
 \ee
and
\be\label{aezlqkjrsdBIS}
 0\le \hat \lambda^k(\lambda)\le \bar \lambda^k=\hat \lambda^k(\bar \lambda^0), \qquad k=1,2, \;  \forall \lambda\in [0,\bar \lambda^0].
\ee
\end{Lemma}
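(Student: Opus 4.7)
The plan is to establish the properties in the order: the easy bound $\bar\lambda^k\le f^k_{\max}$, then monotonicity of $\hat\lambda^k$ together with the upper bound, then the sum identity \eqref{aezlqkjrsd}, and finally continuity as a byproduct.

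First, $\bar\lambda^k\le f^k_{\max}$ is immediate from the flux-limiter assumption \eqref{hypfluxlimiterA}: since $A(t)\le f^k_{\max}$ on $I^k$, one has $\bar\lambda^k=\int_0^1 A(t)\mathbf{1}_{I^k}(t)\,dt\le f^k_{\max}$. Next I would unpack the definition \eqref{defFp0} by introducing the ``minimal past average'' $M(t):=\inf_{t_1<t}\frac{1}{t-t_1}\int_{t_1}^t A(s)\,ds$. The first case of \eqref{defFp0} holds exactly when $\lambda\le M(t)$, so $F_\lambda(t)=\lambda$ if $\lambda\le M(t)$ and $F_\lambda(t)=A(t)$ otherwise. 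Since $A$ is piecewise constant, taking $t_1$ inside the piece containing $t$ forces $M(t)\le A(t)$ for a.e.\ $t$, hence $F_\lambda\le A$ a.e.; integrating against $\mathbf{1}_{I^k}$ gives $\hat\lambda^k(\lambda)\le\bar\lambda^k$ at once, while $\hat\lambda^k\ge0$ is trivial. The same case analysis delivers monotonicity: for $\lambda_1\le\lambda_2$, either both are $\le M(t)$ (so $F_{\lambda_1}=\lambda_1\le\lambda_2=F_{\lambda_2}$), both are $>M(t)$ (so $F_{\lambda_i}=A(t)$), or $\lambda_1\le M(t)<\lambda_2$ (so $F_{\lambda_1}=\lambda_1\le M(t)\le A(t)=F_{\lambda_2}$). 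Thus $\hat\lambda^k$ is nondecreasing on $[0,\bar\lambda^0]$, and it is constant (equal to $\bar\lambda^k$) on $[\bar\lambda^0,f^0_{\max}]$ by definition.

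The key identity \eqref{aezlqkjrsd} is where the main work lies. Here I would invoke the obstacle-problem representation from the Remark (made rigorous in Lemma~\ref{lem.analysew00}), which gives $F_\lambda=\lambda+A-\Phi_\lambda'$ a.e., with $B'=A$ and $\Phi_\lambda-B$ being $1$-periodic. Integrating over one period,
\begin{equation*}
\int_0^1 F_\lambda(t)\,dt=\lambda+\bar\lambda^0-[\Phi_\lambda(1)-\Phi_\lambda(0)]=\lambda,
\end{equation*}
since periodicity of $\Phi_\lambda-B$ forces $\Phi_\lambda(1)-\Phi_\lambda(0)=B(1)-B(0)=\bar\lambda^0$. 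The partition $\mathbf{1}_{I^1}+\mathbf{1}_{I^2}\equiv 1$ then yields $\hat\lambda^1(\lambda)+\hat\lambda^2(\lambda)=\lambda$ on $[0,\bar\lambda^0]$, i.e.\ \eqref{aezlqkjrsd}. Evaluating at $\lambda=\bar\lambda^0$ and combining with $\hat\lambda^k(\bar\lambda^0)\le\bar\lambda^k$ and $\bar\lambda^0=\bar\lambda^1+\bar\lambda^2$ forces the equality $\hat\lambda^k(\bar\lambda^0)=\bar\lambda^k$, matching the constant extension and completing \eqref{aezlqkjrsdBIS}.

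Continuity is then automatic: on $[0,\bar\lambda^0]$ both $\hat\lambda^1$ and $\hat\lambda^2$ are nondecreasing with sum equal to the continuous map $\lambda\mapsto\lambda$, so neither can have a jump (a jump in one would force a downward jump in the other, contradicting its monotonicity); on $[\bar\lambda^0,f^0_{\max}]$ they are constant and the values match at $\bar\lambda^0$. The main obstacle is really the sum identity, which ultimately rests on the $1$-periodicity of $\Phi_\lambda-B$ cited from the Remark; the other claims reduce to elementary case analysis based on the pointwise characterisation of $F_\lambda$ via $M(t)$.
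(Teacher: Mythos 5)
Your proof is correct, and while it follows the same overall skeleton (bound on $\bar \lambda^k$, monotonicity, sum identity, continuity), two of the steps take genuinely different routes from the paper. For monotonicity, you argue pointwise on $F_\lambda(t)$ via the minimal past average $M(t)$ and a three-case comparison, whereas the paper goes through the auxiliary functions $\psi_{p^0}$ and the inclusion $\{\psi_{p^0}>0\}\subset \{\psi_{\bar p^0}>0\}$; both yield the same pointwise inequality $F_{\lambda_1}\le F_{\lambda_2}$ a.e., but yours is more elementary (just make sure to note, as you do, that $M(t)\le A(t)$ only holds off the null set of discontinuities of $A$). The more substantive divergence is continuity: the paper proves it directly, by uniform convergence of $\psi_{p^{0,n}}$ to $\psi_{p^0}$ together with a telescoping sum over the finitely many intervals composing $I^k\cap[0,1]$ (thus using assumption \eqref{hypIj} in that step), while you deduce it for free from the fact that two nondecreasing functions whose sum is the continuous map $\lambda\mapsto\lambda$ cannot jump. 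Your argument is cleaner and does not need the interval structure of $I^k$ at that point; its only cost is that continuity becomes logically downstream of the sum identity \eqref{aezlqkjrsd} rather than independent of it. Finally, your derivation of \eqref{aezlqkjrsd} via the obstacle-problem identity $F_\lambda=\lambda+A-\Phi_\lambda'$ is really the paper's Step~3 in disguise: since $\psi_{p^0}=\Phi_\lambda-B$, the relation $F_{p^0}=f^0(p^0)-\psi_{p^0}'$ proved in Lemma \ref{lem.analysew00} is exactly what you are integrating over a period, so you should cite that lemma (which is proved) rather than the Remark (which is only asserted); with that substitution the step is fully rigorous.
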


\begin{proof}{\bf Step 0: preliminaries.} Let us first note for later use that 
\be\label{FleqA}
F_{p^0}(t)\leq A(t) \qquad {\rm a.e.}.
\ee
Indeed, let $t$ be a point of continuity of $A$. Then either $F_{p^0}(t)= A(t)$, or $\frac{1}{t-t_1}\int_{t_1}^t A(s)ds \geq f^0(p^0)$ for any $t_1<t$. In this later case, 
\be\label{eq:n1}
A(t) = \lim_{t_1\to t^-} \frac{1}{t-t_1}\int_{t_1}^t A(s)ds \geq f^0(p^0) = F_{p^0}(t), 
\ee
which shows \eqref{FleqA}. 

Fix $k=1,2$. On $I^k$, we have $A(t)\leq f^k_{\max}$ by assumption on $A$. Thus
$$
\bar \lambda^k = \int_0^1 A(t){\bf 1}_{I^k}(t)dt \leq  
f^k_{\max}  | [0,1] \cap I^k  | \leq f^k_{\max}.
$$

Let us set 
$$
\forall t\in \R, \qquad \psi_{p^0}(t ) = \max_{t_1\leq t} \Bigl\{ \int_{t_1}^t (f^0(p^0)-A(s))ds\Bigr\} .
$$
We explain in Lemma \ref{lem.analysew00} below that $\psi_{p^0}$ is nonnegative, Lipschitz continuous, $1-$periodic and satisfies
\be\label{psiprimp0}
\psi_{p^0}'(t) = \left\{\begin{array}{ll}
f^0(p^0)-A(t) & \text{if } \psi_{p^0}(t)>0 \\
0 & \text{if }\psi_{p^0}(t)=0
\end{array}\right.
\qquad \text{a.e.}
\ee
and
\be\label{refpsip0Fp0}
\psi_{p^0}'(t) = f^0(p^0)-F_{p^0}(t)  \qquad {\rm a.e.}. 
\ee
Moreover, by the definition of $\psi_{p^0}$,  for any $t\in \R$, $\psi_{p^0}(t)=0$ is equivalent to saying that $\frac{1}{t-t_1} \int_{t_1}^t A(s)ds \geq f^0(p^0)$ for any $t_1$, and thus, as explained in \eqref{eq:n1}, one has $A\geq f^0(p^0)$ a.e. on $\{\psi_{p^0}=0\}$. 
\medskip

\noindent{\bf Step 1: $\hat \lambda^k$ are nondecreasing.}
Fix $k=1,2$. We now prove that the $\hat \lambda^k$ are non decreasing on $[0,\bar \lambda^0]$ (it is constant on $[\bar \lambda^0,\lambda^0_{\max}]$). If $0\leq \lambda\leq \bar \lambda\leq \bar \lambda^0$, then 
$$
a^0\leq p^0:=   u_+^0(\lambda) \leq  u_+^0(\bar \lambda)=: \bar p^0\leq b^0.
$$
Hence, by the definition of $\psi_{p^0}$ and $\psi_{\bar p^0}$, $\psi_{p^0}\leq \psi_{\bar p^0}$ and therefore $\{\psi_{p^0}>0\}\subset \{\psi_{\bar p^0}>0\}$. Recalling \eqref{psiprimp0}, \eqref{refpsip0Fp0} and the facts that $f^0(p^0)\leq f^0(\bar p^0)$ and that  $A\geq f^0(p^0)$ a.e. on $\{\psi_{p^0}=0\}$, we have  
\begin{align*}
F_{p^0}(t) & = f^0(p^0)- \psi_{p^0}'(t) = {\bf 1}_{\{\psi_{p^0}>0\}}(t) A(t)+ {\bf 1}_{\{\psi_{p^0}=0\}}(t)f^0(p^0) \\
& =  {\bf 1}_{\{\psi_{\bar p^0}>0\}}(t) A(t)+ {\bf 1}_{\{\psi_{\bar p^0}=0\}}(t)f^0( p^0) + {\bf 1}_{\{\psi_{\bar p^0}>0, \  \psi_{p^0}=0\}} (f^0(p^0)-A(t))\\
& \leq  {\bf 1}_{\{\psi_{\bar p^0}>0\}}(t) A(t)+ {\bf 1}_{\{\psi_{\bar p^0}=0\}}(t)f^0(\bar p^0)  = F_{\bar p^0}(t).
\end{align*}
Recalling \eqref{defhatlambda}  then shows that $\hat \lambda^k$ is nondecreasing. \medskip

\noindent{\bf Step 2:  $\hat \lambda^k$ is continuous in $[0,\bar \lambda^0]$.} We assume that $\lambda^n$ converge to $\lambda$ in $[0, \bar \lambda^0]$. Let $p^{0,n}= u_-^0(\lambda^n)$ and $p^0= u_-^0(\lambda)$. Then $(p^{0,n})$ converges to $p^0$ and 
$(\psi_{p^{0,n}})$ converges uniformly to $\psi_{p^0}$. Using assumption \eqref{hypIj}, we can write the set $I^k\cap [0,1]$ into a finite union of disjoint intervals $((t^j_1, t^j_2))_{j=1,\dots, J_k}$ up to a set of measure $0$. Then \eqref{refpsip0Fp0} shows that  
$$
\int_0^1 F_{p^{0,n}}(t){\bf 1}_{I^k}(t)dt = \sum_{j=1}^{J_k} \int_{t_{j_1}}^{t_{j_2}}(- \psi'_{p^{0,n}}(t)+f^0(p^{0,n}))dt = 
\sum_{j=1}^{J_k} (\psi_{p^{0,n}}(t^j_1)-\psi_{p^{0,n}}(t^j_2)+f^0(p^{0,n})(t^j_2-t^j_1))
$$
converges to 
$$
\sum_{j=1}^{J_k} (\psi_{p^{0}}(t^j_1)-\psi_{p^{0}}(t^j_2)+f^0(p^{0})(t^j_2-t^j_1)) = \int_0^1 F_{p^{0}}(t){\bf 1}_{{I^k}}dt.
$$
By \eqref{defhatlambda} this shows the continuity of $\hat \lambda^k$ in $[0,\bar \lambda^0]$. \medskip

\noindent{\bf Step 3: proof of \eqref{aezlqkjrsd} and  \eqref{aezlqkjrsdBIS}.}  By \eqref{defhatlambda} and \eqref{refpsip0Fp0} we have, for $ \lambda\in [0,\bar \lambda^0]$,
$$
 \hat \lambda^1(\lambda)+  \hat \lambda^2(\lambda)=\int_0^1 F_{p^0}(t)dt = \int_0^1 ( f^0(p^0)- \psi_{p^0}'(t) )dt = f^0(p^0)=\lambda, 
$$
since $\psi_{p^0}$ is periodic.  This is \eqref{aezlqkjrsd}. By \eqref{FleqA}, $F_{p^0}(t)\leq A(t)$ a.e.. Hence by \eqref{defhatlambda}, $\hat \lambda^k(\lambda)\leq \bar \lambda^k$ for any $\lambda\in [0,\bar \lambda^0]$. For $\lambda=\bar \lambda^0$, we then have 
$$
\bar \lambda^0= \bar \lambda^1+\bar \lambda^2\geq  \hat \lambda^1(\bar \lambda^0)+  \hat \lambda^2(\bar \lambda^0)= \bar \lambda^0 , 
$$
which shows that the inequalities $\bar \lambda^k\geq  \hat \lambda^k(\bar \lambda^0)$ are actually equalities. This is \eqref{aezlqkjrsdBIS}. Let us finally remark that $\hat \lambda^k(\lambda)=\bar \lambda ^k${,} $\forall \lambda \ge \bar \lambda ^0$. Hence $\hat \lambda ^k$ is also continuous in $[0,f^0_{\max}]$ (recall that it is continuous in $[0,\bar \lambda ^0]$ by Step 2).
\end{proof}

In the proof of Lemma \ref{lem.hatlambdak} we used the following result: 

\begin{Lemma} [Analysis of $\psi_{p^0}$]\label{lem.analysew00} Fix $p^0\in [a^0,b^0]$ such that \eqref{condcondf0p0} holds and let
$$
\forall t\in \R, \qquad \psi_{p^0}(t ) {:=} \max_{t_1\leq t} \Bigl\{ \int_{t_1}^t (f^0(p^0)-A(s))ds\Bigr\} .
$$
Then $\psi_{p^0}$ is nonnegative, Lipschitz continuous, $1-$periodic and satisfies, a.e., 
$$
\psi_{p^0}'(t) = f^0(p^0)- F_{p^0}(t)= \left\{\begin{array}{ll}
f^0(p^0)-A(t) & \text{if } \psi_{p^0}(t)>0 \\
0 & \text{if }\psi_{p^0}(t)=0.
\end{array}\right.
$$
In addition, $\psi_{p^0}'(t)+A(t)-f^0(p^0)\geq0$ a.e.. 
\end{Lemma}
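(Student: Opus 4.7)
The plan is to rewrite $\psi_{p^0}$ through an antiderivative, extract its basic regularity, and then compute its derivative in the two cases $\psi_{p^0}(t)=0$ and $\psi_{p^0}(t)>0$. Set $g(s)=f^0(p^0)-A(s)$, bounded and $1$-periodic, and $G(t)=\int_0^t g(s)\,ds$, so that $\psi_{p^0}(t)=G(t)-\inf_{t_1\le t}G(t_1)$. Taking $t_1=t$ gives $\psi_{p^0}\ge 0$. The average of $g$ over a period equals $f^0(p^0)-\bar\lambda^0\le 0$ by \eqref{condcondf0p0}, so either $G(t_1)\to +\infty$ as $t_1\to -\infty$ (strict inequality) or $G$ is itself $1$-periodic (equality); in both cases the infimum over $\{t_1\le t\}$ is finite and attained. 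A standard two-sided comparison (using the maximizer at $s$ to bound $\psi_{p^0}(t)$ below and a common $t_1^\ast\le \min(s,t)$ for the opposite inequality) gives $|\psi_{p^0}(t)-\psi_{p^0}(s)|\le \|g\|_\infty |t-s|$, so $\psi_{p^0}$ is Lipschitz.

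For $1$-periodicity I substitute $\tau_1=t_1-1$ in the definition of $\psi_{p^0}(t+1)$ and change variables $s=\sigma+1$ in the integral; since $A(\cdot+1)=A$ by \eqref{hypfluxlimiterA}, the bijection $\{t_1\le t+1\}\leftrightarrow\{\tau_1\le t\}$ transforms the supremum exactly into $\psi_{p^0}(t)$, yielding $\psi_{p^0}(t+1)=\psi_{p^0}(t)$.

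The main step is the a.e.\ formula for $\psi_{p^0}'$. On $\{\psi_{p^0}=0\}$ the function $\psi_{p^0}\ge 0$ attains an interior minimum, so $\psi_{p^0}'=0$ at every point of differentiability. On $\{\psi_{p^0}>0\}$, pick a maximizer $t_1^\ast<t$ in the definition of $\psi_{p^0}(t)$; I claim that for $t'$ close enough to $t$ every maximizer of $\psi_{p^0}(t')$ lies in $(-\infty,t]$. Indeed, any maximizer in $(t,t')$ would give $\psi_{p^0}(t')\le \|g\|_\infty|t'-t|$, which, for $|t'-t|<\psi_{p^0}(t)/(2\|g\|_\infty)$, contradicts continuity of $\psi_{p^0}$ at $t$. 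Using $t_1^\ast$ for the lower bound on $\psi_{p^0}(t')$ and the maximizer at $t'$ for the upper bound yields $\psi_{p^0}(t')-\psi_{p^0}(t)=\int_t^{t'}g(s)\,ds$ for such $t'$, so $\psi_{p^0}'(t)=g(t)=f^0(p^0)-A(t)$ at every Lebesgue point of $A$ in $\{\psi_{p^0}>0\}$.

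It remains to identify the two cases with the branches of $F_{p^0}$ in \eqref{defFp0} and to derive the final inequality. The condition $(t-t_1)^{-1}\int_{t_1}^t A\,ds\ge f^0(p^0)$ for all $t_1<t$ is equivalent to $\int_{t_1}^t g\,ds\le 0$ for all $t_1<t$, which, combined with $\psi_{p^0}\ge 0$, is exactly $\psi_{p^0}(t)=0$. Hence $\{\psi_{p^0}=0\}=\{F_{p^0}=f^0(p^0)\}$ and $\{\psi_{p^0}>0\}=\{F_{p^0}=A(\cdot)\}$, and combining the two cases of the derivative gives $\psi_{p^0}'=f^0(p^0)-F_{p^0}$ a.e. The pointwise inequality $\psi_{p^0}'+A-f^0(p^0)\ge 0$ reduces to $F_{p^0}\le A$ a.e.: it is an equality on $\{\psi_{p^0}>0\}$, while on $\{\psi_{p^0}=0\}$ sending $t_1\to t^-$ in $(t-t_1)^{-1}\int_{t_1}^t A\,ds\ge f^0(p^0)$ at a Lebesgue point of $A$ yields $A(t)\ge f^0(p^0)=F_{p^0}(t)$. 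The slightly delicate point is the localization of maximizers used in the derivative computation, but it rests only on continuity of $\psi_{p^0}$ and boundedness of $g$.
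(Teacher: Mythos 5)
Your proof is correct and follows essentially the same route as the paper: nonnegativity and periodicity from the definition, Lipschitz continuity as a supremum of uniformly Lipschitz competitors, the derivative computed by splitting into $\{\psi_{p^0}=0\}$ (interior minimum) and $\{\psi_{p^0}>0\}$ (maximizer strictly below $t$, giving $\psi_{p^0}'=f^0(p^0)-A$), and the identification $\{\psi_{p^0}=0\}=\{F_{p^0}=f^0(p^0)\}$ plus the limit $t_1\to t^-$ for the final inequality. The only cosmetic differences are your use of the antiderivative $G$ and a two-sided exact increment identity where the paper uses a one-sided variational inequality at points of differentiability; both are sound.
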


\begin{proof} Note, choosing $t_1=t-1$ as a competitor and using  \eqref{condcondf0p0}, that $\psi_{p^0}\geq 0$. Moreover,  by  \eqref{condcondf0p0} and periodicity of $A$, the maximum in $t_1$ in the definition of $\psi_{p^0}$ can be chosen in $[t-1,t]$.  By periodicity of $A$, $\psi_{p^0}$ is $1-$periodic. Moreover, as 
$$
\psi_{p^0}(t ) = \max_{t_1\in \R} \Bigl\{ \int_{t_1\wedge t}^t (f^0(p^0)-A(s))ds\Bigr\} , 
$$
where the integrand is bounded, $\psi_{p^0}$ is also Lipschitz continuous as the supremum of uniformly Lipschitz {continuous} quantities.

Let us now compute $\psi_{p^0}'$. On $\{\psi_{p^0}=0\}$,  we have $\psi_{p^0}'=0$ a.e.. 
Let  $t$ be a point of derivability of $\psi_{p^0}$ with $\psi_{p^0}(t)>0$ and such that $t$ is a point of continuity of $A$. If $\hat t_1$ is optimal in the definition of $\psi_{p^0}$, then $\hat t_1<t$ because $\psi_{p^0}(t)>0$. Hence, for $|h|>0$ small, 
$$
\psi_{p^0}(t+h) \geq \int_{\hat t_1}^{t+h} (f^0(p^0)-A(s))ds = \psi_{p^0}(t)+ \int_t^{t+h} (f^0(p^0)-A(s))ds, 
$$
which implies that $\psi_{p^0}'(t)= f^0(p^0)-A(t)$. So we have proved that 
$$
\psi_{p^0}'(t) = \left\{\begin{array}{ll}
f^0(p^0)-A(t) & \text{if } \psi_{p^0}(t)>0 \\
0 & \text{if }\psi_{p^0}(t)=0
\end{array}\right.
\qquad \text{a.e..}
$$
On the other hand equality $\psi_{p^0}(t)=0$ is equivalent to saying that, for any $t_1<t$,  
\be\label{eq:n2}
\frac{1}{t-t_1}\int_{t_1}^t A(s))ds \geq f^0(p^0).
\ee
 Comparing \eqref{defFp0} with the previous equality  shows that 
$\psi_{p^0}'(t) = f^0(p^0)-F_{p^0}(t)$ a.e.. 

Finally, we have seen in \eqref{eq:n1} that
 $A\geq f^0(p^0)$ a.e. on $\{\psi_{p^0}=0\}$, which shows the last claim. 
\end{proof}

\begin{proof}[Proof of Theorem \ref{thm.main}: $\mathcal G_{\bar \Lambda}$ is a maximal germ] 
By Lemma \ref{lem.hatlambdak},  $\bar{\Lambda}$ satisfies \eqref{eq::c12}, which implies by Theorem \ref{th:1} that $\mathcal G_{\bar{\Lambda}}$ is a maximal germ.
\end{proof}

\begin{proof}[Proof of Lemma \ref{lem.reduction}] 
Let us set 
\begin{align*}
\Gamma&:=\left\{U=(u^0_+(\lambda),u^1_+(\lambda^1),u^2_+(\lambda^2))\quad \mbox{with}\quad \lambda^k:=\hat \lambda^k(\lambda)
\quad \mbox{for}\quad k=1,2\quad \mbox{and}\quad \lambda\in [0,\bar \lambda^0] \right\} \\
&\;  = \left\{U= (p^0, \hat p^1_{p^0},  \hat p^2_{p^0}), \qquad p^0\in [a^0, b^0]\; {\rm with} \; f^0(p^0)\leq \bar \lambda^0\right\}
\end{align*}
and 
$$
\left\{\begin{array}{l}
P_0:=(a^0,a^1,a^2)\in \Gamma\\
P_3:=(c^0,c^1,c^2)\\
\\
P_1:=(u^0_-(\bar \lambda^1),u^1_+(\bar \lambda^1),c^2)\\
P_2:=(u^0_-(\bar \lambda^2),c^1,u^2_+(\bar \lambda^2))\\
\end{array}\right.
$$
Then $E_{\bar \Lambda}$ defined in \eqref{defG0} is equal to
$$
E_{\bar \Lambda} = \Gamma\cup\{P_1, P_2, P_3\},
$$
the curve $\Gamma$ corresponding to case (i) in \eqref{defG0}, $P_1$ to case (ii), $P_2$ to case (iii) and $P_3$ to case (iv). Therefore $E_{\bar \Lambda} $ generates $\mathcal G_{\bar \Lambda}$ by Theorem \ref{th:1}-(ii).
\end{proof}

\bigskip

\paragraph{Three explicit examples.} 
We complete this part by three explicit computations. In the first one, there is no flux limiter (and hence no stop); the homogenized germ is then quite straightforward. The second one involves one stop only and no other flux limiter; it shows that the order (stop-road 1-road 2 or stop-road 2-road 1) influences the homogenized germ, even if the flux function is the same for both exit roads. The last one gives a hint of the class of germs that can be reached through our homogenization procedure.\bigskip  

\noindent{\it Example 1: the case where the traffic is never limited.}
We assume that
\be\label{hyp:A(t)max}
A(t)=\min\{f^0_{\max}, f^j_{\max}\} \qquad \text{for $t\in I^j$, $j=1,2$}
\ee
and that the sets $I^1$ and $I^2$ are as simple as possible:
\be\label{hyp:Iksimple}
\text{Up to a translation in time, the restriction of $I^k$ to $[0,1]$ is a single interval. }
\ee
Under these assumptions, we can compute explicitly $\bar \Lambda$. 

\begin{Proposition}\label{prop.exmacrogerm} Assume \eqref{hyp:A(t)max} and \eqref{hyp:Iksimple}. Let us set $\theta^k= \left| I^k\cap [0,1]\right|$ (for $k=1,2$). 
Then 
$$\left\{\begin{array}{l}
\displaystyle \bar \lambda^k:=\theta^k \min\{f^0_{\max}, f^j_{\max}\} \quad \mbox{for}\quad k=1,2,\\
\;\\
\displaystyle \bar \lambda^0:=\theta^1 \min\{f^0_{\max}, f^1_{\max}\} +\theta^2 \min\{f^0_{\max}, f^2_{\max}\} .
\end{array}\right.$$
Letting $\displaystyle\theta^k_*:=\frac{\bar \lambda^k}{\bar \lambda^0}$ (for $k=1,2$), 
the curves $\hat \lambda^1, \hat \lambda^2$ are given by
$$\left\{\begin{array}{ll}
\left\{\begin{array}{l}
\hat \lambda^1(\lambda):=\max(\theta^1\lambda,\lambda-\bar \lambda^2)\\
\hat \lambda^2(\lambda):=\min(\theta^2\lambda,\bar \lambda^2)\\
\end{array}\right|\quad \mbox{for}\quad \lambda\in  [0,\bar \lambda^0] & \text{if $\theta^2\ge \theta^2_*$,}\\ 
\; &\\
\left\{\begin{array}{l}
\hat \lambda^1(\lambda):=\min(\theta^1\lambda,\bar \lambda^1)\\
\hat \lambda^2(\lambda):=\max(\theta^2\lambda,\lambda-\bar \lambda^1)\\
\end{array}\right|\quad \mbox{for}\quad \lambda\in  [0,\bar \lambda^0] & \text{if $\theta^2< \theta^2_*$.} 
\end{array}\right. 
$$
\end{Proposition}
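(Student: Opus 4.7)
The plan is straightforward since both $\bar\lambda^k$ and $\hat\lambda^k$ can be computed in closed form. First, using \eqref{deflambdakkk} and the fact that $A(t) = A_k := \min\{f^0_{\max}, f^k_{\max}\}$ is constant on $I^k$, we immediately get $\bar\lambda^k = \theta^k A_k$ and $\bar\lambda^0 = \bar\lambda^1 + \bar\lambda^2$, which is the claimed formula.

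For $\hat\lambda^k$, after a translation we may assume $I^1\cap[0,1) = [0,\theta^1)$ and $I^2\cap[0,1) = [\theta^1,1)$, extended periodically. By symmetry we can treat the case $A_1 \le A_2$; the other case follows by exchanging the roles of the indices $1$ and $2$. Fix $\lambda\in[0,\bar\lambda^0]$ and set $p^0 = (f^{0,+})^{-1}(\lambda)$. I will compute $\psi_{p^0}$ explicitly on one period and then use the identity $F_\lambda = \lambda - \psi'_{p^0}$ from Lemma \ref{lem.analysew00}.

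If $\lambda \le A_1$, then $\lambda - A(s) \le 0$ a.e., so $\psi_{p^0} \equiv 0$, hence $F_\lambda \equiv \lambda$ and $\hat\lambda^k(\lambda) = \theta^k\lambda$. If $A_1 < \lambda \le \bar\lambda^0$, then on $[0,\theta^1)$, taking $t_1 = 0$ in the supremum defining $\psi_{p^0}$ gives a positive value, while going further back into the previous $I^2$-interval (or earlier periods) can only decrease the integral because $\lambda \le A_2$ and $\lambda \le \bar\lambda^0$. This shows $\psi_{p^0}(t) = t(\lambda - A_1) > 0$ on $(0,\theta^1)$. On $[\theta^1,1)$, $\psi_{p^0}$ decreases at rate $A_2-\lambda$ starting from $\theta^1(\lambda-A_1)$, and hits zero precisely at $t^* := \theta^1 + \theta^1(\lambda-A_1)/(A_2-\lambda)$; the inequality $t^* \le 1$ is equivalent to $\lambda \le \bar\lambda^0$, which holds. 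Therefore by Lemma \ref{lem.analysew00}, $F_\lambda = A_1$ on $[0,\theta^1)$, $F_\lambda = A_2$ on $[\theta^1,t^*)$, and $F_\lambda = \lambda$ on $[t^*,1)$.

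Integrating gives $\hat\lambda^1(\lambda) = \theta^1 A_1 = \bar\lambda^1$ and, after simplifying $\tau^*(A_2-\lambda) = \theta^1(\lambda-A_1)$,
\[
\hat\lambda^2(\lambda) = (t^*-\theta^1)A_2 + (1-t^*)\lambda = \lambda - \theta^1 A_1 = \lambda - \bar\lambda^1.
\]
Combining with the case $\lambda\le A_1$ and using the identity $A_1 = \bar\lambda^1/\theta^1$, one checks
\[
\hat\lambda^1(\lambda) = \min(\theta^1\lambda,\bar\lambda^1),\qquad \hat\lambda^2(\lambda)=\max(\theta^2\lambda,\lambda-\bar\lambda^1) \quad \forall \lambda\in[0,\bar\lambda^0].
\]
Finally, a direct algebraic check yields
$\theta^2 - \theta^2_* = \theta^1\theta^2(A_1 - A_2)/\bar\lambda^0$,
so $A_1 \le A_2$ corresponds to $\theta^2 \le \theta^2_*$, which matches the second case of the statement; the symmetric case $A_2 \le A_1$ yields the first case. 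There is no real obstacle beyond the bookkeeping: the only subtlety is verifying $t^* \le 1$, which is equivalent to the standing hypothesis $\lambda\le\bar\lambda^0$ and ensures that $\psi_{p^0}$ returns to zero within each period.
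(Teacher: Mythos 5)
Your proof is correct and follows essentially the same route as the paper's: reduce by translation/relabelling to a fixed interval ordering, compute the flux $F_\lambda$ explicitly on one period (you do this via $\psi_{p^0}$ and Lemma \ref{lem.analysew00}, the paper via the minimizing $t_1$ in \eqref{defFp0}, which is the same computation), and integrate over $I^1$, $I^2$. Your case split by $A_1\le A_2$ versus $A_1>A_2$ is equivalent to the paper's split $\theta^2\le\theta^2_*$ versus $\theta^2>\theta^2_*$, as your identity $\theta^2-\theta^2_*=\theta^1\theta^2(A_1-A_2)/\bar\lambda^0$ confirms.
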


\begin{proof}  The computation of the $\bar \lambda^k$ ($k=0,1,2$) is immediate. Let us now compute  the $\hat \lambda^k$ ($k=1,2$). To fix the ideas, we assume that $\theta^2\ge \theta^2_*$, the other case being treated in a symmetric way. Without loss of generality, we also assume that $0<\theta^1<1$, since otherwise the problem reduces to a problem with a single outgoing road. We set $\phi^k= \min\{f^0_{\max}, f^k_{\max}\}$, $k=1,2$. Note that $\theta^2\ge \theta^2_*$ is equivalent to saying that $\phi^1\geq \phi^2$. Fix $\lambda \in [0, \bar \lambda^0]$ and let $p^0=u_+^0(\lambda)$. 

Let us first assume that $\lambda\in [0, \phi^2]$. Recalling that $\theta_2=1-\theta_1$, we have $\max(\theta^1\lambda,\lambda-\bar \lambda^2)= \theta^1 \lambda$ and $\min(\theta^2\lambda,\bar \lambda^2)= \theta^2\lambda$. On the other hand, in this case, the map $F_{p^0}$ defined in \eqref{defFp0} is constant and equal to $\lambda = f^0(p^0)$.   
Then, for $k=1,2$, 
$$
 \hat \lambda^k(\lambda)= \int_0^1 F_{p^0(\lambda)}(t) {\bf 1}_{I^k}(t)dt=\theta^k \lambda. 
 $$

Let us now suppose that $\lambda\in (\phi^2, \bar \lambda^0]$. Then $\max(\theta^1\lambda,\lambda-\bar \lambda^2)= \lambda-\bar \lambda^2$ and $\min(\theta^2\lambda,\bar \lambda^2)= \bar \lambda^2$. To compute $F_{p^0}$, we assume without loss of generality that $I^1\cap [0,1)= [0,\theta^1)$ while $I^2\cap [0,1)= [\theta^1,1)$. 
Since $\lambda\le \bar \lambda ^0=\theta^1\phi^1+\theta^2\phi^2$ and $\lambda>\phi^2$, we deduce that $\lambda<\phi^1$. Hence the minimum over $t_1$ of $\frac 1{t-t_1}\int_{t_1}^t(A(s)-\lambda)ds$ is reached for $t_1=-(1-\theta^1)=-\theta^2$ if $t\in[0,\theta^1)$.
Then, by \eqref{defFp0},
$$
F_{p^0}(t) = \left\{ \begin{array}{ll}
f^0(p^0) & \text{if $t\in [\frac{\theta^2(\lambda-\phi^2)}{\phi^1-\lambda}, \theta^1)$}\\
A(t) & \text{otherwise}
\end{array}\right. \; \text{(mod 1)},
$$
so that 
$$
\hat \lambda^1(\lambda)= \int_0^1 F_{p^0(\lambda)}(t) {\bf 1}_{I^1}(t)dt =\int_0^{\theta^1} F_{p^0(\lambda)}(t) dt= \frac{\theta^2(\lambda-\phi^2)}{\phi^1-\lambda}\phi^1+ (\theta^1-\frac{\theta^2(\lambda-\phi^2)}{\phi^1-\lambda}) \lambda  = \theta^2(\lambda-\phi^2) +\theta^1\lambda= \lambda-\bar \lambda^2,
$$
while 
$$
\hat \lambda^2(\lambda)=\int_0^1 F_{p^0(\lambda)}(t) {\bf 1}_{I^2}(t)dt= \theta^2 \phi^2=\bar \lambda^2. 
$$
\end{proof}
 
 \noindent{\it Example 2: one stop followed successively by two exits.}
Consider now the case where $f^1=f^2=f$, and $f^0$ may be different. We set $A^0:=\max(f^0_{\max},f_{\max})$.
We also assume that for $\tau_0=\theta^0$, $\tau_1=\theta^0+\theta^1$, $\tau_2=\theta^0+\theta^1+\theta^2=1$ with $\theta^i >0$, we have
$$A(t)=\left\{\begin{array}{ll}
0 &\quad \mbox{on}\quad [0,\tau_0)=I^0\\
A^0 &\quad \mbox{on}\quad [\tau_0,\tau_1)=I^1\\
A^0 &\quad \mbox{on}\quad [\tau_1,\tau_2)=[\tau_1,1)=I^2\\
\end{array}\right.$$
 In other worlds,  all the incoming vehicles from road $0$, go on road $j$ during the time interval $I^j$ for $j=1,2$, while they are all stopped at the junction during the time interval $I^0$.

We then have the following result
\begin{Proposition}\label{lem::f21}{\bf (Flux computation with one stop followed successively by two exits)}
Under the previous assumptions, we have for $\lambda\in [0,\bar \lambda^0]$
$$\left\{\begin{array}{l}
\hat \lambda^1(\lambda)=\min\{\lambda (\theta^0+\theta^1),\bar \lambda^1\}\\
\hat \lambda^2(\lambda)=\max\{\lambda \theta^2,\lambda-\bar \lambda^1\}\\
(\hat \lambda^1+\hat \lambda^2)(\lambda)=\lambda
\end{array}\right.$$
with
$$\bar \lambda^0:=A^0(\theta^1+\theta^2),\quad \bar \lambda^1:=A^0\theta^1,\quad \bar \lambda^2=A^0\theta^2.$$
Moreover, if $\theta^1=\theta^2$, then we have $\bar \lambda^1=\bar \lambda^2$ and
$$\hat \lambda^1 > \hat \lambda^2 \quad \mbox{on}\quad (0,\bar \lambda^0)$$
with equality at both end points of the interval $(0,\bar \lambda^0)$.
\end{Proposition}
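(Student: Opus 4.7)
The plan is to apply Lemma \ref{lem.analysew00} to describe $\psi_{p^0}$ explicitly for this three-phase flux limiter, read off $F_{p^0}$ from the identity $F_{p^0}=f^0(p^0)-\psi_{p^0}'$, and then integrate according to \eqref{defhatlambda}. The values $\bar\lambda^0,\bar\lambda^1,\bar\lambda^2$ are immediate from \eqref{deflambdakkk}: since $A$ vanishes on $[0,\tau_0)$ and equals $A^0$ on $[\tau_0,1)$, we get $\bar\lambda^k=A^0\theta^k$ for $k=1,2$ and $\bar\lambda^0=A^0(\theta^1+\theta^2)$. Since $\theta^0>0$, any $\lambda\in[0,\bar\lambda^0]$ satisfies $\lambda<A^0$.

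Fix $\lambda\in[0,\bar\lambda^0]$ and set $p^0=u^0_+(\lambda)$, so $f^0(p^0)=\lambda$. The integrand $\lambda-A(s)$ defining $\psi_{p^0}$ equals $\lambda\ge 0$ on the stop interval $[0,\tau_0)$ and $\lambda-A^0\le 0$ on $[\tau_0,1)$. Testing competitors $t_1\in[-1,0]$ (using periodicity) gives the bound $\psi_{p^0}(0)\le 0$, and nonnegativity forces $\psi_{p^0}(0)=0$. Propagating in time yields
\[
\psi_{p^0}(t)=\begin{cases}\lambda t & t\in[0,\tau_0),\\ \max\bigl\{\lambda\theta^0+(\lambda-A^0)(t-\tau_0),\,0\bigr\} & t\in[\tau_0,1),\end{cases}
\]
so $\psi_{p^0}$ rises with slope $\lambda$ to a peak $\lambda\theta^0$ at $\tau_0$, then decreases with slope $\lambda-A^0$ and vanishes after a duration $s^{*}:=\lambda\theta^0/(A^0-\lambda)$. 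The hypothesis $\lambda\le\bar\lambda^0$ is precisely $s^{*}\le \theta^1+\theta^2$, so the picture is consistent on $[0,1)$.

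By Lemma \ref{lem.analysew00}, $F_{p^0}=A$ on $\{\psi_{p^0}>0\}$ and $F_{p^0}=\lambda$ on $\{\psi_{p^0}=0\}$; in particular $F_{p^0}=0$ on $[0,\tau_0)$, which is why the placement of $I^0$ inside $I^1$ or $I^2$ is immaterial. Two regimes arise, separated by whether $\psi_{p^0}$ reaches zero inside $I^1$ or only inside $I^2$. In case (a), $s^{*}\le\theta^1$, equivalently $\lambda(\theta^0+\theta^1)\le\bar\lambda^1$, I integrate to obtain
\[
\hat\lambda^1(\lambda)=A^0s^{*}+\lambda(\theta^1-s^{*})=\lambda(\theta^0+\theta^1),\qquad \hat\lambda^2(\lambda)=\lambda\theta^2.
\]
In case (b), $s^{*}>\theta^1$, the function $\psi_{p^0}$ stays positive throughout $I^1$, so $F_{p^0}=A^0$ there, giving $\hat\lambda^1(\lambda)=\bar\lambda^1$, and on $I^2$ a direct computation using $(A^0-\lambda)s^{*}=\lambda\theta^0$ produces $\hat\lambda^2(\lambda)=(A^0-\lambda)(s^{*}-\theta^1)+\lambda\theta^2=\lambda-\bar\lambda^1$. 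Combining both regimes gives the claimed $\min$/$\max$ formulas, and the identity $(\hat\lambda^1+\hat\lambda^2)(\lambda)=\lambda$ follows either by inspection or from Lemma \ref{lem.hatlambdak}.

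For the symmetric subcase $\theta^1=\theta^2$, one has $\bar\lambda^1=\bar\lambda^2$, and evaluating $\hat\lambda^1-\hat\lambda^2$ yields $\lambda\theta^0>0$ in case (a) and $\bar\lambda^0-\lambda>0$ in case (b), strictly positive on $(0,\bar\lambda^0)$ and vanishing at the endpoints by continuity of the $\hat\lambda^k$ (Lemma \ref{lem.hatlambdak}). The whole proof is a piecewise bookkeeping of $\psi_{p^0}$; I do not expect any conceptual obstacle, only the care needed to split correctly between the two regimes $s^{*}\lessgtr\theta^1$.
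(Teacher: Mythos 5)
Your proof is correct and follows essentially the same route as the paper: the paper computes the contact set $\{\Phi_\lambda=B\}\cap[0,1]=[t_\lambda,1]$ of the obstacle problem and integrates $F_\lambda$ over $I^1,I^2$, while you work with $\psi_{p^0}=\Phi_\lambda-B$ from Lemma \ref{lem.analysew00} and locate the time $\theta^0+s^*=t_\lambda$ where it vanishes — the same computation, with your explicit split into the regimes $s^*\lessgtr\theta^1$ replacing the paper's use of $\tau_1\wedge t_\lambda$ and $t_\lambda\vee\tau_1$. All the resulting formulas and the final comparison $\hat\lambda^1>\hat\lambda^2$ on $(0,\bar\lambda^0)$ check out.
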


\begin{Remark}\label{rem::f23}
{The result of Proposition \ref{lem::f21} in the special case $\theta^1=\theta^2$, means that the order (stop-road 1-road 2) matters with respect to the order (stop-road-2-road 1).  The road which receives the traffic just after the stop, will have a higher passing flux than the other one.\\
After reversion, this corresponds  to a convergent 2:1 junction where the outgoing road 0 is congested.
Then road $1$ (just after the stop) will evacuate more easily than road $2$, its vehicles onto the road $0$. This happens because the stop created some free space on road $0$ just after the junction.
This last interpretation is much more intuitive here.}
\end{Remark}

\begin{proof}
For $t\in [0,1]$, let $B(t)=\max(0, A^0(t-\theta^0))$  and extend $B$ to $\R$ by
$B(t+1)=B(t)+A^0(\theta^1+\theta^2)$ such that $B'=A$. For any $\lambda\in [0,\bar \lambda^0]$, where $\bar \lambda^0:=\int_{[0,1]} A=A^0(\theta^1+\theta^2)$, define $\Phi_\lambda$ as in 
\eqref{eq::f16}
and $t=t_\lambda\in (\theta^0,1]$ such that
$$\lambda t = B(t)\qquad {\rm i.e}\qquad t_\lambda-\theta^0=\frac{\lambda \theta^0}{A^0-\lambda}.$$
We then have, using that $\{\Phi_\lambda=B\}\subset\{F_\lambda=\lambda\}$,
$$\hat \lambda^j(\lambda)=\int_{I^j} F_\lambda=\int_{I^j\cap \left\{\Phi_\lambda> B\right\}} A+\int_{I^j\cap \left\{\Phi_\lambda=B\right\}} \lambda=\int_{I^j} A+
\int_{I^j\cap \left\{\Phi_\lambda=B\right\}} (\lambda-A).$$
Since $\left\{\Phi_\lambda=B\right\}\cap [0,1]=[t_\lambda,1]$, we deduce that
$$\hat \lambda^1(\lambda)=A^0\theta^1+\int_{[\tau_1\wedge t_\lambda,\tau_1]} (\lambda-A^0)$$
Let us set $\lambda_*=\frac{A^0\theta^1}{\theta^0+\theta^1}$ such that $t_{\lambda_*}=\tau_1$. This implies that, for $\lambda\in [0,\lambda_*]$, we have
$$\hat \lambda^1(\lambda)=A^0\theta^1+(\lambda-A^0)(\tau_1-t_\lambda)=\lambda \tau_1$$
and then
$$\hat \lambda^1(\lambda)=\min(\lambda \tau_1,\bar \lambda^1)\quad \mbox{on}\quad [0,\bar \lambda^0],\quad \mbox{with}\quad \bar \lambda^1:=\lambda_*\tau_1=A^0\theta^1.$$
Similarly, using that
$$\hat \lambda^2(\lambda)=\int_{[t_\lambda \vee \tau_1,1]} (\lambda-A^0)+A^0\theta^2$$
we can show that
$${\hat \lambda^2(\lambda)=\max(\lambda \theta^2,\lambda-\bar \lambda^1)\quad \mbox{on}\quad [0,\bar \lambda^0]}$$
This ends the proof.
\end{proof}

\begin{Remark}{{\bf (Bounds on the derivatives of $\hat \lambda^j$)}}
A natural question is the characterization of the functions $\hat \lambda^j$ that can be constructed by homogenization. In fact, the derivative of these {fluxes} has to be bounded between $0$ and $1$. More precisely, one can show that
$$1-g^2(\lambda) \ge (\hat \lambda^1)'(\lambda) \ge g^1(\lambda)\ge 0 \quad \mbox{a.e. for}\quad \lambda\in [0,\bar \lambda^0]$$
(and symmetrically for $\hat \lambda^2$) with
$$g^j(\lambda):=|\left\{F_\lambda=\lambda\right\}\cap I^j|$$
Moreover $g^j\in L^\infty([0,\bar \lambda^0])$ has a monotone nonincreasing representant in the class of $L^\infty$ functions.
We can show that this also implies that if there exists some $\lambda_1\in (0,\bar \lambda^0)$ such that the derivative vanishes
$$(\hat \lambda^j)'(\lambda_1)=0$$
then $\hat \lambda^j=const$ on $[\lambda_1,\bar \lambda^0]$.
Moreover each $\hat \lambda^j$ is sandwiched in between a concave function and a convex function.\\

\end{Remark}

\noindent {\it Example 3: concave flux $\hat \lambda^1$.} We now explain how to compute $\hat \lambda^1$ from $A$ when $A$ has a particular structure {and is  assumed to be continuous}.\\

\begin{Proposition}\label{lem::f4}{\bf (The case of $\hat \lambda^1$  concave {and $A$ continuous})}\\
Given $0<t_1-t_0<1$, assume furthermore that $A:\R \to [0,+\infty)$ (still $1$-periodic) is $C^1$, decreasing on $[t_0,t_1]$, and increasing on $[t_1,t_0+1]$.
Given $\bar \lambda^0:=\int_{[0,1]} A$, consider $\bar t_0\in [t_0,t_1]$ such that $A(\bar t_0)= {\bar \lambda^0}$.
Assume now that 
$$A'<0\quad \mbox{on}\quad [\bar t_0,t_1)\qquad {\rm and}\qquad  I^1=[\bar t_0,t_1]\quad \mbox{mod. 1}$$
Then up to translate $A$, we can assume that $\bar t_0=0$, and we have
\begin{equation}\label{eq::b201}
1>(\hat \lambda^1)'(\lambda) = \left\{\begin{array}{ll}
(A_{|[0,t_1]})^{-1}(\lambda)& \quad \mbox{if}\quad \lambda\in (A(t_1),A(0)]\\
|I^1|=t_1 & \quad \mbox{if}\quad \lambda\in [0,A(t_1))\\
\end{array}\right.
\end{equation}
The function $\hat \lambda^1$ is $C^1$ and concave on $[0,\bar \lambda^0]$. Moreover $\hat \lambda^1$ is linear on $[0,A(t_1)]$, and $C^2$ strictly concave on $(A(t_1),\bar \lambda^0]$. We also have $(\hat \lambda^1)'(\bar \lambda^0)=\bar t_0=0$ when $0=\bar t_0< t_1$.\\
\end{Proposition}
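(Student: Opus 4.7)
The plan is to compute $\hat\lambda^1(\lambda)$ explicitly by identifying the set where $F_\lambda=\lambda$ on $I^1$, then differentiate. After translating time so that $\bar t_0=0$, we have $A(0)=A(1)=\bar\lambda^0$, $A$ is strictly decreasing on $[0,t_1]$, strictly increasing on $[t_1,1]$, and $I^1=[0,t_1]$. For each $\lambda\in[0,\bar\lambda^0]$ the key step will be to locate the set $\{\psi_\lambda=0\}$ (equivalently $\{F_\lambda=\lambda\}$) inside $[0,t_1]$, where $\psi_\lambda=\Phi_\lambda-B$ is the obstacle function appearing in the Remark and in Lemma \ref{lem.analysew00}.

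First, I would use the rewriting
\[
\Phi_\lambda(t)-\lambda t=\sup_{s\le t}\bigl\{B(s)-\lambda s\bigr\},
\]
so that $\{\psi_\lambda=0\}$ is exactly the set of running left-maxima of the function $g_\lambda(t):=B(t)-\lambda t$, whose derivative is $A(t)-\lambda$. Since $B(t+1)-\lambda(t+1)=B(t)-\lambda t+(\bar\lambda^0-\lambda)$, the function $g_\lambda$ drifts upward by $\bar\lambda^0-\lambda\ge 0$ each period, so the running left-maximum is ultimately controlled by what happens over a single period.

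Then I would split into the two regimes of \eqref{eq::b201}. If $\lambda\in[0,A(t_1))$, then $A>\lambda$ everywhere, so $g_\lambda$ is strictly increasing and $\{\psi_\lambda=0\}=\R$; therefore $F_\lambda\equiv\lambda$ and $\hat\lambda^1(\lambda)=\lambda\,|I^1|=\lambda t_1$, which gives $(\hat\lambda^1)'(\lambda)=t_1$. If instead $\lambda\in(A(t_1),\bar\lambda^0]$, define $t_\lambda:=(A_{|[0,t_1]})^{-1}(\lambda)\in[0,t_1)$ and $t'_\lambda\in(t_1,1)$ by $A(t'_\lambda)=\lambda$ on the increasing branch. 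Then $g_\lambda$ is increasing on $[0,t_\lambda]$, decreasing on $[t_\lambda,t'_\lambda]$, and increasing again on $[t'_\lambda,1]$; it reaches its left-running maximum on $[0,t_\lambda]$, leaves it on $(t_\lambda,t^*)$ for some $t^*>t_1$, and only returns to it at $t=t^*$. Hence
\[
\{\psi_\lambda=0\}\cap I^1=[0,t_\lambda],\qquad \{F_\lambda=A\}\cap I^1=(t_\lambda,t_1],
\]
which yields
\[
\hat\lambda^1(\lambda)=\int_0^{t_\lambda}\lambda\,dt+\int_{t_\lambda}^{t_1} A(s)\,ds=\lambda\, t_\lambda+B(t_1)-B(t_\lambda).
\]
Differentiating, the terms involving $dt_\lambda/d\lambda$ cancel because $A(t_\lambda)=\lambda$, and we get $(\hat\lambda^1)'(\lambda)=t_\lambda=(A_{|[0,t_1]})^{-1}(\lambda)$, proving \eqref{eq::b201}.

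Finally, the regularity assertions follow by direct inspection of \eqref{eq::b201}. The right-hand side is constant equal to $t_1$ on $[0,A(t_1)]$, so $\hat\lambda^1$ is affine there; on $(A(t_1),\bar\lambda^0]$, since $A\in C^1$ with $A'<0$, the inverse $(A_{|[0,t_1]})^{-1}$ is $C^1$ and strictly decreasing, which gives $\hat\lambda^1\in C^2$ and strictly concave on that interval. Continuity at the junction $\lambda=A(t_1)$ is immediate since both expressions yield the common value $t_1$; hence $\hat\lambda^1$ is $C^1$ and concave on the whole $[0,\bar\lambda^0]$. The identity $(\hat\lambda^1)'(\bar\lambda^0)=(A_{|[0,t_1]})^{-1}(\bar\lambda^0)=0=\bar t_0$ is immediate, as is $(\hat\lambda^1)'<1$ since $t_\lambda\le t_1<1$. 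The main technical step in this plan is the description of $\{\psi_\lambda=0\}\cap I^1$: it relies crucially on the monotonicity structure of $A$ on $[0,t_1]$ and on the fact that $I^1$ lies entirely within the decreasing half-period, which is what ensures the clean formula $[0,t_\lambda]$ rather than a more complicated union.
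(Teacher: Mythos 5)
Your proposal is correct and follows essentially the same route as the paper: identify the coincidence set $\{F_\lambda=\lambda\}\cap I^1=[0,t_\lambda]$ (the paper does this through the obstacle function $\Phi_\lambda$ and by reference to the proof of Proposition \ref{lem::f21}, you through the running maxima of $g_\lambda(t)=B(t)-\lambda t$, which is the same object since $\psi_\lambda(t)=\sup_{s\le t}g_\lambda(s)-g_\lambda(t)$), integrate to get $\hat\lambda^1(\lambda)=\lambda t_\lambda+\int_{t_\lambda}^{t_1}A$, and differentiate using $A(t_\lambda)=\lambda$ — the paper packages this last step as $(-\beta')\circ A=\mathrm{Id}$ while you use an envelope-type cancellation, but it is the same computation. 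One minor inaccuracy: after translating so that $\bar t_0=0$, $A$ is not increasing on all of $[t_1,1]$ in general (it increases on $[t_1,t_0+1]$ and then decreases back to $\bar\lambda^0$), but since $A\ge\bar\lambda^0\ge\lambda$ on that final piece, $g_\lambda$ remains nondecreasing there and your description of the coincidence set — including the check, via the drift $g_\lambda(t+1)=g_\lambda(t)+(\bar\lambda^0-\lambda)$, that the previous period never dominates on $[0,t_\lambda]$ — goes through unchanged.
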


\begin{proof}
We first notice that for $\lambda\in [0,A(t_1)]$, we have $F_\lambda=\lambda$ and $\hat \lambda^1(\lambda)=|I_1| \lambda$.\\
For $\lambda\in [A(t_1),A(\bar t_0)]$, we define $t_\lambda\in [\bar t_0,t_1]$ such that
$A(t_\lambda)=\lambda$. Arguing as in the proof of Proposition \ref{lem::f21}, we have%
$$\beta(\lambda):=\hat \lambda^1(\bar \lambda^0)-\hat \lambda^1(\lambda)=\int_{\bar t_0}^{t_\lambda} (A-\lambda)$$
Because $\lambda \mapsto t_\lambda=(A_{|[\bar  t_0,t_1]})^{-1}(\lambda)=A^{-1}(\lambda)$ is $C^1$ on $(A(t_1),A(\bar t_0)]$, we see for later use that $\beta$ is  also $C^1$, and is moreover nonincreasing. Now for $t:=t_\lambda$, we have $A(t)=\lambda$ and 
$$\beta(\lambda)=\int_{\bar t_0}^{A^{-1}(\lambda)} (A(s)-A(t))ds$$
i.e.
$$(\beta\circ A)(t)=\int_{\bar t_0}^{t} (A(s)-A(t))ds$$
Taking the derivative, and dividing by $A'(t)<0$, and up to assume that $\bar t_0=0$, we get
$$(-\beta')\circ A = Id_{[\bar t_0,t_1)}$$
with $-\beta'=(\hat \lambda^1)'$. This implies that
$$(\hat \lambda^1)'=(A_{|[\bar t_0,t_1]})^{-1} \quad \mbox{on}\quad (A(t_1),A(\bar t_0)]$$
\end{proof}

{\begin{Remark}\label{rem::b200}
1) Notice that we can also prove a sort of recriprocal result.
Given any $C^2$ concave function $\hat \lambda^1:[0,\bar \lambda^0]\to [0,+\infty)$ with $(\hat \lambda^1)''<0$ on $(0,\bar \lambda^0)$
and $\hat \lambda^1(0)=(\hat \lambda^1)(\bar \lambda^0)=0< (\hat \lambda^1)'(0)<1$, we can cook-up a suitable $1$-periodic function $A$ with $A(t_1)=0$. Everything can be done such that $\hat \lambda^1$ is associated to $A$ as in Proposition \ref{lem::f4} (except that  $A$ is constant on $(t_1,t_0+1)$ and possibly discontinuous at $t_0$ and $t_1$).\\
2) Notice also that in this remark and in Proposition \ref{lem::f4}, the function $A$ is not piecewise constant, as it is assumed in our homogenization result. Nevertheless, an approximation {of} such $A$ by a sequence of piecewise constant functions is always possible, and then relation (\ref{eq::b201})
is still valid, once it is correctly interpreted (where $\hat \lambda^1$ is continuous and piecewise linear).
Then any concave $\hat \lambda^1$ as in point 1), can then be obtained as limits of homogenized $\hat \lambda^1$ of piecewisely approximated functions $A$.
\end{Remark}}

\subsection{Proof of Theorem \ref{th:1}}\label{sec:2.2}
This subsection is devoted to the proof of Theorem \ref{th:1}. Starting with a lemma describing how the dissipation condition can be violated (Lemma \ref{lem::22}), we prove that $\mathcal G_\Lambda$ is maximal and generated by $E^+_{\Lambda}$ (Lemma \ref{lem::r36}) and then that it is  a germ (Lemma \ref{lem::r37}). 

\subsubsection{A technical lemma}

We consider $P=(p^0,p^1,p^2)$ and $\bar P=(\bar p^0,\bar p^1,\bar p^2)$ with $P,\bar P \in Q^{RH}$, i.e. such that we have the Rankine-Hugoniot relations
$$\left\{\begin{array}{l}
f^0(p^0)=f^1(p^1)+f^2(p^2)\\
f^0(\bar p^0)=f^1(\bar p^1)+f^2(\bar p^2).\\
\end{array}\right.$$
Defining
\begin{equation}\label{eq::c9}
\left\{\begin{array}{ll}
F^0:=f(\bar p^0)-f(p^0),\quad s^0:=\mbox{sign}(\bar p^0-p^0)\\
F^1:=f(\bar p^1)-f(p^1),\quad s^1:=\mbox{sign}(\bar p^1-p^1)\\
F^2:=f(\bar p^2)-f(p^2),\quad s^2:=\mbox{sign}(\bar p^2-p^2)\\
\end{array}\right.
\end{equation}
we get
$$D(\bar P,P)=s^0F^0-\left\{s^1F^1+s^2F^2\right\}\quad \mbox{with}\quad F^0=F^1+F^2$$
and $s^j=0$ implies $F^j=0$.

\begin{Lemma}\label{lem::22}{\bf (Violated dissipation for divergent 1:2 junction)}\\
Let us consider the dissipation
$$D:=s^0F^0-\left\{s^1F^1+s^2F^2\right\}\quad \mbox{with}\quad \left\{\begin{array}{ll}
F^0=F^1+F^2&\\
s^j\in \left\{0,\pm 1\right\}&\quad \mbox{for}\quad j=0,1,2\\
s^j=0 \quad \mbox{implies}\quad F^j=0 &\quad \mbox{for}\quad j=0,1,2.
\end{array}\right.$$
Then $D<0$ if and only if
$$\left\{\begin{array}{ll}
&s^0F^0<0,\quad s^1=s^2\not= s^0 \quad \mbox{weakly}\\
\mbox{or}&\\
&s^1F^1>0,\quad s^0=s^2\not= s^1 \quad \mbox{weakly}\\
\mbox{or}&\\
&s^2F^2>0,\quad s^0=s^1\not= s^2 \quad \mbox{weakly}\\
\end{array}\right.$$
where 
$$s^1=s^2\not= s^0 \quad \mbox{weakly} \quad \Longleftrightarrow \quad s^0\not=0,\quad s^1s^2\ge 0,\quad s^0s^1\le 0,\quad s^0s^2\le 0.$$
\end{Lemma}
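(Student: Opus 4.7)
The plan is to exploit the linear constraint $F^0 = F^1 + F^2$ to rewrite the dissipation in three equivalent forms, each eliminating one of the $F^j$:
\[
D = (s^0 - s^1) F^1 + (s^0 - s^2) F^2 = (s^0 - s^1) F^0 + (s^1 - s^2) F^2 = (s^0 - s^2) F^0 + (s^2 - s^1) F^1.
\]
These three representations correspond exactly to the three cases of the statement: the first isolates the role of $s^0$ and $F^0$ (case 1), the second that of $s^1$ and $F^1$ (case 2), the third that of $s^2$ and $F^2$ (case 3). I will also use the sign symmetry $(s^j, F^j) \mapsto (-s^j, -F^j)$, which preserves $D$, the relation $F^0 = F^1 + F^2$, and the implication $s^j = 0 \Rightarrow F^j = 0$.

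For the ``if'' direction, I would verify each of the three cases using the matching representation. For instance, in case 1 the hypotheses $s^0 \neq 0$ and $s^0 s^k \leq 0$ force $s^k \in \{0, -s^0\}$ for $k = 1, 2$; combined with $s^j = 0 \Rightarrow F^j = 0$ and with $F^0 = F^1 + F^2$, a short split over the feasible subcases (ruling out $s^1 = s^2 = 0$, which would contradict $s^0 F^0 < 0$) yields in each remaining configuration the identity $(s^0 - s^1) F^1 + (s^0 - s^2) F^2 = 2 s^0 F^0$, which is strictly negative by hypothesis. Cases 2 and 3 proceed analogously using the other two representations.

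For the ``only if'' direction, I would assume $D < 0$ and enumerate the sign triples $(s^0, s^1, s^2) \in \{0, \pm 1\}^3$. By the sign symmetry mentioned above, it suffices to consider $s^0 \in \{0, 1\}$, cutting the $27$ triples in half. The triples with $s^0 = s^1 = s^2$, or with enough zero entries to force $F^0 = F^1 = F^2 = 0$ via $F^0 = F^1 + F^2$ and $s^j = 0 \Rightarrow F^j = 0$, give $D = 0$ and are discarded. The remaining short list of triples each match one of the three sign patterns in the statement; substituting back into the appropriate representation of $D$ shows that the strict inequality $D < 0$ translates into exactly the corresponding sign condition ($s^0 F^0 < 0$, $s^1 F^1 > 0$, or $s^2 F^2 > 0$). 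The only obstacle is the combinatorial bookkeeping of these sign patterns; sign symmetry and the vanishing of $F^j$ when $s^j = 0$ reduce the analysis to a handful of genuine subcases, so the proof is elementary once these reductions are in place.
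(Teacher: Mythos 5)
Your argument is correct, and it reaches the same elementary core as the paper's proof (in every non-degenerate sign configuration the constraint $F^0=F^1+F^2$ collapses $D$ to $2s^0F^0$, $-2s^1F^1$ or $-2s^2F^2$), but it is organized differently. The paper first symmetrizes by replacing $(F^1,F^2)$ with $(-F^1,-F^2)$, so that $D=\sum_j s^jF^j$ with $\sum_j F^j=0$ and the three alternatives become permutations of one another; it then proves the forward implication by a short case split on which $s^k$ vanish, and the converse by contraposition (assuming none of the three conditions holds and deducing $D\ge 0$ via a case analysis on the sign products $s^is^j$). You instead keep the original orientation, record the three eliminations $D=(s^0-s^1)F^1+(s^0-s^2)F^2=(s^0-s^1)F^0+(s^1-s^2)F^2=(s^0-s^2)F^0+(s^2-s^1)F^1$, and prove the converse by direct enumeration of the sign triples. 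The three-representation identity is a nice systematic device that makes the ``which term survives'' computation mechanical, and the direct enumeration avoids the paper's slightly delicate bookkeeping of which weak conditions are being negated; the price is a longer (though halved by the global flip $(s^j,F^j)\mapsto(-s^j,-F^j)$ for all $j$ simultaneously --- note it must be applied to all three indices at once to preserve $F^0=F^1+F^2$) list of triples to check. Both routes are complete and elementary; I verified that your enumeration does close all cases, including the degenerate ones where vanishing $s^j$'s force $F^0=F^1=F^2=0$.
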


\begin{proof}
The proof is technical but elementary.
Up to change $(F^0,F^1,F^2)$ in $(F^0,-F^1,-F^2)$, we can assume that
$$D=s^0F^0+s^1F^1+s^2F^2\quad \mbox{with}\quad F^0+F^1+F^2=0$$
and we want to show that $D<0$ if and only if
$$\left\{\begin{array}{lll}
&\mbox{\bf (0)}&s^0F^0<0,\quad s^1=s^2\not= s^0 \quad \mbox{weakly}\\
\mbox{or}&&\\
&\mbox{\bf (1)}&s^1F^1<0,\quad s^0=s^2\not= s^1 \quad \mbox{weakly}\\
\mbox{or}&&\\
&\mbox{\bf (2)}&s^2F^2<0,\quad s^0=s^1\not= s^2 \quad \mbox{weakly}.\\
\end{array}\right.$$
\noindent {\bf Step 1: (0),(1) or (2) imply $D<0$}\\
We only consider the case (0) (the other cases being symmetric).\\
This means that we have
$$s^0F^0<0,\quad s^0\not=0,\quad s^1s^2\ge 0,\quad s^0s^1\le 0,\quad s^0s^2\le 0$$
and we distinguish several cases.\\
\noindent {\bf Case 1.a: $s^1=0=s^2$.}
Then $F^1=0=F^2$ and $D=s^0F^0<0$.

\noindent {\bf Case 1.b: $s^1=0\not=s^2$.}
Then $F^1=0$ and then $F^2=-F^0$ and also $s^2=-s^0$. We get $D=2s^0F^0<0$.\\
\noindent {\bf Case 1.c: $s^1\not =0=s^2$.}
This is symmetric to case 1.b.\\
\noindent {\bf Case 1.d: $s^1\not =0$, $s^2\not=0$.}
Then $s^1=s^2=-s^0$, and $F^1+F^2=-F^0$ gives $D=2s^0F^0<0$.\newline

We conclude that $D<0$ in all cases of Step 1.\\

\noindent {\bf Step 2: if we do not have (0),(1) nor (2) then $D\ge 0$}\\
If $s^jF^j\ge 0$ for all $j=0,1,2$, then $D\ge 0$. Then assume that at least one such term is negative. By symmetry, we can assume that
$$s^0F^0<0.$$
Notice also that if all the $s^j$ for $j=0,1,2$ have the same sign (with value in $\left\{0,\pm1\right\}$), then $D=0$ (because $F^0+F^1+F^2=0$).
Then we can assume that the $s^j$ do not have all the same sign.\\
Moreover recall  that we don't have (0). Hence we can assume in particular  that
$$\left\{\begin{array}{l}
s^0F^0<0\\
s^0\not=0\quad \mbox{and} \quad \left(s^1s^2< 0\quad \mbox{or} \quad s^0s^1> 0\quad \mbox{or}\quad s^0s^2> 0\right)\\
s^0,s^1,s^2\quad \mbox{do not have all the same sign}.
\end{array}\right.$$
We distinguish several cases.\\

\noindent {\bf Case 2.a: $s^0s^1>0$.} 
If $s^2\not=0$, then $s^1=s^0=-s^2$ and $F^0+F^1=-F^2$ which gives $D=2s^2F^2\ge 0$ because case (2) is also excluded.
If $s^2=0$, then $F^2=0$ and $F^1=-F^0$ which implies $D=0$.

\noindent {\bf Case 2.b: $s^0s^2>0$.} This case is symmetric of case 2.a.

\noindent {\bf Case 2.c: $s^1s^2<0$}. 
If $s^0=s^1$, then $s^0=s^1=-s^2$ and $F^0+F^1=-F^2$. This implies that $D=2s^2F^2\ge 0$, because (2) does not hold.
If $s^0=s^2$, then we obtain, in a symmetric way, that $D\ge 0$.\newline

We conclude that $D\ge 0$ in all cases of Step 2.\\
This completes the proof of the lemma.
\end{proof}

\subsubsection{Maximality}

\begin{Lemma}\label{lem::r36}{\bf (Maximality of ${\mathcal G}_\Lambda$)} We work under the assumptions of Theorem \ref{th:1}. Let us consider  a set  $G\subset Q$ satisfying the dissipation condition
$$D(\bar P,P)\ge 0 \quad \mbox{for all}\quad \bar P,P\in G.$$
Let
$$E^+_\Lambda:=\Gamma\cup \left\{P_1,P_2,P_3\right\} \quad \mbox{defined in (\ref{eq::e100})}.$$
If $E^+_\Lambda \subset G$,
then we have
$$G\subset {\mathcal G}_\Lambda.$$
This implies in particular that ${\mathcal G}_\Lambda$ is maximal.
\end{Lemma}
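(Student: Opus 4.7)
The plan is to prove the contrapositive: given $P=(p^0,p^1,p^2)\in Q$ such that $D(\bar P,P)\ge 0$ for every $\bar P\in E^+_\Lambda=\Gamma\cup\{P_1,P_2,P_3\}$, one shows $P\in{\mathcal G}_\Lambda$. The whole argument is driven by the sign-pattern classification of Lemma~\ref{lem::22}: for each of the three defining conditions of ${\mathcal G}_\Lambda$ (Rankine--Hugoniot, flux bounds $\lambda^j\le\bar\lambda^j$ where $\lambda^j:=f^j(p^j)$, and the inequality $f^{k,+}(p^k)\ge\hat\lambda^k(f^{0,+}(p^0))$), if it fails one must exhibit a specific element of $E^+_\Lambda$ whose dissipation against $P$ is negative.

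First, since $f^j(a^j)=f^j(c^j)=0$, a direct computation gives $q^j(a^j,p^j)=f^j(p^j)$ and $q^j(c^j,p^j)=-f^j(p^j)$ on $[a^j,c^j]$, so the tests $D(P_0,P)\ge 0$ and $D(P_3,P)\ge 0$ (with $P_0=(a^0,a^1,a^2)\in\Gamma$ as the endpoint $\lambda=0$, and $P_3=(c^0,c^1,c^2)$) immediately yield $f^0(p^0)=f^1(p^1)+f^2(p^2)$. Hence $P\in Q^{RH}$ and Lemma~\ref{lem::22} applies. Next, assume $\lambda^k>\bar\lambda^k$ for some $k\in\{1,2\}$ and test against $P_k$. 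One finds $F^k=\bar\lambda^k-\lambda^k<0$, $F^{3-k}=-\lambda^{3-k}\le 0$, together with $s^k=-1$ (since $p^k>u^k_+(\bar\lambda^k)$ whether $p^k$ is fluid or congested), $s^0\ge 0$ (using $\lambda^0\ge\lambda^k>\bar\lambda^k$ in the congested sub-case), and $s^{3-k}\ge 0$ (since $P_k^{3-k}=c^{3-k}$). Then $s^kF^k>0$ and the weak form of case~$(k)$ of Lemma~\ref{lem::22} holds, yielding $D(P_k,P)<0$. The remaining violation $\lambda^0>\bar\lambda^0=\bar\lambda^1+\bar\lambda^2$ reduces to the previous one via Rankine--Hugoniot.

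Under the flux bounds, suppose $\lambda^k<\hat\lambda^k(f^{0,+}(p^0))$ for some $k$; this forces $p^k$ fluid, since otherwise $f^{k,+}(p^k)=f^k_{\max}\ge\bar\lambda^k\ge\hat\lambda^k(\cdot)$. When $p^0$ is fluid the inequality reads $\lambda^k<\hat\lambda^k(\lambda^0)$, and one tests against $\bar P_{\lambda^0}\in\Gamma$: then $\bar P_{\lambda^0}^0=u^0_+(\lambda^0)=p^0$ so $s^0=F^0=0$, the relation $\hat\lambda^{3-k}(\lambda^0)=\lambda^0-\hat\lambda^k(\lambda^0)<\lambda^{3-k}$ gives $s^{3-k}\le 0$ in every fluid/congested possibility for $p^{3-k}$, and $s^k=+1$ with $F^k>0$, so case~$(k)$ of Lemma~\ref{lem::22} applies. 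When $p^0$ is congested the inequality reads $\lambda^k<\bar\lambda^k$, and one tests against $\bar P_{\bar\lambda^0}=(u^0_+(\bar\lambda^0),u^1_+(\bar\lambda^1),u^2_+(\bar\lambda^2))\in\Gamma$. Here $s^0\le 0$ (with $s^0=-1$ whenever $\lambda^0<\bar\lambda^0$) and $s^k=+1$; a further split on $p^{3-k}$ gives either $s^{3-k}\le 0$ (leading to case~$(k)$), or $s^{3-k}=+1$ with $p^{3-k}$ fluid and $\lambda^{3-k}<\bar\lambda^{3-k}$, in which case $\lambda^0<\bar\lambda^0$ so $s^0F^0<0$ and case~$(0)$ of Lemma~\ref{lem::22} applies.

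In every scenario one produces $\bar P\in E^+_\Lambda$ with $D(\bar P,P)<0$, contradicting the hypothesis; hence $P\in{\mathcal G}_\Lambda$, and so $G\subset{\mathcal G}_\Lambda$. Maximality of ${\mathcal G}_\Lambda$ then follows by applying the inclusion with $G={\mathcal G}_\Lambda$ itself, using $E^+_\Lambda\subset{\mathcal G}_\Lambda$ (a consequence of the germ property proved in Lemma~\ref{lem::r37}). The main technical difficulty lies in the sign bookkeeping of the last step, where sub-cases arise depending on the fluid/congested state of each $p^j$ and on whether its flux saturates the bound $\bar\lambda^j$ (or the monotone bound $\hat\lambda^k(\cdot)$), and for each sub-case one must identify which of the three cases of Lemma~\ref{lem::22} is in force; boundary degeneracies where some $s^j$ vanishes are absorbed by the ``weakly'' qualifier in the sign conditions.
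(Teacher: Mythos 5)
Your proof is correct and follows essentially the same route as the paper's: Rankine--Hugoniot from the tests against $P_0$ and $P_3$, the flux bounds from $P_1$ and $P_2$, and the inequalities $f^{k,+}(p^k)\ge\hat\lambda^k(f^{0,+}(p^0))$ from the element of $\Gamma$ at $\lambda=\min(\bar\lambda^0,f^{0,+}(p^0))$ (your fluid/congested split on $p^0$ selects exactly this test point in each case), with Lemma \ref{lem::22} detecting the negative dissipation throughout. One small inaccuracy: the inclusion $E^+_\Lambda\subset\mathcal G_\Lambda$ invoked for the final maximality step is not a consequence of the germ property of Lemma \ref{lem::r37} but a direct (routine) verification of the conditions in \eqref{eq::c1} for the points of $\Gamma$ and $P_1,P_2,P_3$.
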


\begin{proof}
We choose $P\in G$ and we will test it with
$$\bar P\in \Gamma\cup \left\{P_1,P_2,P_3\right\}$$
using the dissipation condition $D(\bar P,P)\ge 0$ in order to show that $P\in {\mathcal G}_\Lambda$.\\
We write
$$P=(p^0,p^1,p^2),\quad \bar P=(\bar p^0,\bar p^1,\bar p^2)$$
We use notation (\ref{eq::c9}) for the fluxes $F^j$ for $j=0,1,2$.\\
\noindent {\bf Step 1: recovering Rankine-Hugoniot condition}\\
We choose $\bar P:=P_3$. 
Because for all $P\in Q=[a^0,c^0]\times [a^1,c^1]\times [a^2,c^2]$, we have $p^j\le \bar p^j=c^j$ for all $j=0,1,2$, 
we get
$$0\le D(\bar P,P)=F^0-(F^1+F^2),\quad f^0(\bar p^0)=f^1(\bar p^1)+f^2(\bar p^2),$$
which implies
\begin{equation}\label{eq:01}
f^0(p^0)-\left\{f^1(p^1)+f^2(p^2)\right\}\le 0.
\end{equation}
We now choose $\bar P:=P_0$. 
Because for all $P\in Q$, we have $p^j\ge \bar p^j=a^j$ for all $j=0,1,2$, 
we get
$$-\left\{F^0-(F^1+F^2)\right\}\ge 0,\quad f^0(\bar p^0)=f^1(\bar p^1)+f^2(\bar p^2),$$
which implies
\begin{equation}\label{eq:02}
f^0(p^0)-\left\{f^1(p^1)+f^2(p^2)\right\}\ge 0.
\end{equation}
Combining \eqref{eq:01} and \eqref{eq:02}, we get the Rankine-Hugoniot relation and then $P\in Q^{RH}$.\newline

\noindent {\bf Step 2: getting flux limiters}\\
\noindent {\bf Step 2.1: $0\le f^1(p^1)\le \bar \lambda^1$.} 
We set $\bar P:=P_1=(p^0_-(\bar \lambda^1), p^1_+(\bar \lambda^1), p^2_-(0))$. Assume by contradiction that
$$\lambda^1:=f^1(p^1)> \bar \lambda^1=f^1(\bar p^1).$$
Using Rankine-Hugoniot relation and the facts that $f^2\ge 0$ and $f^2(\bar p^2)=0$, we get
$$\lambda:=f^0(p^0)> \bar \lambda^1=f^0(\bar p^0).$$
Using that $\bar p^1\in [a^1,b^1]$ and that $\bar p^0\in [b^0,c^0]$, we deduce that
$$p^1> \bar p^1,\quad p^0< \bar p^0.$$
Then we get the table
$$\begin{tabular}{|c||c||c|c|}\hline
& $k=0$& 1 & 2\\\hline
$s^k$& $\boxed{>0}$ & $\boxed{<0}$& $$ \\\hline
$F^k$& $\boxed{<0}$  & $\boxed{<0}$ & $\boxed{\le 0}$ \\\hline
$s^kF^k$& $<0$  & $>0$ & $$ \\\hline
\end{tabular}$$
with the convention that the boxed inequalities are the known ones, and the unboxed inequalities are the deduced ones.\\

Hence whatever is the value of $s^2$, we deduce from Lemma \ref{lem::22} that $D<0$ either from $s^0F^0<0$ or from $s^1F^1>0$ (depending on the value of $s^2$).
Contradiction. \newline

\noindent {\bf Step 2.2: $0\le f^2(p^2)\le \bar \lambda^2$.} Choosing $\bar P:=P_2$, we get the result in a symmetric way.\newline

\noindent {\bf Step 2.3: conclusion}\\
From Rankine-Hugoniot relation, we deduce that
$$0\le f^0(p^0)\le \bar \lambda^0:=\bar \lambda^1+\bar \lambda^2,$$
which, combining with Steps 2.1 and 2.2, implies the limiters
$$0\le f^j(p^j)\le \bar \lambda^j\quad \mbox{for}\quad j=0,1,2.$$

\noindent {\bf Step 3: getting key inequalities defining ${\mathcal G}_\Lambda$.}\\
\noindent {\bf Step 3.1: $f^{1,+}(p^1)\ge \hat \lambda ^1(f^{0,+}(p^0))$.}\\
Assume by contradiction that
$$f^{1,+}(p^1)< \hat \lambda^1(f^{0,+}(p^0)).$$
We choose $\bar \lambda =\min(\bar \lambda^0,f^{0,+}(p^0))$ and we define $\bar P=(\bar p^0, \bar p^1,\bar p^2):=(u^0_+(\bar \lambda),u^1_+(\bar \lambda^1), u^2_+(\bar \lambda^2))$ with $\bar \lambda ^k=\hat \lambda ^k(\bar \lambda)$. This implies in particular that
$$f^0(\bar p^0)=\bar \lambda\ge f^0(p^0)=:\lambda.$$
Hence (recalling that $\hat \lambda^1$ is nondecreasing)
$$\lambda^1:=f^1(p^1)\le f^{1,+}(p^1)< \hat \lambda^1(f^{0,+}(p^0))\le\hat \lambda^1(\bar \lambda)=\bar \lambda^1=f^1(\bar p^1)=f^{1,+}(\bar p^1)$$
and then
$$p^1\in [a^1,b^1],\quad p^1<\bar p^1.$$
Then we get the table
$$\begin{tabular}{|c||c||c|c|}\hline
& $k=0$& 1 & 2\\\hline
$s^k$& $$ & $\boxed{>0}$& $$ \\\hline
$F^k$& $\boxed{\ge 0}$  & $\boxed{>0}$ & $$ \\\hline
$s^kF^k$& $$  & $>0$ & $$ \\\hline
\end{tabular}$$
In order to go further, we have to distinguish cases.\\
\noindent {\bf Case A: $\lambda< \bar \lambda$.}
Then
$$\bar \lambda=f^0(\bar p^0)=\min(\bar \lambda^0,f^{0,+}(p^0))> f^0(p^0)=\lambda$$
and
$$\bar p^0=u^0_+(\bar \lambda)\le u^0_+(\bar \lambda^0)<p^0$$
i.e.
$$\begin{tabular}{|c||c||c|c|}\hline
& $k=0$& 1 & 2\\\hline
$s^k$& $<0$ & $\boxed{>0}$& $$ \\\hline
$F^k$& $\boxed{> 0}$  & $\boxed{>0}$ & $$ \\\hline
$s^kF^k$& $<0$  & $>0$ & $$ \\\hline
\end{tabular}$$
Hence whatever is the value of $s^2$, we deduce from Lemma \ref{lem::22} that $D<0$ either from $s^0F^0<0$ or from $s^1F^1>0$ (depending on the value of $s^2$).
Contradiction.\\
\noindent {\bf Case B: $\lambda= \bar \lambda$.}
Then, 
we have with $\lambda^k=f^k(p^k)$ and $\bar \lambda^k=f^k(\bar p^k)$ for $k=1,2$
$$\lambda^1< \bar \lambda^1\quad \mbox{and}\quad \lambda^1+\lambda^2=\lambda=\bar \lambda=\bar \lambda^1+\bar \lambda^2$$
Hence
$$\lambda^2> \bar \lambda^2$$
i.e.
$$f^{2,+}(p^2)\ge f^2(p^2)> f^2(\bar p^2)=f^{2,+}(\bar p^2)$$
and then
$$\bar p^2<p^2.$$
We can almost complete the table
$$\begin{tabular}{|c||c||c|c|}\hline
& $k=0$& 1 & 2\\\hline
$s^k$& $$ & $\boxed{>0}$& $<0$ \\\hline
$F^k$& $\boxed{= 0}$  & $\boxed{>0}$ & $<0$ \\\hline
$s^kF^k$& $=0$  & $>0$ & $>0$ \\\hline
\end{tabular}$$
Again we deduce from Lemma \ref{lem::22} that $D<0$ using $s^2F^2>0$ or  $s^1F^1>0$ (depending on the sign of $s^0$). Contradiction.\newline

We get a contradiction in all the cases and so 
$$f^{1,+}(p^1)\ge  \hat \lambda^1(f^{0,+}(p^0)).$$
\noindent {\bf Step 3.2: $f^{2,+}(p^2)\ge  \hat \lambda^2(f^{0,+}(p^0)).$}
Proceeding symmetrically to Step 3.1, we get the result.\newline

\noindent {\bf Step 3.3: conclusion}\\
Finally, this shows that
$P\in {\mathcal G}_\Lambda$
and completes the proof of the lemma.
\end{proof}

\subsubsection{Germ property}

\begin{Lemma}\label{lem::r37}{\bf (Germ property of ${\mathcal G}_\Lambda$)}
Under the assumptions of Theorem \ref{th:1}, the set ${\mathcal G}_\Lambda$ defined by \eqref{eq::c1} is a germ.
\end{Lemma}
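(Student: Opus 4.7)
The plan is to verify the two clauses of Definition \ref{defi:1}(ii). The Rankine-Hugoniot inclusion $\mathcal G_\Lambda \subset Q^{RH}$ is built directly into the definition \eqref{eq::c1}, so only the dissipation inequality $D(\bar P, P) \ge 0$ for $P, \bar P \in \mathcal G_\Lambda$ requires work. I would argue by contradiction, invoking Lemma \ref{lem::22}: if $D(\bar P, P) < 0$, then using the symmetry $D(P, \bar P) = D(\bar P, P)$ (by swapping the two arguments if needed) and the symmetry between branches $1$ and $2$, it suffices to rule out case (0) ($s^0 F^0 < 0$) and case (1) ($s^1 F^1 > 0$) of Lemma \ref{lem::22}, together with the corresponding weak sign relations for the other $s^j$. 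Throughout, set $\mu^j := f^j(p^j)$ and $\bar\mu^j := f^j(\bar p^j)$.

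In case (0), I may take $s^0 > 0$, so $\bar p^0 > p^0$ with $\bar\mu^0 < \mu^0$; the exclusion of cases (1), (2) gives $s^k F^k \ge 0$, which combined with $s^k \le 0$ yields $\bar p^k \le p^k$ and $\bar\mu^k \le \mu^k$ for $k = 1, 2$. Since $f^0$ is increasing on $[a^0, b^0]$ and decreasing on $[b^0, c^0]$, the simultaneous conditions $\bar p^0 > p^0$ and $\bar\mu^0 < \mu^0$ force $\bar p^0 > b^0$, whence $f^{0,+}(\bar p^0) = f^0_{\max}$ and $\hat\lambda^k(f^{0,+}(\bar p^0)) = \bar\lambda^k$. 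The defining constraint $f^{k,+}(\bar p^k) \ge \bar\lambda^k$ and the flux bound $\bar\mu^k \le \bar\lambda^k$ then pin down $\bar\mu^k$: if $\bar p^k \le b^k$ one gets $\bar\mu^k = f^{k,+}(\bar p^k) = \bar\lambda^k \ge \mu^k \ge \bar\mu^k$, so $\bar\mu^k = \mu^k$; if $\bar p^k > b^k$ then $p^k \ge \bar p^k > b^k$, and strict monotonicity of $f^{k,-}$ combined with $\bar p^k \le p^k$ and $\bar\mu^k \le \mu^k$ forces $\bar p^k = p^k$. In either subcase $F^k = 0$, and Rankine-Hugoniot gives $F^0 = F^1 + F^2 = 0$, contradicting $F^0 < 0$.

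Case (1) with $s^1 > 0$ is the main obstacle. Here $\bar p^1 > p^1$ and $\bar\mu^1 > \mu^1$, while the exclusion of cases (0), (2) yields $\bar\mu^0 \le \mu^0$ and $\bar\mu^2 \le \mu^2$; then $F^0 = F^1 + F^2 \le 0$ with $F^1 > 0$ forces $F^2 < 0$ strictly, hence $\bar p^2 < p^2$. The concavity of $f^1$ rules out $p^1 > b^1$ (otherwise $p^1, \bar p^1$ would both sit on the decreasing side, contradicting $\bar\mu^1 > \mu^1$), so $f^{1,+}(p^1) = \mu^1$ and the defining constraint reads $\mu^1 \ge \hat\lambda^1(f^{0,+}(p^0))$. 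The plan is to exploit this coupling $f^{k,+}(\cdot) \ge \hat\lambda^k(f^{0,+}(\cdot))$ along with its analogue for $\bar P$ and the identity $\hat\lambda^1(\lambda) + \hat\lambda^2(\lambda) = \min(\lambda, \bar\lambda^0)$ from \eqref{eq::c12}. The key observation: when both $\bar p^1, \bar p^2$ (and separately both $p^1, p^2$) lie in the fluid regime, summing the two constraints forces the equalities $\bar\mu^k = \hat\lambda^k(\bar\mu^0)$ and $\mu^k = \hat\lambda^k(\mu^0)$, so $\bar\mu^1 > \mu^1$ rewrites as $\hat\lambda^1(\bar\mu^0) > \hat\lambda^1(\mu^0)$, incompatible with $\bar\mu^0 \le \mu^0$ and the nondecreasing character of $\hat\lambda^1$. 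The residual subcases, in which some of $p^k, \bar p^k$ lies in the congested region $(b^k, c^k]$, would be reduced to this fluid analysis using the flux bound $f^k(\cdot) \le \bar\lambda^k$, the sign information $\bar p^k \le p^k$ (and $\bar p^1 > p^1$), and the monotonicity of $f^{k,\pm}$, often collapsing back to equalities $\bar\mu^k = \mu^k$ that force $F^j = 0$ and contradict $s^1 F^1 > 0$.
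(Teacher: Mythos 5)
Your overall strategy (argue by contradiction, invoke Lemma \ref{lem::22}, then do a case analysis over its alternatives) is the same as the paper's, but there is a recurring logical error in how you extract sign information, and it leaves a real gap in case (1). You repeatedly deduce ``$s^jF^j\ge 0$'' from ``the exclusion of case $(j)$''. The three cases of Lemma \ref{lem::22} are not mutually exclusive alternatives to be ruled out one at a time, and --- more to the point --- ``not being in case $(j)$'' can hold simply because the \emph{weak sign relation} of case $(j)$ fails; it does not entail $s^jF^j\le 0$. Concretely, in your case (0) with $s^0=1$ and $s^1=s^2=-1$, the weak condition of case (1) requires $s^0s^2\ge 0$, which is already violated, so nothing forbids $F^1>0$; and indeed $\bar p^1<p^1$ with $f^1(\bar p^1)>f^1(p^1)$ is perfectly possible for two points of $\mathcal G_\Lambda$ (take $\bar p^1=u^1_+(\bar\lambda^1)$ and $p^1$ deep in the congested branch). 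So your claimed inequality $\bar\mu^k\le\mu^k$ is false in general. In case (0) this is repairable from the ingredients you already assemble: $f^{k,+}(\bar p^k)\ge\hat\lambda^k(f^0_{\max})=\bar\lambda^k$ together with $f^k(\bar p^k)\le\bar\lambda^k$, $\bar p^k\le p^k$ and $f^k(p^k)\le\bar\lambda^k$ yields the \emph{opposite} inequality $f^k(p^k)\le f^k(\bar p^k)$, hence $F^0=F^1+F^2\ge 0$, contradicting $F^0<0$ --- which is exactly the paper's Case A.

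In case (1) the gap is more serious. The premises $\bar\mu^0\le\mu^0$ and $\bar\mu^2\le\mu^2$ on which your whole argument rests are unjustified for the same reason (when $s^0=s^2=-1$ the weak conditions of cases (0) and (2) fail automatically, so their ``exclusion'' is vacuous), and $\bar\mu^0\le\mu^0$ genuinely fails when $p^0$ lies on the congested branch (e.g.\ $p^0=c^0$, $\bar p^0=u^0_+(\bar\lambda^0)$). The paper instead splits on whether $p^0\ge u^0_+(\bar\lambda^0)$ --- in which case $f^{1,+}(p^1)\ge\hat\lambda^1(f^{0,+}(p^0))=\bar\lambda^1$ already forces $f^1(\bar p^1)\le f^1(p^1)$ with no sign information on $F^0$ needed --- or $p^0<u^0_+(\bar\lambda^0)$, in which case $F^0\le 0$ follows from both $p^0,\bar p^0$ lying on the increasing branch of $f^0$, not from any exclusion argument. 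Finally, your ``key observation'' only treats the subcase where every component of both $P$ and $\bar P$ is fluid; the congested subcases are exactly where the work is, and the paper's treatment there is not a reduction to the fluid case: it combines the constraint on branch $1$ for $P$ with the constraint on branch $2$ for $\bar P$ and the identity $\hat\lambda^1(\bar\lambda)+\hat\lambda^2(\bar\lambda)=\bar\lambda$ at $\bar\lambda=f^0(\bar p^0)$ to obtain $f^1(\bar p^1)\le\hat\lambda^1(\bar\lambda)\le\hat\lambda^1(\lambda)\le f^1(p^1)$. The sentence ``often collapsing back to equalities'' does not supply this step, so the proof is incomplete precisely at its hardest point.
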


\begin{proof}
By construction, we have ${\mathcal G}_\Lambda \subset Q^{RH}$, and then we only have to show that\footnotemark[3]
\begin{equation}\label{eq::c11}
D(\bar P,P)\ge 0\quad \mbox{for all}\quad \bar P,P\in {\mathcal G}_\Lambda.
\end{equation}
\footnotetext[3]{The proof of inequality (\ref{eq::c11}) is a short proof. Still it is quite difficult to guess that proof from scratch (and also the expression of the germ $\mathcal G_{\Lambda}$) and it needs a lot of tries.
Notice that each component of $P$ and $\bar P$ can be either in the nondecreasing (i.e. fluid) or nonincreasing (i.e. congested) part of the flux.
A first (tedious) proof was done distinguishing $(2^3)^2=64$ cases, and using  a much more complicate (and equivalent) expression of ${\mathcal G}_\Lambda$.
Finally, the proof we give here is easy to follow  line by line but is absolutely not intuitive.}

Assume by contradiction that there exists $\bar P,P\in {\mathcal G}_\Lambda$ such that
$$D(\bar P,P)<0.$$
Then from Lemma \ref{lem::22}, we have two cases. Either
$$s^0F^0<0\quad \mbox{and}\quad s^0\not=s^1=s^2\mbox{ weakly},$$
or (up to exchange the indices $1$ and $2$), we have
$$s^1F^1>0\quad \mbox{and}\quad s^1\not=s^0=s^2\mbox{ weakly}.$$
\noindent {\bf Case A: $s^0F^0<0\quad \mbox{and}\quad s^0\not=s^1=s^2\mbox{ weakly}$.}\\
Up to exchange $P$ and $\bar P$, this means that
$$F^0<0,\quad s^0=1,\quad s^1\le 0,\quad s^2\le 0,$$
i.e.
$$\bar p^0>p^0,\quad \bar p^1\le p^1,\quad \bar p^2\le p^2,\quad f^0(\bar p^0)< f^0(p^0)\le \bar \lambda^0.$$
Hence
$$\bar p^0>u^0_-(\bar \lambda^0).$$
Recall that
$$f^{1,+}(\bar p^1)\ge \hat \lambda^1(f^{0,+}(\bar p^0)),\quad f^{2,+}(\bar p^2)\ge  \hat \lambda^2(f^{0,+}(\bar p^0))$$
and in particular
$$f^{0,+}(\bar p^0)\ge \bar \lambda^0,\quad f^{1,+}(\bar p^1)\ge\bar \lambda^1,\quad f^{2,+}(\bar p^2)\ge\bar \lambda^2$$
where we have used the fact that $\hat \lambda^k(f^0_{\max}) = \hat \lambda^k(\bar \lambda^0)=\bar \lambda^k$ for $k=1,2$.
Therefore, since $f^k(\bar p^k)\le \bar \lambda^k$, we have
$$\left\{\begin{array}{l}
\bar p^1\in \left\{u^1_+(\bar \lambda^1)\right\}\cup [u^1_-(\bar \lambda^1),c^1]\\
\\
\bar p^2\in \left\{u^2_+(\bar \lambda^2)\right\}\cup [u^2_-(\bar \lambda^2),c^2].
\end{array}\right.$$
This implies that
$$f^1(p^1)\le f^1(\bar p^1),\quad f^2(p^2)\le f^2(\bar p^2)$$
and then
$$f^0(p^0)=f^1(p^1)+f^2(p^2) \le f^1(\bar p^1)+f^2(\bar p^2)=f^0(\bar p^0)< f^0(p^0).$$
Contradiction.\\

\noindent {\bf Case B: $s^1F^1>0\quad \mbox{and}\quad s^1\not=s^0=s^2\mbox{ weakly}$.}\\
Up to exchange $P$ and $\bar P$, this means that
$$F^1>0,\quad s^1=1,\quad s^0\le 0,\quad s^2\le 0, $$
i.e.
$$f^1(\bar p^1)> f^1(p^1),\quad \bar p^1> p^1,\quad \bar p^0\le p^0,\quad \bar p^2\le p^2.$$
Recall also that
$$\left\{\begin{array}{l}
f^{1,+}(p^1)\ge \hat \lambda^1(f^{0,+}(p^0)),\quad f^{2,+}(p^2)\ge  \hat \lambda^2(f^{0,+}(p^0))\\
\\
f^{1,+}(\bar p^1)\ge \hat \lambda^1(f^{0,+}(\bar p^0)),\quad f^{2,+}(\bar p^2)\ge  \hat \lambda^2(f^{0,+}(\bar p^0)).
\end{array}\right.$$
\noindent {\bf Case B.1: $p^0\ge u^0_+(\bar \lambda^0)$.}
Then
$$f^{1,+}(p^1)\ge \hat \lambda^1(f^{0,+}(p^0)) = \bar \lambda^1$$
and
$$p^1\ge u^1_+(\bar \lambda^1)$$
which implies
$$f^1(\bar p^1)\le f^1(p^1).$$
Contradiction.\\
\noindent {\bf Case B.2: $p^0< u^0_+(\bar \lambda^0)$.}
Hence we have
$$\bar p^0\le p^0< u^0_+(\bar \lambda^0)$$
and then
$$F^0\le 0.$$
Using the fact that $F^1>0$, we get
$F^2<0$.
This implies that
$$\left\{\begin{array}{ll}
\bar p^1> p^1,&\quad \bar p^2< p^2\\
f^1(\bar p^1)> f^1(p^1),&\quad f^2(\bar p^2)< f^2(p^2).
\end{array}\right.$$
Hence
$$p^1< u^1_+(\bar \lambda^1),\quad \bar p^2< u^2_+(\bar \lambda^2).$$
Moreover
$$\left\{\begin{array}{l}
f^1(p^1)=f^{1,+}(p^1)\ge \hat \lambda^1(f^{0,+}(p^0))\ge \hat \lambda^1(\lambda),\quad \lambda:=f^0(p^0)\\
f^2(\bar p^2)=f^{2,+}(\bar p^2)\ge \hat \lambda^2(f^{0,+}(\bar p^0))\ge \hat \lambda^2(\bar \lambda),\quad \bar \lambda:=f^0(\bar p^0)\le \lambda.
\end{array}\right.$$
This implies in particular (using $\hat \lambda^1(\bar \lambda)+\hat \lambda^2(\bar \lambda)=\bar \lambda=f^1(\bar p^1)+f^2(\bar p^2)$) that
$$f^1(\bar p^1)\le \hat \lambda^1(\bar \lambda).$$
Using the monotonicity of the map $\lambda\mapsto \hat \lambda^1(\lambda)$, we get
$$f^1(\bar p^1)\le \hat \lambda^1(\bar \lambda) \le \hat \lambda^1(\lambda)\le f^1(p^1).$$
Contradiction with $f^1(\bar p^1)> f^1(p^1)$.\\
This completes the proof of the lemma.
\end{proof}

\subsubsection{Proof of Theorem \ref{th:1}} 
\begin{proof}[Proof of Theorem \ref{th:1}]
The proof of Theorem \ref{th:1} is a straightforward application of Lemma \ref{lem::r37}, which says that $\mathcal G_\Lambda$ is a germ, and of Lemma \ref{lem::r36}, which proves at the same time its maximality and the fact that it is generated by $E^+_{\Lambda}=\Gamma\cup \left\{P_1,P_2,P_3\right\}$. 
\end{proof}


\section{Construction of the correctors} \label{sec:correctors}

In this section, we build a corrector associated to a density at $-\infty$ equal to some $p^0\in [a^0,c^0]$ such that 
\be\label{condp0}
\int_0^1 A(t) dt \geq f^0(p^0).
\ee
Let us recall that a corrector is a time-periodic solution to the mesoscopic model \eqref{eq.meso}, which is equal to $p^0$ at $-\infty$. 

The construction of the corrector relies, on the one hand, on the equivalence between Hamilon-Jacobi equations and conservation laws in one space dimension and, on the other hand, on representation formulas for solutions of Hamilon-Jacobi equations for concave Hamiltonians. We proceed in four steps. We start with a general construction of a periodic in time solution to a Hamilton-Jacobi equation on a half-line $(0,+\infty)$, with a periodic Dirichlet condition at $x=0$ (Lemma \ref{lem.periodicHJgeneric}). We apply this construction to the entry line (road 0) for a junction condition problem (Lemma \ref{lem.w0}). The surprising fact is that this construction can be achieved independently of the outgoing roads 1 and 2. The reason for this is that, in the periodic regime, the flux entering roads 1 and 2 will be at each time the maximal flux coming from road 0: thus no information coming from the outgoing roads is needed to build the solution on the incoming road. Given the  flux exiting road 0,  one can solve the Hamilton-Jacobi problem on the exit lines 1 and 2 (Lemma \ref{lem.defwBIS}) thanks again to the general construction of Lemma \ref{lem.periodicHJgeneric}. In the fourth step we glue the solutions together and show that they form a periodic solution to the conservation law \eqref{eq.meso} (Proposition \ref{prop.correctorfluid} for the fluid regime and Proposition \ref{prop.correctorcongested} for regimes in which one of the outgoing branches is fully congested). 

\subsection{A periodic solution to a HJ equation on a half-line} 

In this section, we assume that 
\be\label{hypfgen}
\begin{array}{c}
\text{$f:[a,b]\to \R$ is a strictly increasing map which is of class $C^2$ and strongly concave:}\\
\text{$f''(p)\leq -\delta<$ for any $p\in [a,b]$, for some constant $\delta>0$}
\end{array}
\ee 
and 
\be\label{hyppsigen}
\begin{array}{c}
\text{$\psi:\R\to \R$ is a Lipschitz continuous map, which is $1-$periodic }\\
\text{and satisfies $\psi'(t)\in [-f(b),-f(a)] \qquad \text{a.e.}\; t\in \R.$}
\end{array}
\ee 
We consider the  Hamilton-Jacobi equation 
\be\label{eq.HJbisTER}
\left\{\begin{array}{lll}
(i) & \partial_xw \in [a,b] & \text{a.e. in} \; \R\times (0,+\infty),\\
(ii) & \partial_t w +  f(\partial_xw)=0& {\rm for }\; t\in \R, \; x>0,\\
(iii)  & w(t,0)=  \psi(t) & {\rm for }\;  \; t\in \R.
\end{array}\right.
\ee

Inspired by the Lax-Oleinik formula and by optimal control on junctions (see for instance \cite{IM17}), we can guess a representation of the solution. The following result checks afterwards that the candidate is indeed the unique solution.

\begin{Lemma}{\bf (Explicit time-periodic solution of the HJ equation)}\label{lem.periodicHJgeneric} \\
Under the assumptions \eqref{hypfgen} and \eqref{hyppsigen} on $f$ and $\psi$, there exists a unique time-periodic Lipschitz continuous viscosity solution $w:\R\times [0,+\infty)\to \R$ to \eqref{eq.HJbisTER} which is of time period equal to $1$. It is given by
\be\label{def.wtxTER}
w(t,x)= \sup_{t_1\leq t}  \psi(t_1)-\xi( t-t_1,x)
\ee
where the map $\xi:[0,\infty)^2\to \R$ is defined by 
$$
\xi(s,y)= \max_{p\in [a,b]} -py+ s f(p)\qquad \forall s\geq 0,\;y\geq 0 . 
$$
\end{Lemma}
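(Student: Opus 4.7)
The plan is to verify that the explicit formula \eqref{def.wtxTER} defines a $1$-periodic Lipschitz viscosity solution of \eqref{eq.HJbisTER} and then establish uniqueness via a comparison argument for Hamilton--Jacobi equations on a half-line, exploiting the rightward propagation of characteristics.

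First, I would analyze the auxiliary function $\xi$. Being a supremum of affine maps it is convex and Lipschitz on $[0,\infty)^2$; strong concavity of $f$ makes the argmax $p^*(s,y)\in[a,b]$ unique, and the envelope theorem yields $\partial_s\xi = f(p^*)$ and $\partial_y\xi = -p^*$ at differentiability points. The simple bound $\xi(s,y)\geq sf(b)-yb$ (choosing $p=b$) combined with $f(b)\geq 0$ (forced by the zero-mean condition on $\psi'\in[-f(b),-f(a)]$ coming from periodicity of $\psi$) guarantees that the supremum defining $w$ is finite and attained at some $\tau^*(t,x)\geq 0$, modulo the degenerate case $f(b)=0$, in which $\psi$ must be constant and the problem is trivial. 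Lipschitz continuity of $w$ is then inherited from $\psi$ and $\xi$; time-periodicity follows immediately from periodicity of $\psi$; and the boundary identity $w(t,0)=\psi(t)$ follows from $\xi(\tau,0)=\tau f(b)$ combined with the inequality $\psi(t)-\psi(t-\tau)\geq -\tau f(b)$ (derived from $\psi'\geq -f(b)$), the sup being attained at $\tau=0$. The constraint $\partial_x w\in[a,b]$ holds at differentiability points since $\partial_x w=-\partial_y\xi(\tau^*,x)=p^*\in[a,b]$.

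To verify the PDE, at an interior maximizer $(\tau^*,p^*)$ with $\tau^*>0$ and $p^*\in(a,b)$, the envelope theorem applied at both the outer sup and the inner max gives $\partial_t w = \psi'(t-\tau^*)$ and $\partial_x w = p^*$, while the first-order condition $\partial_\tau[\psi(t-\tau)-\xi(\tau,x)]=0$ at $\tau^*$ reads $\psi'(t-\tau^*)+f(p^*)=0$; consequently $\partial_t w + f(\partial_x w) = 0$ almost everywhere. The standard Lax--Oleinik machinery for concave Hamiltonians then upgrades this a.e. identity into the full viscosity sub- and supersolution inequalities throughout $\R\times(0,+\infty)$, also covering boundary configurations of the maximizers.

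For uniqueness, given a second $1$-periodic Lipschitz viscosity solution $\tilde w$ with $\tilde w(t,0)=\psi(t)$, the plan is to invoke the comparison principle for Hamilton--Jacobi equations on $(0,+\infty)_x$ with Dirichlet data at $x=0$: Lipschitz continuity plus time-periodicity supplies the uniform bounds required on the unbounded spatial domain, and strict monotonicity $f'>0$ (from $f$ increasing and strongly concave on $[a,b]$) ensures characteristics travel rightward so that the boundary value at $x=0$ determines the solution. The main obstacle I anticipate is precisely this uniqueness step, namely ruling out extraneous solutions entering from $x=+\infty$ in the unbounded periodic setting; an alternative route, which bypasses general comparison machinery, is to show directly that $\tilde w$ both dominates and is dominated by the Lax--Oleinik sup in \eqref{def.wtxTER} by inserting the viscosity inequalities along admissible and optimal characteristics backward from $(t,x)$ to the boundary.
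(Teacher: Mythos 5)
Your overall strategy (verify the representation formula via the envelope theorem, then prove uniqueness by comparison using the strict monotonicity of $f$) is the same as the paper's, and the pieces concerning $\xi$, finiteness and attainment of the supremum, periodicity, the Lipschitz bound, the boundary identity $w(t,0)=\psi(t)$, and the penalized comparison argument for uniqueness are all sound. The genuine gap is in the sentence ``the standard Lax--Oleinik machinery then upgrades this a.e.\ identity into the full viscosity sub- and supersolution inequalities.'' For a \emph{concave} Hamiltonian, an a.e.\ Lipschitz solution is automatically a viscosity \emph{supersolution} (mollify and use Jensen), but the \emph{subsolution} property does not follow from the a.e.\ identity alone: it requires knowing that $w$ is locally semiconvex in $(t,x)$, so that any point touched from above by a $C^1$ test function is a point of differentiability where the a.e.\ equation can be invoked. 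Establishing this semiconvexity is precisely the nontrivial part here, because the supremum in \eqref{def.wtxTER} runs over the $t$-dependent set $\{t_1\le t\}$, and a sup of smooth functions over a moving constraint set need not be semiconvex. The paper's proof spends its main technical step showing that the optimal $t_1$ satisfies $\hat t^1_{t,x}\le t-\eta$ uniformly for $x$ in compact subsets of $(0,\infty)$ — which requires the auxiliary hypotheses $\psi\in C^1$ and $\psi'<-f(a)$, imposed temporarily and removed at the very end by approximating $\psi$ and using stability of viscosity solutions — so that locally the constraint set can be frozen and semiconvexity inherited from the $C^{1,1}$ bounds on $\xi$ away from $y=0$. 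Your phrase ``also covering boundary configurations of the maximizers'' glosses over exactly this point: if the maximizer could sit at $t_1=t$, the envelope computation and the semiconvexity would both fail.

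A secondary, fixable issue: your identity $\partial_t w=\psi'(t-\tau^*)$ presupposes that $\psi$ is differentiable at the optimal time, which is not guaranteed since $\psi$ is only Lipschitz; the paper avoids this by parametrizing by $t_1$ rather than $\tau=t-t_1$, so that $t$ enters only through $\xi$ and the envelope theorem gives $\partial_t w=-\partial_s\xi(t-\hat t_1,x)=-f(\hat p)$ directly, with no derivative of $\psi$ needed.
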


\begin{proof}  \noindent{\bf Step 1: Uniqueness of the solution to \eqref{eq.HJbisTER}}.\\
We only sketch the proof, arguing as if the two solutions  $w$ and $\bar w$ of \eqref{eq.HJbisTER} were smooth: the general case can be treated by standard viscosity techniques. Arguing by contradiction, we assume that $\sup w-\bar w>0$. Then we look at the maximum of $w(t,x)-\bar w(t,x)-\ep x^2$ (for $\ep>0$ small). At the maximum point $(t,x)$ one gets $ \partial_t w= \partial_t \bar w$ and $\partial_x w= \partial_x \bar w+2\ep x$ with $x>0$ (since $w=\bar w$ at $x=0$), so that 
$$
0\geq \partial_t w+ f(\partial_x w) -  \partial_t \bar w- f(\partial_x \bar w) =  f(\partial_x \bar w+2\ep x) - f(\partial_x \bar w)>0,
$$
as $x>0$ and $ f$ is increasing. This leads to a contradiction.  \medskip

In order to proceed, we first need to rule out the case in which $\psi$ is constant. In this case the solution to \eqref{eq.HJbisTER} is given by $w(t,x)=\psi+p^*x$ where $p^*\in[a,b]$ is such that $f(p^*)=0$. On the other hand we have by \eqref{hyppsigen} that $0\in [f(a),f(b)]$. Using Lemma \ref{lem.xiC11TER}
below, one can easily check that the optimal $s$ in the expression of $w(t,x)=\psi-\inf_{s\ge0}\xi(s,x)$ is given by $s^*=x/f'(p^*)$ and then $\xi(s^*,x)=-p^*w$ which gives the correct expression for $w$. \medskip

From now on we assume  that $\psi$ is not constant. We note for later use that this  implies that $f(a)< 0$ and $f(b)>0$ because $-\psi'\in [f(a),f(b)]$ and $\psi$ is periodic and not constant. We  suppose in addition that  $\psi$ is of class $C^1$ and satisfies $\psi'(t)<-f(a)$ for any $t\in \R$. This extra condition is removed at the very end of the proof. 
\medskip

\noindent{\bf Step 2: $w$ is globally Lipschitz continuous  on $\R\times [0,+\infty)$}\\
We first note that the sup in the definition of $w$ is in fact a max, because $\psi$ is bounded and, as $\sup_{p\in [a,b]}  f(p)$ is positive, 
\begin{equation}\label{eq::r3TER}
\lim_{t_1\to-\infty} \left\{\inf_{x\in [0,R]} \xi(t-t_1,x)\right\} = +\infty \qquad \forall R>0. 
\end{equation}
In particular $w$ is uniformly bounded on any strip $\R\times [0,R]$.
As explained in  Lemma \ref{lem.xiC11TER}, the map $(s,y)\to \xi(s,y)$ is globally Lipschitz continuous and bounded in $C^{1,1}$ in $ [0,\infty)\times[\ep,+\infty)$ (for any $\ep>0$), 
with
$$
\partial_y\xi(s,y)= -\bar  p, \qquad \partial_s\xi(s,y)=  f(\bar p),
$$
where $\bar p$ is the unique maximum in the definition of $\xi(s,y)$. Since $w$ can be rewritten as 
$$
w(t,x)= \sup_{t_1\in \R}  \psi(t_1\wedge t)-\xi( t-(t_1\wedge t),x)
$$
it is globally Lipschitz continuous on $[0,+\infty)^2$. 
\medskip

\noindent{\bf Step 3: $w$ is locally semiconvex in {time-space}}\\
Next we check that $w$ is locally semiconvex in time-space in $\R\times (0,+\infty)$: we  use this property below to check that $w$ is a solution. This local semiconvexity is not straightforward because $w(t,x)$ is defined as a supremum of an expression on the interval $(-\infty,t]$ which itself depends on the variable $t$. 
In order to overcome this difficulty, we will show that the maximum time $\hat t^1_{t,x}$ in the definition of $w(t,x)$ is indeed strictly less than $t$ (with some bound), which will allow us to replace locally the interval $(-\infty,t]$ by some smaller interval locally independent on $t$. For the proof, let us introduce a few notation. Given $(t,x)\in \R\times (0,\infty)$, let $\hat t^1_{t,x}\leq t$ be a maximum point in the definition of $w(t,x)$ and $\hat p_{t,x}\in [a,b]$ be the unique maximum point in the definition of $\xi(t-\hat t^1_{t,x},x)$. We next claim that, for any $0<\ep<1$, there exists $\eta>0$ such that, if $x\in [\ep,1/\ep]$, then $\hat t^1_{t,x}\leq t-\eta$. Indeed, otherwise, there exists a sequence $(t_n,x_n)$ such that $x_n\in [  \ep, 1/\ep]$, $\hat t^1_{t_n,x_n}> t_n-1/n$. By periodicity we can assume without loss of generality that $t_n\in [0,1]$ and converges to some $t$ and that $(x_n)$ converges to some $x\in [\ep,1/\ep]$. Then $\hat t^1_{t_n,x_n}$ converges to $t$, which is a maximum point in the definition of $w(t,x)$, and $\hat p_{t_n,x_n}$ converges to some $\bar p\in [a,b]$, which is the unique maximum point in the definition of $\xi(0,x)$. As $\hat t^1_{t,x}=t$ is a maximum for $w(t,x)$, we get by the optimality conditions (using the additional regularity $ \psi \in C^1$), 
$${
 \psi'(t)\geq - \partial_s \xi(0, x) =  -f(\bar p)= - f(a), 
}$$
because the unique maximizer $\bar p$ of $p\to -px$ on $[a,b]$ is $\bar p=a$. This contradicts our additional assumption that $ \psi'< -f(a)$ and  shows that there exists $\eta>0$ such that, if $x\in [\ep,1/\ep]$, then $\hat t^1_{t,x}\leq t-\eta$. 

As a consequence, given $(t,x)\in \R\times (0,\infty)$, there exists a neighborhood $\mathcal V$ of $(t,x)$ and $\eta'>0$ such that, 
$$
w(s,y)=  \sup_{t_1\leq t-\eta'}  \psi(t_1)-\xi(s-t_1,y),\qquad y\geq x/2\qquad \forall (s,y)\in \mathcal V.
$$
Note that the upper bound for $t_1$ in the above problem is now independent of $(s,y)$. Recalling that $\xi$ is bounded in $C^{1,1}$ in $ [0,\infty)\times[x/2,\infty)$, this shows the semiconvexity of $w$ in $\mathcal V$. 
\medskip

\noindent{\bf Step 4: $w$ is solution of \eqref{eq.HJbisTER}.}\\
As $ f$ is uniformly concave and $w$ locally semiconvex, $w$ satisfies the equation in \eqref{eq.HJbisTER} in the viscosity sense if and only if it satisfies this equation  at any point of differentiability. Let $(t,x)\in \R\times (0,\infty)$ be  a point of differentiability of $w$. By the envelop theorem (Theorem \ref{th::r1}), for any   optimizer $\hat t^1_{t,x}<t$ for $w(t,x)$ and if $\hat p_{t,x}\in [a,b]$  is the unique maximizer for $\xi(t-\hat t^1_{t,x},x)$, we get
$$
\partial_xw(t,x)=-\partial_y \xi(t-\hat t^1_{t,x},x)= \hat p_{t,x} ,\qquad \partial_t w(t,x)= -\partial_s \xi(t-\hat t^1_{t,x},x)= - f(\hat p_{t,x}).
$$
Thus 
$$
\partial_t w+ f(\partial_xw) = -  f(\hat p_{t,x}) +  f( \hat p_{t,x} )=0.
$$
This shows that $w$ satisfies the equation in \eqref{eq.HJbisTER} and that $\partial_xw\in [a,b]$ a.e..

For the boundary condition, we first note that (choosing $t_1=t$ as a competitor)
$$
w(t,0)\geq - \psi(t)-\xi(0,0)=  \psi(t).  
$$
Moreover, 
$$
w(t,0)=    \psi(\hat t^1_{t,0})-(t-\hat t^1_{t,0}) \max_{p\in[a,b]}  f(p)=  \psi(\hat t^1_{t,0})-(t-\hat t^1_{t,0}) f(b). 
$$
If, contrary to our claim, we had $w(t,0)> \psi(t)$, then one would have $\hat t^1_{t,0}<t$ and 
$$
(t-\hat t^1_{t,0}) f(b) <  \psi(\hat t^1_{t,0}) -  \psi(t)= - \int_{\hat t^1_{t,0}}^t\psi'(s)ds \leq  (t-\hat t^1_{t,0}) f(b), 
$$
which is impossible since $  f(b) >0$. Hence $w(t,0)=  - \psi(t)$.
\medskip

\noindent {\bf Step 5: Conclusion.}\\
We finally remove the extra assumption that $\psi\in C^1$ and satisfies $\psi'<-f(a)$: let $(\psi^n)$ be a sequence of smooth periodic maps satisfying $-f(b)\leq (\psi^n)'<-f(a)$ and which converges to $\psi$ (such a sequence exists since $-f(b)\leq \psi'\leq -f(a)$ a.e.). Let $w^n$ be given by \eqref{def.wtxTER} for $\psi^n$ in place of $\psi$. Then $w^n$ solves the HJ equation for $\psi^n$ and, by stability, converges locally uniformly to the unique viscosity solution of the problem with $\psi$. Note that \eqref{eq.HJbisTER}-(i) holds as well by $L^\infty-*$ convergence of $\partial_x w^n$ to $\partial_xw$. 
\end{proof}

It remains to state and check the intermediate lemma. 

\begin{Lemma}{\bf (Properties of the fundamental solution $\xi$)}\label{lem.xiC11TER}\\
The map $\xi$ defined by 
$$
\xi(s,y)= \max_{p\in [a,b]} -py+ s f(p)\qquad \forall s\geq 0,\; y\geq 0. 
$$
is globally Lipschitz continuous in $[0,\infty)\times [0,\infty)$ and  bounded in $C^{1,1}$ in $[0, \infty)_s\times [\ep,\infty)_y$ for any $\ep>0$.   Moreover,  $\xi$ is differentiable at any $(s,y)$ with $s>0$ and
\begin{equation}\label{eq:001TER}
\partial_y\xi(s,y) = -\hat p_{s,y} , \qquad \partial_s\xi(s,y) = f( \hat p_{s,y}),
\end{equation}
where $\hat p_{s,y}$ is the unique point of maximum in the definition of $\xi(s,y)$ and is given by  
\begin{equation}\label{eq::r4TER}
\hat p_{s,y} = \left\{\begin{array}{cl}
( f')^{-1}(y/s) & {\rm if } \; y/s\in ( f'(b),  f'(a)) \\ 
b & {\rm if } \;  y/s \leq  f'(b) \\
a & {\rm if }  \;  y/s \geq  f'(a)
\end{array}\right.
\end{equation}
\end{Lemma}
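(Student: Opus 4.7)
My plan splits the argument into three parts: computing the maximizer $\hat p_{s,y}$ explicitly, deriving global Lipschitz continuity together with the differentiability formulas, and finally upgrading the regularity to $C^{1,1}$ on the subdomain where $y$ stays away from zero.

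First I will compute $\hat p_{s,y}$ for $s>0$. The objective $\phi(p):=-py+sf(p)$ is strongly concave on $[a,b]$ since $\phi''(p)=sf''(p)\le -s\delta<0$, so it admits a unique maximizer. The first-order condition $\phi'(p)=-y+sf'(p)=0$ reads $f'(p)=y/s$. Because $f$ is $C^2$ and strongly concave, $f'$ is continuous and strictly decreasing on $[a,b]$ from $f'(a)$ down to $f'(b)$ (both non-negative since $f$ is increasing), so a critical point lies in the interior precisely when $y/s\in(f'(b),f'(a))$, yielding $\hat p_{s,y}=(f')^{-1}(y/s)$. If $y/s\le f'(b)$ then $\phi'\ge 0$ on $[a,b]$ and the maximum is at $b$; if $y/s\ge f'(a)$ then $\phi'\le 0$ and the maximum is at $a$. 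This establishes \eqref{eq::r4TER}.

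For global Lipschitz continuity on $[0,\infty)^2$, I will view $\xi$ as the supremum, over the compact set $p\in[a,b]$, of the affine maps $(s,y)\mapsto -py+sf(p)$. Their gradients are uniformly bounded by $\max(|a|,|b|)+\max_{[a,b]}|f|$, so the supremum is Lipschitz with the same constant. For differentiability at a point $(s,y)$ with $s>0$, uniqueness of $\hat p_{s,y}$ together with the envelope theorem (Theorem \ref{th::r1}) gives $\partial_s\xi(s,y)=f(\hat p_{s,y})$ and $\partial_y\xi(s,y)=-\hat p_{s,y}$, which is \eqref{eq:001TER}.

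For the $C^{1,1}$ claim on $[0,\infty)_s\times[\ep,\infty)_y$, since $\nabla\xi$ is expressed through $\hat p_{s,y}$ and $f\in C^2$, it suffices to show that $(s,y)\mapsto \hat p_{s,y}$ is Lipschitz on this domain. I will partition the domain according to \eqref{eq::r4TER}. In the two saturated regions $\{y/s\le f'(b)\}$ and $\{y/s\ge f'(a)\}$ (the latter containing $\{s=0\}$), the map $\hat p$ is constant. In the interior region $\{f'(b)<y/s<f'(a)\}$, the constraint $y\ge \ep$ forces $s>y/f'(a)\ge \ep/f'(a)$, giving a positive lower bound on $s$. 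Implicit differentiation of $f'(\hat p)=y/s$ combined with $|f''|\ge \delta$ then yields a uniform bound on $|\nabla_{s,y}\hat p_{s,y}|$ depending only on $\ep$, $\delta$ and $f'(a)$. Since $\hat p$ is continuous across the two interfaces, piecing these estimates together gives global Lipschitz regularity of $\hat p$, hence $C^{1,1}$ regularity of $\xi$ on $[0,\infty)_s\times[\ep,\infty)_y$.

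The main obstacle will be precisely this last step, namely controlling $\hat p_{s,y}=(f')^{-1}(y/s)$ up to the interior/boundary interface, where its $C^1$ regularity breaks. The decisive observation is that the interior regime together with $y\ge \ep$ forces $s$ to stay uniformly away from zero, which prevents the derivative of $(f')^{-1}(y/s)$ from blowing up and makes the $C^{1,1}$ conclusion possible.
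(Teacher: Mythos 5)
Your proposal is correct and follows essentially the same route as the paper: explicit identification of the unique maximizer, the envelope theorem for the derivative formulas and global Lipschitz bound, and a Lipschitz estimate on $(s,y)\mapsto \hat p_{s,y}$ for $y\geq \ep$ to get the $C^{1,1}$ bound. The only (harmless) difference is in that last step, where the paper performs a direct two-point difference estimate with a case analysis on $y/s$ versus $f'(a)$, while you bound the gradient of $\hat p_{s,y}$ separately on the three convex regions determined by \eqref{eq::r4TER} and glue along segments, using exactly the same key observation that $y\geq\ep$ together with $y/s\leq f'(a)$ forces $s$ away from zero.
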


\begin{proof} As $ f$ is increasing and strongly concave, the point of maximum $\hat p_{s,y}$ in the definition of $\xi(s,y)$ is unique for $s>0$ and $y\in [0,\infty)$ and given by \eqref{eq::r4TER}. Thus, by the envelope theorem (Theorem \ref{th::r1}), $\xi$ is differentiable at any $(s,y)$ with $s>0$ and its derivatives are given by \eqref{eq:001TER}.
As $\hat p_{s,y}$ is bounded, this implies that $\xi$ is globally Lipschitz continuous in $[0,\infty)\times[0,\infty)$.\medskip

It remains to show that $(s,y)\to \hat p_{s,y} $ is Lipschitz continuous in $[0, \infty)\times [\ep,\infty)$  (where $\ep>0$ is fixed). As $ f$ is strongly concave, $ f'$ is decreasing. Since $f$ is increasing, this implies that $ f'(a)>0$.

Using again that $ f$ is strongly concave with $ f''\le -\delta<0$, we see that $-C_0\leq (( f')^{-1})'\leq 0$ with $C_0=1/\delta$. Let $(s,y),(s',y')\in (0,\infty)\times [\varepsilon,\infty)$ be such that (to fix the ideas) $y/s\leq y'/s'$, and then $\hat p_{s',y'}\le \hat p_{s,y}$. The idea consists in using $\varepsilon$ in order to control $y,y'$, which will in turn control also $s,s'$ in some sense.

Without loss of generality we can also assume that $y/s<f'(a)$ since otherwise  $\hat p_{s,y}=a=\hat p_{s',y'}$. We have
$$
\left| \hat p_{s',y'} -\hat p_{s,y} \right| \leq C_0 \left|\frac{y'}{s'}\wedge  f'(a)-\frac{y}{s}\vee  f'(b)\right| \leq C_0 \left(\frac{y'}{s'}\wedge  f'(a)-\frac{y}{s}\right). 
$$
Let us first suppose that $y/s,y'/s'\leq  f'(a)$. As $y,y'\geq \ep$, we get $1/s'\leq  f'(a)/y'\leq  f'(a)/\ep$. Hence
$$
\left| \hat p_{s',y'} -\hat p_{s,y} \right| \leq C_0  \left( \frac{1}{s'}|y'-y| + \frac{y}{ss'}|s-s'| \right) 
\leq C_0  \left( \frac{ f'(a)}{\ep}|y'-y| + \frac{( f'(a))^2}{\ep}|s-s'| \right) . 
$$
Finally, if $y'/s' \geq  f'(a)$ and $y/s <  f'(a)$, then 
\begin{align*}
\left| \hat p_{s',y'} -\hat p_{s,y} \right| & \leq C_0 \left( f'(a)-\frac{y'}{s}+\frac{y'}{s}-\frac{y}{s}\right) 
 \leq
C_0 \left( f'(a)(1- \frac{s'}{s})+\frac{ f'(a)}{\ep}|y'-y| \right) \\
& \leq  C_0 \left(\frac{( f'(a))^2}{\ep}|s'-s|+\frac{ f'(a)}{\ep}|y'-y| \right) .
\end{align*}
This shows that the map $(s,y)\to \hat p_{s,y}$ is Lipschitz continuous in  $(0, \infty)\times [\ep,\infty)$, and thus on $[0,+\infty)\times [\varepsilon,+\infty)$. Therefore $\xi$ is bounded in $C^{1,1}$ in this set.
\end{proof}

In order to show that the correctors will have the good behavior at infinity, we have to examine carefully the behavior of the solution of the HJ equation at infinity.
\begin{Lemma}[behavior of the solution at $\infty$]\label{lem.behaviorinfty} Assume that conditions \eqref{hypfgen} and \eqref{hyppsigen} on $f$ and $\psi$ hold and that $0\in [a,b]$ with $f(0)=0$. Then the solution $w$ of \eqref{eq.HJbisTER} is bounded and there exists a constant $C>0$ such that 
\be\label{condinftyw}
\|\partial_x w\|_{L^\infty(\R\times (M,\infty))} \leq \frac{C}{M} \qquad \forall M\geq C.
\ee
\end{Lemma}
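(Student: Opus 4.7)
The plan is to work directly with the representation formula $w(t,x)=\sup_{s\ge 0}\bigl[\psi(t-s)-\xi(s,x)\bigr]$ from Lemma \ref{lem.periodicHJgeneric}. The boundedness is immediate: taking $p=0$ in the definition of $\xi$ gives $\xi\ge 0$, hence $w(t,x)\le \sup\psi$; and since $\xi(x/f'(0),x)=0$ (the maximizer being $\hat p=0$), the test value $s=x/f'(0)$ yields $w(t,x)\ge \psi(t-x/f'(0))\ge \inf\psi$.

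To capture the decay of $\partial_x w$, I exploit the periodicity of $\psi$ to upgrade the trivial lower bound to $w(t,x)\ge \sup\psi-C/x$ for large $x$. Fix $\bar t$ with $\psi(\bar t)=\sup\psi$ and choose an integer $k$ so that $s^\dagger := t-\bar t-k$ satisfies $|s^\dagger - x/f'(0)|\le 1/2$ (in particular $s^\dagger>0$ for $x$ large). Setting $\ell(q):=\max_{p\in[a,b]}(f(p)-qp)$, one has $\xi(s,x)=s\,\ell(x/s)$; since $\ell(f'(0))=\ell'(f'(0))=0$ and, on the interior regime $q\in(f'(b),f'(a))$, $\ell''(q)=-1/f''(\hat p_q)\le 1/\delta$, a Taylor expansion gives $\xi(s^\dagger,x)\le (f'(0))^2/(8\delta s^\dagger)\le C/x$. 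Substituting $s^\dagger$ into the representation yields $w(t,x)\ge \sup\psi-C/x$; evaluating at any maximizer $s^*$ and using $\psi(t-s^*)\le \sup\psi$ then forces $\xi(s^*,x)\le C/x$.

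Next I would show that $x/s^*$ lies in the interior regime $(f'(b),f'(a))$ for $x$ large. If instead $x/s^*\le f'(b)$, then $\hat p_{s^*,x}=b$ and
\[
\xi(s^*,x) = -bx + s^*f(b) \ge x\bigl(f(b)/f'(b)-b\bigr),
\]
which is positive and linear in $x$ (the positivity coming from $f(b)/b>f'(b)$, a consequence of strong concavity combined with $f(0)=0$ via the mean-value theorem). This contradicts $\xi(s^*,x)\le C/x$ once $x$ is large. A symmetric argument with $\hat p=a$ rules out $x/s^*\ge f'(a)$. Therefore $x/s^*\in (f'(b),f'(a))$ and $s^*\in (x/f'(a),x/f'(b))$ is of order $x$.

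On this interior regime $\ell$ is $C^2$ and strongly convex with $\ell''\ge 1/K$ for $K:=\sup_{[a,b]}|f''|$, so $\xi(s,x)\ge (x-sf'(0))^2/(2Ks)$. Combining with $\xi(s^*,x)\le C/x$ and $s^*\le x/f'(b)$ yields $|x-s^*f'(0)|^2\le C'$, a universal constant. Finally, by the envelope argument of Step 4 in the proof of Lemma \ref{lem.periodicHJgeneric}, $\partial_x w(t,x)=(f')^{-1}(x/s^*)$ at any point of differentiability, and the $(1/\delta)$-Lipschitz continuity of $(f')^{-1}$ gives
\[
|\partial_x w(t,x)|\le \frac{1}{\delta s^*}\bigl|x-s^*f'(0)\bigr|\le \frac{C''}{x},
\]
using $s^*\ge x/f'(a)$, which is the desired bound. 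The main technical obstacle I anticipate is ruling out the boundary regimes for $s^*$; this is precisely where the quantitative form of strict concavity, through the inequality $f(b)/b>f'(b)$ (and its analog at $a$), is essential.
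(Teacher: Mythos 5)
Your strategy is sound in outline and genuinely different from the paper's. The paper also starts from the representation formula and the identity $\partial_x w(t,x)=\hat p_{s^*,x}$, but it localizes the optimal delay $s^*=t-\hat t_1$ by a period-shift perturbation: since $\psi$ is $1$-periodic, replacing the optimizer $\hat t_1$ by $\hat t_1\pm 1$ cannot improve the value, and the sign of $\partial_s\xi(\cdot,x)$ (positive or negative according to whether $x/s$ is below or above $f'(0)$, using $f(0)=0$) then gives the two-sided bound $|x-f'(0)(t-\hat t_1)|\leq f'(0)$ directly, with no quantitative convexity needed. You instead compare values ($w\geq \sup\psi - C/x$ via the competitor $s^\dagger$, hence $\xi(s^*,x)\leq C/x$) and invert this through the two-sided bounds $1/K\leq \ell''\leq 1/\delta$ on the Legendre-type function $\ell$. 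Both mechanisms are legitimate; the paper's is shorter, yours is more quantitative. (Minor point: like the paper, you should first dispose of the degenerate case $a=0$ or $b=0$, where \eqref{hyppsigen} forces $\psi$ to be constant and $w\equiv\psi$, before dividing by $f'(0)$.)

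There is, however, one step that fails in exactly the situation the lemma is used for. You obtain $|x-s^*f'(0)|^2\leq C'$ from $\xi(s^*,x)\leq C/x$ \emph{together with} the upper bound $s^*\leq x/f'(b)$. But \eqref{hypfgen} only requires $f$ to be strictly increasing, so $f'(b)=0$ is allowed --- and in the applications (Lemmas \ref{lem.w0} and \ref{lem.defwBIS}) the right endpoint of the interval is $b^j-\hat p^j_{p^0}$ with $b^j$ the critical density of $f^j$, so that $f'(b)=0$ always holds there. In that case the exclusion of the boundary regime $x/s^*\leq f'(b)$ is vacuously true, but so is the bound $s^*\leq x/f'(b)$, and your chain $|x-s^*f'(0)|^2\leq 2Ks^*\,\xi(s^*,x)\leq 2KCs^*/x$ no longer closes: you are missing an upper bound on $s^*$ of order $x$. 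The gap is repairable inside your own framework, because that last inequality is quadratic in $s^*$ and self-improving: setting $u=s^*f'(0)/x$ it reads $(1-u)^2\leq \bigl(2KC/(f'(0)x^2)\bigr)u$, which for $x$ large forces $|u-1|\leq C/x$, hence $s^*\leq 2x/f'(0)$ and then $|x-s^*f'(0)|\leq C'$ as claimed. (Equivalently: if $s^*\geq Mx$ with $M$ large then $x/s^*\leq f'(0)/2$, so $\ell(x/s^*)\geq (f'(0))^2/(8K)$ and $\xi(s^*,x)\geq Mx\,(f'(0))^2/(8K)$, contradicting $\xi(s^*,x)\leq C/x$.) With this repair, and the reduction to $a<0<b$, your proof is complete.
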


\begin{Remark} We can actually show that there exists a constant $C>0$ such that 
$$
\|w-\max {\psi}\|_{L^\infty(\R\times (M,\infty))}\leq  \frac{C}{M} \qquad \forall M\geq C.  
$$
The bound $w\leq \max {\psi}$ follows by comparison, while the other bound is {obtained}  using the  uniform concavity of $f$ in the representation formula. 
\end{Remark}

\begin{proof} We can assume without loss of generality that $a<0<b$ since, if $a=0$ or $b=0$, then by \eqref{hyppsigen} $\psi$ must be constant and therefore, since $f(0)=0$, $w=\psi$ is also constant.\medskip

As $w^+(t,x)=\| \psi\|_\infty$ and $w^-(t,x)= -\| \psi\|_\infty$ are respectively time-periodic  super- and sub-solution of the equation, we have  $|w|\leq \| \psi\|_\infty$  by comparison. 
\medskip

We now turn to the proof of \eqref{condinftyw}. Given any $(t,x)\in \R\times (0,+\infty)$ a point of differentiability of $w$, consider some optimizer $\hat t_1\leq t$ for $w(t,x)$ and $\hat p$ the optimizer in the definition of $\xi(t-\hat t_1,x)$. From the proof of Lemma \ref{lem.periodicHJgeneric}, we know that $\hat t_1<t$ and that $\partial_x w(t,x)= \hat p$. So, to prove \eqref{condinftyw}, we just need to estimate $\hat p$. 

Recalling  Lemma \ref{lem.xiC11TER} again,  we have
$$
\partial_s\xi(s,y)= f(\hat p)\quad \mbox{where}\quad \hat p=-\partial_x \xi(s,y)=  (f')^{-1}((T_{ f'(b)}^{ f'(a)}(y/s)),\quad T_\alpha^\beta(z)=\max(\alpha,\min(\beta,z)). 
$$
Hence $ f(0)=0$ implies
$$\left\{\begin{array}{ll}
\partial_s\xi(s,y)<0 &\quad \mbox{if}\quad y/s> f'(0),\\
\partial_s\xi(s,y)>0 &\quad \mbox{if}\quad y/s< f'(0).\\
\end{array}\right.$$

We claim that $x/(t-\hat t_1+1) \leq  f'(0)$. Indeed, otherwise, $x/(t-\hat t_1)\geq x/(t-\hat t_1+1) >  f'(0)$ and thus $\xi(\cdot, x)$ is decreasing on $[t-\hat t_1,t-\hat t_1+1]$. This implies, as $\psi$ is $1-$periodic, that 
$$
\psi(t-\hat t_1+1)-\xi(t-\hat t_1+1, x) > \psi(t-\hat t_1)-\xi(t-\hat t_1, x) = w(t,x), 
$$
a contradiction because $t_1= \hat t_1-1$ is a competitor in the definition of $w(t,x)$. Thus $x/(t-\hat t_1+1) \leq  f'(0)$. 

In the same way one can check that, if $\hat t_1+1<t$, then $x/(t-\hat t_1-1) \geq  f'(0)$, using $t_1= \hat t_1+1$ as a competitor in the definition of $w(t,x)$. Let us check that indeed $\hat t_1+1<t$ if $x$ is large enough: otherwise, $|t-\hat t_1|\leq 1$ and therefore
\begin{align*} 
w(t,x) & =  \psi(t-\hat t_1) +  \min_{p\in [a,b]} \{px-(t-\hat t_1) f(p)\}  \leq \| \psi\|_\infty+\| f\|_\infty+ \min_{p\in [a,b]} px \\
&=\| \psi\|_\infty+\| f\|_\infty +ax, 
\end{align*}
which yields to a contradiction if $x$ is large enough, because $a<0$ and $w$ is bounded.

The two estimates on $x/(t-\hat t_1+1)$ and $x/(t-\hat t_1-1)$  imply that, for $x$ large enough, 
$$
|x- f'(0)(t-\hat t_1)| \leq  f'(0), 
$$
where $ f'(0)>0$. 
Thus, for $x$ large enough,  $x/(t-\hat t_1)$ is close to $  f'(0)\in (f'(b), f'(a))$ and therefore for $x$ large enough
$$
|\hat p| = \left| ( f')^{-1}\left(T_{ f'(b)}^{ f'(a)}\left(\frac{x}{t-\hat t_1}\right)\right)\right|= \left| ( f')^{-1}\left(\frac{x}{t-\hat t_1}\right)-( f')^{-1}( f'(0)) \right| \leq C\left|\frac{x}{t-\hat t_1}- f'(0)\right| \leq 
\frac{C}{t-\hat t_1}\leq \frac{C}{x}. 
$$
\end{proof}

\subsection{Periodic solutions to a HJ equation on the entry line} 

We  build in  this part an antiderivative of the corrector on the incoming road $\mathcal R^0$. We suppose here that $f^0$ satisfies \eqref{hypflux} and that the flux limiter $A$ satisfies \eqref{hypIj} and \eqref{hypfluxlimiterA}. For $p^0\in [a^0,c^0]$ such that \eqref{condp0} holds, let
\be\label{deftildef0}
\tilde f^0_{p^0}(p)= f^0(p+p^0)-f^0(p^0)\qquad \text{\rm for}\; p\in [a^0-p^0, c^0-p^0],
\ee
so that $\tilde f^0_{p^0}(0)=0$ and $0\in [a_0-p_0,c_0-p_0]$.
We consider the periodic in time viscosity solution $w^0_{p^0}$ to the HJ equation 
\be\label{HJ0}
\left\{\begin{array}{lll}
(i) & \partial_xw^0 \in [a^0-p^0, c^0-p^0] & \text{a.e. in}\; \R\times (-\infty,0),\\
(ii) & \partial_t w^0 + \tilde f^0(\partial_xw^0)=0& {\rm for }\; t\in \R, \; x<0\\
(iii) & \partial_t w^0+ \min\{ A(t)-f^0(p^0),  \tilde f^{0,+} (\partial_xw^0)\}=0 & {\rm for }\; t\in \R, \; x=0
\end{array}\right.
\ee
By a solution, we mean that $w^0_{p^0}$ is continuous on $[0,+\infty)\times (-\infty,0]$ and is a viscosity solution  in the sense of \cite{IM17} to \eqref{HJ0}-(i)-(ii)-(iii) on  each open interval on which $A$ is constant. It is easy to check that the whole theory developed in \cite{IM17} generalizes to this simple time-dependent setting. { Notice that, if $w^0$  is a solution of (\ref{HJ0}), then $w^0+c$ is also a solution for any constant $c\in \R$.} Still we have the following existence result.

\begin{Lemma}[Explicit time-periodic solution in the incoming road] \label{lem.w0} Assume that $f^0$ satisfies \eqref{hypflux} and that \eqref{hypIj} and \eqref{hypfluxlimiterA} hold. Let $p^0$ be such that $p^0\in [a^0,b^0]$ and \eqref{condp0} holds, or  $p^0= (f^{0,-})^{-1}\left(\int_0^1A(s)ds\right)$. Then there exists a  bounded, Lipschitz continuous and time-periodic solution $w^0_{p^0}$ to \eqref{HJ0}, with period $1$, which is given by the representation formula
\be\label{def.w0}
w^0_{p^0}(t,x)= 
\left\{ \begin{array}{ll}
\max \Bigl\{ 0, \max_{t_2\leq t} \{ \psi_{p^0}(t_2)  - \xi^0_{p^0}(t-t_2,x)\} \Bigr\}, & \text{if $p^0\in [a^0,b^0]$,}\\
\max_{ t_2\leq t} \Bigl\{ \psi_{p^0}(t_2) - \xi^0_{p^0}(t-t_2,x)\} \Bigr\}, & \text{if $p^0=(f^{0,-})^{-1}\left(\int_0^1A(s)ds\right)$,}
\end{array}\right.
\ee
where
$$\xi^0_{p^0}(s,y)= \max_{p\in [b^0-p^0,c^0- p^0]} -py+ s\tilde f^0_{p^0}(p)\qquad \forall s\geq  0,\; y\leq 0$$
and 
\be\label{def.psiTER}
\psi_{p^0}(t) = \left\{\begin{array}{ll}
\displaystyle \max_{t_1\leq t} \left\{\int_{t_1}^{t} (f^0(p^0)-A(s))ds\right\} & \text{if $p^0\in [a^0,b^0]$,}\\
\displaystyle \int_0^{t} (f^0(p^0)-A(s))ds & \text{if $p^0=(f^{0,-})^{-1}\left(\int_0^1A(s)ds\right)$.}
\end{array}\right.
\ee
In addition,
 there exists a constant $C>0$ (depending on $p^0$), such that 
\be\label{condinftyw0}
\begin{array}{ll}
w^0_{p^0}(t,x) = 0 \qquad \text{for}\; x\leq -C, \; t\in \R & \text{if}\; p^0\in [a^0,b^0],\\
\;\\
\text{and} \qquad \|\partial_x w^0_{p^0}\|_{L^\infty(\R\times (-\infty,M))}\leq \frac{C}{M} \qquad \text{for}\; M\geq C & \text{if}\; p^0=(f^{0,-})^{-1}\left(\int_0^1A(s)ds\right) {\quad \mbox{with}\quad p^0\in  (b^0,c^0]}. 
\end{array}
\ee
Finally,  
$$
w^0_{p^0}(t,0)=  \psi_{p^0}(t)
\qquad \forall t\in \R, 
$$
and, if $p^0\in [a^0,b^0]$, the map $x{\mapsto}  w^0_{p^0}(t,x)$ is nondecreasing on $(-\infty,0]$ for any $t\in \R$. 
\end{Lemma}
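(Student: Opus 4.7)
The plan is to reduce, via the spatial reflection $y = -x$, to the framework of Lemma~\ref{lem.periodicHJgeneric} on the half-line $[0,\infty)$, using $\psi_{p^0}$ as a $1$-periodic Dirichlet datum. Set $\tilde w(t,y) := w^0_{p^0}(t,-y)$ and $g(p) := \tilde f^0_{p^0}(-p) = f^0(p^0 - p) - f^0(p^0)$ for $p \in [p^0 - c^0,\, p^0 - b^0]$; since $f^0$ is strongly concave and decreasing on $[b^0,c^0]$, $g$ is $C^2$, strongly concave and strictly increasing, and the interior equation becomes $\partial_t \tilde w + g(\partial_y \tilde w) = 0$. For the boundary datum, Lemma~\ref{lem.analysew00} already establishes, in the fluid case $p^0 \in [a^0,b^0]$, that $\psi_{p^0}$ is nonnegative, $1$-periodic, Lipschitz, with $\psi'_{p^0}(t) = f^0(p^0) - F_{p^0}(t)$ a.e.; in the congested case $p^0 = (f^{0,-})^{-1}(\bar\lambda^0)$, periodicity follows from $\int_0^1 A = f^0(p^0)$ and $\psi'_{p^0}(t) = f^0(p^0) - A(t)$ a.e.. Assumption \eqref{hypfluxlimiterA} gives in both cases the compatibility $\psi'_{p^0}(t) \in [f^0(p^0) - f^0_{\max},\, f^0(p^0)] = [-g(p^0-b^0),\, -g(p^0-c^0)]$ required by Lemma~\ref{lem.periodicHJgeneric}.

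Applying Lemma~\ref{lem.periodicHJgeneric} produces a bounded, Lipschitz, $1$-periodic viscosity solution $\tilde w$ with $\tilde w(t,0) = \psi_{p^0}(t)$; the substitutions $q = -p$ in the argument of the dual $\xi$, followed by $x = -y$, yield the second line of \eqref{def.w0} in the congested case. In the fluid case one additionally takes the pointwise maximum with $0$: the constant $\bar w \equiv 0$ is a classical solution of the interior equation (as $\tilde f^0_{p^0}(0)=0$), and a standard stability argument shows that $w^0_{p^0} := \max\{0,\tilde w(t,-x)\}$ remains a viscosity solution of \eqref{HJ0}(i)-(ii) on $\R\times(-\infty,0)$; since $\psi_{p^0}\ge 0$, the boundary trace $w^0_{p^0}(t,0) = \psi_{p^0}(t)$ is preserved.

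To verify the junction condition \eqref{HJ0}(iii) at $x=0$, use the identity $-\psi'_{p^0}(t) = F_{p^0}(t) - f^0(p^0)$ together with the envelope characterization $\partial_x w^0_{p^0}(t,0^-) = -\hat p_{t,0}$, where $\hat p_{t,0}$ is the optimizer in $\xi^0_{p^0}$ (Lemma~\ref{lem.xiC11TER}): on $\{F_{p^0}=f^0(p^0)\}$ one has $A(t) \ge f^0(p^0)$ and the min vanishes, while on $\{F_{p^0}=A\}$ the optimizer saturates the $\tilde f^{0,+}$-constraint at $A(t)-f^0(p^0)$. For the asymptotics in the fluid case, using $p=b^0-p^0\ge 0$ as a competitor yields the uniform bound $\xi^0_{p^0}(s,x) \ge (b^0-p^0)|x| + s\bigl(f^0_{\max}-f^0(p^0)\bigr)$ for $x\le 0$, $s\ge 0$, which forces the supremum in \eqref{def.w0} to be negative beyond a threshold $|x|\geq C$ (as $\psi_{p^0}$ is bounded), hence $w^0_{p^0}=0$ there. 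In the congested case, Lemma~\ref{lem.behaviorinfty} applied to $\tilde w$ delivers the $C/M$ decay of $\partial_x w^0_{p^0}$. Monotonicity in $x$ in the fluid case is immediate from the representation: $\partial_x w^0_{p^0} = -\partial_y\tilde w = -\hat p \in [b^0-p^0,\, c^0-p^0] \subset [0,\, c^0-p^0]$ since $p^0 \le b^0$, and vanishes wherever $w^0_{p^0} = 0$.

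The most delicate step is the verification of the flux-limited condition \eqref{HJ0}(iii): unlike a Dirichlet condition, one must confirm that the a~priori choice of $\psi_{p^0}$ forces precisely the correct effective flux $F_{p^0}(t)$ at the junction at all times, including the jumps of the piecewise constant $A$. A related technicality is justifying that the max with $0$ in the fluid case preserves the viscosity supersolution property at the free boundary $\{w^0_{p^0}=0\}$; this relies either on the concavity of $\tilde f^0_{p^0}$ or on a direct verification using the representation formula.
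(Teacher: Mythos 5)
Your proposal follows essentially the same route as the paper: reflect $x\mapsto -x$ to land in the setting of Lemma \ref{lem.periodicHJgeneric} with Dirichlet datum $\psi_{p^0}$, take the max with $0$ in the fluid case (legitimate because the Hamiltonian is concave and $0$ is a solution), use the competitor $p=b^0-p^0$ to get the compact support of $w^0_{p^0}$, and invoke Lemma \ref{lem.behaviorinfty} for the decay in the congested case. All of that matches the paper's Steps 1--2, and your shortcut for the compact-support bound ($\|\psi_{p^0}\|_\infty-(b^0-p^0)|x|<0$) is fine, modulo the degenerate case $p^0=b^0$ where $b^0-p^0=0$ and the threshold argument collapses; the paper disposes of that case at the outset by noting that \eqref{condp0} and \eqref{hypfluxlimiterA} then force $A\equiv f^0_{\max}$, $\psi_{p^0}\equiv 0$ and $w^0_{p^0}\equiv 0$. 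You should do the same.

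The one genuine gap is the verification of the junction condition \eqref{HJ0}-(iii). What you describe --- checking a.e.\ in $t$ that $\partial_t w^0(t,0)+\min\{A(t)-f^0(p^0),\tilde f^{0,+}(\partial_x w^0(t,0^-))\}=0$ via the trace and the sign of $\psi_{p^0}$ --- is the content of the paper's \emph{separate} Lemma \ref{lem.f0+} (identity \eqref{w0p0t}), and it is a pointwise flux identity, not the viscosity condition in the sense of \cite{IM17} that the notion of solution to \eqref{HJ0} requires. For a Lipschitz candidate the supersolution half does reduce to the a.e.\ inequality $\partial_t w^0(t,0)+A(t)-f^0(p^0)\ge 0$ (this is \cite[Theorem 2.11]{IM17}, and your identity gives it), but the subsolution half needs the reduced test functions $\varphi=\alpha(t)+q^0x$ with $A(t_0)-f^0(p^0)=\tilde f^{0,-}(q^0)$. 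The paper's key step there, which is absent from your sketch, is to show that the optimizer $\hat t_1$ in $\psi_{p^0}(t_0)$ satisfies $\hat t_1<t_0$: otherwise $w^0(t_0,0)=0$, and comparing $\varphi(t_0,x)=q^0x\ge w^0(t_0,x)\ge 0$ for $x<0$ forces $q^0\le 0$, contradicting $q^0\in[b^0-p^0,c^0-p^0]$ with $b^0-p^0>0$; once $\hat t_1<t_0$ one reads off $\alpha'(t_0)=f^0(p^0)-A(t_0)$ from the dynamic programming inequality. Without this (or some substitute establishing the viscosity subsolution property at $x=0$), the proof of the lemma as stated is incomplete, even though the a.e.\ identity you derive is correct and is indeed what gets used downstream in Lemma \ref{lem.viscjunc}.
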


Recall that the map $\psi_{p^0}$ (for $p^0\in [a^0,b^0]$) was introduced in Lemma \ref{lem.analysew00} when building the homogenized germ $\mathcal G_{\bar \Lambda}$. 

{\begin{Remark}\label{rem::b205}
Notice that in case $p^0\in (b^0,c^0]$, it is possible {to} construct explicit examples of solutions where $\partial_x w^0_{p_0}(t,x)$ has no compact support in the space variable $x$, but tends to zero as $x\to -\infty$.
\end{Remark}}

\begin{proof} Note first that, if $p^0=b^0$ satisfies \eqref{condp0}, then $w^0_{p^0}={0}$ is the solution to \eqref{HJ0} because in this (very particular) case, assumption \eqref{hypfluxlimiterA} implies that $A(t)=f^0_{\max}$. From now on we assume that 
$${p^0\neq b^0.}$$
As $p^0$ is fixed, we remove the subscript $p^0$ throughout the proof for simplicity of notation. Note that, if $p^0\in [a^0,b^0)$,  $0< b^0-p^0<c^0-p^0$ and thus the map $y{\mapsto} \xi^0(s,y)$ is decreasing on $(-\infty,0]$ for any $s\geq 0$. Hence  the map $x{\mapsto} w^0_{p^0}(t,x)$ is nondecreasing on $(-\infty,0]$ for any $t\in \R$. \\

\noindent{\bf Step 1: $w^0$ is a viscosity solution to the HJ equation \eqref{HJ0}-(ii).} If $p^0\in [a^0,b^0)$ is such that \eqref{condp0} holds, Lemma \ref{lem.analysew00} states that the map $\psi_{p^0}$ is Lipschitz continuous and $1-$periodic and we can rewrite ${w^0=w^0_{p^0}}$ in the form 
\begin{align*}
w^0(t,x) & = \max \Bigl\{ 0, \tilde w^0(t,x) \Bigr\} \qquad {\rm with}\qquad 
\tilde w^0(t,x)= \max_{t_2\leq t} \{ \psi_{p_0}(t_2) - \xi^0(t-t_2,x)\}. 
\end{align*}
In the case $p^0=(f^{0,-})^{-1}(\int_0^1A(s)ds)$, the map $\psi_{p^0}$ is also Lipschitz continuous and $1-$periodic and we set $\tilde w^0:= w^0$. 
%
%
Our aim is to use Lemma \ref{lem.periodicHJgeneric} to check that $\tilde w^0$ is a viscosity solution to the HJ equation \eqref{HJ0}-(ii). For this we change variable and set 
$$
\hat w^0(t,x)= \tilde w^0(t,-x)= \max_{t_2\leq t} \{ \psi_{p^0}(t_2) -\hat  \xi^0_{p^0}(t-t_2,x)\}, \qquad t\geq 0, \; x\geq 0,
$$
where 
$$
\hat \xi^0(s,y) = \max_{p\in [-c^0+p^0,-b^0+p^0]} -py +s \tilde f^0(-p)\qquad  s\geq 0, \; y\geq 0. 
$$
Note that the map $p {\mapsto} \tilde f^0(-p)$ is uniformly concave and strictly increasing on $[-c^0+p^0,-b^0+p^0]$. In addition, the maps $\psi_{p^0}$ defined in \eqref{def.psiTER} is Lipschitz continuous, $1-$periodic and satisfies, by Lemma \ref{lem.analysew00}, 
$$
\psi_{p^0}'(t)\in\{0, f^0(p^0)-A(t)\} \subset [-\tilde f^0(-(-b^0+p^0)), -\tilde f^0(-(-c^0+p^0))]\qquad \text{a.e.}\; t\in \R,
$$
where, for the proof of the inclusion, we used \eqref{hypfluxlimiterA} and the equality 
$$
[-\tilde f^0(-(-b^0+p^0)), -\tilde f^0(-(-c^0+p^0))] = [-f^0_{\max}+f^0(p^0), f^0(p^0)].
$$
Therefore we can apply Lemma \ref{lem.periodicHJgeneric} which states that $\hat w^0$ is globally Lipschitz continuous,  $1-$periodic in time, and satisfies the HJ equation \eqref{eq.HJbisTER} in $\R\times (0,+\infty)$ for $f(p)= \tilde f^0(-p)$ and the boundary condition $\hat w^0(\cdot,0)= \psi_{p^0}$.
This implies that $\tilde w^0(t,x)= \hat w^0(t,-x)$ is a Lipschitz continuous viscosity solution of \eqref{HJ0}-(i) and \eqref{HJ0}-(ii) in $\R\times (-\infty,0)$, with $\tilde w^0(\cdot,0)=\psi_{p^0}$.  

Assume now that $p^0\in [a^0,b^0)$. As $\tilde f^0$ is concave and the constant map $(t,x){\mapsto} 0$ is also a solution of \eqref{HJ0}-(ii) in $\R\times (-\infty,0)$, $w^0$ is also a viscosity solution of \eqref{HJ0}-(ii) in $\R\times (-\infty,0)$. In addition, \eqref{HJ0}-(i) holds since $0\in [a^0-p^0,c^0-p^0]$ and $\partial_x \tilde w^0\in [a^0-p^0,c^0-p^0]$. Note finally that { $w^0(\cdot,0)= \psi_{p^0}(\cdot)$} as $\psi_{p^0}\geq 0$. 
\medskip

\noindent{\bf Step 2: $w^0$ bounded and satisfies \eqref{condinftyw0}.} 
As $\tilde f(0)=0$, Lemma \ref{lem.behaviorinfty} states that $\hat w^0$ and thus $w^0$ are bounded.\\
Let us {first} assume that $p^0\in [a^0,b^0)$. 
Fix $x<0$ and $t_2\leq t$.  Then 
\begin{align*}
\min_{p\in [b^0-p^0, c^0-p^0]} px -(t-t_2)\tilde f^0(p) & \leq  (b^0-p^0)x + \min_{p\in [b^0-p^0, c^0-p^0]}\{ -(t-t_2) \tilde f^0(p)\}\\
& =  (b^0-p^0)x -(t-t_2)  (f^0_{\max}-f^0(p^0)). 
\end{align*}
Thus 
$$\tilde w^0(t,x)\leq 
(b^0-p^0)x +  \max_{t_1\leq t_2\leq t}  \{ -\int_{t_1}^{t_2}(A(s)-f^0(p^0))ds-(t-t_2)  (f^0_{\max}-f^0(p^0))\}.
$$
We note that the map 
$$
t {\mapsto} \max_{t_1\leq t_2\leq t}  \{ -\int_{t_1}^{t_2}(A(s)-f^0(p^0))ds-(t-t_2) (f^0_{\max}-f^0(p^0))\}
$$
is a continuous, periodic function. Hence it is bounded. As $p^0<b^0$, this shows the existence of $C>0$ such that, for any $x\leq -C$ and $t\in \R$,  $
\tilde w^0(t,x) \leq 0$. Therefore \eqref{condinftyw0} holds {in this case.}\\
{
In the case $p^0=b^0$, it is easy to see that $A=f^0_{\max}$ and then $\psi_{p^0}=0$, which implies that $w^0_{p^0}=0$ is solution.
Hence \eqref{condinftyw0} holds in this case.\\
Finally, we consider the case $p^0=(f^{0,-})^{-1}\left(\int_0^1A(s)ds\right)> b^0$.  Then  \eqref{condinftyw0}  follows from Lemma \ref{lem.behaviorinfty} and a change of variables.} \medskip

\noindent {\bf Step 3: $w^0$ satisfies the boundary condition  \eqref{HJ0}-(iii).}\\
 For proving the supersolution property, we just need to check that $w^0(\cdot,0)$ is Lipschitz continuous and satisfies $\partial_t w^0(t,0)+ A(t)-f^0(p^0)\geq 0$ a.e. (cf. \cite[Theorem 2.11]{IM17}). Recalling {that} ${w^0(\cdot,0)= \psi_{p^0}(\cdot)}$, this inequality is obvious if $p^0=(f^{0,-})^{-1}(\int_0^1A(s)ds)$. If $p^0\in [a^0,b^0)$, it holds thanks to Lemma \ref{lem.analysew00}. 
%
%

Next we turn to the subsolution property. Assume that $\varphi(t,x):=\alpha(t)+q^0 x$  is a $C^1$ test function touching $w^0$  from above at $(t_0,0)$, where $t_0$ is a point of continuity of $A$ and with (condition (2.12) in  \cite[Theorem 2.7]{IM17})
\be\label{zkejnsrdkgBIS}
A(t_0)-f^0(p^0)= \tilde f^0(q^0)=\tilde f^{0,-}(q^0).
\ee 
We have to prove that $\alpha'(t_0)+ A(t_0)-f^0(p^0)\leq 0$. Without loss of generality, we can assume that $\alpha(t_0)=w^0(t_0,0)$. 

Assume first that $p^0=(f^{0,-})^{-1}(\int_0^1A(s)ds)$. Then, for $h\in \R$,  
$$
\alpha(t_0+h)=\varphi(t_0+h,0)  \geq w^0(t_0+h,0) = w^0(t_0,0)+ \int_{t_0}^{t_0+h}  (f^0(p^0)-A(s)) ds= \alpha(t_0)+ \int_{t_0}^{t_0+h}  (f^0(p^0)-A(s)) ds, 
$$
which proves that $\alpha'(t_0)=  f^0(p^0)-A(t_0)$. 

We now assume that $p^0\in [a^0,b^0)$. Let $\hat t_1\leq t_0$ be optimal in the definition of $\psi_{p^0}(t_0)$ in \eqref{def.psiTER}. We claim that $\hat t_1< t_0$. Indeed, otherwise, $\hat t_1=t_0$ and thus $w^0(t_0,0)=0=\alpha(t_0)$. So, for any $x<0$, 
$$
\varphi(t_0, x) =  q^0 x \geq w^0(t_0,x) \geq 0,
$$
which implies that $q^0\leq 0$. But \eqref{zkejnsrdkgBIS} says that $q^0\in [b^0-p^0,c^0-p^0]$, where $b^0-p^0>0$, a contradiction. 

As $\hat t_1<t_0$, for any $h\in \R$ with $|h|$ small, 
\begin{align*}
\alpha(t_0+h)=\varphi(t_0+h,0) & \geq w^0(t_0+h,0)=\psi_{p^0}(t_0+h)\geq  \int_{\hat t_1}^{t_0+h}  (f^0(p^0)-A(s)) ds \\
& = w^0(t_0,0) + \int_{t_0}^{t_0+h}  (f^0(p^0)-A(s)) ds=\alpha(t_0) + \int_{t_0}^{t_0+h}  (f^0(p^0)-A(s))ds.
\end{align*}
Hence $\alpha'(t_0)+A(t_0)-f^0(p^0)= 0$.
\end{proof}

The next step is the computation of the trace $f^{0,+}(\partial_x w^0_{p^0}(t,0^-)+p^0)$, where $w^0_{p^0}$ is the solution of \eqref{HJ0}. The computation of this trace will be useful for gluing the correctors on each branch. Let us note that, as $w^0_{p^0}$ is a Lipschitz continuous viscosity solution to \eqref{HJ0}, Lemma \ref{lem::r2} states that $\partial_x w^0_{p^0}$ is a Krushkov entropy solution to the scalar conservation law 
$$
\left\{\begin{array}{lll}
(i) & \rho \in [a^0-p^0, c^0-p^0] & \text{a.e. in}\; \R\times (-\infty,0),\\
(ii) & \partial_t \rho + \partial_x( \tilde f^0(\rho))=0& {\rm for }\; t\in \R, \; x<0.
\end{array}\right.
$$
Thus $\partial_x w^0_{p^0}$ possesses a trace, denoted as $\partial_x w^0_{p^0}(\cdot, 0^-)$, at $x=0$ (Theorem \ref{th::r7homo}), in the sense that there exists a set $\mathcal N$ of  measure zero in $(-\infty,0)$ such that, for any $t_1<t_2$,  
\be\label{tracew0x}
\lim_{\varepsilon\to 0}\sup_{x\in (-\varepsilon,0)\backslash \mathcal N} \|\partial_x w^0_{p^0}(\cdot,x )-\partial_x w^0_{p^0}(\cdot, 0^-)\|_{L^1(t_1,t_2)}=0.
\ee
By continuity of $f^{0,+}$, we infer the existence of the trace $f^{0,+}(\partial_x w^0_{p^0}(t,0^-)+p^0)$.

\begin{Lemma}[Computation of the trace $f^{0,+}(\partial_x w^0_{p^0}(t,0^-)+p^0)$]\label{lem.f0+}  Under the assumption of Lemma \ref{lem.w0}, let $w^0_{p^0}$ be the solution of \eqref{HJ0} given in Lemma \ref{lem.w0}. Then 
\be\label{repf0+}
f^{0,+}(\partial_x w^0_{p^0}(t,0^-)+p^0)
=\left\{\begin{array}{ll}
\displaystyle f^0(p^0) & \text{if $w^0_{p^0}(t,0)=0$ and $p^0\in [a^0,b^0)$}\\
\displaystyle f^0_{\max} & \text{if $w^0_{p^0}(t,0)>0$ or $p^0= (f^{0,-})^{-1}\left(\int_0^1A(s)ds\right)$ {or $p^0=b^0$}} 
\end{array}\right.\; \text{a.e.}\; t\in \R
\ee
and
\be\label{w0p0t}
\begin{array}{ll}
\partial_t w^0_{p^0}(t,0)& = - \min\Bigl\{A(t)-f^0(p^0) \ , \ \tilde f^{0,+}(\partial_x w^0_{p^0}(t,0^-))\Bigr\} \qquad \text{a.e. in } \R,\\
& = f^0(p^0)-F_{p^0}(t) \qquad \text{a.e. in } \R,
\end{array}
\ee
where the  flux $F_{p^0}$ is defined in \eqref{defFp0} for $p^0\in [a^0, b^0)$ and by 
\be\label{defFp0BIS}
\text{$F_{p^0}(\cdot)=A(\cdot)$ if $p^0= (f^{0,-})^{-1}\left(\int_0^1A(s)ds\right)$ {or $p^0=b^0.$}}
\ee
\end{Lemma}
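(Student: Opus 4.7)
The plan is to combine the identity $w^0_{p^0}(\cdot,0)=\psi_{p^0}$ from Lemma \ref{lem.w0} with the viscosity boundary condition \eqref{HJ0}-(iii). Differentiating the trace in time gives $\partial_t w^0_{p^0}(t,0)=\psi_{p^0}'(t)$, and the identity $\psi_{p^0}'(t)=f^0(p^0)-F_{p^0}(t)$ follows from Lemma \ref{lem.analysew00} in the fluid case $p^0\in[a^0,b^0)$, and from a direct differentiation of \eqref{def.psiTER} together with \eqref{defFp0BIS} in the congested case $p^0=(f^{0,-})^{-1}(\int_0^1 A)$ (or $p^0=b^0$). This yields the second equality in \eqref{w0p0t}. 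The first equality is the boundary condition \eqref{HJ0}-(iii) evaluated a.e.\ in $t$, where the trace $\partial_x w^0_{p^0}(\cdot,0^-)$ exists by Theorem \ref{th::r7homo}; combining the two identities and using $\tilde f^{0,+}(q)+f^0(p^0)=f^{0,+}(q+p^0)$ gives
\begin{equation*}
F_{p^0}(t)=\min\bigl\{A(t),\, f^{0,+}(\partial_x w^0_{p^0}(t,0^-)+p^0)\bigr\}\qquad\text{a.e. } t\in\R.
\end{equation*}

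The value of $f^{0,+}$ is then extracted by a case analysis of the representation \eqref{def.w0}. For $p^0=b^0$, hypothesis \eqref{condp0} forces $A\equiv f^0_{\max}$, hence $\psi_{p^0}\equiv 0$ and $w^0_{p^0}\equiv 0$, giving trivially $\partial_x w^0_{p^0}(t,0^-)+p^0=b^0$ and $f^{0,+}=f^0_{\max}$. When $p^0\in[a^0,b^0)$ and $\psi_{p^0}(t)>0$, the optimizer $\hat t_1$ for $\psi_{p^0}(t)$ is strictly less than $t$ (Step~3 of the proof of Lemma \ref{lem.w0}); by continuity the optimal $\hat t_2(t,x)$ for $\tilde w^0_{p^0}(t,x)$ stays bounded away from $t$ as $x\to 0^-$, so the envelope theorem gives $\partial_x w^0_{p^0}(t,x)=\hat p(t,x)$ with $(\tilde f^0_{p^0})'(\hat p(t,x))=x/(t-\hat t_2(t,x))\to 0^-$, hence $\hat p(t,x)\to b^0-p^0$ and $f^{0,+}(\partial_x w^0_{p^0}(t,0^-)+p^0)=f^{0,+}(b^0)=f^0_{\max}$. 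For the congested case $p^0=(f^{0,-})^{-1}(\int_0^1 A)>b^0$, the optimizer at $x=0$ is $\hat t_2=t$, but the $t_2$-derivative of the objective at $t_2=t$ equals $\psi_{p^0}'(t)+\partial_s\xi^0_{p^0}(0,x)=-A(t)$, strictly negative on the set $\{A>0\}$ of positive measure (since $\int_0^1 A=f^0(p^0)>0$); hence $\hat t_2(t,x)<t$ for a.e.\ $t$ and $x<0$ small, and the same argument yields $\hat p\to b^0-p^0$ and $f^{0,+}=f^0_{\max}$.

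The delicate case is $p^0\in[a^0,b^0)$ with $\psi_{p^0}(t)=0$, where a direct analysis of the optimizer in $\xi^0_{p^0}$ is inconclusive. I shall instead show that $w^0_{p^0}\equiv 0$ on $\{\psi_{p^0}=0\}\times(-\infty,0]$: this forces $\partial_x w^0_{p^0}(t,0^-)=0$ for a.e.\ $t\in\{\psi_{p^0}=0\}$ (the boundary of this set consisting of finitely many points per period, by the piecewise constant structure of $A$), and hence $f^{0,+}(\partial_x w^0_{p^0}(t,0^-)+p^0)=f^{0,+}(p^0)=f^0(p^0)$ since $p^0\le b^0$. The key estimate, valid whenever $\psi_{p^0}(t)=0$ and for every $t_2\le t$, is
\begin{equation*}
\psi_{p^0}(t_2)\le \int_{t_2}^{t}\bigl(A(r)-f^0(p^0)\bigr)\,dr\le (t-t_2)\bigl(f^0_{\max}-f^0(p^0)\bigr)=\xi^0_{p^0}(t-t_2,0),
\end{equation*}
obtained by splitting the maximum that defines $\psi_{p^0}(t_2)$ via the identity $\psi_{p^0}(t)=0$, and bounding $A\le f^0_{\max}$. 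Combined with the trivial monotonicity $\xi^0_{p^0}(s,x)\ge \xi^0_{p^0}(s,0)$ for $x\le 0$ (every competitor $p\ge b^0-p^0\ge 0$ satisfies $-px\ge 0$), this gives $\tilde w^0_{p^0}(t,x)\le 0$ and hence $w^0_{p^0}(t,x)=\max\{0,\tilde w^0_{p^0}(t,x)\}=0$, which completes the proof.
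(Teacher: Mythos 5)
Your overall strategy is the same as the paper's: exploit the representation formula, the envelope theorem, the identity $w^0_{p^0}(\cdot,0)=\psi_{p^0}$ and Lemma \ref{lem.analysew00}, and split according to $\{\psi_{p^0}=0\}$ versus $\{\psi_{p^0}>0\}$. Your explicit estimate $\psi_{p^0}(t_2)\le \xi^0_{p^0}(t-t_2,0)\le \xi^0_{p^0}(t-t_2,x)$ showing $w^0_{p^0}\equiv 0$ on $\{\psi_{p^0}=0\}\times(-\infty,0]$ is correct and is a nice substitute for the paper's argument (which gets the same conclusion from the monotonicity of $x\mapsto w^0_{p^0}(t,x)$ together with $w^0_{p^0}\ge 0$). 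However, there are two genuine gaps. First, the opening step — ``the first equality in \eqref{w0p0t} is the boundary condition \eqref{HJ0}-(iii) evaluated a.e.\ in $t$'' — is not a licensed move: a flux-limited viscosity solution satisfies (iii) only in the test-function sense, and converting that into an a.e.\ pointwise identity involving the Panov trace $\partial_x w^0_{p^0}(\cdot,0^-)$ is precisely the nontrivial content of results such as Lemma \ref{lem::r3}, which you neither invoke nor reprove. The paper avoids this entirely: it first establishes \eqref{repf0+}, then \emph{verifies} both equalities in \eqref{w0p0t} by computing each side ($\partial_t w^0_{p^0}(t,0)=\psi_{p^0}'(t)$ on one hand, and the min using $\tilde f^{0,+}(\partial_x w^0_{p^0}(t,0^-))\in\{0,\,f^0_{\max}-f^0(p^0)\}$ from \eqref{repf0+} together with $A\ge f^0(p^0)$ a.e.\ on $\{\psi_{p^0}=0\}$ on the other). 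You have all the ingredients to do this, but as written the first line of \eqref{w0p0t} is asserted, not proved.

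Second, your claim that $\hat p(t,x)\to b^0-p^0$ as $x\to 0^-$ overshoots what is needed and is false in the congested case $p^0=(f^{0,-})^{-1}\left(\int_0^1 A\right)>b^0$: there the optimizer $\hat t_2(t,x)$ tends to $t$ as $x\to 0^-$ (at $x=0$, $t_2=t$ is optimal unless $A\equiv f^0_{\max}$ near $t$), so $x/(t-\hat t_2(t,x))$ is indeterminate and the actual trace is $(f^{0,-})^{-1}(A(t))$, not $b^0$. Even in the fluid case with $\psi_{p^0}(t)>0$, the assertion that $\hat t_2(t,x)$ stays bounded away from $t$ as $x\to 0^-$ is unproved ($t_2=t$ is also an optimizer at $x=0$ there). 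Fortunately none of this is needed: all the statement requires is $f^{0,+}(\hat p+p^0)=f^0_{\max}$, which follows immediately from $\hat p\in[b^0-p^0,c^0-p^0]$ (the range over which $\xi^0_{p^0}$ is maximized), exactly as the paper argues; the passage from the a.e.\ identity in $(t,x)$ to the trace should then go through the $L^1$ convergence \eqref{tracew0x} rather than a pointwise limit in $x$ for fixed $t$. Both gaps are repairable, but they must be repaired.
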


\begin{proof}[Proof of Lemma \ref{lem.f0+}] In the case where $p^0= (f^{0,-})^{-1}\left(\int_0^1A(s)ds\right)$ {or $p^0=b^0$} 
the proof is quite simple.
Indeed, in those two cases, we have saturation, i.e. $\partial_x w^0_{p^0} +p^0 \in [b^0,c^0]$ a.e., and then $f^{0,+}(\partial_x w^0_{p^0} +p^0)=f^0_{\max}$, which shows (\ref{repf0+}). Moreover, we have 
$$\partial_tw^0_{p^0}(t,0^-)=-\psi_{p^0}'(t)= \left\{\begin{array}{l}
f^0(p^0)-F_{p^0}(t)=0= f^0(p^0)-A(t)  \qquad \mbox{by Lemma \ref{lem.analysew00} and (\ref{hypfluxlimiterA})} \; \mbox{if $p^0=b^0$}\\
f^0(p^0)-A(t)  \qquad \mbox{by (\ref{def.psiTER})}\; \mbox{if $p^0= (f^{0,-})^{-1}\left(\int_0^1A(s)ds\right)$}\\
\end{array}\right.$$
which shows (\ref{w0p0t}).\\
We now prove the results in the case 
$$p^0\in [a^0,b^0).$$

\medskip

\noindent{\bf Step 1: Proof of \eqref{repf0+}.} We first claim that 
\be\label{aezksrjdhfn}
f^{0,+}(\partial_x w^0_{p^0}(t,x)+p^0) =\left\{\begin{array}{ll}
f^0(p^0) & \text{if} \; w^0_{p^0}(t,x)=0\\
f^0_{\max} & \text{if} \; w^0_{p^0}(t,x)>0
\end{array}\right.\; \text{a.e.}\; (t,x)\in \R\times (-\infty,0).
\ee
To prove \eqref{aezksrjdhfn}, let $(t,x)\in \R\times (-\infty,0)$ be a point of differentiability of the Lipschitz map $w^0_{p^0}$.  Then, if $w^0_{p^0}(t,x)=0$, we get $\partial_x w^0_{p^0}(t,x)=0$ since $w^0_{p^0}\geq0$, and thus \eqref{aezksrjdhfn} holds in this case. Let us now assume that $w^0_{p^0}(t,x)>0$. Let $\hat t_2\leq t$ be optimal in the definition of $w^0_{p^0}$ in \eqref{def.w0}.  We have already proved (see Step 3 in the proof of Lemma \ref{lem.periodicHJgeneric}) that $\hat t_2<t$. Then $\xi_{p^0}$ is differentiable at $(t-\hat t_2,x)$ with, by the envelope theorem  \ref{th::r1} used twice, 
$$
\partial_x w^0_{p^0}(t,x)= -\partial_x \xi^0_{p^0}(t-\hat t^2,x) = \hat p\in [b^0-p^0,c^0-p^0],
$$
where $\hat p$ is optimal for $\xi^0_{p^0}(t-\hat t^2,x)$.  As $f^{0,+}([b^0, c^0])=\{f^0_{\max}\}$, this shows  \eqref{aezksrjdhfn}. \\

Fix $t\in\R$. Recalling that $x{\mapsto} w^0_{p^0}(t,x)$ is nondecreasing and nonnegative, equality $w^0_{p^0}(t,0)=0$ implies that $w^0_{p^0}(t,x)=0$ for any $x\leq 0$. Thus 
$$
\lim_{x\to 0^-} {\bf 1}_{\{w^0_{p^0}(t,x)>0\} } = {\bf 1}_{\{w^0_{p^0}(t,0)>0\}}.  
$$
Combining the remark above with \eqref{tracew0x} and \eqref{aezksrjdhfn} gives \eqref{repf0+}. \medskip

\noindent{\bf Step 2: proof of \eqref{w0p0t}.} We recall that $w^0_{p^0}(\cdot,0)=\psi_{p^0}(\cdot)$. Thus, by Lemma \ref{lem.analysew00},  
$$
\partial_t w^0_{p^0}(t,0)= -(A(t)-f^0(p^0)) \qquad \text{for a.e. $t\in \R$ with $w^0_{p^0}(t,0)>0$.}
$$
On the other hand, if $w^0_{p^0}(t,0)>0$, then by \eqref{repf0+}
$$
 \tilde f^{0,+}(\partial_x w^0_{p^0}(t,0^-))=f^0_{\max}-f^0(p^0) \geq A(t)-f^0(p^0), 
 $$
thanks to \eqref{hypfluxlimiterA}.  This proves that \eqref{w0p0t} holds a.e. in $\{w^0_{p^0}(\cdot,0)>0\}$. Fix now $t\in\R$ a point of continuity of $A$, of derivability of $w^0_{p^0}(\cdot,0)$ and such that  $w^0_{p^0}(t,0)=0$ and \eqref{repf0+} holds. Then $\partial_t w^0_{p^0}(\cdot,0)=0$ since $w^0_{p^0}\geq 0$. As $\hat t_1=t$ is optimal in \eqref{def.psiTER} and $A$ is continuous at $t$, one necessarily has $A(t)-f^0(p^0)\geq 0$ by optimality, so that by \eqref{repf0+}
 $$
 \min\Bigl\{A(t)-f^0(p^0) \ , \ \tilde f^{0,+}(\partial_x w^0_{p^0}(t,0^-))\Bigr\}  
 = \min\Bigl\{A(t)-f^0(p^0) \ , \ 0\Bigr\} =0.
 $$
 This proves the first equality in \eqref{w0p0t} in $\{w^0_{p^0}(\cdot,0)=0\}$. The second one is just the last statement of Lemma \ref{lem.analysew00} since $w^0_{p^0}(t,0)= \psi_{p^0}(t)$. 

\end{proof}

\subsection{Periodic solutions to a HJ equation on the exit lines} 

We proceed with our construction of correctors, now building the correctors on the exit lines. Again we use a representation formula of the solution in terms of a Hamilton-Jacobi equation. 

Let  $p^0\in [a^0, b^0]$ satisfying \eqref{condp0} or $p^0= (f^{0,-})^{-1}\left(\int_0^1A(s)ds\right)$. Let  $w^0_{p^0}$ be defined in Lemma \ref{lem.w0}. We fix $j=1,2$ and assume that $f^j$ satisfies \eqref{hypflux}.   Recalling the definition of the flux $F_{p^0}$ in \eqref{defFp0} and \eqref{defFp0BIS}, we  introduce the flux entering  the exit-line $j$ (where $j=1,2$) as
$$
 F^j_{p^0}(t)= F_{p^0}(t){\bf 1}_{I^j}(t)= \left\{ \begin{array}{ll}
\min\{ A(t),f^{0,+}(\partial_x w^0_{p^0}(t,0^-)+p^0)\} & {\rm if}\; t\in I^j, \\
0 & \text{otherwise},
\end{array}\right. 
$$
where the second equality  comes from \eqref{w0p0t} in Lemma \ref{lem.f0+}. Let us also recall the definition of $\hat p^j_{p^0}$ introduced in \eqref{defpkp0BIS} in the case $p^0\in [a^0, b^0]$. 

\begin{Definition}[The notation $\hat p^j_{p^0}$]\label{defhatpi}  Given $p^0\in [a^0, b^0]$ satisfying \eqref{condp0} or $p^0= (f^{0,-})^{-1}\left(\int_0^1A(s)ds\right)$, let $\hat p^j_{p^0}\in [a^j, b^j]$ (for $j=1,2$) be the unique solution to 
$$
f^j(\hat p^j_{p^0})=f^{j,+}(\hat p^j_{p^0})= \int_0^1 F^j_{p^0}(t)dt. 
$$
\end{Definition} 

\begin{Remark} Note that $\hat p^j_{p^0}$ indeed exists and is unique since, by \eqref{hypfluxlimiterA} and the definition of $F^j_{p^0}$, $0\leq \int_0^1 F^j_{p^0}(t)dt\leq f^j_{\max}$ and $f^j$ is one-to-one from $[a^j,b^j]$ to $[0, f^j_{\max}]$.  
\end{Remark} 

Let us now set $\tilde f^j_{p^0}(p)= f^j(p+\hat p^j_{p^0})-f^j(\hat p^j_{p^0})$ for $p\in [a^j-\hat p^j_{p^0},b^j - \hat p^j_{p^0}]$, 
and \begin{equation}\label{eq::rr5BIS}
\tilde \psi^j_{p^0}(t)=- \int_0^t (F^j_{p^0}(s)-f^j(\hat p^j_{p^0}))ds.
\end{equation}
Note that $\tilde \psi^j_{p^0}$ is a $1-$periodic, Lipschitz continuous map, satisfying  
\be\label{zekrhfjdnj}
(\tilde \psi^j_{p^0})'\in - [ - f^j(\hat p^j_{p^0}), \max\tilde f^j_{p^0}]= -[ \tilde f^j_{p^0}(a^j-\hat p^j_{p^0}),\tilde f^j_{p^0}(b^j-\hat p^j_{p^0})]\; \text{a.e..}
\ee
Let us consider the time-periodic viscosity solution $w^j_{p^0}$ to the Hamilton-Jacobi equation  
\be\label{eq.HJbisBIS}
\left\{\begin{array}{lll}
(i) & \partial_x w^j\in [a^j-\hat p^j_{p^0}, b^j-\hat p^j_{p^0}] & \text{a.e. in}\; \R\times (0,+\infty),\\
(ii) & \partial_t w^j + \tilde f^j(\partial_xw^j)=0& {\rm for }\; t\in \R, \; x>0,\\
(iii) & w^j(t,0)=  \tilde \psi^j_{p^0}(t) & {\rm for }\;  \; t\in \R.
\end{array}\right.
\ee

%
\begin{Lemma}[Explicit time-periodic solution in the outgoing roads] \label{lem.defwBIS} Fix $j=1,2$. Assume that $f^j$ satisfies \eqref{hypflux} and that \eqref{hypIj} and \eqref{hypfluxlimiterA} hold. Let $p^0\in [a^0, b^0]$ satisfying \eqref{condp0} or $p^0= (f^{0,-})^{-1}\left(\int_0^1A(s)ds\right)$ and let ${\tilde \psi^j_{p^0}}$ be defined  in (\ref{eq::rr5BIS}). 
Then, there exists a unique time-periodic Lipschitz continuous viscosity solution $w^j_{p^0}$ to \eqref{eq.HJbisBIS}, of time period equal to $1$. It is given by
\be\label{def.wtxBIS}
w^j_{p^0}(t,x)= \sup_{t_1\leq t}  \{ \tilde \psi^j_{p^0}(t_1)-\xi^j_{p^0}( t-t_1,x)\}, 
\ee
where the map $\xi^j_{p^0}:[0,\infty)^2\to \R$ is defined by 
$$
\xi^j_{p^0}(s,y)= \max_{p\in [a^j-\hat p^j,b^j-\hat p^j]} -py+ s\tilde f^j_{p^0}(p)\qquad \forall s\geq 0,\;y\geq 0 . 
$$
 Finally, there exists a constant $C>0$ such that 
\be\label{condinftywi}
\|\partial_x w^j_{p^0}\|_{L^\infty(\R\times (M,\infty))} \leq \frac{C}{M} \qquad \forall M\geq C.
\ee
\end{Lemma}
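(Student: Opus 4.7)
The statement is essentially a direct corollary of the two half-line results already established: Lemma \ref{lem.periodicHJgeneric} supplies existence, uniqueness, Lipschitz regularity, time-periodicity and the representation formula \eqref{def.wtxBIS}, while Lemma \ref{lem.behaviorinfty} delivers the decay estimate \eqref{condinftywi}. My plan is to reduce to those results by verifying that the shifted flux $\tilde f^j_{p^0}$ and the boundary datum $\tilde \psi^j_{p^0}$ play, respectively, the roles of $f$ and $\psi$ in their hypotheses.

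First I would check condition \eqref{hypfgen}. Since $\hat p^j_{p^0}\in [a^j,b^j]$ and $f^j$ is $C^2$, strictly increasing and strongly concave with $(f^j)''\leq -\delta$ on that interval by \eqref{hypflux}, the translation $\tilde f^j_{p^0}(p)=f^j(p+\hat p^j_{p^0})-f^j(\hat p^j_{p^0})$ inherits these properties on $[a^j-\hat p^j_{p^0},b^j-\hat p^j_{p^0}]$. Moreover $0$ lies in this interval and $\tilde f^j_{p^0}(0)=0$, which is precisely the supplementary hypothesis needed to invoke Lemma \ref{lem.behaviorinfty}.

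Second I would verify condition \eqref{hyppsigen} for $\tilde \psi^j_{p^0}$. Lipschitz continuity is immediate because $F^j_{p^0}$ is bounded. The $1$-periodicity, reading directly from \eqref{eq::rr5BIS}, reduces to the identity $\int_0^1 F^j_{p^0}(s)\,ds = f^j(\hat p^j_{p^0})$, which is exactly the defining relation of $\hat p^j_{p^0}$ in Definition \ref{defhatpi}. The derivative bound is a pointwise computation: $(\tilde \psi^j_{p^0})'(t)=f^j(\hat p^j_{p^0})-F^j_{p^0}(t)$, and since $F^j_{p^0}(t)=F_{p^0}(t)\mathbf{1}_{I^j}(t)$ lies in $[0,f^j_{\max}]$ (using $F_{p^0}\leq A$, shown in Step~0 of the proof of Lemma \ref{lem.hatlambdak}, together with $A\leq f^j_{\max}$ on $I^j$ from \eqref{hypfluxlimiterA}), this derivative lies in $[f^j(\hat p^j_{p^0})-f^j_{\max},\,f^j(\hat p^j_{p^0})]=-[\tilde f^j_{p^0}(a^j-\hat p^j_{p^0}),\tilde f^j_{p^0}(b^j-\hat p^j_{p^0})]$, as required by \eqref{zekrhfjdnj}.

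With these two verifications in hand, Lemma \ref{lem.periodicHJgeneric} applied with $f=\tilde f^j_{p^0}$ and $\psi=\tilde \psi^j_{p^0}$ yields at once the unique time-periodic Lipschitz viscosity solution of \eqref{eq.HJbisBIS} together with the representation \eqref{def.wtxBIS}, and Lemma \ref{lem.behaviorinfty} yields \eqref{condinftywi}. I do not anticipate any substantive obstacle here: the delicate choice lies upstream, in Definition \ref{defhatpi}, which was precisely engineered so that the primitive of $F^j_{p^0}-f^j(\hat p^j_{p^0})$ is $1$-periodic and so that $\tilde f^j_{p^0}$ vanishes at the origin, making the half-line theory of Subsection~3.1 directly applicable.
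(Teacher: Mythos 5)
Your proposal is correct and follows essentially the same route as the paper: the paper's proof likewise invokes Lemma \ref{lem.periodicHJgeneric} (after noting that $\tilde f^j_{p^0}$ is increasing and uniformly concave on $[a^j-\hat p^j_{p^0},b^j-\hat p^j_{p^0}]$ and that $\tilde\psi^j_{p^0}$ satisfies \eqref{zekrhfjdnj}) and then Lemma \ref{lem.behaviorinfty} using $\tilde f^j_{p^0}(0)=0$. The only difference is cosmetic: the verifications of periodicity and of the derivative bound \eqref{zekrhfjdnj}, which you carry out inside the proof, are stated in the paper as part of the setup preceding the lemma.
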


\begin{proof} Following Lemma \ref{lem.periodicHJgeneric}, $w^j_{p^0}$ { defined in \eqref{def.wtxBIS}} is the unique solution to \eqref{eq.HJbisBIS} and is Lipschitz continuous because by construction $\tilde f^j_{p^0}:  [a^j-\hat p^j_{p^0},b^j - \hat p^j_{p^0}]\to \R$ is increasing and uniformly concave and $\tilde \psi_{p^0}$ satisfies \eqref{zekrhfjdnj}. As $\tilde f^j_{p^0}(0)=0$, Lemma \ref{lem.behaviorinfty} implies that $w^j_{p^0}$ is bounded and satisfies \eqref{condinftywi}. 
\end{proof}
\medskip

In order to  make the link with conservation laws, we need the following technical result which will allow us to glue the solutions on the different branches. Fix $p^0$ as in Lemma \ref{lem.defwBIS} and let  $w^0_{p^0}$, $\hat p^j_{p^0}\in [a^j, b^j]$  and $w^j_{p^0} $ be  {respectively} defined by  Lemma \ref{lem.w0} , Definition \ref{defhatpi} and Lemma \ref{lem.defwBIS}. The maps $w^j_{p^0}$ being a solution to the HJ equation \eqref{HJ0} (for $j=0$) and \eqref{eq.HJbisBIS} (for $j=1,2$), $\partial_x w^j_{p^0}$ is a solution to the corresponding conservation law (Lemma \ref{lem::r2}). Therefore $\partial_x w^j_{p^0}$ has a trace at $x=0$ in the sense of Panov (Theorem \ref{th::r7homo}).

\begin{Lemma}[Expression of the flux of the traces]\label{lem.viscjunc} On $I^j$ (for $j=1,2$), the trace $(\partial_x  w^0_{p^0}(\cdot,0^-), \partial_x w^j_{p^0}(\cdot,0^+))$ satisfies 
\be\label{tracecondIj}
\begin{array}{c}
\min\Bigl\{ A(t),  f^{0,+}(\partial_x  w^0_{p^0}(t,0^-)+p^0),  f^{j,-}(\partial_x w^j_{p^0}(t,0^+)+\hat p^j_{p^0})\Bigr\}\\
\qquad =  f^0(\partial_x  w^0_{p^0}(t,0^-)+p^0)=  f^j(\partial_x w^j_{p^0}(t,0^+)+\hat p^j_{p^0}) \; \text{a.e.,}
\end{array}
\ee
while on $\R\backslash I^j$, the trace $\partial_x w^j_{p^0}(\cdot,0^+)$ satisfies 
\be\label{tracecondI-j}
 f^j(\partial_x w^j_{p^0}(t,0^+)+\hat p^j_{p^0}) =0 \qquad \text{a.e..}
\ee
\end{Lemma}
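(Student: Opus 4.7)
The plan is to combine the Hamilton-Jacobi equations for $w^0_{p^0}$ and $w^j_{p^0}$ at the boundary $x=0$ with the explicit expressions for the boundary data ($\psi_{p^0}$ and $\tilde\psi^j_{p^0}$), and then use the additional computation of the flux $F_{p^0}$ provided in Lemma \ref{lem.f0+}. The common key identity I would first establish is that, for $j=0,1,2$, the trace satisfies the HJ equation along the boundary:
$$
\partial_t w^j_{p^0}(t,0^\pm) + \tilde f^j(\partial_x w^j_{p^0}(t,0^\pm)) = 0 \qquad \text{a.e. } t\in\R,
$$
where $0^-$ is used for $j=0$ and $0^+$ for $j=1,2$. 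Since $\partial_x w^j_{p^0}$ is a Kruzhkov entropy solution of the associated scalar conservation law (Lemma \ref{lem::r2}) and therefore admits a strong trace in the sense of Panov (Theorem \ref{th::r7homo}), this identity follows from passing to the limit in the HJ equation, or directly by analyzing the points of differentiability via the representation formulas \eqref{def.w0} and \eqref{def.wtxBIS}, as was done at the end of Step 3 in the proof of Lemma \ref{lem.periodicHJgeneric}.

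Granting that identity, the proof is then essentially algebraic. On the outgoing side, $w^j_{p^0}(t,0)=\tilde\psi^j_{p^0}(t)$ and the explicit formula \eqref{eq::rr5BIS} give $\partial_tw^j_{p^0}(t,0^+) = f^j(\hat p^j_{p^0})- F^j_{p^0}(t)$, whence
$$
f^j(\partial_x w^j_{p^0}(t,0^+)+\hat p^j_{p^0}) = F^j_{p^0}(t) \qquad \text{a.e. } t\in\R, \quad j=1,2.
$$
Since $F^j_{p^0} \equiv 0$ on $\R\setminus I^j$ by definition, \eqref{tracecondI-j} is immediate. On the incoming side the same argument, combined with $w^0_{p^0}(t,0)=\psi_{p^0}(t)$ and equation \eqref{w0p0t} from Lemma \ref{lem.f0+}, gives $f^0(\partial_x w^0_{p^0}(t,0^-)+p^0) = F_{p^0}(t)$, and Lemma \ref{lem.f0+} already provides
$$
\min\bigl\{A(t),\ f^{0,+}(\partial_x w^0_{p^0}(t,0^-)+p^0)\bigr\} = F_{p^0}(t).
$$

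For \eqref{tracecondIj}, which is the equality on $I^j$, the only remaining task is to show that adding the third term $f^{j,-}(\partial_x w^j_{p^0}(t,0^+)+\hat p^j_{p^0})$ inside the minimum does not change the value. Here I would exploit the fact that, by construction in Lemma \ref{lem.defwBIS}, $\partial_x w^j_{p^0}(t,0^+)\in[a^j-\hat p^j_{p^0},\,b^j-\hat p^j_{p^0}]$, so that $\partial_x w^j_{p^0}(t,0^+)+\hat p^j_{p^0}\in[a^j,b^j]$ and hence, by the very definition \eqref{eq::e*3} of the nonincreasing envelope, $f^{j,-}(\partial_x w^j_{p^0}(t,0^+)+\hat p^j_{p^0}) = f^j_{\max}$. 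On the other hand $F_{p^0}(t)\le A(t)\le f^j_{\max}$ on $I^j$ by \eqref{hypfluxlimiterA}, so the third term is always at least $F_{p^0}(t)$ and does not bind; combining with the identity $f^j(\partial_x w^j_{p^0}(t,0^+)+\hat p^j_{p^0}) = F_{p^0}(t)$ on $I^j$ closes \eqref{tracecondIj}.

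The main obstacle, I expect, is the first step, namely passing rigorously from the interior HJ equation and the Dirichlet datum to the equation evaluated at the trace. The standard Dirichlet viscosity framework only gives a relaxed boundary condition; to get an equality one needs to use both that $w^j_{p^0}$ is constructed by a representation formula (so the optimizer argument from Lemma \ref{lem.periodicHJgeneric} is available) and that the strong trace of $\partial_x w^j_{p^0}$ exists. Once this identification is in hand, the remaining computations reduce, as sketched above, to unwinding the definitions of $F_{p^0}$, $F^j_{p^0}$, $\tilde\psi^j_{p^0}$ and the envelopes $f^{0,+}$, $f^{j,-}$.
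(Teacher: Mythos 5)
Your proposal is correct, but it takes a genuinely different route from the paper. The paper does not work branch by branch: for each interval $(\tau_1,\tau_2)\subset I^j$ on which $A$ is constant, it glues the two antiderivatives into a single function $W=(W^0,W^j)$ (see \eqref{defWW}), verifies that $W$ is a viscosity solution of the flux\--limited $1$:$1$ junction problem \eqref{HJ0junc} in the sense of \cite{IM17} (the delicate part being the subsolution test at $x=0$ with the reduced test functions of condition (2.12) there), and then invokes the junction-level correspondence of Lemma \ref{lem::r3} to read off \eqref{tracecondIj} as membership of the trace in the germ $G^{0,j}_A$; \eqref{tracecondI-j} is obtained the same way with the degenerate limiter $0$. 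You instead bypass the junction machinery entirely: you identify the flux of each trace with the prescribed boundary flux ($f^j(\partial_xw^j_{p^0}(t,0^+)+\hat p^j_{p^0})=F^j_{p^0}(t)$ and $f^0(\partial_xw^0_{p^0}(t,0^-)+p^0)=F_{p^0}(t)$) and then check the min-condition by hand, using that the trace on the outgoing branch lies in $[a^j,b^j]$ so that $f^{j,-}=f^j_{\max}\ge A\ge F_{p^0}$ there. This is more elementary and arguably more transparent about where each term of the min comes from; the price is the boundary identity you flag as the main obstacle. That identity does hold and can be closed cleanly: since $\partial_tw^j_{p^0}(\cdot,x)=-\tilde f^j_{p^0}(\partial_xw^j_{p^0}(\cdot,x))$ a.e. for $x>0$, the Panov trace gives $L^1_{loc}$ convergence of the right-hand side to $-\tilde f^j_{p^0}(\partial_xw^j_{p^0}(\cdot,0^+))$ as $x\to0^+$ off a null set, while the uniform convergence $w^j_{p^0}(\cdot,x)\to\tilde\psi^j_{p^0}$ forces the left-hand side to converge to $(\tilde\psi^j_{p^0})'$ in the sense of distributions; equivalently one integrates the conservation law over $(t_1,t_2)\times(0,X)$ and uses $\int_0^X\partial_xw^j_{p^0}=w^j_{p^0}(\cdot,X)-\tilde\psi^j_{p^0}$. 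Either way your argument is complete; what the paper's route buys is that the full germ condition (including the non-binding third term) drops out of Lemma \ref{lem::r3} automatically, and that the same gluing construction is reused verbatim for the congested case.
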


\begin{proof} {\bf Step 1: proof of \eqref{tracecondIj}.} The main idea is to reduce the problem to a HJ equation on a junction and then to use the equivalence between HJ and conservation law on a simple junction with only two branches given by Lemma \ref{lem::r3}. Fix $j=1,2$ and let $(\tau_1,\tau_2)\subset I^j$ on which $A$ is constant. We set 
\be\label{defWW}
 \left\{\begin{array}{ll}
W^0(t,x)=w^0_{p^0}(t,x) +p^0x -f^0(p^0)(t-\tau_1)&\text{in}\; (\tau_1,\tau_2)\times (-\infty,0),\\
W^j(t,x)=w^j_{p^0}(t,x)+\hat p^j_{p^0} x-f^j(\hat p^j_{p^0})(t-\tau_1) -w^j_{p^0}(\tau_1,0) +w^0_{p^0}(\tau_1,0)& \text{in}\; (\tau_1,\tau_2)\times (0,+\infty),
\end{array}\right.
\ee
We claim that  $W=(W^0,W^j)$ is a viscosity solution to the problem on the 1:1 junction (in the sense of \cite{IM17}): 
\be\label{HJ0junc}
\left\{\begin{array}{lll}
(i) & \partial_t W^0 +  f^0(\partial_xW^0)=0& {\rm for }\; t\in (\tau_1,\tau_2), \; x<0,\\
(ii) & \partial_t W^j +  f^j(\partial_xW^j)=0& {\rm for }\; t\in (\tau_1,\tau_2), \; x>0,\\
(iii) & W(t,0):=W^0(t,0^-)= W^j(t,0^+) & {\rm for}\; t\in (\tau_1,\tau_2), \\
(iv) & \partial_t W(t,0)+ \min\{ A(t),  f^{0,+} (\partial_xW(t,0^-)),  f^{j,-} (\partial_xW(t,0^+)) \}=0 & {\rm for }\; t\in (\tau_1,\tau_2). \end{array}\right.
\ee
Indeed, by construction, $W(\tau_1,0^-)= W(\tau_1,0^+)$. By Lemma \ref{lem.f0+},  we have
$$
\partial_t W(t,0^-)= \partial_t w^0_{p^0}(t,0^-)-f^0(p^0)= -F_{p^0}(t)=-F^j_{p^0}(t)  \qquad \text{a.e. in} \; (\tau_1,\tau_2),
$$
while, by the boundary condition satisfied by $w^j_{p^0}$ and the definition of $\tilde \psi^j_{p^0}$ in \eqref{eq::rr5BIS},  
$$
\partial_t W(t,0^+)=\partial_t w^j_{p^0}(t,0^+)-f^j(\hat p^j_{p^0})= (\tilde \psi^j_{p^0})'(t)-f^j(\hat p^j_{p^0}) =-F^j_{p^0}(t) \qquad \text{a.e. in} \; (\tau_1,\tau_2).
$$
Thus equality (iii) in \eqref{HJ0junc} holds. Note also that, by the equation satisfied by $w^0_{p^0}$ and $w^j_{p^0}$, \eqref{HJ0junc}-(i) and \eqref{HJ0junc}-(ii) hold. Let us finally check that the junction condition \eqref{HJ0junc}-(iv) holds in the viscosity sense.  As, by the definition of  $F^j_{p^0}$, 
$$
\partial_t W(t,0)+ A(t) = -F^j_{p^0}(t) +A(t)\geq 0\qquad \text{a.e.}, 
$$
\cite[Theorem 2.11]{IM17} implies that $W$ is a supersolution. 
For the subsolution property, assume that $\varphi(t,x):=\alpha(t)+q^0 x{\bf 1}_{x<0}+q^j{\bf 1}_{x>0}$  is a test function touching $w^0$  from above at $(t_0,0)$, where $t_0\in (\tau_1,\tau_2)$ and with (condition (2.12) in  \cite[Theorem 2.7]{IM17})
\be\label{zkejnsrdkgTER}
A(t)=  f^0(q^0)= f^{0,-}(q^0)= f^j(q^j)= f^{j,+}(q^j).
\ee 
We have to prove that $\alpha'(t_0)+A(t)\leq 0$. As the map $(t,x){\mapsto} \alpha(t)+q^0 x{\bf 1}_{x<0}$ touches locally $W$ from above on $(\tau_1,\tau_2)\times (-\infty,0]$ at $(t_0,0)$, the map $(t,x) {\mapsto} \alpha(t)+(q^0-p^0) x{\bf 1}_{x<0} +f^0(p^0)(t-\tau_1)$ touches locally $w^0_{p^0}$ from above on $\R\times (-\infty,0]$ at $(t_0,0)$. By the equation satisfied by $w^0_{p^0}$, this implies that 
$$
\alpha'(t_0)+f^0(p^0)+\min\{ A(t_0)-f^0(p^0), \tilde f^{0,+}( q^0- p^0)\} \leq 0.
$$  
Recalling the definition of $\tilde f^0$ in \eqref{deftildef0}, the inequality above yields  
$$
\alpha'(t_0) +\min\Bigl\{A(t_0), f^{0,+}(q^0)\Bigr\}\leq 0, 
$$
where, because of \eqref{zkejnsrdkgTER} and \eqref{hypfluxlimiterA}, $f^{0,+}(q^0)= f^0_{\max}\geq A(t_0)$. Hence $\alpha'(t_0)+A(t_0)\leq 0$. This proves that $W$ is a viscosity solution to \eqref{HJ0junc}. 

We now rely on Lemma \ref{lem::r3}, which implies that the trace $\partial_x W(\cdot, t)$  satisfies 
$$
\partial_x W(t,0) \in G \qquad \text{a.e. in}\; (\tau_1,\tau_2), 
$$
where 
$$
G=\{(u^0,u^j)\in [a^0,c^0]\times [a^j,c^j], \; \min\left\{A(t), f^{0,+}(u^0),f^{j,-}(u^j)\right\}=f^0(u^0)=f^j(u^j) \}. 
$$
This is \eqref{tracecondIj}. \medskip

\noindent{\bf Step 2: proof of \eqref{tracecondI-j}.} Fix $j\in \{1,2\}$ and  let $(\tau_1,\tau_2)\subset \R\backslash I^j$ on which $A$ is constant and let $W=(W^0,W^j):\R\to \R$ be given by 
\be\label{defWW2}
\left\{\begin{array}{ll}
W^0(t,x) = w^j_{p^0}(t,0) +a^j x-f^j(\hat p^j_{p^0})(t-\tau_1)&\text{in}\; (\tau_1,\tau_2)\times (-\infty,0),\\
W^j(t,x)=w^j_{p^0}(t,x)+\hat p^j_{p^0} x-f^j(\hat p^j_{p^0})(t-\tau_1) & \text{in}\; (\tau_1,\tau_2)\times (0,+\infty),
\end{array}\right.
\ee
We claim that $W$ is a viscosity solution of the HJ equation on the 1:1 junction 
\be\label{HJ0junc2}
\left\{\begin{array}{lll}
(i) & \partial_t W^0 +  f^j(\partial_xW^0)=0& {\rm for }\; t\in (\tau_1,\tau_2), \; x<0,\\
(ii) & \partial_t W^j +  f^j(\partial_xW^j)=0& {\rm for }\; t\in (\tau_1,\tau_2), \; x>0,\\
(iii) & W(t,0):=W^0(t,0^-)= W^j(t,0^+) & {\rm for}\; t\in (\tau_1,\tau_2), \\
(iv) & \partial_t W(t,0)+ \min\{ 0 ,   f^{j,+} (\partial_xW^0(t,0^-)) ,  f^{j,-} (\partial_xW^j(t,0^+)) \}=0 & {\rm for }\; t\in (\tau_1,\tau_2). \end{array}\right.
\ee
Indeed,  by construction, $W$ is continuous and conditions (ii) and (iii) hold. On $(\tau_1,\tau_2)\times (-\infty,0)$, we have (in the a.e. sense and thus, by the smoothness {of $W^0$ which is affine}, in the viscosity sense)
$$
\partial_t W^0(t,x) +  f^j(\partial_xW^0(t,x))= \partial_t w^j_{p^0}(t,0)-f^j(\hat p^j_{p^0}) +f^j(a^j)= 0
$$
since $\partial_t w^j_{p^0}(t,0) =(\tilde \psi^j_{p^0})'(t) =  f^j(\hat p^j_{p^0})$ as $F^j_{p^0}=0$ on $(\tau_1,\tau_2)\subset \R\backslash I^j$. Thus $(i)$ holds. 
The same proof shows that $\partial_t W(t,0)= 0$, which implies condition $(iv)$. As $W$ is a viscosity solution of \eqref{HJ0junc2} we infer from Lemma \ref{lem::r3} that the trace at $x=0$ of ${\partial_x} W$ satisfies 
$$
\partial_xW(t,0) \in G,
$$
where 
\begin{align*}
G&=\{(u^0,u^j)\in [a^j,c^j]^2, \; \min\left\{0, f^{j,+}(u^0),f^{j,-}(u^1)\right\}=f^j(u^0)=f^j(u^j) \}\\
& = \{(u^0,u^j)\in [a^j,c^j]^2, \; 0=f^j(u^0)=f^j(u^j) \}.
\end{align*}
This implies \eqref{tracecondI-j}. 
\end{proof}

\subsection{Construction of the correctors}

We are now ready to build the correctors, i.e., the time-periodic solutions to \eqref{eq.meso} with a specific behavior at infinity. Throughout this part, assumptions   \eqref{hypflux}, \eqref{hypIj} and \eqref{hypfluxlimiterA} are in force.

\subsubsection{The correctors in the fluid case}

We build here a corrector when $(p^0,p^1,p^2)$ is as in case (i) of the definition \eqref{defG0} of $E_{\bar\Lambda}$. 

\begin{Proposition}\label{prop.correctorfluid}
Assume that $p^0\in [a^0,b^0]$ satisfies $f^0(p^0)\leq \int_0^1 A(t)dt$. Then there exists a bounded  solution $u_{p^0}=(u^j_{p^0})$ to \eqref{eq.meso} on $\R\times {\mathcal R}$, which is time-periodic of period $1$ and satisfies, for a constant $C>0$ depending on the data and on $p^0$,  
$$
u^0_{p^0}(t,x)=p^0 \;\text{for a.e.}\; t\in \R, \ x\leq -C, \qquad \|u^j_{p^0}(t, \cdot)-\hat p^j_{p^0}\|_{L^\infty(\R\times (M,\infty))} \leq \frac{C}{M}\;\text{for any}\; M\geq C.
$$
\end{Proposition}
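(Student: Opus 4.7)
The plan is to assemble the three pieces constructed in Lemmas \ref{lem.w0}, \ref{lem.defwBIS} and \ref{lem.viscjunc} into a single entropy solution of \eqref{eq.meso}. Concretely, set
$$
u^0_{p^0}(t,x):=p^0+\partial_x w^0_{p^0}(t,x)\quad (x<0),\qquad u^j_{p^0}(t,x):=\hat p^j_{p^0}+\partial_x w^j_{p^0}(t,x)\quad (x>0,\ j=1,2),
$$
where $w^0_{p^0}$ is the HJ-corrector on the entering road given by Lemma \ref{lem.w0} and the $w^j_{p^0}$ are the HJ-correctors on the exit roads given by Lemma \ref{lem.defwBIS}. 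Each $w^j_{p^0}$ is $1$-periodic in time and Lipschitz, so each $u^j_{p^0}$ is bounded, $1$-periodic in time, and takes values in the appropriate interval $[a^j,c^j]$ (by construction of the HJ equations on each branch).

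Next I would verify that each $u^j_{p^0}$ is a Krushkov entropy solution of the scalar conservation law $\partial_t u^j+\partial_x(f^j(u^j))=0$ on its branch. This is the standard equivalence between uniformly concave HJ equations and their conservation law counterparts in one space dimension (Lemma \ref{lem::r2} in the paper), applied to the shifted Hamiltonians $\tilde f^j_{p^0}$; the shift by $p^0$ (resp.\ $\hat p^j_{p^0}$) and the constants $f^0(p^0)$, $f^j(\hat p^j_{p^0})$ disappear in the derivative.

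The core step is the junction condition \eqref{eq.meso}-(iii). I would use Lemma \ref{lem.viscjunc} as follows. On the set $I^1$, the identity \eqref{tracecondIj} with $j=1$ gives
$$
\min\bigl\{A(t),\ f^{0,+}(u^0_{p^0}(t,0^-)),\ f^{1,-}(u^1_{p^0}(t,0^+))\bigr\}=f^0(u^0_{p^0}(t,0^-))=f^1(u^1_{p^0}(t,0^+)),
$$
while \eqref{tracecondI-j} with $j=2$ (valid on $\R\setminus I^2\supset I^1$) gives $f^2(u^2_{p^0}(t,0^+))=0$. Comparing with \eqref{germGL1bis}, this is exactly the statement that the trace lies in $\mathcal{G}_{\Lambda_1}(t)$. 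The symmetric argument on $I^2$, swapping the roles of $1$ and $2$, puts the trace in $\mathcal{G}_{\Lambda_2}(t)$. Hence the trace belongs to $\mathcal{G}(t)$ a.e., and $u_{p^0}$ is an entropy solution of \eqref{eq.meso}.

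Finally, the quantitative behavior at infinity is a direct read-off of Lemmas \ref{lem.w0} and \ref{lem.defwBIS}. Since $p^0\in[a^0,b^0]$, the first line of \eqref{condinftyw0} gives $w^0_{p^0}(t,x)=0$ for $x\leq -C$, whence $\partial_x w^0_{p^0}\equiv 0$ there and $u^0_{p^0}\equiv p^0$. On the exit roads, \eqref{condinftywi} yields $\|u^j_{p^0}-\hat p^j_{p^0}\|_{L^\infty(\R\times(M,\infty))}=\|\partial_x w^j_{p^0}\|_{L^\infty(\R\times(M,\infty))}\leq C/M$ for $M\geq C$. The only place where one has to be a little careful is in the verification of the junction condition, since it rests on the precise computation of the trace $f^{0,+}(\partial_x w^0_{p^0}(t,0^-)+p^0)$ in Lemma \ref{lem.f0+} and on the 1:1 reduction performed in the proof of Lemma \ref{lem.viscjunc}; but these have already been established in full, so here the argument is just bookkeeping.
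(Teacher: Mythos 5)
Your proposal is correct and follows essentially the same route as the paper: the same decomposition $u^0_{p^0}=p^0+\partial_x w^0_{p^0}$, $u^j_{p^0}=\hat p^j_{p^0}+\partial_x w^j_{p^0}$, the same appeal to Lemma \ref{lem::r2} for the interior equations, to Lemma \ref{lem.viscjunc} for the junction condition, and to \eqref{condinftyw0}, \eqref{condinftywi} for the behavior at infinity. Your explicit matching of \eqref{tracecondIj} and \eqref{tracecondI-j} against the germs \eqref{germGL1bis}--\eqref{germGL2bis} is a useful spelling-out of a step the paper leaves implicit.
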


\begin{proof} Let us set 
$$
 \left\{\begin{array}{ll}
u^0_{p^0}(t,x)= \partial_x w^0_{p^0}(t,x)+p^0 & \text{on} \; \R\times (-\infty,0), \\
u^j_{p^0}(t,x)=\partial_x w^j_{p^0}(t,x)+\hat p^j_{p^0} & \text{on} \; \R\times (0,+\infty), j=1,2,\\
\end{array}\right.
$$
where $w^0_{p^0}$, $\hat p^j_{p^0}\in [a^j, b^j]$  and $w^j_{p^0} $ are defined in  Lemma \ref{lem.w0}, Definition \ref{defhatpi} and Lemma \ref{lem.defwBIS} respectively. By construction, $u_{p^0}$ is bounded and time-periodic of period $1$ as $w^0_{p^0}$ and $w^j_{p^0}$ are Lipschitz continuous and $1-$periodic in time. As $w^0_{p^0}$ and $w^j_{p^0}$ solve \eqref{HJ0}-(i)-(ii) and \eqref{eq.HJbisBIS}-(i)-(ii) respectively, $u_{p^0}$ satisfies \eqref{eq.meso}-(i)-(ii) thanks to the local correspondance between viscosity solution and conservation laws in $1-$space dimension recalled in Lemma \ref{lem::r2}. The behavior at infinity of $u_{p^0}$ is a consequence of \eqref{condinftyw0} and \eqref{condinftywi}. As for the junction condition \eqref{eq.meso}-(iii), it is proved in Lemma \ref{lem.viscjunc}. 
\end{proof}

\subsubsection{The correctors in the fully congested case} 

In this part we assume that the second exit road is fully congested (case (ii) in \eqref{defG0}): 

\begin{Proposition}\label{prop.correctorcongested} Assume that $(p^0,p^1,p^2)\in Q$ satisfies 
$$
p^2=c^2, \; f^0(p^0)= f^{0,-}(p^0)= \int_0^1{\bf 1}_{I^1}(t)A(t)dt = f^1(p^1)={f^{1,+}(p^1).}
$$
Then there exists a bounded  solution $u=(u^j)$ to \eqref{eq.meso} on $\R\times {\mathcal R}$, which is time-periodic of period $1$ and satisfies, for a constant $C>0$ depending on the data and on $p^0$,  $u^2=c^2$ and 
$$
\|u^0-p^0\|_{L^\infty(\R\times (-\infty,M))}+\|u^1- p^1\|_{L^\infty(\R\times (M,\infty))} \leq \frac{C}{M}\;\text{for any}\; M\geq C.
$$
\end{Proposition}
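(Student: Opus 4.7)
The plan is to reduce the $1{:}2$ corrector problem to a $1{:}1$ corrector problem between roads $0$ and $1$, by freezing road $2$ at its fully congested state. Concretely, I would set $u^2 \equiv c^2$; since $f^2(c^2)=0$, this trivially solves the scalar conservation law on $\mathcal R^2$ and produces no flux through the junction from road $2$. With road $2$ inert, no traffic may transit at the junction during $I^2$ (neither from road $0$ to road $1$, which is closed by the traffic light, nor to road $2$, which is saturated), so the remaining branches $\mathcal R^0,\mathcal R^1$ must behave as in a $1{:}1$ junction whose effective flux limiter is
\[
\tilde A(t) \,:=\, A(t)\,{\bf 1}_{I^1}(t).
\]
The map $\tilde A$ is $1$-periodic, piecewise constant and satisfies $0\le \tilde A(t)\le \min\{f^0_{\max},f^1_{\max}\}$, fulfilling the analogue of \eqref{hypfluxlimiterA} for the $1{:}1$ setting. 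Moreover $\int_0^1 \tilde A(s)\,ds = \bar \lambda^1 = f^0(p^0) = f^1(p^1)$, and the hypothesis $f^0(p^0)=f^{0,-}(p^0)$ yields $p^0 = (f^{0,-})^{-1}\!\left(\int_0^1 \tilde A(s)\,ds\right)$, which is precisely the saturated regime in Lemma \ref{lem.w0}.

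Next, I would apply Lemma \ref{lem.w0} with $A$ replaced by $\tilde A$ to obtain a Lipschitz continuous, $1$-periodic HJ solution $\tilde w^0$ on $\R\times(-\infty,0]$ satisfying the decay estimate \eqref{condinftyw0}, and set $u^0 := \partial_x \tilde w^0 + p^0$. The local correspondence between HJ equations and conservation laws (Lemma \ref{lem::r2}) shows that $u^0$ is an entropy solution on $\mathcal R^0$ with the announced decay at $-\infty$. Lemma \ref{lem.f0+} (applied with $\tilde A$) then gives $\partial_t \tilde w^0(t,0)=f^0(p^0)-\tilde A(t)$, so the boundary flux of $u^0$ at $0^-$ is $\tilde A(t)$ a.e.. Consequently, in Definition \ref{defhatpi} applied with $\tilde A$, the constant $\hat p^1_{p^0}$ is determined by $f^1(\hat p^1_{p^0})=\int_0^1 \tilde A(t){\bf 1}_{I^1}(t)\,dt = \bar \lambda^1 = f^{1,+}(p^1)$, whence $\hat p^1_{p^0}=p^1$ by the monotonicity of $f^{1,+}$ on $[a^1,b^1]$. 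Invoking Lemma \ref{lem.defwBIS} with $\tilde A$ then provides a time-periodic $\tilde w^1$ on $\R\times[0,\infty)$ with the corresponding bound \eqref{condinftywi}, and I would set $u^1 := \partial_x \tilde w^1 + p^1$.

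The final step is to verify that $u=(u^0,u^1,c^2)$ satisfies the $1{:}2$ junction condition \eqref{eq.meso}-(iii) for a.e.\ $t$. On $I^1$ one has $\tilde A(t)=A(t)$, so Lemma \ref{lem.viscjunc} (applied with $\tilde A$) gives $\min\{A(t),f^{0,+}(u^0(t,0^-)),f^{1,-}(u^1(t,0^+))\}=f^0(u^0(t,0^-))=f^1(u^1(t,0^+))$, and $f^2(c^2)=0$, which matches \eqref{germGL1bis}. On $I^2$ one has $\tilde A(t)=0$, which by Lemma \ref{lem.viscjunc} forces both boundary fluxes on roads $0$ and $1$ to vanish; combined with $f^{2,-}(c^2)=0$, this trivially saturates the minimum in \eqref{germGL2bis}, so $u(t,0)\in\mathcal G(t)$ on $I^2$ as well.

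The only real subtlety is bookkeeping: one must verify that Lemmas \ref{lem.w0}, \ref{lem.f0+} and \ref{lem.defwBIS} remain valid when $A$ is replaced by $\tilde A$. This is immediate, since the proofs of those lemmas only use \eqref{hypfluxlimiterA}-type bounds on the flux limiter relative to the $1{:}1$ junction $(\mathcal R^0,\mathcal R^1)$, and these are obvious for $\tilde A$ from its definition. No deeper estimate needs to be reproved, and the quantitative decay estimates transfer verbatim, yielding the claimed bound $C/M$.
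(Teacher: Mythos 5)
Your proposal is correct and follows essentially the same route as the paper: the paper's proof also introduces the modified flux limiter $\tilde A=A\,{\bf 1}_{I^1}$, observes that $p^0=(f^{0,-})^{-1}\bigl(\int_0^1\tilde A\bigr)$ falls into the saturated regime of Lemma \ref{lem.w0}, builds $u^0=\partial_x w^0_{p^0}+p^0$ and $u^1=\partial_x w^1_{p^0}+\hat p^1_{p^0}$ from Lemmas \ref{lem.w0} and \ref{lem.defwBIS} with $\tilde A$, freezes $u^2\equiv c^2$, and invokes Lemma \ref{lem::r2}, the decay estimates \eqref{condinftyw0}--\eqref{condinftywi} and Lemma \ref{lem.viscjunc} exactly as you do. Your write-up is in fact slightly more detailed than the paper's (notably the identification $\hat p^1_{p^0}=p^1$ and the explicit check of the germ condition on $I^2$, where the vanishing of the road-$0$ flux requires running the $1{:}1$ junction argument of Lemma \ref{lem.viscjunc} with limiter $\tilde A=0$ there).
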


\begin{proof} Let us define a new flux limiter by setting $\tilde A:= A{\bf 1}_{I^1}$. We note that $p^0= (f^{0,-})^{-1}\left(\int_0^1 \tilde A(s)ds\right)$. 
Let us consider $w^0_{p^0}$ the solution introduced in Lemma \ref{lem.w0}  and $w^1_{p_0}$  the solution given for $j=1$  in Lemma \ref{lem.defwBIS} for the the new flux limiter $\tilde A$. We set 
$$
 \left\{\begin{array}{ll}
u^0_{p^0}(t,x)= \partial_x w^0_{p^0}(t,x)+p^0 & \text{on} \; \R\times (-\infty,0), \\
u^1_{p^0}(t,x)=\partial_x w^1_{p^0}(t,x)+\hat p^j_{p^0} & \text{on} \; \R\times (0,+\infty).\\
\end{array}\right.
$$
As $w^0_{p^0}$ and $w^1_{p^0}$ solve \eqref{HJ0}-(i)-(ii) and \eqref{eq.HJbisBIS}-(i)-(ii) respectively (with flux limiter $\tilde A$), $(u^0_{p^0},u^1_{p^0},c^2)$ satisfies \eqref{eq.meso}-(i)-(ii) thanks to the local correspondance between viscosity solution and conservation laws in $1-$space dimension recalled in Lemma \ref{lem::r2}. The behavior at infinity of $(u^0_{p^0},u^1_{p^0})$ is a consequence of \eqref{condinftyw0} and \eqref{condinftywi}. As for the junction condition \eqref{eq.meso}-(iii), it is proved in Lemma \ref{lem.viscjunc}. 
\end{proof}

\section{Proof of the homogenization}\label{sec:homog}

The section is dedicated to the proof of the existence of a solution to the mesoscopic model and of  the homogenization for the 1:2 junctions ({Subsection \ref{proofs1:2}}) and  for the 2:1 junctions (Subsection \ref{proof2:1}). 

\subsection{Proof for a 1:2 junction}\label{proofs1:2}

In this part, we prove Lemma \ref{lem.existeqmeso}, Theorem  \ref{thm.corrector} and Theorem \ref{thm.main}.

\begin{proof}[Proof of Lemma \ref{lem.existeqmeso}]
We  show the existence of a solution to \eqref{eq.meso} with initial condition $\bar \rho$ by induction on the time intervals $[0,\tau^{k+1})$, $k\in \N$, where ($[\tau^k,\tau^{k+1})$) form a partition of $[0,+\infty)$ such that, for any $k\in \N$, $A$ is constant on the interval $(\tau^k,\tau^{k+1})$ and $(\tau^k,\tau^{k+1})\subset I^i$ for some $i=1,2$.\\

\noindent {\bf Step 1: existence on $(0,\tau^1)$}\\
To fix the ideas we assume here that $(0,\tau^1)\subset I^1$, as the case where $(0,\tau^1)\subset I^2$ can be treated in a symmetric way. Let  $A$ denote the (constant) restriction of the flux limiter $A(\cdot)$ to $(0,\tau^1)$. 

Let $\bar w$ be an antiderivative of the initial data $\bar \rho$, i.e. $\bar w:\R\to \R$ is Lipschitz continuous and such that $\partial_x \bar w= \bar \rho$.

 On the time interval $[0,\tau^1)$ we set 
$$
(\rho^0,\rho^1,\rho^2)= (\partial_x w^0, \partial_x w^1, \partial_x w^2)\qquad \text{on}\;   (0,\tau^1)
$$
where $(w^0, w^1)$ solves the HJ equation,  with a junction condition at $x=0$, 
$$
\begin{array}{llll}
\partial_t w^j+f^j(\partial_x w^j)=0&\qquad \text{on}\; &(0,\tau^1)\times \mathcal R^j, \;& j=0,1,\\
w(t,0):=w^0(t,0^-)=w^1(t,0^+)&\qquad \text{on }\; &(0,\tau^1)\times \{x=0\},&\\
\partial_t w+ \min\{A, f^{0,+}(\partial_x w^0), f^{1,-}(\partial_x w^1)\}=0 &\qquad \text{on }\; &(0,\tau^1)\times \{x=0\},&\\
w^j=\bar w^j& \qquad \text{on}\; &\left\{t=0\right\}\times \mathcal R^,\; &j=0,1,
\end{array}
$$
and $w^2$ is the solution to 
$$
\begin{array}{lll}
\partial_t w^2+f^2(\partial_x w^2)=0& \qquad \text{on}\; &(0,\tau^1)\times \mathcal R^2,\\
\partial_t w^2 +\min\left\{0,f^{2,-}(\partial_x w^2)\right\}=0& \qquad \text{on}\; &(0,\tau^1)\times \left\{x=0\right\},\\
w^2= \bar w^2 & \qquad \text{on}\; &\left\{t=0\right\}\times  \mathcal R^2,
\end{array}
$$
where the solutions are given by the theory developed in \cite{IM17}\footnote{In  \cite{IM17}, the Hamiltonian is coercive. To cover this case, we just have to extend each $f^j$ as a concave function on $\R$ such that $-f^j$ is coercive. Then using the comparison principle and suitable barriers (built on the initial data), it is quite standard that we can show that $\partial_t w^j\le 0$ for our initial data satisfying $f^j(\partial_x \bar w^j)\ge 0$. Then using the PDE itself, we can show that the solution satisfies $f^j(\partial_x w^j)\ge 0$ and then $\partial_x w^j\in [a^j,c^j]$ almost everywhere.}.

From Lemma \ref{lem::r3}, we know that $\tilde \rho:=(\rho^0,\rho^1)$ is an entropy solution to  
$$\begin{array}{llll}
\rho^j \in [a^j,c^j]&\qquad \mbox{a.e. on}\quad &(0,\tau^1)\times \mathcal R^j,&\quad j=0,1\\
\partial_t \rho^j+ \partial_x( f^j( \rho^j))=0&\qquad \text{in}\; &(0,\tau^1) \times \mathcal R^j,&\quad j=0,1,  \\ 
\tilde \rho(t,0)\in G^{0,1}&\qquad \text{a.e. on}\; &(0,\tau^1)\times \left\{0\right\},&
\end{array}$$
with initial condition ${(\bar \rho^0,\bar \rho^1)}$, where the (maximal) germ $G^{0,1}$ is given  by 
$$G^{0,1}:= \{ (p^0,p^1)\in [a^0,c^0]\times [a^1,c^1], \quad   \min \left\{A, f^{0,+}(p^0),f^{1,-}(p^1)\right\}=f^0(p^0)= f^1(p^1)\}.$$
Moreover, introducing $\bar w^{\emptyset}\equiv w^2(0,0)$, $f^{\emptyset}\equiv 0\equiv f^{\emptyset,+}$, $\mathcal R^{\emptyset}:=(-\infty,0)$, we see that $(w^{\emptyset}\equiv  w^2(0,0), w^2)$ is solution to
$$
\begin{array}{llll}
\partial_t w^j+f^j(\partial_x w^j)=0 &\qquad \text{on}\; &(0,\tau^1) \times \mathcal R^{j},& j=\emptyset,2\\
w(t,0)=w^{\emptyset}(t,0)=w^2(t,0)&\qquad \text{at}\; & (0,\tau^1)\times \left\{x=0\right\},&\\
\partial_t w +\min \{f^{\emptyset,+}(\partial_x w^\emptyset),f^{2,-}(\partial_x w^2)\}=0 &\qquad \text{at}\; &(0,\tau^1)\times \left\{x=0\right\}.&\\
w^j=\bar w^j& \qquad \text{on}\; &\left\{t=0\right\}\times \mathcal R^j,& j=\emptyset,2.
\end{array}
$$
Setting $\bar \rho^{\emptyset}\equiv 0$, $\rho^{\emptyset}=\partial_x w^{\emptyset}=0$ and $a^\emptyset=0=c^\emptyset$, we see from Lemma \ref{lem::r3} that $\hat \rho=(\rho^{\emptyset},\rho^2)$ is an entropy solution of 
$$\begin{array}{llll}
\rho^j\in [a^j,c^j]&\quad \mbox{a.e. on}\quad &(0,\tau^1)\times \mathcal R^j,&\quad j=\emptyset,2\\
\partial_t \rho^j+ \partial_x( f^j(\tilde \rho^j))=0&\quad \mbox{on}\quad &(0,\tau^1) \times \mathcal R^j,&\quad j=\emptyset,2\\
\hat \rho(t,0)\in G^{\emptyset,2}& \quad \mbox{on}\; &(0,\tau^1)\times \left\{x=0\right\},&
\end{array}$$
with initial condition $\bar \rho^j$, where the germ $G^{\emptyset,2}$ is given by 
 $$
 G^{\emptyset,2}=\{(p^\emptyset ,p^2) \in [a^\emptyset,c^\emptyset] \times [a^2,c^2], \quad \min \left\{f^{\emptyset,+}(p^\emptyset),f^{2,-}(p^2)\right\}=f^\emptyset(p^\emptyset)= f^2(p^2)\}.
 $$
 This shows that $\rho^2$ is an entropy solution of
 $$\begin{array}{lll}
\rho^2\in [a^2,c^2]&\quad \mbox{a.e. on}\quad &(0,\tau^1)\times \mathcal R^2,\\
\partial_t \rho^2+ \partial_x( f^2(\tilde \rho^2))=0&\quad \mbox{on}\quad &(0,\tau^1) \times \mathcal R^2,\\
\rho^2(t,0)\in G^2& \quad \mbox{on}\; &(0,\tau^1)\times \left\{x=0\right\},
\end{array}$$
 with initial condition $\bar \rho^2$ and with
 $$G^2:=\left\{p^2\in \R\quad\mbox{such that $(0,p^2)\in G^{\emptyset,2}$}\right\}=\left\{p^2 \in [a^2,c^2], \quad f^2(p^2)=0\right\}=\left\{a^2,c^2\right\}$$
 Therefore  $\rho=(\rho^0,\rho^1,\rho^2)$ solves \eqref{eq.meso} on $(0,\tau^1)$ with initial condition $\bar \rho$.\\
 
\noindent {\bf Step 2: existence on $[0,\tau^k)$ given the solution on $[0,\tau^{k-1})$, $k\geq 2$}\\
Assume that we have built $\rho$ on $[0,\tau^{k-1})$. Recall that $\rho$ has a continuous in time representative with values in $L^1_{loc}$ {(see \cite[Theorem 6.2.2]{Da05})}. Let us set $\underline \rho := \rho(\tau^{k-1},\cdot)$. We can then build as in the previous step a solution $\tilde \rho=(\tilde \rho^0,\tilde \rho^1,\tilde \rho^2)$ of \eqref{eq.meso} on $(\tau^{k-1},\tau^k)$ with initial condition $\underline \rho$ at time $\tau^{k-1}$. It remains to check that the concatenation 
 $$
 \hat \rho(t,\cdot) =\left\{\begin{array}{ll}
 \rho(t,\cdot) & {\rm in}\; [0,\tau^1)\\
 \tilde \rho(t,\cdot) & {\rm in}\; [\tau^1, \tau^2)
 \end{array}\right.
 $$
 is an entropy solution to \eqref{eq.meso} on $[0,\tau^2)$ with initial condition $\bar \rho$. Note that the junction condition \eqref{eq.meso}-(iii) at $x=0$ is  satisfied because this is the case for $\rho$ on $(0,\tau^{k-1})$ and for $\tilde \rho$ on $(\tau^{k-1},\tau^k)$. It remains to check that $\hat \rho^j$ is an entropy solution on $[0,\tau^k)\times \mathcal R^j$ for any $j=0,1,2$. The argument is standard and we only sketch it. To fix the ideas, we do the proof for $j=1$, the argument for $j=0$ and $j=2$ being symmetric. Fix a $C^1_c([0,\tau^k)\times (0,+\infty))$ function $\varphi\ge 0$. Let $\theta_n:[0,\tau^{k-1})\to [0,1]$ be smooth, nonincreasing map, with a compact support and such that $\theta_n\to 1$ and $\theta_n'\to 0$ uniformly in $[0, \tau^{k-1}-\delta]$ for any $\delta>0$. As $\rho^1$ is an entropy solution on $[0,\tau^{k-1})\times \mathcal R^1$,   we have, for all $c\in \R$, 
$$
\int_{(0,\tau^{k-1})} \ \int_{(0,+\infty)} \ |\rho^1-c| (\varphi_t \theta_n+\varphi \theta_n') + \left\{\mbox{sign}(\rho^1-c)\right\}\cdot (f(\rho^1)-f(c)) \varphi_x \theta_n+ \int_{\left\{0\right\}\times (0,+\infty)} |\bar \rho^1-c|\varphi \theta_n(0) \ge 0.
$$
By the continuity of $t{\mapsto} \rho^1(t,\cdot)$ in $L^1_{loc}((0,\infty))$, we find, when letting $n\to\infty$, 
$$
- \int_{\{\tau^{k-1}\}\times (0,+\infty)} \ |\underline \rho^1-c| \varphi  +\int_{(0,\tau^{k-1})} \ \int_{(0,+\infty)} \ |\rho^1-c| \varphi_t +  \left\{\mbox{sign}(\rho^1-c)\right\}\cdot (f(\rho^1)-f(c)) \varphi_x + \int_{\left\{0\right\}\times (0,+\infty)} |\bar \rho^1-c|\varphi \ge 0.
$$
As $\tilde \rho^1$ is an entropy solution on $[\tau^{k-1},\tau^k)\times \mathcal R^1$ with initial condition $\underline \rho^1$, we also have 
$$
\int_{(\tau^{k-1},\tau^k)} \ \int_{(0,+\infty)} \ |\tilde \rho^1-c| \varphi_t +  \left\{\mbox{sign}(\tilde \rho^1-c)\right\}\cdot (f(\tilde \rho^1)-f(c)) \varphi_x + \int_{\left\{\tau^{k-1}\right\}\times (0,\infty)} |\underline \rho^1 -c|\varphi \ge 0. 
$$
Putting together the two previous inequalities   proves that $\rho^1$ is an entropy solution on $[0,\tau^k)\times \mathcal R^1$ with initial condition $\bar \rho^1$. \\
\noindent {\bf Step 4: existence on $[0,+\infty)$}\\
By  induction this proves the existence of a solution of the whole time interval $[0,\infty)$. \\
\noindent {\bf Step 5: Kato's inequality (\ref{Kato}) and uniqueness}\\
We claim that  $(\rho^0,\rho^1,\rho^2)$  satisfies Kato's inequality \eqref{Kato}. Indeed, as the sets $\mathcal G_{{\Lambda}^1}$ and $\mathcal G_{\Lambda^2}$ introduced in \eqref{germGL1bis} and \eqref{germGL2bis} are maximal germs (see Lemma \ref{lem::c15}), we just need to apply Kato's inequality given in \cite{AKR11} on each time interval $(\tau^k, \tau^{k+1})$ for $k\in \N$ and then proceed as above to glue the solution together. The uniqueness of the solution $\rho$ is then an obvious consequence of Kato's inequality. 
\end{proof}

\begin{proof}[Proof of Theorem  \ref{thm.corrector}] Let $p\in E_{\bar \Lambda}$. The existence of a corrector when $p$ satisfies (i) in the definition \eqref{defG0} of  $E_{\bar \Lambda}$ is given by  Proposition  \ref{prop.correctorfluid}. The case $(ii)$ is the aim of Proposition \ref{prop.correctorcongested}. The cases $(iii)$ is symmetric to the case (ii), exchanging the indices $1$ and $2$.
The case $(iv)$ is obvious because then one can choose $u^j_p=c^j$ for $j=0,1,2$.  
 \end{proof}
 
 \begin{proof}[Proof of Theorem \ref{thm.main}] 
 Recall that the construction of $\mathcal G_{\bar {\Lambda}}$ and the proof that it is a maximal germ are given in Subsection \ref{subsec.germmacro}. 
 
 We now prove the homogenization. It is known that the sequence $(\rho^\ep)$ is relatively compact in $L^1_{loc}((0,+\infty)\times \mathcal R)$ (Proposition \ref{prop.intregu} in the Appendix). 
  
 Let $\rho=(\rho^i)_{i=0, 1,2}$ be a limit (in $L^1_{loc}((0,+\infty)\times \mathcal R)$ and up to a subsequence) of $(\rho^\ep)$. We have to check that $\rho$ is the unique solution to \eqref{eq.homo}. By stability, $\rho^i$ is  an entropy solution on $[0,+\infty)\times \mathcal R^i$ and satisfies $\rho^i\in [a^i,c^i]$ a.e. on $(0,+\infty)\times \mathcal R^i$ for $i=0,1,2$. 

Let $p=(p^i)\in E_{\bar \Lambda}$. By Theorem \ref{thm.corrector}  there exists a time-periodic solution $u_p$ of \eqref{eq.meso} and $C>0$ such that for $M\ge C$, we have
\be\label{keypptcorrbis}
\|u^0_p- p^0\|_{L^\infty(\R\times (-\infty,-M))} +\|u^i_p-p^i\|_{L^\infty(\R\times (M,\infty))}\leq CM^{-1},\qquad i=1,2.
\ee
We set $u^\ep_p(t,x)= u_p(t/\ep, x/\ep)$. Note that the scaled function $u^\ep_p= (u^{\ep,k}_p)$ is a solution to \eqref{eq.mesoep} (without the initial condition). Thus, by  Kato's inequality (\ref{Kato}), we have
\begin{align*}
\sum_{i=0}^2\bigg\{ \int_0^\infty \int_{\mathcal R^i} |\rho^{\ep,i}- u^{\ep,i}_p|\phi^i_t+ \left\{\mbox{sign}(\rho^{\ep,i}- u^{\ep,i}_p)\right\}\cdot (f^i(\rho^{\ep,i})- f^i(u^{\ep,i}_p) )\partial_x\phi^i \\
+ \int_{\mathcal R^i} |\bar \rho_0^{i}(x)- u^{\ep,i}_p(0,x)| \phi^i(0,x)\bigg\} \geq 0
\end{align*}
for any continuous nonnegative test function $\phi:[0,\infty)\times \mathcal R\to [0,\infty)$ with a compact support and such that $\phi^j:=\phi_{|[0,+\infty)\times (\mathcal R^j\cup \left\{0\right\})}$ is $C^1$ for any $j=0,1,2$. Letting $\ep\to0$ and recalling \eqref{keypptcorrbis}, which implies that  $u^\ep_p$ converges in $L^1_{loc}$ to $p$ as $\ep\to 0$, this gives for any test function $\phi$ as above: 
$$
\sum_{i=0}^2\left\{ \int_0^\infty \int_{\mathcal R^i} |\rho^i- p^i|\phi^i_t+ \left\{\mbox{sign}(\rho^i- p^i)\right\}\cdot (f^i(\rho^i)- f^i(p^i)) \partial_x\phi^i +\int_{\mathcal R^i} |\bar \rho_0^i(0,x)- p^i| \phi^i(x) \right\}\geq 0. 
$$
Following the argument in \cite[Proposition 2.12]{MFR22}, this implies that, for a.e. $t\geq 0$, 
$$
q^0(p^0,\rho^0(t,0^-))\geq q^1(p^1,\rho^1(t,0^+))+q^2(p^2,\rho^2(t,0^+)).
$$
This inequality holds for any $p\in E_{\bar \Lambda}$ and for a.e. $t\ge 0$, and we have $(\rho^0(t,0^-),\rho^1(t,0^+),\rho^2(t,0^+))\in Q$ for a.e. $t\ge 0$. Therefore Lemma \ref{lem.reduction} implies that $\rho(t,0)=(\rho^i(t,0))\in \mathcal G_{\bar{\Lambda}}$. 
It follows that $\rho$ solves \eqref{eq.homo}, which has a unique solution $\rho$. Therefore the whole sequence $(\rho^\varepsilon)$ converges to $\rho$. {Moreover the $L^\infty$ bound on $\rho^\ep$ implies its convergence in $L^1_{loc}([0,+\infty)\times \mathcal R)$.}
\end{proof}

\subsection{Proof for a 2:1 junction}\label{proof2:1}

The main idea of the proof is to derive Theorem \ref{thm.main2:1} from Theorem \ref{thm.main} by a simple change of variable{s}, transforming 2:1 junctions into 1:2 junctions.

\subsubsection{A general framework for junctions with three roads}

We first introduce a general class of germs, defined for  fluxes $f^j$ for $j=0,1,2$ satisfying (\ref{hypflux}). 
The entropy flux associated to $f^j$ is defined for $\bar c,c \in [a^j,c^j]$ as
$$q^{f^j}(\bar c,c):=(f^j(\bar c)-f^j(c))\mbox{sign}(\bar c-c)$$
and let
$$\mbox{sign}(\mathcal R^j)=\left\{\begin{array}{ll}
+ &\quad \mbox{if}\quad \mathcal R^j=(0,+\infty)\\
- &\quad \mbox{if}\quad \mathcal R^j=(-\infty,0)\\
\end{array}\right.$$
 with a general set of three roads
$$\mathcal R=(\mathcal R^0,\mathcal R^1,\mathcal R^2).$$
Given $f=(f^0,f^1,f^2)$, the dissipation for $\bar P=(\bar p^0,\bar p^1,\bar p^2)$ and $P=(p^0,p^1,p^2)$ is defined by
$$D_{f,\mathcal R}(\bar P,P)= - \sum_{j=0,1,2} \mbox{sign}(\mathcal R^j)\cdot  q^{f^j}(\bar p^j,p^j).$$
We now build associated germs. Let us define the roots $u^{f^j}_{\pm}$ of $f^{j,\pm}(\cdot)=\lambda$ as 
$$\left\{\begin{array}{l}
\left[a^j,b^j\right] \ni u^{f^j}_+(\lambda):=r \quad \mbox{such that}\quad f^{j,+}(r)=\lambda\in \left[0,f^j_{\max}\right]\\
\left[b^j,c^j\right]\ni u^{f^j}_-(\lambda):=r  \quad \mbox{such that}\quad f^{j,-}(r)=\lambda\in \left[0,f^j_{\max}\right].\\
\end{array}\right.$$
For $\Lambda=(\bar \lambda^0,\bar \lambda^1,\bar \lambda^2,\hat \lambda^1,\hat \lambda^2)$ satisfying (\ref{eq::c12}), and for $\sigma\in \left\{\pm\right\}$, we consider  the curve
\begin{equation}\label{eq::c5bis}
\Gamma_{f,\Lambda}^{\sigma}:=\left\{P=(u^{f^0}_\sigma(\lambda),u^{f^1}_\sigma(\lambda^1),u^{f^2}_\sigma(\lambda^2))\quad \mbox{with}\quad \lambda^k:=\hat \lambda^k(\lambda)
\quad \mbox{for}\quad k=1,2\quad \mbox{and}\quad \lambda\in [0,\bar \lambda^0] \right\}
\end{equation}
and the points
\begin{equation}\label{eq::c6bis}
\left\{\begin{array}{lllll}
P_{0}^{f,\Lambda,\sigma}:=(&u^{f^0}_\sigma(0),&u^{f^1}_\sigma(0),&u^{f^2}_\sigma(0)&)\quad  \in \Gamma_{f,\Lambda}^\sigma\\
P_{3}^{f,\Lambda,\sigma}:=(&u^{f^0}_{-\sigma}(0),&u^{f^1}_{-\sigma}(0),&u^{f^2}_{-\sigma}(0)&)\\
\\
P_{1}^{f,\Lambda,\sigma}:=(&u^{f^0}_{-\sigma}(\bar \lambda^1),&u^{f^1}_\sigma(\bar \lambda^1),&u^{f^2}_{-\sigma}(0)&)\\
P_{2}^{f,\Lambda,\sigma}:=(&u^{f^0}_{-\sigma}(\bar \lambda^2),&u^{f^1}_{-\sigma}(0),&u^{f^2}_\sigma(\bar \lambda^2)&)\\
\end{array}\right.
\end{equation}
We also define
\begin{equation}\label{eq::c25}
E^\sigma_{f,\Lambda}:=\Gamma_{f,\Lambda}^{\sigma} \cup \left\{P^{f,\Lambda,\sigma}_1,P^{f,\Lambda,\sigma}_2,P^{f,\Lambda,\sigma}_3\right\}
\end{equation}

The case $\sigma=+$ corresponds to the divergent 1:2 junction, while the case $\sigma=-$ corresponds to the convergent 2:1 junction.

We consider the following general set (using notation $Q^{RH}$ defined in (\ref{eq::e**1}) and (\ref{eq::e**2}))
\begin{equation}\label{eq::c1bis}
{\mathcal G}_{f,\Lambda}^\pm:=\left\{P=(p^0,p^1,p^2)\in Q^{RH},\quad \left|\begin{array}{ll}
0\le f^j(p^j)\le\bar \lambda^j ,&\quad j=0,1,2\\
\\
f^{k,\pm}(p^k)\ge \hat \lambda^k(f^{0,\pm}(p^0)),&\quad k=1,2\\
\end{array}\right.\right\}.
\end{equation}

\subsubsection{Germs for  2:1 junctions,  by reversion} 

Consider the convergent 2:1 junction (with an abuse of notation)
\begin{equation}\label{eq::e60}
\check {\mathcal R}:=(\check {\mathcal R}^0,\check {\mathcal R}^1,\check {\mathcal R}^2)\quad \mbox{with}\quad \left\{\begin{array}{ll}
\check {\mathcal R}^j=(-\infty,0)&\quad \mbox{for}\quad j=1,2\\
\check {\mathcal R}^0=(0,+\infty)&
\end{array}\right.
\end{equation}
and associated fluxes $\check f^j$ for $j=0,1,2$ satisfying (\ref{hypflux}), (\ref{eq::e*1}), (\ref{eq::e*2}) and (\ref{eq::e*3}), with $\check a^j,\check b^j,\check c^j$ instead of $a^j,b^j,c^j$. Similarly, we consider the divergent 1:2 junction  denoted by ${\mathcal R}$ and defined as (also with an abuse of notation)
$${\mathcal R}:=({\mathcal R}^0,{\mathcal R}^1,{\mathcal R}^2)\quad \mbox{with}\quad \left\{\begin{array}{ll}
{\mathcal R}^0=(-\infty,0)&\\
{\mathcal R}^j=(0,+\infty)&\quad \mbox{for}\quad j=1,2\\
\end{array}\right.$$
We now explain how to transform fluxes $(\check f^j)$ defined on the convergent junction  $\check{\mathcal R}$ into fluxes $(f^j)$ defined on the divergent junction $\mathcal R$: 
we set
\begin{equation}\label{eq::c19}
f^j(v):=\check f^j(-v)\quad \mbox{with}\quad (a^j,b^j,c^j):=(-\check c^j,-\check b^j,-\check a^j). 
\ee
As before we set $Q=[a^0,c^0]\times [a^1,c^1]\times[a^2,c^2]$ and $\check Q=[\check a^0,\check c^0]\times [\check a^1,\check c^1]\times[\check a^2,\check c^2]$. 
\begin{Lemma}\label{lem::c23}{\bf (Effect of reversion on the dissipation)} For $P, \bar P\in Q$, 
$$
D_{\check f,\check{\mathcal R}}(-\bar P,-P)=D_{f,\mathcal R}(\bar P,P).
$$
\end{Lemma}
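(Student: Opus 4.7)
The plan is to verify the identity by a direct computation that unpacks both dissipations via their definitions and exploits two cancellations that arise from the reversion transformation.

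First, I would observe that the reversion flips the orientation of every branch: by comparing the definitions of $\mathcal R$ and $\check{\mathcal R}$ given in \eqref{eq::e60} and in the paragraph before \eqref{eq::c19}, one has
$$\mathrm{sign}(\check{\mathcal R}^j) = -\mathrm{sign}(\mathcal R^j) \qquad \text{for } j=0,1,2,$$
since branch $0$ switches from incoming to outgoing and branches $1,2$ switch from outgoing to incoming.

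Next, I would compute the entropy flux under reversion. Using $f^j(v)=\check f^j(-v)$ from \eqref{eq::c19} together with the fact that $\mathrm{sign}(-x)=-\mathrm{sign}(x)$, a direct calculation yields
$$
q^{\check f^j}(-\bar p^j,-p^j) = \bigl(\check f^j(-\bar p^j)-\check f^j(-p^j)\bigr)\,\mathrm{sign}(-\bar p^j+p^j) = -\bigl(f^j(\bar p^j)-f^j(p^j)\bigr)\,\mathrm{sign}(\bar p^j-p^j) = -q^{f^j}(\bar p^j,p^j).
$$

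Finally, combining the two sign flips in the definition of $D_{\check f,\check{\mathcal R}}$ gives
$$
D_{\check f,\check{\mathcal R}}(-\bar P,-P) = -\sum_{j=0,1,2} \mathrm{sign}(\check{\mathcal R}^j)\,q^{\check f^j}(-\bar p^j,-p^j) = -\sum_{j=0,1,2}\bigl(-\mathrm{sign}(\mathcal R^j)\bigr)\bigl(-q^{f^j}(\bar p^j,p^j)\bigr) = D_{f,\mathcal R}(\bar P,P),
$$
which is exactly the claim. There is no real obstacle here: the lemma is purely a bookkeeping statement, and the entire proof reduces to these two sign-flip identities and a one-line substitution into the definition of the dissipation.
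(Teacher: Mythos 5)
Your proof is correct and follows essentially the same route as the paper's: the paper's own proof consists precisely of the identity $q^{\check f^j}(-\bar p^j,-p^j)=-q^{f^j}(\bar p^j,p^j)$ followed by the substitution into the definition of the dissipation. You merely make explicit the branch-orientation flip $\mathrm{sign}(\check{\mathcal R}^j)=-\mathrm{sign}(\mathcal R^j)$, which the paper leaves implicit.
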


\begin{proof} Using
$$q^{\check f^j}(-\bar p^j,-p^j)=-q^{f^j}(\bar p^j,p^j)$$
we deduce that
$$D_{\check f,\check{\mathcal R}}(-\bar P,-P)=D_{f,\mathcal R}(\bar P,P).$$
\end{proof}

Let us now explain how to build germs for the fluxes  $\check f^j$ on the junction $\check{\mathcal R}$. 

\begin{Lemma}\label{cor::c24}{\bf (Germ for a convergent 2:1 junction)}\\
Let $\check {\mathcal R}$ be defined in (\ref{eq::e60}), and fluxes $\check f^j$ for $j=0,1,2$ satisfying (\ref{hypflux}), (\ref{eq::e*1}), (\ref{eq::e*2}) and (\ref{eq::e*3}).
Under assumption (\ref{eq::c12}) on $\Lambda$,  let us consider the set ${\mathcal G}_{\check f,\Lambda}^-$ defined in (\ref{eq::c1bis}).
Then this set ${\mathcal G}_{\check f,\Lambda}^- \subset \check Q$ is a maximal germ (for dissipation $D_{\check f,\check{\mathcal R}}$) determined by its subset $E^-_{\check f,\Lambda}$ defined in (\ref{eq::c25}). Recall here that for $P=(p^0,p^1,p^2)$ and $\bar P=(\bar p^0,\bar p^1,\bar p^2)$
$$D_{\check f,\check{\mathcal R}}(\bar P,P)=q^{\check f^1}(\bar p^1,p^1)+q^{\check f^2}(\bar p^2,p^2)-q^{\check f^0}(\bar p^0,p^0)=\mbox{IN}-\mbox{OUT}$$
\end{Lemma}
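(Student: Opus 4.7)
The approach is a purely formal reduction to Theorem~\ref{th:1} via the reversion transformation $P\mapsto -P$. The fluxes $\check f^j$ on the convergent junction $\check{\mathcal R}$ are turned into fluxes $f^j$ on the divergent junction $\mathcal R$ through (\ref{eq::c19}): $f^j(v):=\check f^j(-v)$ on $[a^j,c^j]=[-\check c^j,-\check a^j]$. These $f^j$ still satisfy (\ref{hypflux}), so Theorem~\ref{th:1} applies and asserts that ${\mathcal G}_{f,\Lambda}^+\subset Q$ is a maximal germ for the dissipation $D_{f,\mathcal R}$ and is generated by $E^+_{f,\Lambda}$.

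The first step of the plan is bookkeeping: I would verify the identities $f^{j,\pm}(-p)=\check f^{j,\mp}(p)$ and $u^{f^j}_\pm(\lambda)=-u^{\check f^j}_\mp(\lambda)$, which are immediate from (\ref{eq::e*2})--(\ref{eq::e*3}) and the definition of the roots $u^{\bullet}_\pm$. With these in hand, one checks that the reversion $P\mapsto -P$ maps $\check Q$ bijectively onto $Q$ and, more importantly, sends ${\mathcal G}^-_{\check f,\Lambda}$ onto ${\mathcal G}^+_{f,\Lambda}$: the Rankine--Hugoniot constraint and the bounds $0\le \check f^j(p^j)\le \bar\lambda^j$ translate verbatim, while $\check f^{k,-}(p^k)\ge \hat\lambda^k(\check f^{0,-}(p^0))$ becomes $f^{k,+}(-p^k)\ge \hat\lambda^k(f^{0,+}(-p^0))$, which is the defining inequality of ${\mathcal G}^+_{f,\Lambda}$ at the point $-P$. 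The same identities, used in (\ref{eq::c5bis})--(\ref{eq::c6bis}), show that $\Gamma^-_{\check f,\Lambda}=-\Gamma^+_{f,\Lambda}$ and $P^{\check f,\Lambda,-}_i=-P^{f,\Lambda,+}_i$ for $i=1,2,3$, hence $E^-_{\check f,\Lambda}=-E^+_{f,\Lambda}$.

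The second step is to combine this bijection with Lemma~\ref{lem::c23}, which gives $D_{\check f,\check{\mathcal R}}(\bar P,P)=D_{f,\mathcal R}(-\bar P,-P)$. This lets us transfer every property asserted by Theorem~\ref{th:1}: non-negativity of $D_{\check f,\check{\mathcal R}}$ on ${\mathcal G}^-_{\check f,\Lambda}\times {\mathcal G}^-_{\check f,\Lambda}$ follows from Theorem~\ref{th:1}(i); maximality and the fact that $E^-_{\check f,\Lambda}$ generates ${\mathcal G}^-_{\check f,\Lambda}$ follow from Theorem~\ref{th:1}(ii) applied to $-P$: given $P\in \check Q$ with $D_{\check f,\check{\mathcal R}}(\bar P,P)\ge 0$ for all $\bar P\in E^-_{\check f,\Lambda}$, we get $D_{f,\mathcal R}(\bar Q,-P)\ge 0$ for all $\bar Q\in E^+_{f,\Lambda}$, whence $-P\in {\mathcal G}^+_{f,\Lambda}$ and therefore $P\in {\mathcal G}^-_{\check f,\Lambda}$.

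Since the argument is a formal change of variables built on results already established in the 1:2 case, no real obstacle is expected; the only delicate point is to keep track consistently of the $\pm$ signs in the envelopes $f^{j,\pm}$ and the roots $u^{f^j}_\pm$ under reversion, which amounts to checking the handful of identities listed above.
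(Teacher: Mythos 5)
Your proposal is correct and follows essentially the same route as the paper: reversion $P\mapsto -P$ combined with Lemma \ref{lem::c23}, the identities $-u^{f^j}_\sigma(\lambda)=u^{\check f^j}_{-\sigma}(\lambda)$ giving $E^-_{\check f,\Lambda}=-E^+_{f,\Lambda}$, and the transfer of the germ, maximality and generation properties from Theorem \ref{th:1} applied to $\mathcal G^+_{f,\Lambda}$. No gaps.
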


\begin{proof}
Lemma \ref{cor::c24} follows from Theorem \ref{th:1} for $\mathcal G_\Lambda=\mathcal G_{f,\Lambda}^+$. Applying reversion transform (\ref{eq::c19}) 
for $P=(p^0,p^1,p^2)\in \mathcal G^+_{f,\Lambda}$, which consists here in the transform
$$(P,\check f)\mapsto (-P, f)$$
and using the fact that
\be\label{eq.germU-U}
-P\in \mathcal G_{\check f,\Lambda}^- \quad \Longleftrightarrow\quad P\in \mathcal G_{f,\Lambda}^+
\ee
we see from Lemma \ref{lem::c23} that $\mathcal G_{\check f,\Lambda}^-$ is a germ.\\
Moreover, because for $\sigma\in \left\{\pm\right\}$ we have
$$-u^{f^j}_\sigma(\lambda)=u^{\check f^j}_{-\sigma}(\lambda),$$
 we see that 
$$-\Gamma_{\check f,\Lambda}^{-}=\Gamma_{f,\Lambda}^{+}\quad -P^{\check f,\Lambda,-}_{\ell}=P^{f,\Lambda,+}_{\ell}\quad \mbox{for}\quad \ell=1,2,3.$$
Now recall that $\mathcal G^+_{f,\Lambda}$ is determined by $E^+_{f,\Lambda}$.
Hence for $\displaystyle -P\in \check Q:= \prod_{j=0,1,2} [\check a^j,\check c^j]$, we have
$$\left(D_{\check f,\check{\mathcal R}}(-\bar P,-P)\ge 0 \quad \mbox{for all}\quad 
-\bar P\in E^-_{\check f,\Lambda}\right)\quad \Longrightarrow\quad -P\in \mathcal G_{\check f,\Lambda}^-\ ,$$
which shows that $\mathcal G_{\check f,\Lambda}^-$ is determined by the set $E^-_{\check f,\Lambda}$. 
This gives the desired result for dissipation $D_{\check f,\check {\mathcal R}}$ and  completes the proof of the lemma.
\end{proof}

By this simple change of variables and \eqref{eq.germU-U}, we have immediately
\begin{Corollary}\label{eq::c18}{\bf (Reversion of the germ)} Given $\check \rho, \check \rho^\ep\in L^\infty ((0,\infty)\times \check{\mathcal R})$, let 
\be\label{eq.defrhorhoep}
 \rho^{j}(t,x):=-\check\rho^{j}(t,-x) , \qquad  \rho^{j,\ep}(t,x):=-\check\rho^{j,\ep}(t,-x),\qquad \bar \rho_0^{j}(x):=-\check{\bar \rho}^{j}(-x), \quad \mbox{for}\quad x\in {\mathcal R}^j.
\ee
Given $(\check f^j)$, let $(f^j)$ be given by reversion transform (\ref{eq::c19}).
Then $\check \rho^\ep$ solves \eqref{eq::c20bis} (with initial data $\check{\bar \rho}$) with  germ $\check{\mathcal G}(\cdot)$ given by \eqref{eq::c21bis}, if and only if $\rho^\ep$ solves \eqref{eq.mesoep} (with initial data $\bar \rho$) with germ $\mathcal G(\cdot)$ given by \eqref{defGt}.  In the same way, if $\Lambda$ satisfies (\ref{eq::c12}), then $\check \rho$ solves \eqref{eq.macro2:1} for the germ ${\mathcal G}_{\check f,\Lambda}^-$ given by \eqref{eq::c1bis}, if and only if $\rho$ solves \eqref{eq.homo} for the germ  $\mathcal G_\Lambda=\mathcal G^+_{f,\Lambda}$ given by \eqref{eq::c1bis}. 
\end{Corollary}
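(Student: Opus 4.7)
The plan is to prove both equivalences by a direct change of variables: the reflection $(t,x)\mapsto(t,-x)$ combined with the sign flip $v\mapsto -v$ on densities sends every ingredient of the 2:1 problem (the conservation law on each branch, the Kru\v{z}kov entropy condition, the initial data, and the junction condition) to the corresponding ingredient of the 1:2 problem, and vice versa. Once the transformation of each ingredient is checked, the corollary follows without further work.

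First I would set up the transformation. Writing $\tilde x=-x$, the definition $\rho^j(t,x)=-\check\rho^j(t,-x)$ gives $\partial_t\rho^j(t,x)=-\partial_t\check\rho^j(t,\tilde x)$ and $\partial_x\rho^j(t,x)=\partial_{\tilde x}\check\rho^j(t,\tilde x)$, while the relation $f^j(v)=\check f^j(-v)$ from \eqref{eq::c19} gives $f^j(\rho^j(t,x))=\check f^j(\check\rho^j(t,\tilde x))$. Therefore
$$
\partial_t\rho^j+\partial_xf^j(\rho^j)=-\partial_t\check\rho^j(t,\tilde x)-\partial_{\tilde x}\check f^j(\check\rho^j(t,\tilde x)),
$$
so the conservation law on $\mathcal R^j$ for $\rho^j$ is equivalent to the conservation law on $\check{\mathcal R}^j$ for $\check\rho^j$. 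Similarly, substituting $c\mapsto -c$ in the Kru\v{z}kov inequality shows that standard entropy solutions on each branch correspond under the reflection; the initial data transforms with $\bar\rho_0^j(x)=-\check{\bar\rho}^j(-x)$ as stated.

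Next I would verify the junction conditions. Because $x<0$ in $\mathcal R^0$ and $x>0$ in $\check{\mathcal R}^0$ (with the roles reversed on branches $1,2$), one has $\rho^0(t,0^-)=-\check\rho^0(t,0^+)$ and $\rho^j(t,0^+)=-\check\rho^j(t,0^-)$ for $j=1,2$, so writing $P=(\rho^0(t,0^-),\rho^1(t,0^+),\rho^2(t,0^+))$, membership of the junction trace in $\mathcal G(t)$ is equivalent to $-P\in\check{\mathcal G}(t)$. From \eqref{eq::c19} one reads off the envelope identities $f^j(p)=\check f^j(-p)$, $f^{j,+}(p)=\check f^{j,-}(-p)$ and $f^{j,-}(p)=\check f^{j,+}(-p)$. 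Substituting these into \eqref{germGL1bis}--\eqref{germGL2bis} and comparing with \eqref{eq::c21bis} termwise shows $-P\in\check{\mathcal G}^k(t)\Leftrightarrow P\in\mathcal G_{\Lambda_k}(t)$ for $k=1,2$, which settles the mesoscopic part. For the macroscopic part, the same identities applied to \eqref{eq::c1bis} yield the equivalence $-P\in\mathcal G^-_{\check f,\Lambda}\Leftrightarrow P\in\mathcal G^+_{f,\Lambda}$, which is precisely \eqref{eq.germU-U} already used in the proof of Lemma \ref{cor::c24}. Combining the transformation of the PDE, the entropy structure, the initial data, and the germs yields both claims. The only step requiring bookkeeping is the matching of one-sided traces at $x=0$ together with the interchange of $f^{j,+}$ and $f^{j,-}$ under reversion; once these are handled consistently, the corollary is a substitution.
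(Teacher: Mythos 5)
Your proposal is correct and follows exactly the route the paper takes: the paper states the corollary as an immediate consequence of the reversion transform \eqref{eq::c19} together with the germ equivalence \eqref{eq.germU-U}, and your computation simply writes out the verification (transformation of the conservation law, of the Kru\v{z}kov inequality under $c\mapsto -c$, of the one-sided traces, and the envelope identities $f^{j,\pm}(p)=\check f^{j,\mp}(-p)$) that the paper leaves implicit. No gaps.
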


\subsubsection{Proof of Theorem \ref{thm.main2:1}}

\begin{proof}[Proof of Theorem \ref{thm.main2:1}] The existence and the uniqueness of a solution to  \eqref{eq::c20bis} is a consequence of Lemma~\ref{lem.existeqmeso} and Corollary \ref{eq::c18}. Given $\check \rho^\ep$ a solution to \eqref{eq::c20bis}, let $\rho^\ep$  and $\bar \rho_0$ be defined by \eqref{eq.defrhorhoep}. We know from Corollary~\ref{eq::c18} that $\rho^\ep$ solves \eqref{eq.mesoep}, with $(f^j)$ defined by \eqref{eq::c19}, $\mathcal G(\cdot)$ given by \eqref{defGt}. Then Theorem \ref{thm.main} says that the $(\rho^\ep)$ converges in $L^1_{loc}$ as $\ep\to 0^+$ to the solution $\rho$ of \eqref{eq.homo}  for the germ $\mathcal G_{\bar{\Lambda}}={\mathcal G}_{ f,\bar \Lambda}^+$ defined in  \eqref{eq::c1BIS}, where $\bar{\Lambda}$ is given in Subsection  \ref{subsec.germmacro}. Let $\check \rho$ be defined from $\rho$ by the transform \eqref{eq.defrhorhoep}. Then, by Corollary~\ref{eq::c18},  $\check \rho$ is a solution of \eqref{eq.macro2:1} for the germ ${\mathcal G}_{\check f,\bar \Lambda}^-$. This shows that the $(\check \rho^\ep)$ converges in $L^1_{loc}$ as $\ep\to 0^+$ to $\check \rho$, which is the unique solution to \eqref{eq.macro2:1} for the germ ${\mathcal G}_{\check f,\bar \Lambda}^-$.
\end{proof}

\appendix

\section{Appendix} 

In this appendix, we collect several results needed throughout the paper. 

\subsection{Panov's theorem on strong traces}

Let $T>0$ and let us consider the following equation
\begin{equation}\label{eq::h1}
\partial_t u+ \partial_x(f(u))=0\quad \mbox{on}\quad (0,T)_t\times (0,+\infty)_x 
\end{equation}

We recall the following result.
\begin{Theorem}\label{th::r7homo}{\bf (Existence of strong traces;  \cite[Theorem 1.1]{Ps07})}\\
Assume that $f:\R\to \R$ is continuous and that $u\in L^\infty((0,T)_t\times (0,+\infty)_x)$ is a standard Krushkov entropy solution of (\ref{eq::h1}) on $(0,T)_t\times (0,+\infty)_x$. Assume moreover that $f$ satisfies the following nondegeneracy condition:
\begin{equation}\label{eq::h2}
\mbox{the map}\quad v\mapsto f(v) \quad \mbox{is not constant on  intervals of positive length.} 
\end{equation}
Then there exists $w \in L^\infty(0,T)$ and a measurable set $\mathcal N\subset (0,+\infty)_x$ of measure zero, such that
$$ \lim_{\varepsilon\to 0}\sup_{x\in (0,\varepsilon)\backslash \mathcal N} \|u(\cdot,x )-w\|_{L^1(0,T)}=0$$
and we write
$$\mbox{ess}\lim_{(0,+\infty)\ni x\to 0^+} u(\cdot,x )=w\quad \mbox{in}\quad L^1(0,T).$$
We call $w$ the  strong trace of $u$ on the interface $(0,T)\times \left\{0\right\}$ and we denote it by $u(\cdot, 0^+)$.
\end{Theorem}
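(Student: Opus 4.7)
The plan is to follow Panov's kinetic-formulation-plus-velocity-averaging strategy. First I would convert the Kruzhkov entropy inequalities into a kinetic equation for $\chi(v,t,x) := \mathbf{1}_{0<v<u(t,x)} - \mathbf{1}_{u(t,x)<v<0}$:
\[
\partial_t \chi + f'(v)\,\partial_x \chi \;=\; \partial_v \mu \qquad \text{in}\; \mathcal D'((0,T)\times(0,\infty)\times\R_v),
\]
where $\mu \ge 0$ is the locally finite entropy-dissipation measure obtained by testing the entropy condition against a smooth family of Kruzhkov entropies. Because $u \in L^\infty$, $\chi$ is supported in a fixed $v$-interval and $\mu$ has uniformly bounded local mass, stable under horizontal translation in $x$.

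Second, I would consider the one-parameter family $u_\varepsilon(t,x):=u(t,x+\varepsilon)$ and its kinetic counterpart $\chi_\varepsilon$, each solving the same equation on a common strip $(0,T)\times(0,R)\times \R_v$. Here the non-degeneracy hypothesis \eqref{eq::h2} is exactly what is required: it ensures that the set $\{v : \tau+f'(v)\xi=0\}$ has measure zero for a.e.\ $(\tau,\xi)\neq(0,0)$, so the velocity-averaging lemma of Lions--Perthame--Tadmor (in the refined form due to Panov) applies. It gives that any family of kinetic functions with uniformly bounded kinetic defect measures produces velocity averages $\int \phi(v)\chi_\varepsilon(v,\cdot,\cdot)\,dv$ that are precompact in $L^1_{\text{loc}}$. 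Consequently $(u_\varepsilon)$ itself is precompact in $L^1_{\text{loc}}((0,T)\times (0,R))$, and any weak-$*$ limit of $\chi_\varepsilon$ is again the kinetic function of some $\bar u\in L^\infty$.

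The main obstacle --- and the heart of Panov's contribution --- is to upgrade compactness of $x$-translates into the existence of a genuine trace $w\in L^\infty(0,T)$ with the essentially uniform convergence stated. To this end I would show that the family $(u(\cdot,x))_{x>0}$ is Cauchy in $L^1(0,T)$ as $x\to 0^+$, outside some Lebesgue-null set $\mathcal N\subset(0,+\infty)$. The key is that on thin strips $(0,T)\times(0,\varepsilon)$ the defect measure $\mu$ has total mass tending to $0$ as $\varepsilon\to 0^+$; after rescaling $x=\varepsilon y$, the $\partial_x$-term degenerates and the kinetic equation reduces in the limit to pure transport in $t$, for which the only admissible limit kinetic function is independent of $y$. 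Combined with the $L^1_{\text{loc}}$ compactness produced by velocity averaging, this forces any two subsequential limits to coincide with a single profile $w(t)$, and a standard Fubini-type argument on the points of strong $L^1(0,T)$-continuity of $x\mapsto u(\cdot,x)$ then yields the null exceptional set $\mathcal N$ and the asserted uniform convergence.
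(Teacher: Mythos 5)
The first thing to say is that the paper does not prove this statement at all: Theorem \ref{th::r7homo} is quoted verbatim as Theorem 1.1 of \cite{Ps07} and used as a black box, so there is no internal proof to compare your attempt against. What you have written is a reconstruction of the Vasseur--Panov argument from \cite{V01,Ps07}, and its overall architecture is the right one: kinetic formulation with a nonnegative defect measure $\mu$, compactness of a boundary-layer family via a velocity-averaging lemma whose hypothesis is exactly the non-degeneracy condition \eqref{eq::h2}, vanishing of the mass of $\mu$ on thin strips $(0,T)\times(0,\varepsilon)$ (which does hold, but only after one checks that $\mu$ has finite mass \emph{up to} the boundary --- local finiteness in the open set is not enough, and the uniform bound comes from testing the entropy inequalities with cutoffs whose gradients have uniformly bounded $L^1$ norm as the support approaches $x=0$), and finally a rigidity statement for the blow-up limit.

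The step that does not hold together as written is the rigidity step. Under the rescaling $x=\varepsilon y$ one has $\partial_x=\varepsilon^{-1}\partial_y$, so the kinetic equation becomes $\varepsilon\,\partial_t\chi_\varepsilon+f'(v)\,\partial_y\chi_\varepsilon=\varepsilon\,\partial_v\mu_\varepsilon$: the transport term in the normal variable \emph{dominates} rather than degenerates, and it is precisely this dominance which, in the limit, yields $f'(v)\,\partial_y\chi_0=0$ and hence independence of $\chi_0$ in $y$ on $\{f'(v)\neq 0\}$. Your formulation (``the $\partial_x$-term degenerates and the equation reduces to pure transport in $t$, forcing independence of $y$'') is internally inconsistent --- pure transport in $t$ would force independence in $t$, not in $y$ --- so the mechanism that actually produces the trace is misidentified even though the intended conclusion is correct. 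Two further points are glossed over: for a merely continuous flux $f$ the symbol $f'(v)$ is not available and the non-degeneracy condition must be exploited in integrated form (this is exactly Panov's improvement over Vasseur's $C^2$ setting); and passing from ``every subsequential limit of the blow-up family equals a single profile $w$'' to the essentially uniform convergence $\sup_{x\in(0,\varepsilon)\setminus\mathcal N}\|u(\cdot,x)-w\|_{L^1(0,T)}\to 0$ requires the Lebesgue-point/Fubini argument you allude to but do not carry out. None of this affects the paper, which legitimately imports the result, but as a standalone proof the proposal has a genuine gap at the rigidity step.
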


\subsection{Local regularity of scalar conservation in one space dimension}

We assume for $a<c$, $\delta>0$, 
\begin{equation}\label{hypflux-ter}
\begin{array}{c}
\text{$f:[a, c]\to \R$ is  $C^2$ with $f''\le -\delta<0$ on $[a,c]$.}
\end{array}
\end{equation}
Given $(t,x)\in \R^2$ and $R>0$, let 
$$
Q_R(t,x)=[t-R, t+R]\times [x-2\|f'\|_\infty R, x+2\|f'\|_\infty R].
$$
We are interested in BV estimates of solutions to the the scalar conservation law $\partial_t u+\partial_x(f(u))=0$ in $Q_R(t,x)$. 

\begin{Proposition}{\bf (Local BV bound for a conservation law with a convex flux)}\label{prop.intregu}\\
Under assumption (\ref{hypflux-ter}), there exists  a constant $C>1$, depending on $c-a$, on $\|f'\|_\infty$ and on $\delta>0$ (the concavity constant of $f$), such that, for any $R\in (0,1]$ and any $(t,x)\in \R^2$, if $u:Q_R(t,x)\to [a,c]$ is an $L^\infty$ entropy solution to the scalar conservation law $\partial_t u+\partial_x(f(u))=0$ in $Q_R(t,x)$, then the total variation $V(u; Q_{R/3}(t,x))$ of $u$ in $Q_{R/3}(t,x)$ is bounded by 
$$
{V(u; Q_{R/3}(t,x)) \leq CR.} 
$$
\end{Proposition}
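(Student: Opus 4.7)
The plan is to invoke Oleinik's classical one-sided Lipschitz estimate --- which strong concavity $f''\leq -\delta$ forces on every entropy solution --- combine it with the $L^\infty$ bound $u\in[a,c]$ to control the spatial variation, and then convert this to a time-variation bound via the conservation law itself. By translation I may assume $(t,x)=(0,0)$, so $Q_R=[-R,R]\times[-2\|f'\|_\infty R,2\|f'\|_\infty R]$, and similarly for $Q_{R/3}$.

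First I would localize by finite speed of propagation: the backward cone of slope $\|f'\|_\infty$ from any point of $Q_{R/3}$ down to the base $\{s=-R\}$ has horizontal half-width at most $(4/3)\|f'\|_\infty R$, which combined with the spatial location $|y|\leq(2/3)\|f'\|_\infty R$ keeps it inside $[-2\|f'\|_\infty R,2\|f'\|_\infty R]$. Hence one may take the trace $u(-R,\cdot)$, extend it to any $[a,c]$-valued function on $\R$, and solve the resulting Cauchy problem on $[-R,+\infty)\times\R$; by uniqueness of entropy solutions and finite propagation speed, this new solution coincides with the original $u$ on $Q_{R/3}$. Applying the Oleinik estimate to this Cauchy problem --- equivalently, using that $v:=f'(u)$ solves Burgers' equation $\partial_t v+v\partial_x v=0$, so $\partial_x v(s,\cdot)\leq 1/(s+R)$ in the distributional sense --- and combining with $f''(u)\leq -\delta$, one obtains the one-sided bound
\[
\partial_x u(s,\cdot)\geq -\frac{1}{\delta(s+R)}\geq -\frac{3}{2\delta R}=:-\frac{C_1}{R}\qquad\text{on }Q_{R/3}.
\]

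With $I=[-2\|f'\|_\infty R/3,2\|f'\|_\infty R/3]$ and using $|\partial_x u|=2(\partial_x u)^-+\partial_x u$, for a.e. $s\in[-R/3,R/3]$,
\[
\int_I|\partial_x u(s,\cdot)|\,dy\leq 2\cdot\frac{C_1}{R}\cdot|I|+(c-a)=\frac{8C_1\|f'\|_\infty}{3}+(c-a)=:C_2.
\]
Integrating over $s\in[-R/3,R/3]$ yields $\iint_{Q_{R/3}}|\partial_x u|\leq (2R/3)C_2$. The Vol'pert chain rule applied to the PDE gives $|\partial_s u|=|\partial_x(f(u))|\leq\|f'\|_\infty|\partial_x u|$ as measures, hence an identical bound (up to the factor $\|f'\|_\infty$) for $\iint_{Q_{R/3}}|\partial_s u|$. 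Adding the two contributions bounds the total variation $V(u;Q_{R/3})\leq CR$, with $C$ depending only on $c-a$, $\|f'\|_\infty$, and $\delta$.

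The main obstacle is the rigorous derivation of the Oleinik one-sided bound at the level of merely $L^\infty$ entropy solutions. I would route through a vanishing-viscosity approximation $u^\varepsilon$: for $w:=\partial_x v^\varepsilon$ with $v^\varepsilon=f'(u^\varepsilon)$, a direct computation yields $\partial_t w+v^\varepsilon\partial_x w+w^2=\varepsilon\partial_{xx}w$, so the maximum principle (compared with the explicit supersolution $1/(s+R)$) forces $w\leq 1/(s+R)$; passing to the limit $\varepsilon\to 0$ preserves this distributional one-sided bound. Everything else in the argument is elementary and poses no further difficulty.
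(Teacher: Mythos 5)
Your proof is correct and follows essentially the same route as the paper's: localization by finite speed of propagation to reduce to a Cauchy problem on the whole line, the Lax--Oleinik one-sided bound $\partial_x u\geq -\frac{3}{2\delta R}$ on $Q_{R/3}$, the decomposition of $|\partial_x u|$ into twice its negative part plus the total increment (bounded by $c-a$), and the equation itself to transfer the spatial variation bound to the time variation. The only caveat concerns your viscous sketch of Oleinik's estimate: the equation for $w=\partial_x\bigl(f'(u^\varepsilon)\bigr)$ carries an extra term involving $f'''$ (and $f$ is only assumed $C^2$ here), so one should instead apply the maximum principle directly to $w=\partial_x u^\varepsilon$, which satisfies $\partial_t w+f'(u^\varepsilon)\partial_x w+f''(u^\varepsilon)w^2=\varepsilon\partial_{xx}w$ with $f''\leq-\delta$ --- but this is the classical statement that the paper likewise invokes without proof, so it is a presentational slip rather than a gap.
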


\begin{proof} We only sketch the proof, as it is standard (we just did not find a reference giving the formulation above needed in the paper). Without loss of generality we can assume that $(t,x)=(0,0)$ and $a=0$, so that $\|u\|_\infty\leq c$. We abbreviate $Q_R(0,0)$ into $Q_R$. By finite speed of propagation, the restriction of $u$ to $Q_{R/3}$ depends only on the value of $u(-R, \cdot)$ in $[-R', R']$, where $R':=2R\|f'\|_\infty$. Let us denote by $\tilde u$ the solution of $\partial_t \tilde u+\partial_x(f(\tilde u))=0$ in $(-\infty,\infty)\times \R$ starting from $\tilde u_0$ at time $-R$, where $\tilde u_0=u(-R,\cdot)$ on $[-R', R']$ and $\tilde u_0=0$ otherwise. Then $\tilde u=u$ in $Q_{R/3}$ and $\tilde u$ satisfies the Lax-Oleinik bound:
\be\label{laxOleinick}
\partial_x \tilde u(s,\cdot) \geq - \frac{1}{\delta (s+R)}\geq -\frac{3}{2\delta R} \qquad \text{for}\; s\in [-R/3,R/3],
\ee
in the sense of distributions. Thus, for any smooth test function $\phi$ with a compact support in $Q_{R/3}$, we have, at least formally, ($C$ denoting a constant depending on $c-a$, $\|f'\|_\infty$ and $\delta$ and possibly changing from line to line):  
\begin{align*}
\iint_{Q_{R/3}} (\partial_x \phi)  u & = \iint_{Q_{R/3}}(\partial_x\phi)  \tilde u  =
-\iint_{Q_{R/3}} \phi \partial_x\tilde u  
=-\iint_{Q_{R/3}} \phi \left( \partial_x\tilde u + \frac{3}{2\delta R}\right) + \frac{3}{2\delta R} \iint_{Q_{R/3}} \phi\\
& \leq CR  \|\phi\|_\infty + \|\phi\|_\infty \iint_{Q_{R/3}}  \left( \partial_x\tilde u + \frac{3}{2\delta R}\right) \qquad \text{(by \eqref{laxOleinick})}\\
& \leq CR\|\phi\|_\infty + \|\phi\|_\infty \int_{-\frac{R}{3}}^{\frac{R}{3}}\left[ \tilde u(s, \cdot)\right]_{-R'/3}^{R'/3}ds \leq CR\|\phi\|_\infty .
\end{align*}
The rigorous derivation of the above inequality can be achieved by regularization. 
It implies that 
$$
\int_{-R/3}^{R/3} V_x( u(s,\cdot); [-R'/3,R'/3]) ds \leq CR,
$$ 
where $V_x$ denotes the total variation in the $x$ variable. 
On the other hand, by the equation satisfied by $u$, 
\begin{align*}
\iint_{Q_{R/3}} (\partial_t \phi)  u&  = -\iint_{Q_{R/3}} (\partial_x \phi)  f( u) \leq \|\phi\|_\infty \int_{-R/3}^{R/3} V_x(f( u(s,\cdot)); [-R'/3,R'/3])ds \\
& \leq \|f'\|_\infty \|\phi\|_\infty \int_{-R/3}^{R/3} V_x(u(s,\cdot); [-R'/3,R'/3])ds  \leq CR \|\phi\|_\infty. 
\end{align*}
This implies the result. 
\end{proof}

\subsection{Local correspondence: viscosity solutions versus entropy solution} 

Equivalence between Hamilton-Jacobi equation and scalar conservation laws in one space dimension has been discussed in several papers: see for instance \cite{CP20, KR02} {(see also Lemma \ref{lem::r3} below)}. The following statement can be deduced from these reference combined with a localization argument in the  spirit of the proof of Proposition \ref{prop.intregu}: 

\begin{Lemma}{\bf (Local correspondence viscosity solution versus entropy solution)}\label{lem::r2}\\
Let $a<c$, $\delta>0$ and $f:[a,c]\to \R$ be $C^2$ such that $f''\le -\delta$. Let $T>0$ and $R>0$
Let $v:\Omega \to \R$ be a Lipschitz continuous function with $\Omega:=(0,T)\times (-R,R)$ and $\partial_x v\in [a,c]$ a.e. on $\Omega$.\\
If $v$ is a viscosity solution of
$$\partial_t v + f(\partial_x v)=0\quad \mbox{on}\quad \Omega$$
then $u=\partial_x v$ is an entropy solution of
$$\partial_t u+ \partial_x f(u)=0\quad \mbox{on}\quad \Omega.$$
\end{Lemma}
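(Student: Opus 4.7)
The plan is to prove the Kruzhkov entropy inequality for $u = \partial_x v$ by vanishing viscosity approximation combined with a localization argument. What I want to show is that, for every $k\in [a,c]$ and every nonnegative test function $\phi\in C^1_c(\Omega)$,
$$
\iint_\Omega |u-k|\,\partial_t\phi + \operatorname{sign}(u-k)\bigl(f(u)-f(k)\bigr)\partial_x\phi \ \geq\ 0.
$$
By definition of an entropy solution (and since the initial/boundary conditions are not part of the statement), this pointwise-in-test-function inequality is all that is required.

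First I would localize. Fix $\phi$ supported in a compact set $K$, and pick an intermediate open set $\Omega'$ with $K\subset\subset\Omega'\subset\subset\Omega$. On $\Omega'$ I would solve the viscous Cauchy--Dirichlet problem $\partial_t v_\epsilon + f(\partial_x v_\epsilon) = \epsilon\,\partial_{xx} v_\epsilon$, where the initial/lateral data on the parabolic boundary $\partial_p\Omega'$ are obtained by smoothing the trace of the Lipschitz function $v$. Classical parabolic theory gives a unique smooth $v_\epsilon$, and by the standard stability of viscosity solutions under the vanishing viscosity limit (using that $v$ is itself the unique viscosity solution of the limiting equation with the prescribed boundary data on $\partial_p\Omega'$), $v_\epsilon\to v$ locally uniformly on $\Omega'$; in particular on $K$. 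Differentiating in $x$, the smooth function $u_\epsilon := \partial_x v_\epsilon$ solves the viscous conservation law
$$
\partial_t u_\epsilon + \partial_x(f(u_\epsilon)) \ =\ \epsilon\,\partial_{xx} u_\epsilon\qquad \text{on } \Omega'.
$$

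Next I derive and pass to the limit in a viscous entropy inequality. For any smooth convex $\eta_\delta$ approximating $u\mapsto|u-k|$ with entropy flux $q_\delta$ defined by $q_\delta'=\eta_\delta' f'$, multiplying by $\eta_\delta'(u_\epsilon)\phi$ and integrating by parts yields
$$
\iint \eta_\delta(u_\epsilon)\partial_t\phi + q_\delta(u_\epsilon)\partial_x\phi + \epsilon\,\eta_\delta(u_\epsilon)\partial_{xx}\phi \ \geq\ \iint \epsilon\,\eta_\delta''(u_\epsilon)|\partial_x u_\epsilon|^2\phi \ \geq\ 0.
$$
Sending $\delta\to 0$ gives the $\epsilon$-entropy inequality with $(\eta_\delta,q_\delta)$ replaced by $(|\cdot-k|,q(\cdot,k))$ plus an $\epsilon|u_\epsilon-k|\partial_{xx}\phi$ remainder. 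To pass to $\epsilon\to 0$, I use the uniform $L^\infty$ bound $u_\epsilon\in[a,c]$ (by the maximum principle, since $v_\epsilon\in[a,c]$-slope-preserving) together with the Oleinik one-sided Lipschitz bound $\partial_x u_\epsilon\geq -C$ on compact subsets, which holds for viscous approximations of uniformly concave Hamilton--Jacobi equations ($f''\le-\delta<0$). This is precisely the estimate whose hyperbolic version underlies Proposition \ref{prop.intregu}, and it yields a uniform-in-$\epsilon$ BV-in-$x$ bound on $u_\epsilon$ on every compact subset of $\Omega'$, hence $L^1_{\mathrm{loc}}$ compactness. Since $\partial_x v_\epsilon\rightharpoonup\partial_x v = u$ weakly-$\star$ and $u_\epsilon$ converges strongly along a subsequence, the whole family converges to $u$ in $L^1_{\mathrm{loc}}(K)$, and the remainder $\epsilon\iint|u_\epsilon-k|\,\partial_{xx}\phi$ vanishes.

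The main obstacle is the localization: one needs the viscous approximation $v_\epsilon$ to exist with good boundary data and to converge to $v$ on the interior $K$, while simultaneously yielding Oleinik-type one-sided estimates on $u_\epsilon$ that are uniform in $\epsilon$ up to (a neighborhood of) $K$. The first point is handled by mollifying $v$ along the parabolic boundary of $\Omega'$ and applying standard parabolic existence/comparison results; the second uses the uniform concavity of $f$ and follows the classical Oleinik argument applied to the quantity $w_\epsilon = \partial_{xx} v_\epsilon$, differentiating the viscous HJ equation twice and using the maximum principle away from $\partial\Omega'$. Once these ingredients are in place, the Kruzhkov inequality follows by the limit procedure above. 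Alternatively, one can short-circuit the entire localization by invoking the global equivalence of \cite{CP20,KR02} after extending $v$ by finite speed of propagation: since $u$ in $K$ depends only on $v$ in its cone of dependence within $\Omega$, extending $v$ outside $\Omega$ as any Lipschitz continuation with compatible slope ranges reduces the claim to the already established global correspondence.
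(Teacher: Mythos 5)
The paper does not prove this lemma from scratch: it invokes the global HJ/CL equivalence of \cite{CP20,KR02} and localizes by finite speed of propagation, exactly as in the proof of Proposition \ref{prop.intregu}. Your closing ``alternative'' (extend $v(t_0,\cdot)$ to a global Lipschitz function with slope in $[a,c]$, solve the Cauchy problem on the whole line, identify the global solution with $v$ on the cone of dependence, apply \cite{KR02}, and cover a given compact set by finitely many such cones) is precisely that argument, and it works.

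Your main route, however, has a genuine soft spot, and it is concentrated entirely on the lateral boundary of the Dirichlet localization $\Omega'$. Three separate steps silently use information there that you do not have. First, the identification of the vanishing-viscosity limit with $v$ is not ``standard stability'': for a first-order Dirichlet problem the viscous solutions can develop boundary layers, the limit satisfies the lateral boundary condition only in the generalized (Barles--Perthame) sense, and uniqueness of viscosity solutions continuously attaining prescribed lateral data can fail; identifying the limit with $v$ therefore requires a comparison principle for the generalized boundary-value problem, which in turn needs a coercive concave extension of $f$ and is not a two-line remark. Second, the bound $u_\epsilon=\partial_x v_\epsilon\in[a,c]$ by the maximum principle requires knowing $\partial_x v_\epsilon\in[a,c]$ on the \emph{lateral} part of the parabolic boundary, where only $v_\epsilon$, not its normal derivative, is prescribed; this is exactly where a boundary layer would push $\partial_x v_\epsilon$ out of $[a,c]$. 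Third, the Oleinik one-sided bound via the Riccati inequality for $w_\epsilon=\partial_{xx}v_\epsilon$ again needs control of $w_\epsilon$ on the lateral boundary, so the interior version requires a barrier blowing up at $\partial\Omega'$ that you do not construct. None of these is fatal --- each can be repaired --- but as written the argument asserts as routine the very points where it could fail. The clean fix is the one you mention last: localize with cones of dependence and a whole-line Cauchy problem (where all three estimates are classical), rather than with a lateral Dirichlet boundary; that is the paper's intended proof.
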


\subsection{Correspondence for a junction: viscosity solutions versus entropy solution}

\begin{Lemma}{\bf (Correspondence for a junction: viscosity solution versus entropy solution, \cite{CFGM})}\label{lem::r3}\\
For $i=L,R$, let real numbers $a^i< b^i< c^i$ and functions $f^i:[a^i,c^i]\to \R$ be $C^2$ satisfying $(f^i)''\le -\delta<0$, increasing on $[a^i,b^i]$ and decreasing on $[b^i, c^i]$ {and such that $f^i(a^i)=f^i(c^i)=0$}. We define the monotone envelopes
$$f^{i,+}(p)=\left\{\begin{array}{ll}
f^i(p)&\quad \mbox{for}\quad p\in [a^i,b^i]\\
f^i(b^i) &\quad \mbox{for}\quad p\in [b^i,c^i]\\
\end{array}\right.\quad \mbox{and}\quad
f^{i,-}(p)=\left\{\begin{array}{ll}
f^i(b^i)&\quad \mbox{for}\quad p\in [a^i,b^i]\\
f^i(p) &\quad \mbox{for}\quad p\in [b^i,c^i]\\
\end{array}\right.
$$
Let $T>0$ and a flux limiter $A\ge 0$. Let  $v=(v^L,v^R)$ be a viscosity solution (in the sense of \cite{IM17}) of
$$\left\{\begin{array}{ll}
v^L_t+f^L(v^L_x)=0&\quad \mbox{on}\quad (0,T)\times (-\infty,0)\\
v^R_t+f^R(v^R_x)=0&\quad \mbox{on}\quad (0,T)\times (0,+\infty)\\
v(t,0):=v^L(t,0)=v^R(t,0)&\quad \mbox{on}\quad (0,T)\times \left\{0\right\}\\
\partial_t v(t,0)+\min\left\{A,f^{L,+}(v^L_x(t,0^-)),f^{R,-}(v^R_x(t,0^+))\right\} =0 &\quad \mbox{on}\quad (0,T)\times \left\{0\right\}\\
v=v_0 & \quad \mbox{on}\quad \left\{0\right\}\times \R
\end{array}\right.$$
with $v$ uniformly Lipschitz continuous on $[0,T)\times \R$.\\
{\noindent {\bf i) (Natural result)}}\\
Then $u=\partial_xv=(u^L,u^R)$ is an entropy solution of 
$$\left\{\begin{array}{ll}
\partial_t u^L+\partial_x(f^L(u^L))=0&\quad \mbox{on}\quad (0,T)\times (-\infty,0)\\
\partial_t u^R+\partial_x(f^R(u^R))=0&\quad \mbox{on}\quad (0,T)\times (0,+\infty)\\
(u^L(t,0^-),u^R(t,0^+))\in \mathcal G_A &\quad  \mbox{a.e. on}\quad  (0,T)\times \left\{0\right\}\\
u=u_0 & \quad \mbox{a.e. on}\quad \left\{0\right\}\times \R
\end{array}\right.$$
with $u_0=\partial_x v_0$ and with
$$\mathcal G_A:=\left\{(p^L,p^R)\in [a^L,c^L]\times [a^R,c^R],\quad \min\left\{A,f^{L,+}(p^L),f^{R,-}(p^R)\right\}=f^L(p^L)=f^R(p^R)\right\}.$$
{\noindent {\bf ii) (A variant)}\\
The result is still true for $f^i=f^{i,\pm}=0=a^i=b^i=c^i$ for $i=L$, or for $i=R$ or for both.}
\end{Lemma}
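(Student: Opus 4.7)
The plan is to split the proof into an analysis on the interior of each branch and an analysis at the junction point $x=0$. On each open branch, I would apply Lemma \ref{lem::r2} on any small cylinder contained in $(0,T)\times(-\infty,0)$ or in $(0,T)\times(0,+\infty)$: this directly yields that $u^L=\partial_xv^L$ (resp.\ $u^R=\partial_xv^R$) is a standard Kruzhkov entropy solution of the corresponding scalar conservation law, with values in $[a^L,c^L]$ (resp.\ $[a^R,c^R]$). The initial condition $u(0,\cdot)=u_0=\partial_xv_0$ follows from the fact that differentiating in $x$ the initial condition $v(0,\cdot)=v_0$ and using Lipschitz continuity in $t$ are compatible operations.

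The main part is the junction condition. Since $v$ is uniformly Lipschitz on $[0,T)\times\R$, the components $u^L$ and $u^R$ are essentially bounded, and the strict concavity assumption on each $f^i$ implies the nondegeneracy condition (\ref{eq::h2}) of Panov. Hence Theorem \ref{th::r7homo} (applied after a trivial change of variable on the left branch) provides strong traces $u^L(\cdot,0^-)$ and $u^R(\cdot,0^+)$ at $x=0$. To obtain the Rankine--Hugoniot identity $f^L(u^L(t,0^-))=f^R(u^R(t,0^+))$ I would then exploit that $v$ is continuous across $x=0$ so that $t\mapsto v(t,0)$ is Lipschitz; the a.e.\ identities $\partial_tv^i=-f^i(u^i)$ (valid on each branch by the viscosity equation and the Lipschitz regularity of $v$) pass to the strong traces by continuity of $f^i$, giving $-f^L(u^L(t,0^-))=\partial_tv(t,0)=-f^R(u^R(t,0^+))$ a.e.

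The main obstacle is to extract from the viscosity junction condition the pointwise identity
$$
\partial_tv(t,0)+\min\{A,f^{L,+}(u^L(t,0^-)),f^{R,-}(u^R(t,0^+))\}=0\qquad\text{a.e. in }(0,T),
$$
which, combined with the Rankine--Hugoniot identity above, gives $(u^L(t,0^-),u^R(t,0^+))\in\mathcal{G}_A$. The junction condition in the sense of \cite{IM17} is only phrased through test functions $\varphi(t,x)=\alpha(t)+q^Lx\mathbf{1}_{x<0}+q^Rx\mathbf{1}_{x>0}$ satisfying the Imbert--Monneau compatibility condition at the test point, not as a pointwise identity on traces. To bridge this gap I would rely, as in \cite{CFGM}, on a semiconvexity/semiconcavity argument: the uniform concavity of each $f^i$ combined with the Lipschitz regularity of $v$ yields enough one-sided regularity of the trace $v(\cdot,0)$ to produce, at a.e.\ $t_0$, admissible test functions realizing both the sub- and supersolution inequalities, thereby forcing equality. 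At that point the maximality of $\mathcal{G}_A$ (a special case of Section \ref{sec:germ}) closes the argument.

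Finally, part ii) reduces to i). When $f^i\equiv 0$ and $a^i=b^i=c^i=0$ on one of the branches, the constraint $v^i_x\in\{0\}$ forces $v^i$ to be constant in $x$ (so that $u^i\equiv 0$), and the junction condition collapses to a flux-limited Hamilton--Jacobi problem on a single half-line, for which the correspondence follows from the same semiconvexity argument or directly from \cite{IM17}; the degenerate germ components collapse consistently to $\{0\}$.
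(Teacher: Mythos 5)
First, note that the paper does not prove this lemma at all: it is imported as a black box from the reference \cite{CFGM}, so there is no internal proof to compare against. Your proposal must therefore be judged on its own merits as an outline of what \cite{CFGM} actually has to do.

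The architecture you propose is the right one: Lemma \ref{lem::r2} handles the interior of each branch, the uniform concavity gives Panov's nondegeneracy condition \eqref{eq::h2} so that Theorem \ref{th::r7homo} furnishes strong traces, and the Rankine--Hugoniot identity $f^L(u^L(t,0^-))=f^R(u^R(t,0^+))$ does follow, as you say, from passing the a.e.\ identities $\partial_t v^i=-f^i(\partial_x v^i)$ to the limit $x\to 0$ using the strong $L^1$ convergence of the traces and the continuity of $v$ across the junction. Part ii) also reduces correctly to part i). (One small slip: maximality of $\mathcal G_A$ is not what ``closes the argument'' here --- if you truly obtain the pointwise identity $\partial_t v(t,0)+\min\{A,f^{L,+}(u^L(t,0^-)),f^{R,-}(u^R(t,0^+))\}=0$ a.e.\ together with Rankine--Hugoniot, membership in $\mathcal G_A$ is immediate by definition; maximality is only relevant for uniqueness and stability, not for this identification.)

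The genuine gap is exactly the step you flag as ``the main obstacle'' and then dispatch in one sentence. The Imbert--Monneau junction condition constrains $v$ only through test functions $\alpha(t)+q^Lx{\bf 1}_{x<0}+q^Rx{\bf 1}_{x>0}$ whose slopes are \emph{fixed} by the compatibility condition $A=f^L(q^L)=f^{L,-}(q^L)=f^R(q^R)=f^{R,+}(q^R)$ --- note that these are the \emph{opposite} monotone envelopes from those appearing in $\mathcal G_A$, and these slopes have a priori nothing to do with the traces $u^L(t,0^-)$, $u^R(t,0^+)$. To conclude, one must exclude the ``forbidden'' trace configurations (typically $u^L$ on the increasing branch of $f^L$ and $u^R$ on the decreasing branch of $f^R$ with common flux strictly below $\min\{A,f^{L,+}(u^L),f^{R,-}(u^R)\}$), and this requires an actual mechanism --- an Oleinik-type one-sided bound up to the junction, a blow-up/classification of stationary junction profiles, or a comparison with explicit barriers --- not merely ``enough one-sided regularity of $v(\cdot,0)$ to produce admissible test functions.'' Semiconvexity of $v$ in the interior of each branch does not by itself transfer to the trace at $x=0$, and even when one can test at a given $t_0$, the resulting sub- and supersolution inequalities bound $\partial_t v(t_0,0)$ against $A$ and against the fluxes at the \emph{test slopes} $q^L,q^R$, not against $f^{L,+}(u^L(t_0,0^-))$ and $f^{R,-}(u^R(t_0,0^+))$. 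Bridging that mismatch is the entire content of \cite{CFGM}, and your proposal leaves it as a placeholder rather than an argument.
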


\subsection{The envelope theorem} 

We recall the following result (which is easy to prove directly).
\begin{Theorem}{\bf (Envelope theorem)}\label{th::r1}\\
Let $\Omega\subset \R^n$ be an open set for $n\ge 1$and $Y$ be a compact set. Consider a function $\varphi:\Omega_x\times Y_y \to \R$ and let
$$h(x)=\max_{y\in Y} \varphi(x,y)$$
We make the following assumptions on $\varphi$
\begin{equation}\label{eq::rr1}
\left\{\begin{array}{l}
\mbox{the map $\varphi$ is continuous on $\Omega_x\times Y_y$}\\
\mbox{the map $\varphi(\cdot,y)$ is differentiable on $\Omega_x$ for each $y\in Y$, with derivative $\varphi_x(\cdot,y)$}\\
\mbox{the map $\partial_x \varphi$ is continuous on $\Omega_x\times Y_y$}\\
\end{array}\right.
\end{equation}
\noindent {\bf i) (The directional derivative)}\\
For any $v\in \R^n$, the function $h$ has directional derivative at each point $x_0\in \Omega$ which is defined by
$$D^+_vh(x_0):=\lim_{\varepsilon\to 0^+} \frac{h(x+\varepsilon v)-h(x)}{\varepsilon}$$
and we have
$$D^+_vh(x_0)=\max_{y_0\in \mbox{\footnotesize Argmax} \ \varphi(x_0,\cdot)}\ v\cdot \partial_x\varphi(x_0,y_0)$$
with
$$\mbox{Argmax} \ \varphi(x_0,\cdot):=\left\{y_0\in Y,\quad  \varphi(x_0,y_0)=\max_{y\in Y} \varphi(x_0,y)\right\}.$$
\noindent {\bf ii) (When $h$ has already a derivative)}\\
Assume that $h$ has a derivative at $x_0\in \Omega$. Then we have
$$\partial_xh(x_0)=\partial_x\varphi(x_0,y_0)\quad \mbox{for all}\quad y_0\in \mbox{Argmax} \ \varphi(x_0,\cdot).$$
\noindent {\bf iii) (Existence of a derivative for $h$)}\\
Let $x_0\in \Omega$. If the map $v\mapsto D^+_vh(x_0)$ is linear, then $h$ has a derivative at $x_0$.\\
\noindent {\bf iv) (The basic result)}\\
Let $x_0\in \Omega$.
$$\mbox{If $\mbox{Argmax} \ \varphi(x_0,\cdot)= \left\{y_0\right\}$ is a singleton},$$
then  $h$ has a derivative at $x_0$ and
$$\partial_xh(x_0)=\partial_x\varphi(x_0,y_0).$$
\end{Theorem}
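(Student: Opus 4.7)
The strategy is to prove the four parts in order, using the compactness of $Y$ in an essential way and deriving (iii) and (iv) from (i) via a Lipschitz‐continuity argument for $h$.

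For part (i), I would prove the two inequalities separately. The lower bound is direct: for any $y_0 \in \mathrm{Argmax}\,\varphi(x_0,\cdot)$, one has $h(x_0+\varepsilon v) \geq \varphi(x_0+\varepsilon v, y_0)$ while $h(x_0)=\varphi(x_0,y_0)$, so by differentiability of $\varphi(\cdot,y_0)$ at $x_0$ one obtains $\liminf_{\varepsilon\to 0^+} \varepsilon^{-1}(h(x_0+\varepsilon v)-h(x_0)) \geq v\cdot \partial_x\varphi(x_0,y_0)$; taking the max over $y_0$ yields the desired lower bound. For the upper bound, I would pick $y_\varepsilon \in \mathrm{Argmax}\,\varphi(x_0+\varepsilon v,\cdot)$ (which exists by continuity of $\varphi$ and compactness of $Y$) and write
\[
\frac{h(x_0+\varepsilon v)-h(x_0)}{\varepsilon} \leq \frac{\varphi(x_0+\varepsilon v,y_\varepsilon)-\varphi(x_0,y_\varepsilon)}{\varepsilon} = v\cdot \partial_x\varphi(x_0+\theta_\varepsilon v, y_\varepsilon)
\]
for some $\theta_\varepsilon\in (0,\varepsilon)$, by the mean value theorem. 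Along a subsequence $\varepsilon_n\to 0$ realizing the limsup, compactness of $Y$ gives $y_{\varepsilon_n}\to y_\ast$; the key step is then to show $y_\ast\in \mathrm{Argmax}\,\varphi(x_0,\cdot)$, which follows by passing to the limit in $\varphi(x_0+\varepsilon_n v, y_{\varepsilon_n})\geq \varphi(x_0+\varepsilon_n v, y)$ for each $y\in Y$, using continuity of $\varphi$. Continuity of $\partial_x\varphi$ on $\Omega\times Y$ then gives $\limsup \leq v\cdot \partial_x\varphi(x_0,y_\ast)$, closing the argument.

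For part (ii), if $h$ is differentiable at $x_0$ then $D^+_v h(x_0) = v\cdot \partial_x h(x_0)$ for every $v\in \R^n$, and applying (i) to both $v$ and $-v$ for a fixed $y_0\in \mathrm{Argmax}\,\varphi(x_0,\cdot)$ gives $v\cdot\partial_x h(x_0) \geq v\cdot\partial_x\varphi(x_0,y_0)$ and $-v\cdot\partial_x h(x_0) \geq -v\cdot\partial_x\varphi(x_0,y_0)$, whence equality and thus $\partial_x h(x_0)=\partial_x\varphi(x_0,y_0)$.

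For part (iii), the main obstacle is that pointwise linearity of one-sided directional derivatives does not in general imply differentiability; Lipschitz regularity of $h$ is the crucial ingredient. I would first observe that $h$ is locally Lipschitz: continuity of $\partial_x\varphi$ on $\Omega\times Y$ (compact in $y$) yields a local bound $|\partial_x\varphi|\leq M$, and then $|h(x_1)-h(x_2)|\leq M|x_1-x_2|$ by taking the sup over $y$ of $|\varphi(x_1,y)-\varphi(x_2,y)|$. Given this, if $v\mapsto D^+_v h(x_0)=L(v)$ is linear, I would argue by contradiction: assume sequences $u_n\to 0$ with $|h(x_0+u_n)-h(x_0)-L(u_n)|\geq \delta|u_n|$, extract $v_n=u_n/|u_n|\to v$, and use the Lipschitz estimate to replace $v_n$ by $v$ with error $O(|v_n-v|)$, obtaining from the directional derivative in direction $v$ that $|u_n|^{-1}(h(x_0+u_n)-h(x_0))\to L(v)$, contradicting $\delta>0$.

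Finally, part (iv) is an immediate consequence of (i) and (iii): a singleton $\mathrm{Argmax}\,\varphi(x_0,\cdot)=\{y_0\}$ makes the formula in (i) read $D^+_v h(x_0)=v\cdot\partial_x\varphi(x_0,y_0)$, which is linear in $v$, and (iii) delivers differentiability with $\partial_x h(x_0)=\partial_x\varphi(x_0,y_0)$.
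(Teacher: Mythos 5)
The paper states this envelope theorem in the appendix without proof, remarking only that it ``is easy to prove directly,'' so there is no in-paper argument to compare against. Your proof is correct and complete, and it is the standard argument: the lower bound in (i) from testing with a fixed maximizer, the upper bound from maximizers at the perturbed point together with the mean value theorem, compactness of $Y$, and continuity of $\partial_x\varphi$; then (ii) by applying (i) to $\pm v$. You also correctly identify the one genuinely delicate point, namely that linearity of $v\mapsto D^+_vh(x_0)$ alone does not give differentiability, and you supply the missing ingredient (local Lipschitz continuity of $h$, obtained from the uniform bound on $\partial_x\varphi$ over a compact neighborhood times $Y$) to upgrade radial convergence to full differentiability in (iii); (iv) then follows at once. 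No gaps.
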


\paragraph{\textbf{Acknowledgement.}}
This research was partially funded by l'Agence Nationale de la Recherche (ANR), project ANR-22-CE40-0010 COSS. 
For the purpose of open access, the authors have applied a CC-BY public copyright licence to any Author Accepted Manuscript (AAM) version arising from this submission.


\begin{thebibliography}{ABC}


%

\bibitem{ACCT13} 
\textsc{Y. Achdou, F. Camilli, A. Cutr\`{\i}, and N. Tchou},
{\it  Hamilton-Jacobi equations constrained on networks}. Nonlinear Differential Equations and Applications NoDEA, 20(3) (2013), 413-445.

\bibitem{AOT15} 
\textsc{Y. Achdou, F. Camilli, S. Oudet, and N. Tchou},
{\it Hamilton-Jacobi equations for optimal control on junctions and networks}. 
ESAIM: Control, Optimisation and Calculus of Variations, 21(3) (2015), 876-899.


\bibitem{AcTc15} 
\textsc{Y. Achdou, and N. Tchou}, 
{\it  Hamilton-Jacobi equations on networks as limits of singularly perturbed problems in optimal control: dimension reduction}. Communications in Partial Differential Equations, 40(4) (2015), 652-693.



%
\bibitem{AGDV11}
\textsc{Adimurthi, S. S. Ghoshal, R. Dutta, and G.D. Veerappa Gowda}, 
{\it Existence and nonexistence of TV bounds for scalar conservation laws with discontinuous flux}. Communications on pure and applied mathematics, 64(1) (2011), 84-115.

%


%
\bibitem{ACD}
\textsc{B. Andreianov, G. M. Coclite and C. Donadello},
{\it Well-posedness for vanishing viscosity solutions of scalar conservation laws on a network.}
Discrete Contin. Dyn. Syst.  37 (2017), 5913-5942.

\bibitem{AKR11}  
\textsc{B. Andreianov, K.H. Karlsen and  N.H. Risebro}, 
{\it A theory of $L^ 1$-dissipative solvers for scalar conservation laws with discontinuous flux.} 
Arch. Ration. Mech.  Anal. 201 (2011), 27-86.

%
\bibitem{AuPe05}
\textsc{E. Audusse, and B. Perthame},
{\it  Uniqueness for scalar conservation laws with discontinuous flux via adapted entropies}. Proceedings of the Royal Society of Edinburgh Section A: Mathematics, 135(2) (2005), 253-265.

%
\bibitem{BaJe97}
\textsc{P. Baiti, and H.K. Jenssen}, 
{\it Well-Posedness for a Class of $2\times 2$ Conservation Laws with  $L^\infty$Data}. Journal of differential equations, 140(1)  (1997), 161-185.

%
\bibitem{BaCh} 
\textsc{G. Barles, and E. Chasseigne}, 
{\sc An illustrated guide of the modern approches of Hamilton-Jacobi equations and control problems with discontinuities}. {(2023)}. arXiv preprint arXiv:1812.09197.


%
\bibitem{BCGHP}
\textsc{A. Bressan, S. Canic, M. Garavello, H. Herty and P. Piccoli},
{\it Flows on networks: Recent results and perspectives}, 
EMS Surv. Math. Sci.  1 (2014), 47-111.

%
\bibitem{CaMa13} 
\textsc{F. Camilli, and C. Marchi}, 
{\it A comparison among various notions of viscosity solution for Hamilton-Jacobi equations on networks}. Journal of Mathematical Analysis and applications, 407(1) (2013), 112-118.

%

%
\bibitem{CFGM} 
\textsc{P. Cardaliaguet, N.  Forcadel, T.  Girard  and R. Monneau},
{\it {Conservation law and Hamilton-Jacobi equations on a junction: the convex case}}. {Preprint, https://hal.science/hal-04279829.}

%
\bibitem{CFarma}
\textsc{P. Cardaliaguet, and N.  Forcadel} 
{\it Microscopic derivation of a traffic flow model with a bifurcation}. To appear in ARMA. 

\bibitem{CFsiam21}
\textsc{P. Cardaliaguet, and N.  Forcadel},
{\it From heterogeneous microscopic traffic flow models to macroscopic models}. 
SIAM Journal on Mathematical Analysis, 53(1) (2021), 309-322.
%


\bibitem{CP20}
\textsc{R.M. Colombo, and V. Perrollaz},
{\it Initial data identification in conservation laws and Hamilton-Jacobi equations}. 
Journal de Math\'ematiques Pures et Appliqu\'ees, 138 (2020), 1-27.


%
\bibitem{CHM20}  
\textsc{R.~M. Colombo, H.~Holden, and F.~Marcellini},  On the microscopic modeling of
vehicular traffic on general networks.  SIAM J. Appl. Math. 80 (2020), no. 3, 1377-1391.

\bibitem{Da05} 
\textsc{C.M. Dafermos}, 
{\sc Hyperbolic conservation laws in continuum physics} (Vol. 3) (2005). Berlin: Springer.

\bibitem{Da09} 
\textsc{A.L. Dalibard}, 
{\it Homogenization of non-linear scalar conservation laws}. 
Archive for rational mechanics and analysis, 192(1) (2009), 117-164.


%


%

%
\bibitem{Ew92}
\textsc{W. E}. 
{\it Homogenization of scalar conservation laws with oscillatory forcing terms}. 
SIAM Journal on Applied Mathematics (1992), 959-972.


%
\bibitem{rigorousLWR}
\textsc{M.~Di~Francesco and M.~D. Rosini}, 
{\it Rigorous derivation of nonlinear
  scalar conservation laws from follow-the-leader type models via many particle
  limit}, Arch. Ration. Mech. Anal., 217 (2015), pp.~831--871.

\bibitem{FMR22b} 
\textsc{U.S. Fjordholm, M. Musch and N.H. Risebro},
{\it Well-posedness and convergence of a finite volume method for conservation laws on networks.} 
SIAM J. Numer. Anal. 60 (2) (2022), 606-630.

%
\bibitem{FSZ18} 
\textsc{N. Forcadel, W. Salazar, and M. Zaydan}, 
{\it Specified homogenization of a discrete traffic model leading to an effective junction condition}. 
Communications on Pure and Applied Analysis, 17(5)  (2018), 2173-2206.

%
\bibitem{FoSa20}
\textsc{N. Forcadel, and W. Salazar}, 
{\it Homogenization of a discrete model for a bifurcation and application to traffic flow}. Journal de Math\'ematiques Pures et Appliqu\'ees, 136 (2020), 356-414.

%
\bibitem{GaImMo15} 
\textsc{G. Galise, C. Imbert, and R. Monneau}, 
{\it A junction condition by specified homogenization and application to traffic lights}. 
Analysis \& PDE, 8(8) (2015), 1891-1929.

%
\bibitem{GP1}
\textsc{M. Garavello and B. Piccoli}, 
{\it Traffic flow on networks}, 
American Institute of Mathematical Sciences (AIMS), Springfield, MO, (2006).
%
\bibitem{GP2}
\textsc{M. Garavello and B. Piccoli}, 
{\it Conservation laws on complex networks}, Ann. Inst. H. Poincare Anal. Non Lin\'eaire  26 (2009), 1925-1951.

%
\bibitem{GNPT07}
\textsc{M. Garavello, N. Natalini, B. Piccoli and A. Terracina}, 
{\it Conservation laws with discontinuous flux}. Networks and Heterogeneous Media, 2(1) (2007), 159.


%
\bibitem{GR20}
\textsc{P. Goatin and E. Rossi},
{\it  Comparative study of macroscopic traffic flow models at road junctions.} 
Netw.  Heterog. Media 15(2) (2020), 261-279.

%

\bibitem{HR}
\textsc{H. Holden and N. H. Risebro}. 
{\it A mathematical model of traffic flow on a network of unidirectional roads}. SIAM J. Math. Anal., 26 (4) (1995), 999-1017.


\bibitem{H22}
\textsc{Y. Holle},
{\it Entropy Dissipation at the Junction for Macroscopic Traffic Flow Models.}
SIAM J. Math. Anal. 54 (1) (2022), 954-985.

\bibitem{IM17} 
\textsc{C. Imbert and R. Monneau}, 
{\it Flux-limited solutions for quasi-convex Hamilton-Jacobi equations on networks.} 
Annales scientifiques de l'ENS 50 (2) (2017), 357-448

\bibitem{IMZ}
\textsc{C. Imbert, R. Monneau and H. Zidani},
{\it  A Hamilton-Jacobi approach to junction problems and application to traffic flows}. 
ESAIM: Control, Optimisation and Calculus of Variations 19 (2013), 129-166.
%
\bibitem{KR02}
\textsc{K.H. Karlsen and  N.H. Risebro},
{\it A note on Front tracking and the Equivalence between Viscosity Solutions of Hamilton-Jacobi 
Equations And Entropy Solutions of scalar Conservation Laws.}
Nonlin. Anal. TMA 50 (4) (2002), 455-469.

%


%
\bibitem{LiSo16} 
\textsc{P.-L. Lions and P. Souganidis},
{\it Viscosity solutions for junctions: well posedness and stability}. 
Rendiconti Lincei-matematica e applicazioni, 27(4) (2016), 535-545. 


{\bibitem{LS2}
\textsc{P.-L. Lions, P. Souganidis},
{\it Well-posedness for multi-dimensional junction problems with Kirchoff-type conditions},
Rend. Lincei Mat. Appl. 28 (2017), 807-816.}


%
\bibitem{LS}
\textsc{P.-L. Lions and P. Souganidis},
{\it Scalar conservation laws: Initial and boundary value problems
revisited and saturated solutions},
C. R. Acad. Sci. Paris, Ser. I 356 (2018), 1167-1178.

\bibitem{MFR22} 
\textsc{M. Musch,  U.S. Fjordholm  and N.H. Risebro},  
{\it Well-posedness theory for nonlinear scalar conservation laws on networks.} 
Netw. Heterog. Media, 17 (2022), 101-128.

%


\bibitem{Ps07} 
\textsc{E.Y. Panov},   
{\it Existence of strong traces for quasi-solutions of multidimensional conservation laws.}
J. Hyperbolic Differ. Equ. 4 (2007), 729-770.

%
\bibitem{ShCa13} 
\textsc{D. Schieborn, D., and F. Camilli}, 
{\it Viscosity solutions of Eikonal equations on topological networks}. 
Calculus of Variations and Partial Differential Equations, 46(3) (2013), 671-686.

%
\bibitem{Se92}
\textsc{D. Serre}, 
{\it Correctors for the homogenization of conservation laws with oscillatory forcing terms}. 
Asymptotic analysis, 5(4) (1992), 311-316.


%
\bibitem{To00}
\textsc{J.D. Towers},
{\it Convergence of a difference scheme for conservation laws with a discontinuous flux}. SIAM journal on numerical analysis, 38(2) (2000), 681-698.

%
\bibitem{To01}
\textsc{J.D. Towers},
{\it A difference scheme for conservation laws with a discontinuous flux: the nonconvex case}. SIAM journal on numerical analysis, 39(4) (2001), 1197-1218.

%
\bibitem{To20}
\textsc{J.D. Towers},
{\it Well-posedness of a model of merging and branching traffic flow},
ResearchGate preprint (2020).


%
\bibitem{V01}
\textsc{A. Vasseur},
{\it Strong Traces for Solutions of Multidimensional Scalar Conservation Laws.}
Arch. Rational Mech. Anal. 160 (2001), 181-193.





\end{thebibliography}
\end{document}